\newcommand\reallywidehat[1]{%
\savestack{\tmpbox}{\stretchto{%
0  \scaleto{%
    \scalerel*[\widthof{\ensuremath{#1}}]{\kern-.6pt\bigwedge\kern-.6pt}%
    {\rule[-\textheight/2]{1ex}{\textheight}}
  }{\textheight}%
}{0.5ex}}%
\stackon[1pt]{#1}{\tmpbox}%
}
\newenvironment{nscenter}
 {\parskip=0pt\par\nopagebreak\centering}
 {\par\noindent\ignorespacesafterend}
\newcommand{\floor}[1]{\left \lfloor #1 \right \rfloor}
\newcommand{\ceil}[1]{\left \lceil #1 \right \rceil}
\newcommand{\bE}{\mathbb{E}}
\newcommand{\cT}{\mathcal{T}}
\newtheorem{definition}{Definition}
\newtheorem{example}{Example}
\newtheorem{theorem}{Theorem}
\newtheorem{conjecture}{Conjecture}
\newtheorem{lemma}{Lemma}
\newtheorem{prop}{Proposition}
\newtheorem{rmk}{Remark}
\newenvironment{fminipage}%
  {\begin{Sbox}\begin{minipage}}%
  {\end{minipage}\end{Sbox}\fbox{\TheSbox}}
\newcommand*{\rom}[1]{\expandafter\@slowromancap\romannumeral #1@}
\newtheorem{remark}{Remark}
\newtheorem{cor}{Corollary} 
\newtheorem{obs}{Observation}
\newcommand{\Ind}{\mathbbm{1}}
\newcommand{\vcd}{\mathsf{{vcd}}}
\newcommand{\vc}{\tau}
\newcommand{\smu}{\zeta}
\newcommand{\sd}{\mu}
\newcommand{\abs}[1]{\left|#1\right|}
\newcommand{\N}{\mathbb{N}}
\newcommand{\E}{\mathbb{E}}
\def\P{{\mathbb P}}
\newcommand{\indep}{\perp \!\!\! \perp}
\newcommand {\pr} {\mathbb{P}}
\newcommand{\calA}{{\cal A}}
\newcommand{\calE}{{\cal E}}
\newcommand{\calG}{{\cal G}}
\newcommand{\calH}{{\cal H}}
\newcommand{\calJ}{{\cal J}}
\newcommand{\calK}{{\cal K}}
\newcommand{\calM}{{\cal M}}
\newcommand{\calN}{{\cal N}}
\newcommand{\calP}{{\cal P}}
\newcommand{\calQ}{{\cal Q}}
\newcommand{\calS}{{\cal S}}
\DeclarePairedDelimiterX{\set}[1]{\{}{\}}{\setargs{#1}}
\DeclarePairedDelimiterX{\cond}[1]{[}{]}{\setargs{#1}}
\NewDocumentCommand{\setargs}{>{\SplitArgument{1}{;}}m}
{\setargsaux#1}
\NewDocumentCommand{\setargsaux}{mm}
{\IfNoValueTF{#2}{#1} {#1\,\delimsize|\,\mathopen{}#2}}
\newcommand{\be}{\begin{equation}}
\newcommand{\ee}{\end{equation}}
\newcommand{\beqna}{\begin{eqnarray}}
\newcommand{\eeqna}{\end{eqnarray}}
\newcommand{\p}[1]{\left(#1\right)}
\newcommand{\pp}[1]{\left[#1\right]}
\newcommand{\ppp}[1]{\left\{#1\right\}}
\newcommand{\norm}[1]{\left\|#1\right\|}
\newcommand{\innerP}[1]{\left\langle#1\right\rangle}
\newcommand{\s}[1]{\mathsf{#1}}
\def\thanks#1{\protected@xdef\@thanks{\@thanks
        \protect\footnotetext{#1}}}
\renewcommand{\paragraph}{%
  \@startsection{paragraph}{4}%
  {\z@}{1.25ex \@plus 1ex \@minus .2ex}{-1em}%
  {\normalfont\normalsize\bfseries}%
}
\begin{document}
\title{Detecting Arbitrary Planted Subgraphs in Random Graphs}
\author{Dor Elimelech~~~~~~~~~~~~~Wasim Huleihel\thanks{D. Elimelech and W. Huleihel are with the School of Electrical Engineering and Computer Engineering, at Tel Aviv University, {T}el {A}viv 6997801, Israel (e-mails:  \texttt{dorelimelech@tauex.tau.ac.il, wasimh@tauex.tau.ac.il}). This work is supported by the ISRAEL SCIENCE FOUNDATION (grant No. 1734/21).}}

\maketitle
\begin{abstract}
    
The problems of detecting and recovering planted structures/subgraphs in Erd\H{o}s-R\'{e}nyi random graphs, have received significant attention over the past three decades, leading to many exciting results and mathematical techniques. However, prior work has largely focused on specific ad hoc planted structures and inferential settings, while a general theory has remained elusive. In this paper, we bridge this gap by investigating the detection of an \emph{arbitrary} planted subgraph $\Gamma = \Gamma_n$ in an Erd\H{o}s-R\'{e}nyi random graph $\mathcal{G}(n, q_n)$, where the edge probability within $\Gamma$ is $p_n$. We examine both the statistical and computational aspects of this problem and establish the following results. In the dense regime, where the edge probabilities $p_n$ and $q_n$ are fixed, we tightly characterize the information-theoretic and computational thresholds for detecting $\Gamma$, and provide conditions under which a computational-statistical gap arises. Most notably, these thresholds depend on $\Gamma$ only through its number of edges, maximum degree, and maximum subgraph density. Our lower and upper bounds are general and apply to any value of $p_n$ and $q_n$ as functions of $n$. Accordingly, we also analyze the sparse regime where $q_n = \Theta(n^{-\alpha})$ and $p_n-q_n =\Theta(q_n)$, with $\alpha\in[0,2]$, as well as the critical regime where $p_n=1-o(1)$ and $q_n = \Theta(n^{-\alpha})$, both of which have been widely studied, for specific choices of $\Gamma$. For these regimes, we show that our bounds are tight for all planted subgraphs investigated in the literature thus far\textemdash{}and many more. Finally, we identify conditions under which detection undergoes sharp phase transition, where the boundaries at which algorithms succeed or fail shift abruptly as a function of $q_n$. 
\end{abstract}

\section{Introduction}\label{sec:intro}

\sloppy

The study of structures in networks is a central problem at the intersection of graph theory, computer science, and statistics, with widespread applications in fields such as social and biological sciences. A key aspect of this research involves identifying communities--groups of nodes with many internal connections and relatively few links to the rest of the network. While much of the existing work has focused on assigning nodes to specific communities, an equally fundamental challenge is determining whether a small, well-connected group exists within a large random graph. This problem, introduced by \cite{arias2014community}, has practical implications for event detection and cluster monitoring, as well as theoretical significance in understanding the statistical and computational limits of community detection \cite{chen2016statistical}.

Machine learning problems inherently involve two key aspects: \emph{statistical} and \emph{computational}. The statistical aspect determines the accuracy of inference tasks, while the computational aspect examines the efficiency of solving them. Traditionally, these aspects have been studied separately, with information theory and statistics providing the foundation for understanding statistical limits. However, computational feasibility has often been overlooked, despite its growing importance as modern datasets continue to expand. Recent research, e.g., \cite{berthet2013complexity,ma2015computational,cai2015computational,chen2016statistical,hopkins2017bayesian,Hopkins18,gamarnik2020lowdegree,barak2016nearly,Lenka16,Lesieur_2015,hajek2015computational,bandeira2018notes,brennan18a,brennan19,brennan20a}, and many references therein, has revealed a fundamental gap between the data required for statistically optimal methods and the limits of computationally efficient algorithms, specifically in problems with a planted combinatorial structure. This \emph{statistical-to-computational gap} suggests that while a task may be statistically feasible, no known efficient algorithm can achieve optimal performance.

In this paper, we investigate the inferential problem of detecting the presence of an \emph{arbitrary} subgraph planted in otherwise Erd\H{o}s-R\'{e}nyi random graph. This is formulated as follows. Let $n\in\mathbb{N}$, $q=q_n\in(0,1)$, $p=p_n\in(0,1)$, and $\Gamma=\Gamma_n$ be an arbitrary undirected sequence of graphs; $\Gamma$ is referred to as the planted subgraph. We consider the following detection problem. Under the null hypothesis, the observed graph $\s{G}$ is an instance from the Erd\H{o}s-R\'{e}nyi random distribution $\calG(n,q_n)$, with edge density $q_n$. Under the alternative hypothesis, we first draw a uniform random copy of $\Gamma_n$ in the complete graph, and then include each edge of $\Gamma_n$ in $\s{G}$ with probability $p_n$, while other edges are drawn with probability $q_n$; this is referred to as the \emph{union ensemble} (as planting can be thought of as taking the union of $\calG(n,q_n)$ with (some) edges of $\Gamma_n$).  
The last few decades have observed a wide range of research on specific planted subgraphs. Perhaps the most well-known canonical example is the planted clique problem \cite{jerrum1992large}, which involves distinguishing between $\calG(n,1/2)$, and randomly selected $k$-clique embedded within $\calG(n,1/2)$. Several other planted subgraph models have been explored in this research area, including the planted dense subgraph problem \cite{arias2014community,arias2015detecting,hajek2015computational}, where an instance of $\calG(n,p_n)$is embedded in $\calG(n,q_n)$, the planted tree model \cite{massoulie19a}, the planted Hamiltonian cycle problem \cite{Bagaria20}, the planted matching problem \cite{10.1214/20-AAP1660}, and the planted bipartite problem \cite{HuleihelBip}, just to name a few.

Although, at a high level, the techniques used to analyze different planted subgraph models share common principles, their detailed mathematical treatment is often intricate and highly problem-specific. In fact, not only do the analytical bounds typically vary from one problem to another, but different planted subgraphs also exhibit fundamentally distinct statistical and computational behaviors. For instance, many planted structures, such as the planted clique problem, exhibit rich and complex behavior, with distinct phases where detection is either statistically impossible, computationally hard, or computationally easy. In striking contrast, certain structures, such as paths or stars, do not exhibit a computationally hard phase \cite{massoulie19a}. Furthermore, in some cases, such as trees planted in sparse graphs with $q=\Theta(n^{-1})$ detection undergoes a sharp phase transition, where the boundaries at which algorithms succeed or fail shift abruptly as a function of $q$. This behavior contrasts with that of many other subgraphs, where the transition is gradual.

Attempts to establish a unified framework for detecting arbitrary planted subgraphs can be found in the literature. Such an attempt already appeared in \cite{addario2010combinatorial}, albeit for a slightly different model. However, for general structures, the lower and upper bounds provided therein are loose. More recently, \cite{Huleihel2022} proposed the general testing problem described above, along with a variant in which the planted structure appears as an induced subgraph. The latter was examined from both information-theoretic and computational perspectives in the dense regime, where $p=1$ and $q=\Theta(1)$. In \cite{pmlr-v247-yu24a}, it was shown that, in the dense regime, the optimal constant-degree polynomial is always obtained by counting stars. Finally, \cite{pmlr-v195-mossel23a} investigated the recovery variant, which involves \emph{estimating} the planted subgraph, establishing tight results for certain families of subgraphs, again primarily in the dense regime. Despite these efforts, the fundamental task of characterizing the statistical and computational limits for detecting general subgraphs in random graphs\textemdash{}to the best of our knowledge\textemdash{}remains largely open.

The quest for a fundamental theory for the general planted subgraph setting constitutes the main impetus behind this paper. Specifically, motivated by the unresolved challenges outlined above, we pose the following questions:

\vspace{0.2cm}
\centerline{\noindent\fbox{\parbox{0.9\textwidth}{
\begin{nscenter}
     \emph{What properties of $\Gamma$ govern the statistical and computational limits of detection?}

      \emph{For which planted structures does a statistical-to-computational gap exist?}
          
     \emph{Under what conditions do sharp phase transitions take place?}
\end{nscenter}
}}}
\vspace{0.2cm}

\paragraph{Main contributions (informal).} In this paper, we address the questions above in various settings, as detailed below. We begin our investigation with perhaps the most canonical setting of this problem: the dense regime, where the edge probabilities $p_n$ and $q_n$ are fixed and independent of $n$. For this setting, we tightly characterize the information-theoretic thresholds for detecting an arbitrary planted subgraph $\Gamma$. Specifically, let $\mu(\Gamma_n)$, $|e(\Gamma_n)|$, and $d_{\max}(\Gamma_n)$ denote the maximum subgraph density (see, Definition~\ref{def:maxsubden}), the number of edges, and the maximum degree in $\Gamma_n$, respectively. Then, roughly speaking, we show that the statistical barrier is given by

\vspace{-0.4cm}
\begin{center}
    \fbox{
        \parbox[t][0.9cm][c]{0.51\textwidth}{ 
            \[
            \chi^2(p_n||q_n) \approx \frac{\log n}{\mu(\Gamma_n)} \wedge \frac{n^2}{|e(\Gamma_n)|^2} \wedge \frac{n\log n}{d_{\max}^2\left(\Gamma_n\right)}
            \]
        }
    }
\end{center}
%

\noindent where $\chi^2(p_n||q_n)$ is the Chi-square distance between $p_n$ and $q_n$. To wit, if $\chi^2(p_n||q_n)$ is ``larger" than the term on the right-hand side (r.h.s.), then detection is possible; otherwise, detection is information-theoretically impossible. The algorithms that achieve this barrier rely on brute-force search for the subgraph that achieves the maximum subgraph density, counting the total number of edges, and evaluating the maximum degree in the observed graph. The more interesting and challenging part is the proof of the lower bound. Specifically, it is rather folklore that to rule out the possibility of detection, it suffices to upper bound the second moment of the likelihood, and that this second moment is given by the moment-generating function of the intersection between two random copies of $\Gamma$ in the complete graph. That is, the optimal risk $\s{R}_n^\star$ satisfies,
\begin{align}
    \s{R}_n^\star\geq1-\frac{1}{2}\sqrt{\bE\pp{(1+\chi^2(p_n||q_n))^{|e(\Gamma\cap\Gamma')|}}-1}.
\end{align}
This is, more or less, the starting point for the analysis of arguably any planted structure considered in the literature. The challenge here is that the distribution of $|e(\Gamma\cap\Gamma')|$ is intricate and strongly depends on $\Gamma$. For instance, when $\Gamma$ is a $k$-clique, we have $|e(\Gamma\cap\Gamma')|\stackrel{\s{d}}{=}\binom{H}{2}$, where $H\sim\s{Hypergeometric}(n,k,k)$. On the other hand, if $\Gamma$ is a $k$-path, the distribution of $|e(\Gamma\cap\Gamma')|$ is significantly more complex, implicit, and dominated by a certain Markov chain \cite{massoulie19a}. Consequently, even for these two relatively simple structures, the analytical techniques required are quite different. Attempts such as those in \cite{addario2010combinatorial} to bound the above using standard probabilistic arguments either result in loose bounds or impose restrictions on the family of subgraphs. Furthermore, it is well understood that the subgraph detection problem is closely related to the study of sharp thresholds for the appearance of specific subgraphs in $\calG(n,q_n)$, as explored in \cite{erdos_renyi_60}, as well as upper tail bounds for subgraph counts in random graphs—a subject that has received significant attention over the past four decades, leading to strong results (e.g., \cite{Alon1981OnTN,Janson,Friedgut1998OnTN,Jaikumar,Wojciech} and references therein). While these results provide useful bounds on the optimal risk, we demonstrate that even the best existing bounds fall short in capturing the statistical barrier described above. 

We also investigate the computational aspects of our problem and tightly characterize the computational barrier using the framework of low-degree polynomials. Specifically, as expected, for some choices of $\Gamma$, there exists a gap between the statistical limits we derive and the performance of the efficient algorithms we construct (i.e., the count and maximum-degree tests). We conjecture that this gap is, in fact, inherent\textemdash{}namely, below the computational barrier, no polynomial-time algorithms exist. To provide evidence for this conjecture, we follow a recent line of work (e.g., \cite{hopkins2017bayesian,Hopkins18,Kunisky19,Cherapanamjeri20,gamarnik2020lowdegree}) and demonstrate that the class of low-degree polynomials fails to solve the detection problem in this conjectured hard regime. Roughly speaking, we show that such a gap may exist only if $\Gamma$ has \emph{super-logarithmic density}, i.e., $\mu(\Gamma) = \Omega(\log|v(\Gamma)|)$. Moreover, in such a case, all $O(\log n)$-degree polynomials fail, provided that,

\centerline{\noindent\fbox{\parbox[t][0.6cm][c]{0.35\textwidth}{
\begin{align*}
        |e(\Gamma_n)|\vee d_{\max}^2(\Gamma_n)\ll n
\end{align*}}}}
\vspace{0.2cm} 

\noindent This is why, for example, there is a hard phase in the planted clique problem but not in the planted path problem. In the former, $\mu(\Gamma_n) = \frac{|v(\Gamma_n)|-1}{2}=\Omega(\log|v(\Gamma_n)|)$, whereas in the latter $\mu(\Gamma_n) = \frac{|v(\Gamma_n)|-1}{|v(\Gamma_n)|}=o(\log|v(\Gamma_n)|)$. The results above completely resolve the statistical and computational limits of the subgraph detection problem in the dense regime. While our main focus is on this regime, our bounds are, in fact, general and apply to any values of $p_n$ and $q_n$ as functions of $n$. Although these bounds are not tight in general, they become tight for all planted subgraphs studied in the literature thus far. For concreteness, we pay special attention to the sparse regime, where $\chi^2(p||q) = \Theta(n^{-\alpha})$ for $\alpha\in[0,2]$, and the critical regime, where $\chi^2(p||q) = \Theta(n^{\alpha})$ for $\alpha\in[0,2]$, and characterize the impossible, hard, and easy regimes as a function of the polynomial growth/decay of each of the relevant parameters (e.g., number of vertices, edge, etc.). Finally, in the critical regime, we show that detection undergoes a sharp phase transition as a function of $q$.

The rest of this paper is organized as follows. In Section~\ref{sec:problem}, we introduce the problem setup and provide some necessary preliminaries. Section~\ref{sec:mainresults} presents our main results for the various regimes, including the dense regime, the sparse regime, and the critical regime. Section~\ref{sec:upperbound} and Section~\ref{sec:statlowerbound} are devoted to the the derivation of our upper and lower bounds, respectively. Finally, in Section~\ref{sec:complowerbound} we establish our computational lower bounds.
\subsection{Related work}\label{subsec:relatedwork}
This work is part of an expanding field that explores planted combinatorial structures in random graphs and matrices from both statistical and computational perspectives. Below, we mention previous research on this topic. However, we would like to emphasize that this is by no means an exhaustive overview, and many other related studies exist in the literature.

\paragraph{Planted subgraphs and matrices.} 

The planted clique problem was first introduced in \cite{alon1998finding}, where a spectral algorithm was shown to successfully recover the planted clique when $k = \Omega(\sqrt{n})$. Since then, various approaches have been developed for solving this problem, including approximate message passing, semidefinite programming, nuclear norm minimization, and other combinatorial methods \cite{feige2000finding}; \cite{mcsherry2001spectral}; \cite{feige2010finding}; \cite{ames2011nuclear}; \cite{dekel2014finding}; \cite{deshpande2015finding}; \cite{chen2016statistical}. Notably, all these algorithms require $k = \Omega(\sqrt{n})$, which has led to the widely held planted clique conjecture, suggesting that no polynomial-time algorithm can recover the planted clique when $k = o(\sqrt{n})$. Beyond planted cliques, related problems have been explored. Several works \cite{coja2015independent}; \cite{gamarnik2014limits}; \cite{rahman2017local} analyze greedy and local algorithms for detecting independent sets in Erd\H{o}s-R\'{e}nyi-type random graph models when $q = \Theta(n^{-1})$. Additionally, \cite{feige2005finding} introduces a spectral method for recovering a planted independent set under similar conditions, while \cite{coja2003finding} establishes that polynomial-time recovery is possible when $q = \Theta(n^{-\alpha})$ for $\alpha \in (0,1)$, provided $q \gg \frac{n}{k^2}$.

The planted dense subgraph detection problem has been extensively studied in works such as \cite{arias2014community}; \cite{butucea2013detection}; \cite{verzelen2015community}; \cite{Hajek2015}, with more general formulations explored in \cite{chen2016statistical}; \cite{hajek2016information}; \cite{montanari2015finding}; \cite{candogan2018finding}. A key result by \cite{hajek2015computational} establishes a reduction from the planted clique problem for the regime $p = c q$ (with some constant $c > 1$) and $q = \Theta(n^{-\alpha})$, where $p$ and $q$ denote community and background edge densities, respectively. This result was further strengthened in \cite{brennan18a}, showing that it holds for all $p > q$ when $p - q = O(q)$. When $p = \omega(q)$, the problem transitions into the planted dense subgraph regime analyzed in \cite{bhaskara2010detecting}. Other variations of planted structure detection include the planted tree model \cite{massoulie19a}, the planted Hamiltonian cycle problem \cite{Bagaria20}, the planted matching problem \cite{10.1214/20-AAP1660}, the planted bipartite problem \cite{HuleihelBip}, and the detection of planted dense cycles \cite{ChengWein, pmlr-v195-mao23a}. These papers reference a broad range of additional related works, further expanding the study of planted structures in random graphs and matrices.

A particularly relevant topic in this context is community detection in the stochastic block model, which has been the subject of extensive research (see \cite{abbe2017community} for a survey). Recent results indicate that the two-community stochastic block model does not exhibit computational-statistical gaps for partial and exact recovery when the edge density scales as $\Theta(n^{-1})$ \cite{mossel2012stochastic}; \cite{mossel2013proof}; \cite{abbe2016exact}; \cite{abbe2017community}. Another well-studied problem is Gaussian biclustering, which has been analyzed in both detection and recovery settings. The detection problem is examined in \cite{butucea2013detection}; \cite{ma2015computational}; \cite{montanari2015limitation}, while recovery approaches are considered in \cite{shabalin2009finding}; \cite{kolar2011minimax}; \cite{balakrishnan2011statistical}; \cite{cai2015computational}; \cite{chen2016statistical}; \cite{hajek2016information}; \cite{brennan19}; \cite{10447320,10387722}. Additionally, a significant body of research has investigated the spectral properties of the spiked Wigner model \cite{peche2006largest}; \cite{feral2007largest}; \cite{capitaine2009largest}, with spectral algorithms and information-theoretic bounds developed in \cite{montanari2015limitation}; \cite{perry2016statistical}; \cite{perry2016optimality}; \cite{banks2018information}; \cite{hopkins2017power}.

\paragraph{General planting.} Several studies have examined the detection and recovery of general planted subgraphs and matrices in random graphs. In \cite{addario2010combinatorial}, the authors formulated a hypothesis testing problem where, given an observed realization of an $n$-dimensional Gaussian vector, one must determine whether the vector originates from a standard normal distribution or if a subset of components, belonging to a predefined class, has a nonzero mean. Using probabilistic techniques, they derived general upper and lower bounds for detection. While these bounds are loose in the general case, they are tight for certain special cases. The methods introduced in this work have since been widely adopted in subsequent research. More recently, \cite{Huleihel2022} introduced two models for planting general subgraphs in random graphs: the union model, which we study in this paper, and an alternative approach where the planted structure appears as an induced subgraph. The latter was analyzed in the dense regime ($p=1$ and $q=\Theta(1)$) from both an information-theoretic and a computational perspective. However, these results are not directly comparable to ours, as the union and induced subgraph models exhibit fundamentally different behaviors. The computational limits of the union planting model were further investigated in \cite{pmlr-v247-yu24a}, in the dense regime where $p=1$ and $q$ remains fixed. It was shown that in this setting, the optimal \emph{constant}-degree polynomial is always obtained by counting stars. In contrast, our work examines both statistical and computational limits while allowing for polynomials of degree $O(\log n)$. Finally, \cite{pmlr-v195-mossel23a} studied the recovery problem, which involves estimating the planted subgraph rather than merely detecting its presence. The authors established upper and lower bounds for recovery, which are tight for specific families of subgraphs, again focusing primarily on the dense regime.

\paragraph{Average-case complexity.} Over the past decade, significant progress has been made in developing a rigorous understanding of the fundamental limits of efficient algorithms. Recent works (e.g., \cite{berthet2013complexity,ma2015computational,cai2015computational,krauthgamer2015semidefinite,hajek2015computational,chen2016statistical,wang2016average,wang2016statistical,gao2017sparse,brennan18a,brennan19,wu2018statistical,brennan20a,hopkins2017bayesian,Hopkins18,Kunisky19,Cherapanamjeri20,gamarnik2020lowdegree,barak2016nearly,deshpande2015improved,meka2015sum,TengyuWig15,kothari2017sum,hopkins2016integrality,raghavendra2019highdimensional,hopkins2017power,mohanty2019lifting,Feldman17,feldman2018complexity,Diakonikolas17,DiakonikolasKong19,Lenka16,Lesieur_2015,Lesieur_2016,Krzakala10318,Ricci_Tersenghi_2019,bandeira2018notes,schramm2020computational,pmlr-v134-brennan21a,Wein2021Independent,Abhishek23}) have uncovered a striking phenomenon common to many high-dimensional problems with a planted structure: a fundamental gap exists between the amount of data required by computationally efficient algorithms and the data needed for statistically optimal procedures. Rigorous evidence supporting this computational-statistical gap has been established through various approaches, which can be broadly classified into the following categories:  
\begin{enumerate}[leftmargin=*]
    \item \textbf{Failure under certain computational models:} This approach demonstrates that even powerful classes of computationally efficient algorithms fail in the conjectured computationally hard regime. Examples include:  
    \begin{itemize}[leftmargin=*]
        \item \textbf{Low-degree polynomials} are a powerful tool for analyzing the computational complexity of high-dimensional inference problems. They provide a rigorous way to understand the gap between statistical and computational limits, e.g., \cite{hopkins2017bayesian,Hopkins18,Kunisky19,Cherapanamjeri20,gamarnik2020lowdegree}. 
        \item \textbf{Sum-of-squares hierarchy} is a powerful framework for designing and analyzing algorithms for combinatorial and optimization problems. It is based on semi-definite programming (SDP) relaxations and provides tight approximations to polynomial optimization problems by considering higher-degree polynomial constraints, e.g.. \cite{barak2016nearly,deshpande2015improved,meka2015sum,TengyuWig15,kothari2017sum,hopkins2016integrality,raghavendra2019highdimensional,hopkins2017power,mohanty2019lifting}.
        \item \textbf{Statistical query algorithms} are a class of computational models to study learning problems where the algorithm does not access individual data points but instead queries expectations of functions over the data distribution. This framework has become a fundamental tool for analyzing the computational complexity of high-dimensional inference problems, e.g., \cite{Feldman17,feldman2018complexity,Diakonikolas17,DiakonikolasKong19,pmlr-v134-brennan21a}.
        \item \textbf{Message-passing algorithms} are a class of iterative methods used in high-dimensional statistical inference, combinatorial optimization, etc. They are also studied in the context of average-case complexity because they often achieve optimal performance in structured random models but fail in regimes conjectured to be computationally hard, e.g., \cite{Lenka16,Lesieur_2015,Lesieur_2016,Krzakala10318,Ricci_Tersenghi_2019,bandeira2018notes}.  
    \end{itemize}  
    \item \textbf{Average-case reductions:} This method establishes hardness by reducing a problem to another problem conjectured to be computationally difficult, such as the planted clique problem, e.g., \cite{berthet2013complexity,ma2015computational,cai2015computational,chen2016statistical,hajek2015computational,wang2016average,wang2016statistical,gao2017sparse,brennan18a,brennan19,wu2018statistical,brennan20a}.  
\end{enumerate}

\subsection{Notation}\label{subsec:notation}

In this paper, we adopt the following notational conventions. We denote the size of any finite set $\calS$ by $|\calS|$. For $n\in\mathbb{N}$ we let $[n] = \{1,\ldots,n\}$, and $\binom{\calS}{n}\triangleq\{\calA\subseteq\calS:|\calA|=n\}$. For a subset $\calS\subseteq\mathbb{R}$, let $\Ind\pp{\calS}$ denote the indicator function of the set $\calS$. For $a,b\in\mathbb{R}$, we let $a\vee b\triangleq\max\{a,b\}$ and $a\wedge b\triangleq\min\{a,b\}$. We denote by $\s{Bern}(p)$ and $\s{Binomial}(n,p)$ the Bernoulli and binomial distributions with $n$ trials and success probability $p$, respectively. We denote by $\s{Hypergeometric}(n,k,m)$ the Hypergeometric distribution with parameters $(n,k,m)$. Given a finite or measurable set $\mathcal{X}$, let $\s{Unif}[\mathcal{X}]$ denote the uniform distribution on $\mathcal{X}$. For two random variables $\s{X}$ and $\s{Y}$, we write $\s{X}\indep\s{Y}$ if $\s{X}$ and $\s{Y}$ are statistically independent. For probability measures $\mathbb{P}$ and $\mathbb{Q}$, let $d_{\s{TV}}(\mathbb{P},\mathbb{Q})=\frac{1}{2}\int |\mathrm{d}\mathbb{P}-\mathrm{d}\mathbb{Q}|$, $\chi^2(\mathbb{P}||\mathbb{Q}) = \int\frac{(\mathrm{d}\mathbb{P}-\mathrm{d}\mathbb{Q})^2}{\mathrm{d}\mathbb{Q}}$, and $d_{\s{KL}}(\mathbb{P}||\mathbb{Q}) = \bE_{\mathbb{P}}\log\frac{\mathrm{d}\mathbb{P}}{\mathrm{d}\mathbb{Q}}$, denote the total variation distance, the $\chi^2$-divergence, and the Kullback-Leibler (KL) divergence, respectively. In particular, we use $\chi^2(p||q)$ to denote the $\chi^2$-distance between two Bernoulli distributions with parameters $p$ and $q$; it is straightforward to check that $\chi^2(p||q) = \frac{(p-q)^2}{q(1-q)}$. We will frequently use standard asymptotic notations $O,o,\Omega,\omega,\Theta$. 
The notation $\ll$ refers to polynomially less than in $n$, namely, $a_n\ll b_n$  if $\liminf_{n\to\infty}\log_n a_n<\liminf_{n\to\infty}\log_n b_n$, e.g., $n\ll n^2$, but $n\not\ll n\log_2 n$. Throughout the paper, $\s{C}$ refers to any constant independent of the parameters of the problem at hand and will be reused for different constants. 

A graph $\s{G}$ is a pair $(v(\s{G}),e(\s{G}))$, where $v(\s{G})$ is the vertex set and $e(\s{G}) \subseteq\binom{v(\s{G})}{2}$ is the edge set; $|v(\s{G})|$ and $|e(\s{G})|=|\s{G}|$ denote the sizes of thereof.  We say $\s{H}\subseteq\s{G}$ is a subgraph of $\s{G}$ if $v(\s{H})\subseteq v(\s{G})$ and $e(\s{H})\subseteq e(\s{G})$. We treat subgraphs $\s{H}$ containing no isolated vertices as sets of edges, and vice versa; thus, we sometimes use $|\s{H}|$ to denote $|e(\s{H})|$. In general, we denote subsets of edges with capital letters $H\subseteq E$, and the corresponding induced graph by $\s{H}$. The complete graph on $n$ vertices is denoted by $\calK_n$. For vertices, we also use capital letters $U\subseteq V$, and the induced graph as $G_U$.
An isomorphism of graphs $\s{G}_1$ and $\s{G}_2$ is a bijection $f:v(\s{G}_1)\to v(\s{G}_2)$ such that any two vertices $u,v\in\s{G}_1$ are adjacent in $\s{G}_1$ if and only if $f(u)$ and $f(v)$ are adjacent in $\s{G}_2$. We use $\s{G}_1\cong\s{G}_2$ to denote that $\s{G}_1$ and $\s{G}_2$ are isomorphic. An automorphism of $\s{G}$ is a graph isomorphism from $\s{G}$ to itself. The set of automorphisms of a graph $\s{G}$, i.e., the automorphism group, is denoted by $\s{Aut}(\s{G})$. If $|v(\s{G})|\leq n$, we let $\calS_{\s{G}}$ be the set of isomorphic copies of $\s{G}$ in $\calK_n$.
 A simple combinatorial counting argument reveals that $|\calS_{\s{G}}| = \binom{n}{|v(\s{G})|}  \frac{|v(\s{G})|!}{\vert\mathsf{Aut}{\left(\s{G}\right)}\vert}$ (see, e.g., \cite[Lemma 5.1]{FriezeKaronski2015}).


\section{Problem Setup and Preliminaries}\label{sec:problem}

In this section we describe the setting we plan to study alongside several important preliminaries. Let $\Gamma=(\Gamma_n)_{n\in\N}$ be a sequence of graphs such that for all $ n\in\mathbb{N}$, $\Gamma_n=(v(\Gamma_n),e(\Gamma_n))$ is an \emph{arbitrary} undirected graph without isolated vertices, such that $|v(\Gamma_n)|\leq n$. 
We let $\calS_{\Gamma_n}$ be the set of isomorphic copies of $\Gamma_n$ in $\calK_n$.
We shall refer to $\Gamma_n$ as the \emph{planted/hidden} structure. 

Our detection problem can be phrased as the following simple hypothesis testing problem. Under the null hypothesis $\calH_0$, the observed graph $\s{G}$ is an instance from the Erd\H{o}s-R\'{e}nyi random distribution $\calG(n,q_n)$, with edge density $0<q_n<1$. Under the alternative hypothesis $\calH_1$, the observed graph $\s{G}$ is the union of an Erd\H{o}s-R\'{e}nyi random graph with a uniform random copy of $\Gamma_n$, whose edges are kept with probability $p_n$ such that $q_n<p_n\leq1$. To wit, under $\calH_1$ the graph $\s{G}$ on $n$ vertices is constructed as follows: we first draw $\Gamma_n\sim\s{Unif}(\calS_{\Gamma_n})$. Then, each edge of $\Gamma_n$ is included in $\s{G}$ with probability $p_n$, while the leftover edges in $\s{G}$ are included with probability $q_n$. In short, we have the following hypothesis testing problem:
\begin{align}
\calH_0: \s{G} \sim \calG(n,q_n) \quad \s{vs.} \quad \calH_1 : \s{G} \sim \calG_{\Gamma_n}(n,p_n,q_n),\label{eqn:super_hypo}   
\end{align}
where $\calG_{\Gamma_n}(n,p_n,q_n)$ denotes the ensemble of planted graphs, as defined above. We study the above framework in the asymptotic regime where $n\to \infty$. Observing $\s{G}$, the goal is to design a test/algorithm $\phi(\s{G}) \in \{0, 1\}$ that distinguishes between $\calH_0$ and $\calH_1$. Specifically, the average $\mathsf{Type}$ $\mathsf{I}$+$\mathsf{II}$ risk of a
test $\phi$ is defined as
$\s{R}_n (\phi) = \pr_{\calH_0}(\phi(\s{G}) = 1)+ \pr_{\calH_1}(\phi(\s{G}) = 0)$, and the optimal risk is defined as $\s{R}_n^\star\triangleq\inf_{\phi_n}\s{R}_n (\phi_n)$. We consider the following types of detection guarantees.
\begin{definition}[Strong and weak detection]\label{def:detection}
Let $\pr_{\calH_0}$ and $\pr_{\calH_1}$ be the distributions of $\s{G}$ under the null and alternative hypotheses, respectively. A possibly randomized sequence of tests $\phi_n(\s{G}) \in \{0, 1\}$ achieves strong detection if $\limsup_{n\to\infty}\s{R}_n (\phi_n)=0$, and weak detection if $\limsup_{n\to\infty}\s{R}_n (\phi_n)<1$. Conversely, we say that strong detection is impossible if $\liminf_{n\to\infty}\s{R}_n^\star>0$, and weak detection is impossible if $\lim_{n\to\infty}\s{R}_n^\star=1$.
\end{definition}
Our results will be expressed in terms of the following graph theoretic measures. We let $d_{\max}\left(\Gamma_n\right)$ denote the maximum degree in $\Gamma_n$, and we define the density of $\Gamma_n$ as $\eta(\Gamma_n)\triangleq |e(\Gamma_n)|/|v(\Gamma_n)|$.  Finally, we recall the definition of the \emph{maximum subgraph density}.
\begin{definition}[Maximum subgraph density \cite{bollobas_2001}]\label{def:maxsubden}
Let $\s{G}$ be an undirected graph. The maximum subgraph density of $\s{G}$ is
\begin{align}
\sd(\s{G})\triangleq\max\ppp{\eta(\s{H}):\s{H}\subseteq\s{G},\s{H}\neq\emptyset}.\label{eqn:maxDensity}
\end{align}
\end{definition}
Throughout this paper, we sometime relegate the notational dependency of various parameters on the index sequence $n$ implicitly, e.g., we denote the sequence of planted graphs as $\Gamma = (\Gamma_n)_n$, the pair of sequences of edges densities by $p=(p_n)_n$ and $q=(q_n)_n$, etc. Finally, $\chi^2(p||q) = \frac{(p-q)^2}{q(1-q)}$ denotes the $\chi^2$ divergence  between two Bernoulli distributions with parameters $p$ and $q$.

\section{Main Results}\label{sec:mainresults}
 In this section, we will introduce the main results of our paper. As mentioned, we will focus on three main regimes: the dense regime, the sparse regime and the critical regime. It should be emphasized that the statistical lower and upper bounds (presented in Section~\ref{sec:statlowerbound} and Section~\ref{sec:upperbound}) and the computational lower bounds (presented in Section~\ref{sec:complowerbound}) are  general, and  extends beyond this three main regimes for any sequence of graphs $\Gamma$, and for any values of $p$ and $q$. 
 
\subsection{Dense regime}

\paragraph{Statistical limits.} We start with the following result which establishes the statistical limits in the dense regime.

\begin{theorem}[Dense regime]\label{th:StatLimitDense}
Fix a sequence of subgraphs $\Gamma=(\Gamma_n)_n$ and consider the detection problem in \eqref{eqn:super_hypo}. Assume that $\chi^2(p||q)=\Theta(1)$. 
\begin{enumerate}
    \item \underline{Lower bounds}: If $\mu(\Gamma_n)\geq \alpha_n\cdot \log|v(\Gamma_n)|$, for some $\alpha_n=\Omega(1)$, then there exists a constant $\underline{C}>0$ such that weak detection is impossible if, 
    \begin{align}
        \mu(\Gamma_n)\leq \underline{C}\cdot \log n.\label{eq:condDenseStat}
    \end{align}
    If $\mu(\Gamma_n)=o(\log|v(\Gamma_n)|)$, then for every $\varepsilon>0$, weak detection is impossible if,
    \begin{align}
        |e(\Gamma_n)|\vee d^2_{\max}(\Gamma_n)\leq n^{1-\varepsilon}.
    \end{align}
    \item \underline{Upper bounds}: There exists $\overline{C}>0$ such that strong detection is possible for every $\varepsilon>0$ if,
    \begin{align}
        \mu(\Gamma_n)\geq \overline{C} \cdot\log n\quad\s{or}\quad|e(\Gamma_n)|\geq n^{1+\varepsilon}\quad\s{or}\quad d_{\max}^2(\Gamma_n)\geq n^{1+\varepsilon}.\label{eq:detectiont}
    \end{align}
\end{enumerate} 
\end{theorem}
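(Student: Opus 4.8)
The plan is to establish the statistical limits in the dense regime by treating the lower and upper bounds separately, with most of the work concentrated in the lower bound. Throughout, I would use the standard reduction (quoted in the introduction) that
\[
\s{R}_n^\star \geq 1 - \tfrac{1}{2}\sqrt{\bE\pp{(1+\chi^2(p\|q))^{|e(\Gamma\cap\Gamma')|}} - 1},
\]
where $\Gamma,\Gamma'$ are two independent uniform copies of $\Gamma_n$ in $\calK_n$. Since $\chi^2(p\|q)=\Theta(1)$ here, write $\lambda \triangleq \log(1+\chi^2(p\|q)) = \Theta(1)$, so the whole problem reduces to controlling the moment generating function $\bE[e^{\lambda\, |e(\Gamma\cap\Gamma')|}]$ and showing it is $1+o(1)$ under the stated conditions; then weak (indeed strong) detection is impossible.

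For the first lower bound (super-logarithmic density, condition $\mu(\Gamma_n)\leq\underline C\log n$), the key combinatorial fact I would prove is a deterministic/large-deviation bound on how much two random copies can overlap: for any subgraph $\s{F}$ with $|e(\s{F})| = m$, the probability that $e(\Gamma\cap\Gamma')\supseteq e(\s{F})$ (equivalently that a fixed copy of $\s{F}$ lies inside a random copy of $\Gamma_n$) is at most something like $n^{-m/\mu(\Gamma_n)}\cdot(\text{poly})$ — this is exactly where maximum subgraph density enters, because embedding a dense $\s{F}$ into $\calK_n$ and requiring it to sit inside the random copy costs a factor governed by the densest part of $\s{F}$. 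I would combine this with a union bound over the (at most $\binom{|e(\Gamma_n)|}{m}\le |e(\Gamma_n)|^m$, and $|e(\Gamma_n)|\le \mu(\Gamma_n)\cdot n \le n^{O(1)}$ in this regime) choices of $\s{F}$, getting $\bP(|e(\Gamma\cap\Gamma')| = m) \leq n^{C m}\cdot n^{-m/(\text{something like }\mu)}$; choosing $\underline C$ small enough makes the exponent negative for all $m\ge 1$, so summing the geometric-type series gives $\bE[e^{\lambda |e(\Gamma\cap\Gamma')|}] = 1+o(1)$. The hypothesis $\mu(\Gamma_n)\ge\alpha_n\log|v(\Gamma_n)|$ is what guarantees $|e(\Gamma_n)|$ is at most polynomial in $n$ (since $|e(\Gamma_n)|\le \binom{|v(\Gamma_n)|}{2}$ and one controls $|v(\Gamma_n)|$ via the density), keeping the union bound affordable.

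For the second lower bound ($\mu(\Gamma_n) = o(\log|v(\Gamma_n)|)$, condition $|e(\Gamma_n)|\vee d_{\max}^2(\Gamma_n)\le n^{1-\varepsilon}$), the mgf approach via a crude union bound no longer suffices because $|e(\Gamma_n)|$ can be super-polynomial, so here I would instead bound the overlap more carefully using the \emph{low-density} structure. The idea is that when $\Gamma_n$ has small maximum subgraph density, the intersection $\Gamma\cap\Gamma'$ decomposes (up to lower-order terms) into a controlled number of small components, and one can bound $\bP(|e(\Gamma\cap\Gamma')|\ge 1)$ directly: the expected overlap is roughly $|e(\Gamma_n)|^2/n^2$ (each edge of one copy coincides with each edge of the other with probability $\Theta(n^{-2})$, after accounting for vertex-degree constraints which bring in $d_{\max}$), and more generally $\bE[\binom{|e(\Gamma\cap\Gamma')|}{k}]$-type quantities are controlled by products of $|e(\Gamma_n)|^2/n^2$ and $d_{\max}^2/n$ factors. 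Under $|e(\Gamma_n)|\vee d_{\max}^2(\Gamma_n)\le n^{1-\varepsilon}$ these are all $o(1)$, so with high probability $|e(\Gamma\cap\Gamma')| = 0$ and the mgf is $1+o(1)$; I expect this is where one invokes (or re-derives) moment estimates in the spirit of subgraph-count concentration. I anticipate this decomposition-and-moment bookkeeping — making precise how $\mu$, $|e(\Gamma_n)|$, and $d_{\max}$ jointly govern the tail of $|e(\Gamma\cap\Gamma')|$ — to be the main obstacle, since it is exactly the point at which ad hoc arguments in prior work either lose tightness or restrict the graph family.

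The upper bounds are comparatively routine and I would dispatch them by exhibiting three simple tests and analyzing their first and second moments under $\calH_0$ and $\calH_1$. \emph{(i)} If $\mu(\Gamma_n)\ge\overline C\log n$, use a brute-force scan: test whether $\s{G}$ contains \emph{some} copy of the densest subgraph $\s{H}^\star\subseteq\Gamma_n$ achieving $\eta(\s{H}^\star)=\mu(\Gamma_n)$; under $\calH_1$ such a copy is present with probability $\ge p_n^{|e(\s{H}^\star)|} = \Omega(1)$-per-copy boosted over the planted region, while under $\calH_0$ a first-moment (union) bound over all $\le n^{|v(\s{H}^\star)|}$ potential copies gives expected count $\le n^{|v(\s{H}^\star)|} q_n^{|e(\s{H}^\star)|} = n^{|v(\s{H}^\star)|(1-\mu(\Gamma_n)\log(1/q_n))}\to 0$ once $\overline C$ is large. \emph{(ii)} If $|e(\Gamma_n)|\ge n^{1+\varepsilon}$, use the total edge count $|e(\s{G})|$: its mean shifts by $(p_n-q_n)|e(\Gamma_n)| = \Theta(|e(\Gamma_n)|)$ between the hypotheses while its standard deviation under either is $O(n)$, and $|e(\Gamma_n)|/n \ge n^{\varepsilon}\to\infty$ gives strong detection by Chebyshev. \emph{(iii)} If $d_{\max}^2(\Gamma_n)\ge n^{1+\varepsilon}$, use the maximum degree of $\s{G}$: under $\calH_1$ some vertex has degree $\ge q_n(n-1) + (p_n-q_n)d_{\max}(\Gamma_n)$ in expectation with Gaussian-type fluctuations $O(\sqrt n)$, whereas under $\calH_0$ the maximum degree is $q_n(n-1)+O(\sqrt{n\log n})$ by a union bound over $n$ vertices; since $d_{\max}(\Gamma_n)\ge n^{(1+\varepsilon)/2}\gg\sqrt{n\log n}$, thresholding the max degree separates the hypotheses. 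Taking the OR of these three tests, and the corresponding $\overline C$, completes the upper bound.
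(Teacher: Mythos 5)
Your upper bounds are essentially the paper's tests (scan over the densest subgraph, edge count, maximum degree), and apart from one slip they are fine; but both lower bounds, which carry the theorem, have genuine gaps. For the super-logarithmic case, the union bound you propose does not close. You bound $\P(|e(\Gamma\cap\Gamma')|=m)$ by (number of $m$-edge subgraphs of $\Gamma'$) $\times$ (worst-case probability $n^{-m/\mu}$), i.e.\ by $|e(\Gamma_n)|^m\cdot n^{-m/\mu(\Gamma_n)}$ up to poly factors. At the boundary of the claim, $\mu(\Gamma_n)\asymp\log n$ with $\alpha_n=\Omega(1)$ allows $|v(\Gamma_n)|$, hence $|e(\Gamma_n)|$, to be a genuine power of $n$ (e.g.\ $\Gamma_n$ a disjoint union of $n^{0.1}/\log n$ cliques of size $\Theta(\log n)$). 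For the dominant overlap event ($\s{F}$ one of the small cliques, $m=\Theta(\log^2 n)$ edges), the true number of such subgraphs is only $\approx n^{0.1}$, but your entropy term $|e(\Gamma_n)|^m=n^{\Theta(m)}=n^{\Theta(\log^2 n)}$ dwarfs the probability gain $n^{-m/\mu}=n^{-\Theta(\log n)}$, so the bound explodes even though the statement is true. The point is that the count of $m$-edge subgraphs and the probability that a given one lies in a random copy cannot be maximized separately: almost all $m$-edge subsets are spread over many vertices/components and have correspondingly tiny probability. This is exactly what the paper's machinery supplies — stratifying subgraphs by number of vertices, edges and connected components, bounding $\P[\s{H}\subseteq\Gamma]$ via the vertex cover number and maximum degree (Lemma~\ref{lem:probRandomSubgraph1}), and counting $|\calS_{m,\ell,j}|$ via Bollob\'as's connected-subgraph bound and integer partitions (Lemma~\ref{lem:subgraphsBoundedDegreeCount1}) — and your sketch has no substitute for it.

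For the sub-logarithmic case the same missing mechanism reappears. First, "with high probability $|e(\Gamma\cap\Gamma')|=0$" does not bound $\E[(1+\lambda^2)^{|e(\Gamma\cap\Gamma')|}]$: the second moment is typically dominated by rare large-overlap events (planted clique being the standard example), so you must control all the factorial moments $\E\binom{|e(\Gamma\cap\Gamma')|}{k}$, which via the identity $\E[(1+\lambda^2)^{X}]=\sum_k\lambda^{2k}\E\binom{X}{k}$ is precisely the paper's sum $\sum_{\s{H}\subseteq\Gamma'}\lambda^{2|\s{H}|}\P_\Gamma[\s{H}\subseteq\Gamma]$. Your claim that these moments are "controlled by products of $|e(\Gamma_n)|^2/n^2$ and $d_{\max}^2/n$ factors" is the theorem's actual content, not a step you can assert: overlaps are strongly positively correlated through high-degree vertices and dense pieces, and the natural bound one gets (as in Theorem~\ref{th:lowerVCD}) involves $\vc(\Gamma)\,d_{\max}(\Gamma)$, which can exceed $|e(\Gamma)|$ by a polynomial factor for unbalanced graphs (Example~\ref{ex:StartUnbalanced}); closing that gap requires the additional reduction — an edge-disjoint decomposition of $\Gamma$ into nearly $\vcd$-balanced pieces combined with H\"older's inequality on the overlap mgf (Propositions~\ref{prop:decompisitionThD} and~\ref{prop:GoodDecompositionExists}) — of which your proposal has no analogue. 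Finally, a smaller but real slip in the upper bound: your scan test asks for a \emph{complete} copy of the densest subgraph, whose probability under $\calH_1$ is $p_n^{|e(\s{H}^\star)|}\to0$ when $p_n<1$ is a constant; one must instead threshold the number of edges inside the best candidate copy at $\kappa|e(\Gamma_{\max})|$ for some $q<\kappa<p$, as in the paper's scan test.
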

A few important remarks are in order. The constants $\overline{C},\underline{C}$ depend on $(p_n,q_n,\alpha_n)$; to keep the exposition simple we have made this dependency implicit, but explicit formulae can be found in the appendices. In many cases these constants are sharp. The algorithms achieving the upper bounds in \eqref{eq:detectiont}, from left to right, rely on brute-force search for the planted structure (i.e., scan test), counting the total number of edges, and evaluating the maximum degree in the observed graph; for more details, we refer the reader to Section~\ref{sec:upperbound}. The latter two tests exhibit polynomial-time computational complexity and are therefore efficient. The scan test, however, is inefficient, requiring exponential time. Accordingly, Theorem~\ref{th:StatLimitDense} shows that for graphs $\Gamma$ with sub-logarithmic density, namely, $\mu(\Gamma_n)=o(\log|v(\Gamma_n)|)$, the detection problem is either statistically impossible or solvable in polynomial-time, and no hard phase occurs. This is not the case for graphs $\Gamma$ with super-logarithmic density, namely, $\mu(\Gamma_n)=\Omega(\log|v(\Gamma_n)|)$. Indeed, we conjecture that in the region where the scan test succeeds, but the count and degree tests fail, no polynomial-time algorithm exist. 

\paragraph{Computational limits.} As an evidence for the above claim, we use the framework of low-degree polynomials (see, e.g., \cite{Hopkins18,Dmitriy19}). We continue with a brief background on the low-degree polynomial (LDP) method, and refer the reader to Section~\ref{sec:complowerbound}, for a more detailed exposition. The LDP framework hings on the hypothesis that all polynomial-time algorithms for solving detection problems are captured/represented by low-degree polynomials. To date, there is increasing and compelling evidence supporting this conjecture. The concepts described below were developed through a fundamental sequence of works in the sum-of-squares optimization literature \cite{barak2016nearly,Hopkins18,hopkins2017bayesian,hopkins2017power}. 

We begin by outlining the fundamentals of of the LDP framework, adhering to the notations and definitions established in \cite{Hopkins18,Dmitriy19}. Recall that any distribution $\pr_{\calH_0}$
defines an inner product of measurable functions $f,g:\Omega_n\to\mathbb{R}$ given by $\left\langle f,g \right\rangle_{\calH_0} = \bE_{\calH_0}[f(\s{G})g(\s{G})]$, along with a norm defined as $\norm{f}_{\calH_0} = \left\langle f,f \right\rangle_{\calH_0}^{1/2}$. Also, recall that the space $L^2(\calH_0)$ represents the Hilbert space of function $f$ with  $\norm{f}_{\calH_0}<\infty$, equipped with the above inner product. The core idea of the LDP method is to identify the ``low-degree" polynomial that distinguishes $\pr_{\calH_0}$ from $\pr_{\calH_1}$ best in the $L^2$ sense. To define this mathematically, let $V_{n, \leq\s{D}}\subset L^2(\calH_0)$ denote the subspace of polynomials of degree at most $\s{D}\in\mathbb{N}$. Then, the \emph{$ \s{D}$-low-degree likelihood-ratio} $\s{L}_{n,\leq  \s{D}}$ is defined as the projection of the likelihood $\s{L}_{n}$ onto $V_{n, \leq\s{D}}$, w.r.t. $\left\langle \cdot,\cdot \right\rangle_{\calH_0}$. Now, recall that the likelihood-ratio is the optimal test to distinguish $\pr_{\calH_0}$ from $\pr_{\calH_1}$, in the $L^2$ sense. As it turns out, the $\s{D}$-low-degree likelihood-ratio shares a similar property \cite{hopkins2017bayesian,hopkins2017power,Dmitriy19}.
\begin{lemma}[Optimally of $\s{L}_{n, \leq\s{D}}$ {\cite{hopkins2017bayesian,hopkins2017power,Dmitriy19}}]\label{lem:Dmitriymain}
Consider the following optimization problem:
\begin{equation}
\begin{aligned}
\mathrm{max}
\;\bE_{\calH_1}f(\s{G})
\quad\mathrm{s.t.}
\quad\bE_{\calH_0}f^2(\s{G}) = 1,\; f\in V_{n, \leq\s{D}},
\end{aligned}\label{eqn:optimizationProblemmain}
\end{equation}
Then, the unique solution $f^\star$ for \eqref{eqn:optimizationProblemmain} is the $\s{D}$-low degree likelihood-ratio $f^\star = \s{L}_{n, \leq\s{D}}/\norm{\s{L}_{n, \leq\s{D}}}_{\calH_0}$, and the value of the optimization problem is $\norm{\s{L}_{n, \leq\s{D}}}_{\calH_0}$. 
\end{lemma}
A key characteristic of the likelihood-ratio is that when $\norm{\s{L}_n}_{\calH_0}=O(1)$, then $\pr_{\calH_0}$ and $\pr_{\calH_1}$ are statistically indistinguishable (or, strong detection is impossible). The conjecture below extends this principle to the computational realm. In a nutshell, it suggests that polynomials of degree $\approx\log n$ can effectively represent/cover all polynomial-time algorithms. The statement below is inspired by  \cite{Hopkins18,hopkins2017bayesian,hopkins2017power}, and \cite[Conj. 2.2.4]{Hopkins18}. Here, we present the informal version which appears in \cite[Conj. 1.16]{Dmitriy19}, while more formal statements can be found in, e.g., \cite[Conj. 2.2.4]{Hopkins18} and \cite[Sec. 4]{Dmitriy19}.
\begin{conjecture}[Low-degree conj., informal]\label{conj:1main}
Given a sequence of probability measures $\pr_{\calH_0}$ and $\pr_{\calH_1}$, if there exists $\epsilon>0$ and $\s{D}\geq (\log n)^{1+\epsilon}$, such that $\norm{\s{L}_{n, \leq\s{D}}}_{\calH_0}$ remains bounded as $n\to\infty$, then there is no polynomial-time algorithm that distinguishes $\pr_{\calH_0}$ and $\pr_{\calH_1}$.
\end{conjecture}
We are in a position to state our main result.
\begin{theorem}[LDP lower bound]\label{th:CompLimitDense}
Fix a sequence of subgraphs $\Gamma=(\Gamma_n)_n$. If $\mu(\Gamma_n)=\Omega(\log|v(\Gamma_n)|)$, then under the low-degree polynomial conjecture (see, Conjecture~\ref{conj:1main} in Section~\ref{sec:complowerbound}), for every $\varepsilon>0$, strong detection in polynomial-time is impossible whenever,
\begin{align}
     |e(\Gamma_n)|\vee d^2_{\max}(\Gamma_n)\leq n^{1-\varepsilon}.\label{eq:CondSaprse}
\end{align}
\end{theorem}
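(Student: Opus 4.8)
\medskip

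The plan is to follow the now-standard low-degree polynomial framework: to show that strong detection in polynomial time is impossible under Conjecture~\ref{conj:1}, it suffices to show that the low-degree advantage is bounded, i.e.\ that for $D = D_n = O(\log n)$ (more precisely $D = \omega(1)$, say $D = \log^2 n$ or $D = \polylog(n)$), the quantity
\begin{align}
    \mathrm{Adv}_{\leq D} \;\triangleq\; \sup_{\deg f \leq D} \frac{\E_{\calH_1}[f(\s{G})]}{\sqrt{\E_{\calH_0}[f(\s{G})^2]}}
\end{align}
remains bounded (in fact $O(1)$, or $1 + o(1)$) as $n \to \infty$. By the standard characterization (see \cite{Hopkins18,Kunisky19}), for a planted model of this type $\mathrm{Adv}_{\leq D}^2 = \sum_{S : |S| \leq D} \widehat{L}(S)^2$ where $L = \frac{\dif \calH_1}{\dif \calH_0}$ is the likelihood ratio expanded in the Fourier--Walsh basis on the edge variables of $\calK_n$ (indexed by edge subsets $S \subseteq e(\calK_n)$). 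The first step is therefore to write down this Fourier expansion explicitly. Because the planted copy is a uniformly random element $\Gamma' \sim \s{Unif}(\calS_{\Gamma_n})$, and conditioned on $\Gamma'$ each of its edges is retained independently with probability $p_n$ and all other edges are i.i.d.\ $\s{Bern}(q_n)$, a direct computation (averaging first over the edge-retention coins, then over $\Gamma'$) gives
\begin{align}
    \widehat{L}(S) \;=\; \rho^{|S|} \cdot \Pr_{\Gamma'}\!\left[ S \subseteq e(\Gamma') \right],
\end{align}
where $\rho \triangleq \frac{(p_n - q_n)}{\sqrt{q_n(1-q_n)}}$, so that $\rho^2 = \chi^2(p_n \| q_n)$. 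Hence
\begin{align}
    \mathrm{Adv}_{\leq D}^2 \;=\; \sum_{k=0}^{D} \chi^2(p_n\|q_n)^k \sum_{\substack{S \subseteq e(\calK_n) \\ |S| = k}} \Pr_{\Gamma'}[S \subseteq e(\Gamma')]^2 \;=\; \sum_{k=0}^{D} \chi^2(p_n\|q_n)^k \cdot \E_{\Gamma',\Gamma''}\!\left[ \binom{|e(\Gamma' \cap \Gamma'')|}{k} \right],
\end{align}
where $\Gamma',\Gamma''$ are two i.i.d.\ uniform copies of $\Gamma_n$ in $\calK_n$ (the inner sum counts ordered pairs of $k$-edge sets lying in both copies). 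This reduces the whole problem to controlling the low-order moments of the intersection random variable $W \triangleq |e(\Gamma' \cap \Gamma'')|$ — exactly the same combinatorial object that drives the statistical second-moment bound, but now truncated at degree $D$ and with the truncation providing the extra slack that separates the computational from the statistical threshold.

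\medskip

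The heart of the argument is therefore a bound of the form $\E[\binom{W}{k}] \leq (\text{something})^k$ for all $k \leq D$. Here I would use the structural lemmas about random copies of $\Gamma_n$ that must already be established for the statistical lower bound (Section~\ref{sec:statlowerbound}): the key fact is that a $k$-edge subgraph $\s{H} \subseteq \Gamma_n$ that is common to $\Gamma'$ and $\Gamma''$ must, roughly speaking, be counted with a multiplicity controlled by $\binom{n}{|v(\s{H})|}$ copies, and $|v(\s{H})| \geq k/\mu(\Gamma_n)$ since every subgraph of $\Gamma_n$ has density at most $\mu(\Gamma_n)$. Combining the count of $k$-edge subgraphs of $\Gamma_n$ (at most $\binom{|e(\Gamma_n)|}{k}$) with the probability that a fixed such subgraph is embedded consistently in both random copies, one gets a bound of the schematic form
\begin{align}
    \E_{\Gamma',\Gamma''}\!\left[\binom{W}{k}\right] \;\lesssim\; \sum_{\s{H} \subseteq \Gamma_n,\, |e(\s{H})| = k} \left(\frac{|v(\s{H})|!}{n^{|v(\s{H})|}} \cdot \#\{\text{copies}\}\right)^{\!\?},
\end{align}
and the dominant contributions come from $\s{H}$ that are either a union of $\approx k$ disjoint single edges (giving a term $\sim (|e(\Gamma_n)|^2/n^2)^k$ via the second-moment-of-edge-counts heuristic, controlled when $|e(\Gamma_n)| = o(n)$), or a "star-like" configuration concentrated at one high-degree vertex (giving a term $\sim (d_{\max}^2(\Gamma_n)/n)^k$, controlled when $d_{\max}^2(\Gamma_n) = o(n)$), or a "dense" configuration whose vertex count is as small as $k/\mu(\Gamma_n)$ (giving a term $\sim (\log n)^{-k}$ when $\mu(\Gamma_n) = \Omega(\log|v(\Gamma_n)|)$ and $|v(\Gamma_n)| \geq n^{\Omega(1)}$, so that $n^{-|v(\s{H})|/k} \sim n^{-1/\mu} $ beats the combinatorial count). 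Under the hypothesis $|e(\Gamma_n)| \vee d_{\max}^2(\Gamma_n) \leq n^{1-\varepsilon}$, each of these per-edge "costs" is at most $n^{-\varepsilon}$ up to lower-order factors (and the dense term is at most $(\log n)^{-1}$), while $\chi^2(p_n\|q_n) = \Theta(1)$. Therefore each summand $\chi^2(p_n\|q_n)^k \E[\binom{W}{k}]$ with $k \geq 1$ is at most, say, $(n^{-\varepsilon/2})^k$, the geometric series over $k \geq 1$ is $o(1)$, the $k=0$ term is $1$, and $\mathrm{Adv}_{\leq D}^2 = 1 + o(1)$ — which, via the low-degree conjecture, rules out strong detection by polynomial-time algorithms.

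\medskip

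I expect the main obstacle to be the bookkeeping in the combinatorial moment bound for $\E[\binom{W}{k}]$: unlike the second-moment (statistical) computation, here one cannot simply use the crude bound $\E[(1+\chi^2)^W]$ because that is what fails to capture the computational threshold; the truncation at degree $D$ must be exploited, which means carefully classifying the possible "overlap shapes" $\s{H} \subseteq \Gamma_n$ by their vertex count $v = |v(\s{H})|$ relative to their edge count $k$, summing over $v$ from $\lceil k/\mu(\Gamma_n)\rceil$ up to $\min(k+1, |v(\Gamma_n)|)$, and showing that in each regime of $v/k$ the product of (number of shapes) $\times$ (embedding probability)$^2$ is dominated by one of the three extreme cases above. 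A secondary technical point is to make sure the argument is uniform over all $k \leq D$ with $D = \polylog(n)$ — in particular that the $\polylog(n)$ factors lost per edge (e.g.\ from $\binom{|e(\Gamma_n)|}{k}$ versus $|e(\Gamma_n)|^k$, or from automorphism-group sizes) are absorbed by the $n^{-\varepsilon}$ slack, which is exactly why the theorem is stated with the $n^{1-\varepsilon}$ (rather than $o(n)$) hypothesis. If the clean inequality $\E[\binom{W}{k}] \le \bigl( C(|e(\Gamma_n)|^2/n^2 \vee d_{\max}^2(\Gamma_n)/n \vee 1/\log n)\bigr)^k$ can be extracted from the statistical-lower-bound machinery essentially for free, the rest is a one-line geometric-series estimate.
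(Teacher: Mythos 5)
Your setup is exactly the paper's starting point: expanding the likelihood ratio in the Fourier characters gives $\widehat{L}(\s{H})=\lambda^{|\s{H}|}\P_\Gamma[\s{H}\subseteq\Gamma]$, and the truncated norm becomes $\sum_{k\le \s{D}}\lambda^{2k}\,\E_{\Gamma'\indep\Gamma''}\big[\binom{|e(\Gamma'\cap\Gamma'')|}{k}\big]$, which is precisely \eqref{eq:LDPstart}--\eqref{eq:LDPstart2} and Lemma~\ref{lem:intersectionMomentGenGeneralComp}. The problem is what comes next. Everything after that in your proposal is a sketch with an explicit placeholder, and the one concrete hope you state --- that the clean per-$k$ inequality $\E[\binom{W}{k}]\le \big(C(|e(\Gamma)|^2/n^2\vee d_{\max}^2(\Gamma)/n\vee\cdots)\big)^k$ ``can be extracted from the statistical-lower-bound machinery essentially for free'' --- is exactly what fails. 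The statistical machinery (Lemma~\ref{lem:probRandomSubgraph1} together with the subgraph-counting Lemma~\ref{lem:subgraphsBoundedDegreeCount1}) controls these sums in terms of $\vc(\Gamma)\cdot d_{\max}(\Gamma)$, not $|e(\Gamma)|$, and the two can differ polynomially (Example~\ref{ex:StartUnbalanced}). Bridging that gap is the hard part of the paper's proof: one first proves the vertex-cover-based LDP bound (Theorem~\ref{th:lowDegreeConbinatorial}), then decomposes $\Gamma$ into locally $\vcd$-balanced pieces (Proposition~\ref{prop:GoodDecompositionExists}), and then needs a low-degree analogue of the H\"older decomposition step (Proposition~\ref{prop:decompisitionThDCcomp}). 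In the truncated setting this last step is genuinely delicate --- the product structure of the MGF is lost under truncation, and the paper has to go through Vandermonde's identity, a binomial-tail reinterpretation, and a comparison of truncated binomial sums with a modified parameter $\bar\lambda_M$ --- none of which your plan anticipates. A direct classification of overlap shapes (matchings, stars, dense pieces) as you propose is not obviously impossible, but the interpolating shapes are exactly where the uniform bookkeeping is hard, and the paper's route suggests it is not ``a one-line geometric-series estimate.''

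A second, more local issue: your treatment of the dense shapes invokes $\mu(\Gamma_n)=\Omega(\log|v(\Gamma_n)|)$, which is not a hypothesis of the theorem, and the heuristic ``$n^{-1/\mu}$ beats the combinatorial count'' is the mechanism of the \emph{statistical} bound, not the computational one. In the low-degree bound the dense overlaps are tamed purely by the truncation: since $|e(\s{H})|\le\s{D}$ forces $|v(\s{H})|\le 2\s{D}$, the factor that replaces $(1+\lambda^2)^{\mu\ell}$ is $\big(\ell^2\lambda^2/2\big)^{\min(\ell^2/2,\s{D})}$, which when spread over the $\ell\ge\sqrt{2\s{D}}$ vertices contributes only $\big(2\s{D}^2\lambda^2\big)^{\sqrt{\s{D}/2}}=n^{o(1)}$ per vertex for $\s{D}=O(\log n)$, and this $n^{o(1)}$ loss is what the $n^{-\varepsilon}$ slack absorbs. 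Without this (or an equivalent) argument, your per-$k$ bound is not established for general $\Gamma$, so the proposal as written has a genuine gap in its main step.
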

We conclude this subsection by stating our main findings above.
\begin{enumerate}[leftmargin=*]
    \item For $\Gamma's$ with sub-logarithmic density $\mu(\Gamma_n)=o(\log|v(\Gamma_n)|)$, there are two complementary regimes:
    \begin{itemize}[leftmargin=*]
        \item \emph{The impossible regime:} No test can detect the planted subgraph regardless of the computational complexity.
        \item \emph{The easy/simple regime:} The subgraph can be detected in linear time with high probability by counting the total number of edges, or evaluating the maximum degree in the observed graph.
    \end{itemize}
    \item For $\Gamma's$ with super-logarithmic density $\mu(\Gamma_n)=\Omega(\log|v(\Gamma_n)|)$, there are three complementary regimes:
    \begin{itemize}[leftmargin=*]
        \item \emph{The impossible regime:} No test can detect the planted subgraph regardless of the computational complexity.
        \item \emph{The hard regime:} While strong detection can be achieved by thresholding the maximum number of edges among all maximum subgraph density graphs, no polynomial-time solver exists in this regime (assuming the LDP conjecture).
        \item \emph{The easy/simple regime:} The subgraph can be detected in linear time with high probability by counting the total number of edges, or evaluating the maximum degree in the observed graph.
    \end{itemize}
\end{enumerate}

\subsection{Other regimes}

As mentioned in the introduction, our techniques and results apply to any sequences $p = (p_n)_n$ and $q = (q_n)_n$ of edge probabilities. While general bounds can be found in Section~\ref{sec:statlowerbound}, we focus here on the following two regimes to keep the exposition simple, as they have received the most attention in the literature: the \emph{sparse regime}, where $\chi^2(p||q)=\Theta(n^{-\alpha})$, for $0\leq \alpha\leq 2$, and the \emph{critical regime}, where $p=1-o(1)$ and $q = \Theta(n^{-\alpha})$, for $0\leq \alpha\leq 2$ (in particular, $\chi^2(p||q)=\Theta(n^{\alpha})$). 

\paragraph{Sparse regime.} In this regime, we will mostly be concerned with the polynomial growth/decay of each of the parameters in the problem. Accordingly, let $\Gamma=(\Gamma_n) $ be a sequence of graphs such that $|v(\Gamma)|=\Theta(n^\beta)$, for $0<\beta<1$, and 
\begin{align}
    \lim_{n\to\infty} \frac{\log|e(\Gamma)|}{\log|v(\Gamma)|}=\epsilon \quad \lim_{n\to\infty} \frac{\log d_{\max}(\Gamma)}{\log|v(\Gamma)|}=\delta  \quad \lim_{n\to\infty} \frac{\log\mu(\Gamma)}{\log|v(\Gamma)|}=\smu.
\end{align}
We have the following result.

\begin{theorem}[Sparse regime]\label{th:StatCompLimitSparse}
Consider the detection problem in \eqref{eqn:super_hypo} and assume that $\chi^2(p||q)=\Theta(n^{-\alpha})$, for $0\leq \alpha\leq 2$.
\begin{enumerate}
    \item Weak detection is impossible if,
\begin{align}
    \beta<\begin{cases}
          \frac{\alpha}{\smu}\wedge\frac{1+\alpha}{2\delta+\smu}\wedge\frac{2+\alpha}{2\epsilon} & 0\leq \smu<1\\
          \frac{\alpha}{\smu}\wedge\frac{1+\alpha}{2\delta}\wedge\frac{2+\alpha}{2\epsilon} & \smu=1,
    \end{cases}
\end{align}
where $\alpha/0\triangleq \infty$, while strong detection is possible if,
\begin{align}
    \beta>\frac{\alpha}{\smu}\wedge\frac{1+\alpha}{2\delta}\wedge\frac{2+\alpha}{2\epsilon}.
\end{align}
\item  Under the low-degree polynomial conjecture (see, Conjecture~\ref{conj:1} in Section~\ref{sec:complowerbound}), strong detection in polynomial-time is impossible if,
\begin{align}
    \beta<\frac{1+\alpha}{2\delta}\wedge\frac{2+\alpha}{2\epsilon}.\label{eqn:compLDPSparseMain}
\end{align}
\end{enumerate}
\end{theorem}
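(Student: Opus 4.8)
## Proof plan for Theorem~\ref{th:StatCompLimitSparse}

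\textbf{Overall strategy.}\textbf{Overall approach.} The plan is to obtain Theorem~\ref{th:StatCompLimitSparse} as the sparse-regime specialization of three general results proved elsewhere in the paper: the achievability bound of Section~\ref{sec:upperbound}, built from the scan, edge-count and maximum-degree tests; the statistical lower bound of Section~\ref{sec:statlowerbound}, obtained from the second moment of the likelihood ratio; and the low-degree lower bound of Section~\ref{sec:complowerbound} that already underlies Theorem~\ref{th:CompLimitDense}. None of those mechanisms changes here\textemdash{}the only new ingredient is bookkeeping. Since $\chi^2(p||q)=\Theta(n^{-\alpha})$ and, by hypothesis, $|e(\Gamma_n)|=n^{\beta\epsilon+o(1)}$, $d_{\max}(\Gamma_n)=n^{\beta\delta+o(1)}$ and $\mu(\Gamma_n)=n^{\beta\smu+o(1)}$, every ``success'' or ``failure'' condition of the general statements\textemdash{}each of the form ``one polynomially sized quantity dominates another''\textemdash{}reduces to a linear inequality among the exponents $\alpha,\beta,\epsilon,\delta,\smu$, and the minima of thresholds become minima of linear expressions. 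In particular all $\mathrm{polylog}(n)$ and $n^{o(1)}$ factors are immaterial.

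\textbf{Achievability.} I would quote the three tests of Section~\ref{sec:upperbound}: the scan test succeeds once $\mu(\Gamma_n)\,\chi^2(p||q)=\omega(\log n)$, the edge-count test once $|e(\Gamma_n)|^2\,\chi^2(p||q)=\omega(n^2)$, and the maximum-degree test once $d_{\max}^2(\Gamma_n)\,\chi^2(p||q)=\omega(n)$ (ignoring polylogarithmic corrections, which are harmless once everything is polynomial). Substituting the parametrization, these become $\beta>\alpha/\smu$, $\beta>(2+\alpha)/(2\epsilon)$ and $\beta>(1+\alpha)/(2\delta)$ respectively, with the convention $\alpha/0=\infty$ covering $\smu=0$. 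Hence some test succeeds\textemdash{}so strong detection is possible\textemdash{}whenever $\beta$ exceeds the minimum of the three, which is exactly the claimed condition.

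\textbf{Statistical impossibility.} For the lower bound I would start from the second-moment inequality recorded in the introduction, $\s{R}_n^\star\ge 1-\tfrac12\sqrt{\bE[(1+\chi^2(p||q))^{|e(\Gamma\cap\Gamma')|}]-1}$, with $\Gamma,\Gamma'$ two independent uniform copies of $\Gamma_n$ in $\calK_n$; since $\s{R}_n^\star\le1$, it suffices to show the moment-generating function is $1+o(1)$, which after summation by parts reduces to controlling the upper tail of $|e(\Gamma\cap\Gamma')|$. The general combinatorial estimate of Section~\ref{sec:statlowerbound}, valid for an arbitrary $\Gamma$, says in effect that small overlaps are governed by sparse (near-forest) unions of shared edges\textemdash{}hence by $|e(\Gamma_n)|$ and $d_{\max}(\Gamma_n)$\textemdash{}while the weight of large overlaps is governed by the densest subgraph\textemdash{}hence by $\mu(\Gamma_n)$\textemdash{}so that the MGF is $1+o(1)$ as soon as $\chi^2(p||q)$ is polynomially below $\frac{\log n}{\mu(\Gamma_n)}\wedge\frac{n^2}{|e(\Gamma_n)|^2}\wedge\frac{n}{\mu(\Gamma_n)\,d_{\max}^2(\Gamma_n)}$. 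Plugging in the exponents turns this into $\beta<\frac{\alpha}{\smu}\wedge\frac{2+\alpha}{2\epsilon}\wedge\frac{1+\alpha}{2\delta+\smu}$, i.e.\ the $0\le\smu<1$ branch. The case $\smu=1$ needs a short extra argument: from $\mu(\Gamma_n)\le d_{\max}(\Gamma_n)/2\le(|v(\Gamma_n)|-1)/2$ the assumption $\smu=1$ forces $\delta=1$, and then $\Gamma_n$ contains a subgraph on $\Theta(|v(\Gamma_n)|)$ vertices with $\Theta(|v(\Gamma_n)|^2)$ edges; for such a clique-like core the large-overlap tail can be bounded without the spurious factor $\mu(\Gamma_n)$, promoting the third term to $\frac{n}{d_{\max}^2(\Gamma_n)}$ and the threshold to $\frac{1+\alpha}{2\delta}$. (For $\smu\in(0,1)$ this leaves the window $\frac{1+\alpha}{2\delta+\smu}<\beta<\frac{1+\alpha}{2\delta}$ open, which is fine: the statistical lower and upper thresholds coincide exactly when $\smu=0$ or $\smu=1$, and these cases already cover paths, cycles, trees, matchings and Hamilton cycles ($\smu=0$) and cliques and planted dense subgraphs ($\smu=1$).)

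\textbf{Computational impossibility, and the main obstacle.} Part~2 follows directly from the general low-degree lower bound behind Theorem~\ref{th:CompLimitDense}: under Conjecture~\ref{conj:1}, every polynomial of degree $O(\log n)$ fails\textemdash{}hence no polynomial-time test exists\textemdash{}whenever $|e(\Gamma_n)|^2\,\chi^2(p||q)\ll n^2$ and $d_{\max}^2(\Gamma_n)\,\chi^2(p||q)\ll n$; substituting the exponents gives precisely $\beta<\frac{2+\alpha}{2\epsilon}\wedge\frac{1+\alpha}{2\delta}$, i.e.\ \eqref{eqn:compLDPSparseMain}. The scan threshold $\alpha/\smu$ is absent, as it should be: low-degree polynomials only ``see'' the count and degree statistics, which is precisely why a hard phase can open when $\mu(\Gamma_n)$ is large. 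The genuinely hard step is not inside Theorem~\ref{th:StatCompLimitSparse} but is inherited from Section~\ref{sec:statlowerbound}: obtaining upper-tail bounds on $|e(\Gamma\cap\Gamma')|$ that hold for \emph{every} $\Gamma$ and are tight enough to match achievability\textemdash{}the difficulty highlighted in the introduction, where standard subgraph-count and upper-tail tools are shown to be too lossy. Within the present proof, the only delicate points are the $\smu=1$ boundary, where the generic bound is not tight and the clique-core refinement is needed, and keeping the strict inequalities in $\beta$ so as to absorb the $n^{o(1)}$ slack.
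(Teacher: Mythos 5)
Your overall route\textemdash{}specializing the general achievability bounds (Theorem~\ref{thm:upperBoundAlgo}), the second-moment lower bounds of Section~\ref{sec:statlowerbound}, and the low-degree bounds of Section~\ref{sec:complowerbound} to the polynomial parametrization and reducing everything to linear inequalities in $(\alpha,\beta,\epsilon,\delta,\smu)$\textemdash{}is exactly the paper's route (Theorem~\ref{th:polynomial} and Theorem~\ref{thm:LDPFinal}), and your bookkeeping for the achievability part and for the $0\le\smu<1$ impossibility branch matches. One caveat worth flagging: the general lower bounds (Theorem~\ref{th:lowerVCFull} and Theorem~\ref{th:lowDegreeConbinatorial}) are stated in terms of $\vc(\Gamma)\,d_{\max}(\Gamma)$, not $|e(\Gamma)|$; the passage to $|e(\Gamma)|$ is not automatic but goes through the balanced decomposition of Proposition~\ref{prop:GoodDecompositionExists} combined with the H\"older argument of Proposition~\ref{prop:decompisitionThFull} (and its truncated analogue, Proposition~\ref{prop:decompisitionThDCcomp}, for the LDP part). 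Since you cite those sections wholesale, this is presentational rather than substantive.

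The genuine gap is your $\smu=1$ branch. You assert that for a ``clique-like core the large-overlap tail can be bounded without the spurious factor $\mu$'', promoting $\frac{1+\alpha}{2\delta+\smu}$ to $\frac{1+\alpha}{2\delta}$, but you give no mechanism, and the mechanism cannot be a local repair of the vertex-cover machinery: when $\smu=1$ one has $\vc(\Gamma),\mu(\Gamma)=|v(\Gamma)|^{1+o(1)}$ (Lemma~\ref{obs:SuperDense}), and then the bound of Lemma~\ref{lem:probRandomSubgraph1} degenerates to the trivial containment estimate, so the connected-component summation is inherently lossy here. The paper instead proves Proposition~\ref{prop:denseGraphSparseRegimeStat} by abandoning the component bookkeeping altogether: sum over subgraphs by number of vertices $\ell$ and edges $j$, bound their number by $\binom{k}{\ell}\binom{\binom{\ell}{2}}{j}$, use the trivial probability $\bigl(k/(n-k)\bigr)^{\ell}$, and exploit the side condition $\beta<\alpha$ (so $k\chi^2(p||q)=o(1)$) to resum; the binding requirement becomes $k^2\sqrt{\chi^2(p||q)}=o(n)$, i.e.\ $\beta<\tfrac12+\tfrac{\alpha}{4}=\frac{2+\alpha}{2\epsilon}$. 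Note also that in the window where the improvement over the generic bound actually matters, the binding threshold is this count-type term, not the degree term $\frac{1+\alpha}{2\delta}$ you single out; so ``removing $\mu$ from the degree term'' is not the statement that has to be established\textemdash{}one must re-derive boundedness of the entire second moment for super-dense $\Gamma$. With that argument supplied (or Proposition~\ref{prop:denseGraphSparseRegimeStat} cited), the rest of your plan goes through.
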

We observe that our statistical bounds are not tight in general, as $\frac{1 + \alpha}{2\delta + \smu} < \frac{1 + \alpha}{2\delta}$, for $\smu>0$. However, in the extreme cases of \emph{sub-polynomial density} (i.e., $\smu = 0$) and \emph{maximal-polynomial density} (i.e., $\smu = 1$), our statistical bounds coincide. We prove in Section~\ref{subsec:SparseRegimeProofs} that this is also the case whenever $\epsilon > 2\delta + \smu/2$, in which case the maximum degree test is redundant (and the scan and count tests dominate). The computational lower bound in \eqref{eqn:compLDPSparseMain}, on the other hand, is tight and complements the performance of the count and maximum degree tests.
Furthermore, Theorem~\ref{th:StatCompLimitSparse} shows that a statistical-computational gap emerge iff $\Gamma$ has positive polynomial density, i.e., $\smu>0$.

\paragraph{Critical regime.} Assume that $p=1-o(1)$ and $q=\Theta(n^{-\alpha})$, where $0< \alpha<2$ is fixed, and let $\Gamma=(\Gamma_n)$ be a sequence of graphs. As we prove in Section~\ref{sec:CriticalRegime}, strong detection is possible provided that $\mu>\frac{1}{\alpha}$. Henceforth, we will only consider sequences whose maximal density is bounded by $\frac{1}{\alpha}$. Surprisingly, even when $\mu(\Gamma)$ is assumed to be bounded, it seems that a rich and complicated phenomena appear to take place. Specifically, it is shown in Section~\ref{sec:CriticalRegime} that the  behavior of the statistical limits vary dramatically between three main regimes: $\mu(\Gamma)>1$, $\mu(\Gamma)=1-o(1)$, and $\mu(\Gamma)\leq 1-\delta$, for a fixed $\delta>0$. Due to space limitation, we will focus on the case $\mu(\Gamma)=1-o(1)$, where we observe a sharp phase transition phenomena whenever $\alpha=1$. We remark that the reduction argument to the balanced case (used in the dense and sparse regimes) do not generalize to the critical regime (see Section~\ref{sec:CriticalRegime} for a detailed explanation). We will therefore focus on the $\vcd$-balanced scenario (which captures all specific cases studied in the literature thus far). We also remark in the case where $\Gamma$ has bounded degree (and a sharp phase transition occurs), the balanceness assumption is satisfied automatically. Our results for the regime $\mu(\Gamma)=1-o(1)$ are summarized as follows:

\begin{theorem}\label{thm:phaseTansitionsCrit1}
Let $\Gamma=(\Gamma_n)_n$ be a $\vcd$-balanced sequence of  graphs such that $1-o(1)\leq \mu(\Gamma_n)<1$ for some $o(1)$ function.
\begin{enumerate}[leftmargin=*]
    \item $\underline{0<\alpha<1}$: For every $\varepsilon>0$, weak detection is impossible if,
    \begin{align}
        |e(\Gamma)|\leq n^{1-\frac{\alpha}{2}-\varepsilon} \quad \text{and} \quad d_{\max}(\Gamma)\leq n^{\frac{1-\alpha}{2}-\varepsilon},\label{eq:CriticalCond1m}
    \end{align}
   while strong detection is possible if 
    \begin{align}
        |e(\Gamma)|\geq n^{1-\frac{\alpha}{2}+\varepsilon} \quad \text{or} \quad d_{\max}(\Gamma)\geq n^{\frac{1-\alpha}{2}+\varepsilon}.\label{eq:CriticalCond1.1m}
    \end{align}

    \item $\underline{\alpha=1}$: Assume that $q=\frac{\sigma}{n}$, for some $\sigma>0$.
    \begin{enumerate}
        \item \underline{Polynomial maximum degree:} If $d_{\max}(\Gamma)=\Omega\p{|v(\Gamma)|^\beta}$, for $0<\beta\leq 1$, then, for any $\varepsilon>0$, weak detection is impossible if, 
        \begin{align}
            |e(\Gamma)|\leq n^{\frac{1}{2}-\varepsilon} \quad \text{and} \quad  d_{\max}^{\frac{1}{\beta}}(\Gamma)\cdot \log(d_{\max}(\Gamma))\leq \frac{1-\varepsilon}{2} \log n, \label{eq:CriticalCond2m}
        \end{align}
        while strong detection is possible if, 
        \begin{align}
             |e(\Gamma)|\geq n^{\frac{1}{2}+\varepsilon} \quad \text{or} \quad  d_{\max}(\Gamma)\geq (16+\varepsilon)\log n.
        \end{align}
        \item \underline{Bounded maximum degree:} If $d_{\max}(\Gamma)=O(1)$, then, there exists $\overline{\sigma}_d$ and $\underline{\sigma}_d$, depending on $d_{\max}(\Gamma)$ such that 
        \begin{enumerate}
            \item \underline{If $\sigma > \overline{\sigma}_d$:}
             weak detection is impossible for all $\varepsilon>0$
        \begin{align}
          |e(\Gamma)|\leq n^{\frac{1}{2}-\varepsilon},
        \end{align}
        while strong detection is possible if 
        \begin{align}
           |e(\Gamma)|\geq n^{\frac{1}{2}+\varepsilon}.
        \end{align}
        
            \item \underline{If $ \sigma <\underline{\sigma}_d$} and $\mu(\Gamma)\geq 1-|v(\Gamma)|^{-\beta}$, for $0<\beta\leq 1$,
            then weak detection is impossible if, 
        \begin{align}
         |v(\Gamma)|\leq \frac{\log\p{\frac{e d^2}{\sigma}}}{1+\varepsilon}\cdot \log n,\label{eq:CriticalCond4.1m}
        \end{align}
        while strong detection is possible if, 
        \begin{align}
            |v(\Gamma)|\geq (1+\varepsilon)\cdot\p{\log n}^{\frac{1}{\beta}}.\label{eq:CriticalCond4.2m}
        \end{align}
        \end{enumerate}
    \end{enumerate}
\end{enumerate}
\end{theorem}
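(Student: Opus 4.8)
Every impossibility claim is obtained from the second moment method: by the bound recalled in the introduction it suffices to show $\bE\big[(1+\chi^2(p||q))^{W}\big]=1+o(1)$, where $W:=|e(\Gamma\cap\Gamma')|$ for two independent uniform copies $\Gamma,\Gamma'$ of $\Gamma_n$ in $\calK_n$. In the critical regime $\chi^2(p||q)=(1+o(1))/q$, so $\chi^2(p||q)\asymp n^{\alpha}$ (and $\asymp n/\sigma$ when $\alpha=1$), and the target becomes $\sum_{k\ge1}n^{\alpha k}\,\pr(W\ge k)=o(1)$. Every possibility claim comes from the three tests of Section~\ref{sec:upperbound} — the edge-count test, the maximum-degree test, and the scan test — and the work is only to decide which test wins in which parameter range and to check the corresponding threshold against $\calG(n,q)$.

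\noindent\textbf{Forest structure and the moment expansion.}
The hypothesis $\mu(\Gamma_n)<1$ forbids cycles, so $\Gamma_n$ — and hence every subgraph $H$ of it — is a forest, whence $|v(H)|=|e(H)|+c(H)$ with $c(H)$ its number of components. Writing $(1+\chi^2)^{W}=\sum_{j\ge0}\binom{W}{j}(\chi^2)^j$ and using $\bE\binom{W}{j}=\sum_{F:|e(F)|=j}\bE[N_F(\Gamma\cap\Gamma')]$ together with the elementary identity $\bE[N_F(\Gamma\cap\Gamma')]=N_F(\Gamma_n)^2\,|\s{Aut}(F)|/P(n,|v(F)|)$ (where $N_F$ counts subgraphs isomorphic to $F$ and $P(n,v)=n!/(n-v)!$) yields
\[
\bE\big[(1+\chi^2)^{W}\big]=\sum_{H\subseteq\Gamma_n}(\chi^2)^{|e(H)|}\,\frac{\s{inj}(H,\Gamma_n)}{P(n,|v(H)|)},
\]
the sum over all edge-subgraphs $H$, with $\s{inj}(H,\Gamma_n)=N_H(\Gamma_n)|\s{Aut}(H)|$ the number of injective homomorphisms; here $P(n,|v(H)|)=(1+o(1))\,n^{|v(H)|}$ since $|v(H)|\le|v(\Gamma_n)|\le n^{1-\delta}$ for some $\delta>0$. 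I bound the numerator by growing each connected piece of $H$ from one of its edges, $\s{inj}(H,\Gamma_n)\le(2|e(\Gamma_n)|)^{c(H)}d_{\max}(\Gamma_n)^{|e(H)|-c(H)}$, and bound the number of $k$-edge, $c$-component subforests of $\Gamma_n$ by $\tfrac1{c!}\binom{k-1}{c-1}|e(\Gamma_n)|^{c}(Cd_{\max}(\Gamma_n))^{k-c}$ (same growth argument), or by the cruder $\binom{|e(\Gamma_n)|}{k}$ and $2^{|e(\Gamma_n)|}$ when $\Gamma_n$ is itself small. Plugging these into the identity reduces everything to summing explicit geometric-type series.

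\noindent\textbf{The three regimes.}
For $0<\alpha<1$ the hypotheses $|e(\Gamma_n)|\le n^{1-\alpha/2-\varepsilon}$ and $d_{\max}(\Gamma_n)\le n^{(1-\alpha)/2-\varepsilon}$ are exactly calibrated so that, after substituting the estimates above, all powers of $n$ in the $k$-edge block collapse to $-2\varepsilon k$; the series $\sum_k(C'n^{-2\varepsilon})^{k}$ is then $o(1)$ (one still checks that the $P(n,|v(H)|)$-correction $e^{|v(H)|^2/n}$ is beaten by this decay, using $|v(H)|=o(n)$), and impossibility follows. For $\alpha=1$ this cancellation degenerates and the sum splits into two competing pieces. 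The \emph{matching-type} subforests (many single-edge components) contribute $\asymp\sum_k\tfrac1{k!}\big(\tfrac{2|e(\Gamma_n)|^2}{\sigma n}\big)^{k}=\exp\!\big(\tfrac{2|e(\Gamma_n)|^2}{\sigma n}\big)-1$, which is $o(1)$ precisely under $|e(\Gamma_n)|\le n^{1/2-\varepsilon}$ (this is the $\chi^2\cdot\bE[W]=o(1)$ regime). The remaining low-component contributions, up to $H=\Gamma_n$, culminate in $(\chi^2)^{|e(\Gamma_n)|}|\s{Aut}(\Gamma_n)|/P(n,|v(\Gamma_n)|)\asymp|\s{Aut}(\Gamma_n)|\,\sigma^{-|e(\Gamma_n)|}n^{-c(\Gamma_n)}$, which is $o(1)$ iff roughly $|v(\Gamma_n)|\log(d_{\max}^2/\sigma)\lesssim\log n$. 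In the polynomial-degree case $d_{\max}=\Theta(|v(\Gamma_n)|^{\beta})$ this reads exactly $d_{\max}^{1/\beta}\log d_{\max}\le\tfrac{1-\varepsilon}{2}\log n$; in the bounded-degree case $|\s{Aut}(\Gamma_n)|\le C_d^{|v(\Gamma_n)|}$, so the condition becomes $|v(\Gamma_n)|\log(C_d/\sigma)\lesssim\log n$, which is vacuous once $\sigma>\overline{\sigma}_d$ (sub-case (i): only $|e(\Gamma_n)|\le n^{1/2-\varepsilon}$ survives) and, when $\sigma<\underline{\sigma}_d$, forces $|v(\Gamma_n)|\lesssim(\log n)/\log(ed^2/\sigma)$ (sub-case (ii)). This is where $\vcd$-balancedness is needed: unlike in the dense and sparse regimes, one cannot first replace $\Gamma_n$ by a balanced graph (that reduction exploits $\chi^2=o(1)$), and $\vcd$-balancedness is exactly what makes the bounds on $|\s{Aut}(\Gamma_n)|$ and on the near-spanning embeddings tight enough that no $n^{o(1)}$ slack contaminates the $\log n$-scale thresholds.

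\noindent\textbf{Matching upper bounds, and the main obstacle.}
Strong detection: the edge-count test compares $|e(\Gamma_n)|$ with the standard deviation $\asymp\sqrt{\binom{n}{2}q}\asymp n^{1-\alpha/2}$ of the edge count under $\calH_0$ (giving the $|e(\Gamma_n)|\ge n^{1-\alpha/2+\varepsilon}$ branch, i.e.\ $n^{1/2+\varepsilon}$ at $\alpha=1$); the maximum-degree test compares $d_{\max}(\Gamma_n)$ with the fluctuation of $\max_v\deg_{\calG(n,q)}(v)$, of order $\sqrt{n^{1-\alpha}\log n}$ for $\alpha<1$ and $\Theta(\log n/\log\log n)$ for $\alpha=1$ (giving the $d_{\max}(\Gamma_n)\ge n^{(1-\alpha)/2+\varepsilon}$, respectively $d_{\max}(\Gamma_n)\ge(16+\varepsilon)\log n$, branches); and the scan test looks for the densest (largest) tree component of $\Gamma_n$ — equivalently, a $v$-vertex subset of unusually high internal density — which by $\mu(\Gamma_n)\ge 1-|v(\Gamma_n)|^{-\beta}$ has $v\ge|v(\Gamma_n)|^{\beta}$ vertices, and a first-moment estimate over bounded-degree trees shows that for $\sigma<\underline{\sigma}_d$ the graph $\calG(n,\sigma/n)$ a.a.s.\ contains no bounded-degree tree on $\gtrsim\log n$ vertices, so the scan wins once $|v(\Gamma_n)|^{\beta}\gtrsim\log n$, i.e.\ $|v(\Gamma_n)|\ge(1+\varepsilon)(\log n)^{1/\beta}$. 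The main obstacle is the combinatorial estimate of the second moment at $\alpha=1$: there one must simultaneously use the \emph{sharp} bound on $k$-edge subforest counts (neither $\binom{|e(\Gamma_n)|}{k}$ nor $(Cd_{\max})^{k}$ alone suffices across all $k$) and control $|\s{Aut}(\Gamma_n)|$ and the near-spanning embeddings tightly enough — precisely what $\vcd$-balancedness buys — so that the impossibility threshold comes out in the exact form $|v(\Gamma_n)|\log(ed^2/\sigma)\asymp\log n$, which is what makes the phase transition sharp.
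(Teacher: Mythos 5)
Your proposal is correct and, at its core, follows the same route as the paper: you rule out weak detection by bounding the second moment $\bE\big[(1+\chi^2(p\|q))^{|e(\Gamma\cap\Gamma')|}\big]$ through the expansion over subgraphs $\s{H}\subseteq\Gamma$ (your identity with $\s{inj}(\s{H},\Gamma_n)/P(n,|v(\s{H})|)$ is exactly the paper's Proposition~\ref{prop:LikelihoodMomentExperession} via Lemma~\ref{lem:equivRandomCopyOrSubgraphcopy}), exploit that $\mu<1$ forces all subgraphs to be forests so that $|v(\s{H})|=|e(\s{H})|+c(\s{H})$, count subforests by anchoring each component at an edge and growing by degrees, and then sum geometric-type series whose two extreme blocks (many single-edge components versus near-spanning connected pieces) produce precisely the count-test barrier $|e(\Gamma)|\lesssim n^{1-\alpha/2}$ and the $|v(\Gamma)|\log(ed^2/\sigma)\lesssim\log n$ barrier; the matching upper bounds come from the same count, maximum-degree and scan tests of Theorem~\ref{thm:upperBoundAlgo}. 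The only genuine difference is organizational: the paper obtains the impossibility conditions as specializations of the general Theorem~\ref{th:lowerVCFull} (cases \eqref{eq:LowerMainCond3}--\eqref{eq:LowerMainCond5}), whose embedding bound uses the vertex-cover factor $2\vc(\Gamma)\,d_{\max}(\Gamma)$ per component, whereas you anchor directly at edges with the factor $2|e(\Gamma)|$; since $|e(\Gamma)|\le\vc(\Gamma)\,d_{\max}(\Gamma)$ this is never worse, and it lets you state the forest-case thresholds in terms of $|e(\Gamma)|$ and $d_{\max}(\Gamma)$ without ever mentioning vertex covers. One small correction of emphasis: the paper invokes $\vcd$-balancedness not to control $|\s{Aut}(\Gamma_n)|$ or near-spanning embeddings, but simply to convert the $\vc\cdot d_{\max}$ appearing in Theorem~\ref{th:lowerVCFull} into $|e(\Gamma)|^{1+o(1)}$ so the lower bound aligns with the count test (and, as you note, the decomposition reduction of Section~\ref{sec:ReudctionToBalanced} is unavailable here because $\lambda_M^2\approx n^{M^2\alpha}$ blows up); in your edge-anchored formulation this hypothesis plays essentially no role in the impossibility direction, which is consistent with the paper's observation that bounded-degree graphs are automatically $\vcd$-balanced. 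Beyond that, your sketch leaves the domination of the full sum by the two extreme blocks at $\alpha=1$ asserted rather than carried out, but this is exactly the geometric-series bookkeeping the paper performs, so it is a matter of detail rather than a gap.
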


To put the results of Theorem~\ref{thm:phaseTansitionsCrit1} perspective, we consider the results of \cite{massoulie19a} concerning the case where $\Gamma$ is a regular tree, and $q=\sigma/n$. The results of \cite{massoulie19a} focuses on three main examples: a star (with increasing degrees), a $D$-regular tree and a path. In the polynomial maximum degree regime, which includes the case of a planted star, our bounds suggest that the statistical limits are determined by the number of edges (as compared to $\sqrt{n}$), and the maximum degree (as compared to poly-logarithmic function of $n$). For the family of bounded degree graphs, which includes the cases of paths and regular trees, the above results shows that the sharp phase transition phenomena observed in 
\cite{massoulie19a} are in fact general. Indeed, 
for $\sigma<\overline{\sigma}_d$, the barrier for detection is determined by comparing $|v(\Gamma)|$ to a poly-logarithmic function of $n$, while if $\sigma>\overline{\sigma}_d$, the barrier for detection is governed by the count test, namely, $|v(\Gamma)|=\omega(\sqrt{n})$.

\section{Upper Bounds}\label{sec:upperbound} 

In this section, we show algorithmic upper bounds for the detection problem in \eqref{eqn:super_hypo} using three simple test statistics. Below, we let $\Gamma_{\max}$ be a subgraph that achieves the maximum in the definition of $\mu{\left(\Gamma\right)}$, and then $\calS_{\Gamma_{\max}}$ is the set of all possible copies of $\Gamma_{\max}$ in $\calK_n$. Given the adjacency matrix $\s{A}_n\in\{0,1\}^{n\times n}$, define,
\begin{align}
    T_{\s{count}}(\s{A}_n)&\triangleq \sum_{i<j}\s{A}_{ij},\label{eqn:countstat}\\
    T_{\s{deg}}(\s{A}_n)&\triangleq \max_{i\in[n]}\sum_{j\in[n]} \s{A}_{ij},\label{eqn:degstat}\\
    T_{\s{scan}}(\s{A}_n)&\triangleq \max_{\bar{\Gamma} \in {\calS_{\Gamma_{\max}}}}\sum_{(i,j)\in\bar{\Gamma}}\s{A}_{ij}.\label{eqn:scanstat}
\end{align}
\sloppy
Accordingly, the corresponding tests are $\phi_{\s{count}}\triangleq\Ind\ppp{T_{\s{count}}(\s{A}_n)\geq \tau_{\mathsf{count}}}$, $\phi_{\s{deg}}\triangleq\Ind\ppp{T_{\s{deg}}(\s{A}_n)\geq \tau_{\mathsf{deg}}}$, and $\phi_{\s{scan}}\triangleq\Ind\ppp{T_{\s{scan}}(\s{A}_n)\geq \tau_{\mathsf{scan}}}$, where $\tau_{\mathsf{count}},\tau_{\mathsf{deg}},\tau_{\mathsf{scan}}\in\mathbb{R}_+$, are thresholds that will be specified later. These tests---or variants thereof\footnote{Most notably, the maximization in the scan statistics in \eqref{eqn:scanstat} is taken over all copies of $\Gamma_{\max}$ (the densest subgraph of $\Gamma$), rather than over all copies of $\Gamma$ (the actual planted subgraph), as was done for all structures studied in the literature. Somewhat surprisingly, however, the latter approach is suboptimal for general structures.}---are well-established and have been applied to the detection of specific planted subgraphs studied in the literature, as well as in various related contexts, such as in \cite{kolar2011minimax, butucea2013detection, ma2015computational, arias2014community, Brennan2018, Hajek2015, Huleihel2022, pmlr-v99-brennan19a, HuleihelBip}. Additionally, the count and degree tests are computationally efficient, with polynomial time complexity on the order of $O(n^2)$. In contrast, the scan test has exponential computational complexity, making it inefficient. Specifically, the search space in \eqref{eqn:scanstat} becomes at least quasi-polynomial when $v(\Gamma_n) = \omega(1)$. The following result establishes sufficient conditions under which the risk associated with each of these tests remains small. 

\begin{theorem}[Algorithmic upper bounds]\label{thm:upperBoundAlgo}
    Consider the detection problem in \eqref{eqn:super_hypo}, and the statistics in \eqref{eqn:countstat}--\eqref{eqn:scanstat}. We have the following set of results.
    \begin{enumerate}
        \item \underline{Count test}: Let $\tau_{\s{count}}\triangleq\binom{n}{2}q + |e{\left(\Gamma\right)}| \frac{p-q}{2}$. Then, $\s{R}_n(\phi_{\s{count}})\to 0$, provided that,\footnote{Note that $(p-q)\cdot|e(\Gamma)|\to\infty$ is essential, as otherwise, detection is statistically impossible (see, Section~\ref{app:simpleLowerBound} for more details).} 
\begin{align}
\lim_{n\to\infty}\pp{\frac{\chi^2(p||q)|e(\Gamma)|^2}{n^2}\wedge(p-q)\cdot|e(\Gamma)|}=\infty.\label{eqn:countCondUpper}
\end{align}
        \item \underline{Degree test}: Let $\tau_{\s{deg}}=\left(n-1\right)q + d_{\max}\left(\Gamma\right) \frac{p-q}{2}$. Then, $\s{R}_n(\phi_{\s{deg}})\to 0$, provided that,
        \begin{align}
            \liminf_{n\to\infty}\pp{\frac{d^2_{\max}(\Gamma)\chi^2(p||q)}{n\log n}\wedge\frac{d_{\max}(\Gamma)(p-q)}{\log n}}>16.\label{eq:MaxDegGeneral}
        \end{align}
        \item \underline{Scan test}: Let $\tau_{\s{scan}}= \kappa\cdot |e(\Gamma_{\max})|$, where $\kappa\in(q,p)$. Then, $\s{R}_n(\phi_{\s{scan}})\to 0$, provided that $p\cdot |e(\Gamma_{\max})|\to\infty$,\footnote{Note that the condition $p\cdot |e(\Gamma_{\max})|\to\infty$ is essential, as otherwise, with positive probability the planted subgraph does not contain any edge.} and,
\begin{align}
    \liminf_{n\to\infty}\frac{\mu(\Gamma)d_{\s{KL}}(p||q)}{\log n}>1.
\end{align}
    \end{enumerate}
\end{theorem}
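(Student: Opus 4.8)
The plan is to treat the three tests uniformly: under each hypothesis I would identify the (approximate) law of the statistic and then show that the stated threshold --- placed a constant fraction of the way between the typical values under $\calH_0$ and $\calH_1$ --- is crossed on the correct side with probability tending to $1$. The structural facts I would extract first are: under $\calH_0$, $T_{\s{count}}$ is $\s{Binomial}\big(\binom n2,q\big)$, $T_{\s{deg}}$ is the maximum of $n$ variables each marginally $\s{Binomial}(n-1,q)$, and $T_{\s{scan}}$ is the maximum of $|\calS_{\Gamma_{\max}}|$ variables each marginally $\s{Binomial}(|e(\Gamma_{\max})|,q)$; under $\calH_1$ the planted copy shifts $\bE[T_{\s{count}}]$ by exactly $|e(\Gamma)|(p-q)$ irrespective of which copy is planted, raises the expected degree of a highest-degree planted vertex by $d_{\max}(\Gamma)(p-q)$, and --- since $\Gamma_{\max}$ is a subgraph of $\Gamma$ --- contains a copy of $\Gamma_{\max}$ whose $|e(\Gamma_{\max})|$ edges are present independently with probability $p$, so that $T_{\s{scan}}$ stochastically dominates a $\s{Binomial}(|e(\Gamma_{\max})|,p)$. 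Throughout I would exploit that a sum of independent Bernoulli's has variance at most its mean, which replaces conditional variances by conditional means and avoids $(p-q)^2/p$-type nuisances.

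For the count test, conditioning on the planted copy makes $T_{\s{count}}$ a sum of $\binom n2$ independent Bernoulli's with total mean $\binom n2 q$ (under $\calH_0$) or $\binom n2 q+|e(\Gamma)|(p-q)$ (under $\calH_1$), so $\Var_{\calH_0}(T_{\s{count}})=\binom n2 q(1-q)$ and $\Var_{\calH_1}(T_{\s{count}})\le\bE_{\calH_1}[T_{\s{count}}]$; Chebyshev around $\tau_{\s{count}}$, at distance $\tfrac12|e(\Gamma)|(p-q)$ from each mean, then bounds both error probabilities by a constant times $\big(\binom n2 q+|e(\Gamma)|(p-q)\big)/(|e(\Gamma)|(p-q))^2$, which vanishes under \eqref{eqn:countCondUpper} after substituting $(p-q)^2=\chi^2(p\|q)q(1-q)$ (using $|e(\Gamma)|\le\binom n2$ to handle the corner $q\to1$). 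For the degree test, under $\calH_0$ a union bound over the $n$ vertices together with a Bernstein inequality for a $\s{Binomial}(n-1,q)$ variable yields $\pr_{\calH_0}(T_{\s{deg}}\ge\tau_{\s{deg}})\to0$ once $(d_{\max}(\Gamma)(p-q))^2\gg nq\log n$ and $d_{\max}(\Gamma)(p-q)\gg\log n$, which are precisely \eqref{eq:MaxDegGeneral} after the same substitution (the constant $16$ absorbing the Bernstein constants); under $\calH_1$, a single Bernstein bound applied to the observed degree $D_{v^\star}$ of a planted vertex $v^\star$ of degree $d_{\max}(\Gamma)$ --- whose mean is $(n-1)q+d_{\max}(\Gamma)(p-q)$ and whose variance is at most that mean --- controls the type-$\s{II}$ error, since $T_{\s{deg}}\ge D_{v^\star}$.

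For the scan test, the first point is that scanning over copies of the densest subgraph $\Gamma_{\max}$ (not of $\Gamma$) is what optimizes the tradeoff: $\log|\calS_{\Gamma_{\max}}|\le|v(\Gamma_{\max})|\log n$, whereas each copy carries $|e(\Gamma_{\max})|=\mu(\Gamma)|v(\Gamma_{\max})|$ edges, so the effective per-vertex exponent is $\mu(\Gamma)d_{\s{KL}}(\cdot\|q)$. Fixing a threshold $\kappa=\kappa_n\in(q_n,p_n)$, a union bound and the Chernoff bound for $\s{Binomial}(|e(\Gamma_{\max})|,q)$ give $\pr_{\calH_0}(T_{\s{scan}}\ge\kappa|e(\Gamma_{\max})|)\le\exp\big(|v(\Gamma_{\max})|(\log n-\mu(\Gamma)d_{\s{KL}}(\kappa\|q))\big)$, while the Chernoff bound for the planted copy gives $\pr_{\calH_1}(T_{\s{scan}}<\kappa|e(\Gamma_{\max})|)\le\exp(-|e(\Gamma_{\max})|d_{\s{KL}}(\kappa\|p))$. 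The hypothesis $\liminf\mu(\Gamma)d_{\s{KL}}(p\|q)/\log n>1$ produces a constant $\delta>0$ with $\mu(\Gamma)d_{\s{KL}}(p\|q)\ge(1+\delta)\log n$ eventually; I would pick $\kappa_n$ so that $d_{\s{KL}}(\kappa_n\|q_n)$ equals a fixed fraction $c<1$ of $d_{\s{KL}}(p_n\|q_n)$ with $c(1+\delta)>1$, which forces the $\calH_0$ exponent to $-\infty$, while keeping $\kappa_n<p_n$ so that $d_{\s{KL}}(\kappa_n\|p_n)>0$; combined with the hypothesis $p\,|e(\Gamma_{\max})|\to\infty$ this makes $|e(\Gamma_{\max})|d_{\s{KL}}(\kappa_n\|p_n)\to\infty$, so the $\calH_1$ error vanishes too.

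The main obstacle is the scan test, specifically the tuning of $\kappa_n$: it must simultaneously (a) absorb the union-bound entropy $|v(\Gamma_{\max})|\log n$ --- which forces $\kappa_n$ close to $p_n$ and is possible only because of the strict ``$>1$'' in the hypothesis --- and (b) keep the lower-tail exponent $|e(\Gamma_{\max})|d_{\s{KL}}(\kappa_n\|p_n)$ divergent, which is exactly what the assumption $p\,|e(\Gamma_{\max})|\to\infty$ buys. Verifying (b) in full generality requires a short case split according to whether $p_n,q_n$ are polynomially separated, both near $0$, or $p_n$ near $1$ --- the three situations where, respectively, $|e(\Gamma_{\max})|$ itself diverges, the multiplicative gap $1-\kappa_n/p_n$ stays bounded below, and $d_{\s{KL}}(\kappa_n\|p_n)$ itself diverges; this is where the dense, sparse and critical regimes differ. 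By contrast, the count- and degree-test analyses are routine once the variance--mean bound and the identity $\chi^2(p\|q)=(p-q)^2/(q(1-q))$ are in hand.
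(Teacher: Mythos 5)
Your count- and degree-test arguments are essentially the paper's own: Chebyshev around the midpoint threshold for $T_{\s{count}}$ under both hypotheses, and a union bound with a Bernstein/Chernoff tail for $T_{\s{deg}}$ under $\calH_0$ together with a single-planted-vertex tail bound under $\calH_1$. Your scan-test type-I analysis (union over $\calS_{\Gamma_{\max}}$, bounded by $n^{|v(\Gamma_{\max})|}$, against the Chernoff exponent $|e(\Gamma_{\max})|\,d_{\s{KL}}(\kappa\|q)$, so that the per-vertex exponent is $\mu(\Gamma)\,d_{\s{KL}}(\kappa\|q)-\log n$) is also exactly the paper's. The only substantive deviation is the scan type-II error: you invoke the lower-tail Chernoff exponent $|e(\Gamma_{\max})|\,d_{\s{KL}}(\kappa\|p)$ and a KL-interpolated threshold, whereas the paper applies Chebyshev to the planted copy, obtaining $p(1-p)/\bigl((p-\kappa)^2|e(\Gamma_{\max})|\bigr)$ with $\kappa=\epsilon q+(1-\epsilon)p$. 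Either bound can be made to work; the issue is how you justify that the type-II term vanishes.

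The genuine gap is your claim that $p\,|e(\Gamma_{\max})|\to\infty$ is ``exactly what buys'' $|e(\Gamma_{\max})|\,d_{\s{KL}}(\kappa_n\|p_n)\to\infty$. This fails in the regime $p-q=o(q)$ (i.e.\ $p/q\to1$), which the theorem, being stated for arbitrary $p_n,q_n$, must cover, and which is precisely where the paper does its careful work. There, any admissible threshold — including yours, since $d_{\s{KL}}(\kappa\|q)=c\,d_{\s{KL}}(p\|q)$ forces $\kappa-q\approx\sqrt{c}\,(p-q)$ — satisfies $p-\kappa=\Theta(p-q)=o(p)$, hence $d_{\s{KL}}(\kappa\|p)\asymp(p-q)^2/p$ and $|e(\Gamma_{\max})|\,d_{\s{KL}}(\kappa\|p)\asymp p\,|e(\Gamma_{\max})|\cdot\bigl((p-q)/p\bigr)^2$, which $p\,|e(\Gamma_{\max})|\to\infty$ alone does not force to diverge; moreover neither mechanism in your case split applies here (the multiplicative gap $1-\kappa_n/p_n$ vanishes, and $d_{\s{KL}}(\kappa_n\|p_n)\to0$). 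The repair — and what the paper actually does — is to draw the divergence from the main hypothesis rather than from $p\,|e(\Gamma_{\max})|\to\infty$: since $\mu(\Gamma)\le|e(\Gamma_{\max})|$ and, in this regime, $d_{\s{KL}}(\kappa\|q)\asymp\chi^2(\kappa||q)\asymp(p-q)^2/q$ with $q\asymp p$, the type-I requirement $\mu(\Gamma)\,d_{\s{KL}}(\kappa\|q)\gtrsim\log n\to\infty$ already yields $|e(\Gamma_{\max})|\,(p-q)^2/p\to\infty$, which is exactly the divergence needed for the type-II bound (the paper's Chebyshev bound and your Chernoff exponent alike). With that observation substituted into your case analysis the outline closes; as written, the sub-case $p-q=o(q)$ is a hole.
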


\begin{proof}[Proof of Theorem~\ref{thm:upperBoundAlgo}] To prove the theorem, we will upper bound the risk associated with each one of the tests we proposed, starting with the count test.

\paragraph{Count test.} We start with the Type-I error probability. Under $\calH_0$, we note that $T_{\s{count}}(\s{A}_n)\sim\s{Binomial}(\binom{n}{2},q)$. By Chebyshev's inequality,
\begin{align}
    \pr_{\calH_0}\pp{\phi_{\s{count}}(\s{A}_n)=1} &\leq \pr_{\calH_0}\pp{\abs{T_{\s{count}}(\s{A}_n)-\bE_{\calH_0}[T_{\s{count}}(\s{A}_n)]}\geq |e{\left(\Gamma\right)}| \frac{p-q}{2}}\\
    &\leq \frac{4\binom{n}{2}q(1-q)}{|e{\left(\Gamma\right)}|^2(p-q)^2}\\
    & \leq 2\frac{n^2}{|e{\left(\Gamma\right)}|^2\chi^2(p||q)}.\label{eqn:CountTypeI}
\end{align}
Under $\calH_1$, conditioned on the random draw of $\Gamma$, we note that $T_{\s{count}}(\s{A}_n)$ is merely the
independent sum of $\mathsf{Binomial}\p{|e(\Gamma)|,p}$ and $\mathsf{Binomial}\p{\binom{n}{2}-|e(\Gamma)|,q}$. Applying Chebyshev's inequality once again,
\begin{align}
    \pr_{\calH_1}\pp{\phi_{\s{count}}(\s{A}_n)=0} &= \bE_{\Gamma}\pr_{\calH_1\vert\Gamma}\pp{\phi_{\s{count}}(\s{A}_n)=0}\\
    &= \bE_{\Gamma}\pr_{\calH_1\vert\Gamma}\pp{T_{\s{count}}(\s{A}_n)-\bE_{\calH_1}[T_{\s{count}}(\s{A}_n)]\leq-|e{\left(\Gamma\right)}| \frac{p-q}{2}}\\
    &\leq \bE_{\Gamma}\pr_{\calH_1\vert\Gamma}\pp{\abs{T_{\s{count}}(\s{A}_n)-\bE_{\calH_1}[T_{\s{count}}(\s{A}_n)]}\geq|e{\left(\Gamma\right)}| \frac{p-q}{2}}\\
    &\leq \frac{4\cdot\pp{|e(\Gamma)|[p(1-p)-q(1-q)]+\binom{n}{2}q(1-q)}}{|e{\left(\Gamma\right)}|^2(p-q)^2}\\
    &\leq \frac{4\cdot\pp{|e(\Gamma)|(p-q)+\binom{n}{2}q(1-q)}}{|e{\left(\Gamma\right)}|^2(p-q)^2}\\
    &\leq\frac{4}{|e(\Gamma)|(p-q)}+2\frac{n^2}{|e{\left(\Gamma\right)}|^2\chi^2(p||q)}.\label{eqn:CountTypeII}
\end{align}
Therefore, based on \eqref{eqn:CountTypeI} and \eqref{eqn:CountTypeII}, we have $\s{R}_n(\phi_{\s{count}})\to0$, as $n\to\infty$, if \eqref{eqn:countCondUpper} holds.

\paragraph{Degree test.} Let $\s{W}_i(\s{A}_n)\triangleq \sum_{j\in[n]} A_{ij}$, for $i\in[n]$. Then, under $\calH_0$, it is clear that $\s{W}_i(\s{A}_n)\sim \mathsf{Binomial}{\left(n-1,q\right)}$. Therefore, by the union bound and Bernstein's inequality, 
\begin{align}
\pr_{\calH_0}\p{\phi_{\s{deg}}(\s{A})=1} &=\pr_{\calH_0}\p{\max_{i\in[n]}\s{W}_i(\s{A}_n)\geq \tau_{\s{deg}}}\\
    &\leq n \cdot \exp{\left(-
    \frac{d^2_{\max}\left(\Gamma\right) \cdot {\left(p-q\right)}^2/4}{2\left(n-1\right)q(1-q)+d_{\max}\left(\Gamma\right)\cdot{\left(p-q\right)}/3}\right)}    \\
    &\leq \exp{\left(\log{n}-
    \frac{d^2_{\max}\left(\Gamma\right) \cdot {\left(p-q\right)}^2/8}{\left(n-1\right)q(1-q)+d_{\max}\left(\Gamma\right)\cdot{\left(p-q\right)}}\right)}.\label{eqn:degreeTypeI}
\end{align}
Under $\calH_1$, by definition there is at least one row $i^{\star}$ such that $\sum_{j\in[n]} \s{A}_{i^{\star} j}$ is distributed as the independent sum of $\mathsf{Binomial}{\left(d_{\max}\left(\Gamma\right), p\right)}$ and $\mathsf{Binomial}{\left(n-1-d_{\max}\left(\Gamma\right), q\right)}$.  Therefore, by the multiplicative Chernoff's bound, we get,
\begin{align}
    \pr_{\calH_1}\p{\phi_{\s{deg}}(\s{A})=0} &=\pr_{\calH_1}\p{\max_{i\in[n]}\s{W}_i(\s{A}_n)< \tau_{\s{deg}}}\\
    &=\sum_{\Gamma_0\in\calS_{\Gamma}}\pr_{\calH_1}\p{\left.\max_{i\in[n]}\s{W}_i(\s{A}_n)< \tau_{\s{deg}} \right| \Gamma=\Gamma_0}\cdot \pr\p{\Gamma=\Gamma_0}\\
    &\leq \sum_{\Gamma_0\in\calS_{\Gamma}}\pr_{\calH_1} \p{\left.\s{W}_{i^\star(\Gamma_0)}(\s{A}_n)< \tau_{\s{deg}} \right| \Gamma=\Gamma_0}\cdot \pr\p{\Gamma=\Gamma_0}\\
    &\leq \sum_{\Gamma_0\in\calS_{\Gamma}}\exp{\left(-
    \frac{ d^2_{\max}\left(\Gamma\right) \cdot {\left(p-q\right)}^2/4}{2\left(n-1\right)q+2\cdot d_{\max}\left(\Gamma\right)\cdot{\left(p-q\right)}}\right)}\pr\p{\Gamma=\Gamma_0}\\
    &\leq \exp{\left(-
    \frac{ d^2_{\max}\left(\Gamma\right) \cdot {\left(p-q\right)}^2/8}{\left(n-1\right)q+ d_{\max}\left(\Gamma\right)\cdot{\left(p-q\right)}}\right)}.\label{eqn:degreeTypeII}
\end{align}
Examining \eqref{eqn:degreeTypeI} and \eqref{eqn:degreeTypeII}, we see that the risk is dominated by the former, and as so, vanishes whenever \eqref{eqn:degreeTypeI} converges to zero, as $n\to\infty$. A sufficient condition for this is that,
\begin{align}
    \liminf_{n\to\infty}\pp{\frac{d^2_{\max}(\Gamma)\chi^2(p||q)}{n\log n}\wedge\frac{d_{\max}(\Gamma)(p-q)}{\log n}}>16.\label{eqn:Low16}
\end{align}
\begin{rmk}
By optimizing the threshold $\tau_{\s{deg}}$, the factor at the right-hand-side of \eqref{eqn:Low16} can be reduced to $2$.
\end{rmk}

\paragraph{Scan test.} As before, we start with the analysis of the Type-I error probability. For any $\bar{\Gamma}\in\calS_{\Gamma_{\max}}$, let $\s{W}(\bar{\Gamma})\triangleq\sum_{(i,j)\in\bar{\Gamma}}\s{A}_{ij}$. Then, under $\calH_0$, we have $\s{W}(\bar{\Gamma})\sim \mathsf{Binomial}(|e(\bar{\Gamma}|), q)$. Thus, by the union bound and classical tail probabilities \cite{arratia1989tutorial},
\begin{align}
\pr_{\calH_0}\p{\phi_{\s{scan}}(\s{A})=1} &= \pr_{\calH_0}\pp{\max_{\bar{\Gamma}\in\calS_{\Gamma_{\max}}}\s{W}(\bar{\Gamma})\geq\kappa\cdot |e(\Gamma_{\max})|}\\
&\leq \sum_{\bar{\Gamma}\in\calS_{\Gamma_{\max}}}\pr_{\calH_0}\pp{\s{W}(\bar{\Gamma})\geq\kappa\cdot |e(\Gamma_{\max})|}    \\
    &\leq \binom{n}{|v(\Gamma_{\max})|}\cdot\frac{|v(\Gamma_{\max})| !}{\vert\mathsf{Aut}{\left(\bar{\Gamma}\right)}\vert}\cdot e^{-|e(\Gamma_{\max})|\cdot d_{\s{KL}}(\kappa||q)} \\
    &\leq \exp\p{|v(\Gamma_{\max})| \cdot \log{n}-|e(\Gamma_{\max})|\cdot d_{\s{KL}}(\kappa||q)},\\
    &=\exp\p{-|v(\Gamma_{\max})| \cdot \log{n}\pp{\frac{\mu(\Gamma)d_{\s{KL}}(\kappa||q)}{\log n}-1}},\\
    &=n^{-|v(\Gamma_{\max})|\cdot\pp{\frac{\mu(\Gamma)d_{\s{KL}}(\kappa||q)}{\log n}-1}},
\end{align}
where in the above we assume that $\kappa\geq q$. Thus, as long as $|e(\Gamma_{\max})|\cdot d_{\s{KL}}(\kappa||q)-|v(\Gamma_{\max})| \cdot \log{n}\to\infty$, or, equivalently, 
\begin{align}
    \liminf_{n\to\infty}\frac{\mu(\Gamma)d_{\s{KL}}(\kappa||q)}{\log n}>1,
\end{align}
the Type-I error probability will converge to zero. Next, under $\calH_1$, there exists a subgraph $\bar{\Gamma}^\star\in\calS_{\Gamma_{\max}}$, such that $\sum_{(i,j)\in\bar{\Gamma}^\star}\s{A}_{ij} \sim \mathsf{Binomial}{\left(|e{\left(\bar{\Gamma}\right)|}, p\right)}$. Therefore, by Chebyshev's inequality, for $\kappa<p$,
\begin{align}
\pr_{\calH_1}\p{\phi_{\s{scan}}(\s{A})=0} &= \sum_{\Gamma_0\in\calS_{\Gamma}}\pr_{\calH_1}\pp{\left.\max_{\bar{\Gamma}\in\calS_{\Gamma_{\max}}}\s{W}(\bar{\Gamma})<\kappa\cdot |e(\Gamma_{\max})| \right| \Gamma=\Gamma_0}\cdot \pr\p{\Gamma=\Gamma_0}\\
&\leq \sum_{\Gamma_0\in\calS_{\Gamma}}\pr_{\calH_1}\pp{\left.\s{W}(\bar{\Gamma}^\star(\Gamma_0))<\kappa\cdot |e(\Gamma_{\max})|\right| \Gamma=\Gamma_0}\cdot \pr\p{\Gamma=\Gamma_0}\\
&\leq \sum_{\Gamma_0\in\calS_{\Gamma}}\frac{|e(\Gamma_{\max})|p(1-p)}{(p-\kappa)^2|e(\Gamma_{\max})|^2}\cdot  \pr\p{\Gamma=\Gamma_0}   \\
    & = \frac{p(1-p)}{(p-\kappa)^2|e(\Gamma_{\max})|}.
\end{align} 
Consider the convex combination $\kappa = \epsilon q+(1-\epsilon)p$. Note that for any $\epsilon\in(0,1]$ we have $q\leq\kappa<p$, as required above. For this choice,
\begin{align}
\pr_{\calH_1}\p{\phi_{\s{scan}}(\s{A})=0} &\leq \frac{p(1-p)}{\epsilon^2(p-q)^2|e(\Gamma_{\max})|}.
\end{align} 
Thus, our conditions for the Type-I and II error probabilities to converge to zero are
\begin{align}
    \liminf_{n\to\infty}\frac{\mu(\Gamma)d_{\s{KL}}(\kappa||q)}{\log n}&>1,\label{eqn:cond1Scan}\\
    \frac{p(1-p)}{\epsilon^2(p-q)^2|e(\Gamma_{\max})|}&\to0.\label{eqn:cond2Scan}
\end{align}
These conditions can be simplified easily as follows. Recall that we assume that $p>q$, and consider the case where $\frac{p}{q}>c>1$. In this case, the condition in \eqref{eqn:cond2Scan} simplifies to,
\begin{align}
    \frac{(1-p)}{\epsilon^2p(p/q-1)^2|e(\Gamma_{\max})|}\leq \frac{1}{\epsilon^2p(c-1)^2|e(\Gamma_{\max})|},
\end{align}
which converges to zero provided that $p\cdot |e(\Gamma_{\max})|=\omega(1)$, for any $\epsilon>0$. Taking $\epsilon$ sufficiently small, the scan test is successful provided that
\begin{align}
    \liminf_{n\to\infty}\frac{\mu(\Gamma)d_{\s{KL}}(p||q)}{\log n}>1.
\end{align}
Now, consider the case where $\frac{p}{q}$ converges to unity. In this case, we have,
\begin{align}
    d_{\s{KL}}(p||q) &= p\log \frac{p}{q}+(1-p)\log\frac{1-p}{1-q}\\
    & = p\cdot\log \p{1+\frac{p}{q}-1}+(1-p)\log\pp{1-\frac{p-q}{1-q}}\\
    & = p\cdot\pp{\frac{p}{q}-1+O\p{\frac{p}{q}-1}^2}-(1-p)\cdot\pp{\frac{p-q}{1-q}+O\p{\frac{p-q}{1-q}}^2}\\
    & = \frac{(p-q)^2}{q(1-q)}+O\p{\frac{(p-q)^3}{q^2}}\\
    & = \chi^2(p||q)+O\p{\frac{(p-q)^3}{q^2}}.
\end{align}
In the regime $\frac{p}{q}\to1$, it also holds that $\frac{\kappa}{q}\to1$, and thus, \eqref{eqn:cond1Scan} boils down to,
\begin{align}
    \mu(\Gamma)\chi^2(\kappa||q)>\log n,
\end{align}
as $n\to\infty$, which in light of the fact that $\mu(\Gamma)<|e(\Gamma_{\max})|$, implies that \eqref{eqn:cond2Scan} is satisfied. Therefore, only \eqref{eqn:cond1Scan} prevails, which concludes the proof.
    
\end{proof}

\section{Information-Theoretic Lower Bounds}\label{sec:statlowerbound}
This section is dedicated to establishing and proving general information-theoretic (or statistical) lower bounds on the risk of the detection problem stated in Section~\ref{sec:problem}. For the sake of clarity and to transparently present the main ideas and steps of our proof, we begin with an overview of the key stages involved in our strategy. The missing details and proofs of the statements below are provided in Sections~\ref{sec:Polynomial} and \ref{subsec:denseRegime}. While our derivations are essentially independent of the specific scaling of $p$ and $q$ with $n$, to keep the exposition simple, we first consider the dense regime where $\lambda^2\triangleq\chi^2(p||q)=\Theta(1)$ in Subsection~\ref{subsec:denseRegime}, and then generalize our techniques and results in Subsection~\ref{subsec:otherRegimes}. 

\subsection{Lower bounds proof outline}\label{subsec:outline}

In this subsection, we outline the main steps and ideas behind our proofs. As mentioned above, to simplify the exposition, we focus our discussion on the dense regime, with the understanding that our arguments can be generalized to any value of $p$ and $q$. Generally speaking, our proofs hinges on the following four main steps.

\paragraph{1. Analysis via polynomial decomposition.} Our analysis begins with a well known argument, which suggests that the possibility of detection can be ruled out by showing that the second moment of the likelihood ratio, i.e., $\E_{\calH_0}[\s{L}^2(\s{G})]$, is bounded. It is a standard result that this second moment is given by,
\begin{align}
             \E_{\calH_0}\pp{\s{L}^2(\s{G})}=\E_{\Gamma\sim \s{Unif}(\calS_\Gamma)}\pp{(1+\lambda^2)^{|e(\Gamma\cap \Gamma')|}}.\label{eq:SecondMomentExpressionFolk}
         \end{align}
where $\Gamma'$ is a fixed (arbitrary) copy of $\Gamma$ in $\calK_n$, the probability is taken w.r.t. to a random copy $\Gamma\sim \s{Unif}(\calS_\Gamma)$, $\lambda^2$ denotes $\chi^2(p||q)$, and $|e(\Gamma\cap\Gamma')|$ is the number of edges in the intersection of $\Gamma$ and $\Gamma'$. The expectation term in the r.h.s. of \eqref{eq:SecondMomentExpressionFolk} is often a starting point in the analysis of virtually any planted structure considered in the literature. As it turns out, the following equivalent representation, which results from projecting the likelihood function onto its orthogonal polynomial components, proves to be quite useful,
\begin{align}
             \E_{\calH_0}\pp{\s{L}(\s{G})^2}=\sum_{\s{H}\subseteq \Gamma'}\lambda^{2|\s{H}|}\cdot\P_{\Gamma\sim \s{Unif}(\calS_\Gamma)}\pp{\s{H}\subseteq \Gamma},\label{eq:SecondMomentExpressioniNT}
         \end{align}
where the summation is taken over subgraphs that do not contain isolated vertices. To best of our knowledge, the expression on the r.h.s. of \eqref{eq:SecondMomentExpressioniNT} has not been previously used to study the second moment and is crucial for establishing our results.

\paragraph{2. Vertex covers.} In light of \eqref{eq:SecondMomentExpressioniNT}, to upper bound the second moment, it suffices to bound $\P_{\Gamma}[\s{H}\subseteq \Gamma]$, for any $\s{H}\subseteq\Gamma'$. It straightforward to show that $\P_{\Gamma}[\s{H}\subseteq \Gamma]=\frac{\calN(\s{H},\Gamma)}{|\calS_{\s{H}}|}$, where $\calN(\s{H},\Gamma)$ denotes the number of times $\s{H}$ appears as a
subgraph of $\Gamma$ (see, Definition~\ref{def:CountsSubg}). Thus, a natural approach is to bound this combinatorial quantity directly. As it turns out, this quantity is closely related to the study of upper tails for subgraph counts in random graphs, a topic that has received significant attention over the past four decades, with several existing strong upper bounds, e.g., \cite{Alon1981OnTN,Janson,Friedgut1998OnTN,Jaikumar,Wojciech}, and many reference therein. However, these bounds, even the best-known ones, fail to capture the correct statistical barrier when applied on \eqref{eq:SecondMomentExpressioniNT} (for more details, see Appendix~\ref{app:JansonType}). 

Interestingly, the probability in \eqref{eq:SecondMomentExpressioniNT} can be rather easily upper bounded in terms of four quantities: $|v(\s{H})|$, $d_{\max}(\Gamma)$, the number of connected components of $\s{H}$, which we denote by $m(\s{H})$, and the vertex cover number $\vc(\Gamma)$, i.e., the minimal size of a vertex cover of $\Gamma$ (see, Definition~\ref{def:coverNumber}). Specifically, as we show in Lemma~\ref{lem:probRandomSubgraph1},
\begin{align}
    \P_{\Gamma}\pp{\s{H}\subseteq\Gamma}\leq \frac{[2\vc(\Gamma)]^{m(\s{H})} [d_{\max}(\Gamma)]^{|v(\s{H})|-m(\s{H})}}{(n-|v(\Gamma)|)^{|v(\s{H})|}}\triangleq \vartheta(m(\s{H}),v(\s{H})).\label{eqn:upperBouondOnProb}
\end{align}
Accordingly, when \eqref{eqn:upperBouondOnProb} is applied on \eqref{eq:SecondMomentExpressioniNT}, it becomes evident that the summands depend on $\s{H}$ only through the values of $(|v(\s{H})|,|\s{H}|,m(\s{H}))$. Thus, if we define $\calS_{m,\ell,j}$ as the set of all subgraphs of $\Gamma$ with exactly $m$ connected components, $\ell$ vertices, and $j$ edges, then, loosely speaking,
\begin{align}
    \E_{\calH_0}\pp{\s{L}(\s{G})^2}\leq\sum_{(m,\ell,j)}\lambda^{2j}\vartheta(m,\ell)|\calS_{m,\ell,j}|.
\end{align}
Using certain powerful combinatorial arguments, we were able to derive an upper bound on $|\calS_{m,\ell,j}|$, which ultimately leads to the following impossibility result.
\begin{theorem}\label{th:lowerVCD}
    Let $\Gamma=(\Gamma_n)_n$ be a sequence of graphs. Then $\E_{\calH_0}[\s{L}(\s{G})^2]=O(1)$ (and therefore strong detection is impossible) if
\begin{align}
        \frac{ \p{1+\chi^2(p||q)}^{\mu(\Gamma) }\cdot\max\p{\vc(\Gamma)d_{\max}(\Gamma),d^2_{\max}(\Gamma)}}{n-|v(\Gamma)|}\leq C, \label{eq:LowerMainCond1}
            \end{align}
for some $C>0$.
         \end{theorem}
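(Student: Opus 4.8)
The plan is to start from the orthogonal-polynomial expansion of the second moment, namely \eqref{eq:SecondMomentExpressioniNT}, and group the sum according to the isomorphism-type invariants $(m(\s{H}),|v(\s{H})|,|\s{H}|)$ of the subgraphs $\s{H}\subseteq\Gamma'$. By Lemma~\ref{lem:probRandomSubgraph1}, the probability $\P_\Gamma[\s{H}\subseteq\Gamma]$ is controlled by $\vartheta(m(\s{H}),v(\s{H}))$ as in \eqref{eqn:upperBouondOnProb}, which depends on $\s{H}$ only through $m(\s{H})$ and $|v(\s{H})|$. Thus the whole sum reorganizes as $\E_{\calH_0}[\s{L}^2]\le\sum_{m,\ell,j}\lambda^{2j}\,\vartheta(m,\ell)\,|\calS_{m,\ell,j}|$, and everything reduces to a good upper bound on $|\calS_{m,\ell,j}|$, the number of subgraphs of $\Gamma$ with $m$ components, $\ell$ vertices, $j$ edges. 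First I would record the trivial but important constraint that any such subgraph, being a union of $m$ nonempty connected pieces sitting inside $\Gamma$, satisfies $j\ge\ell-m$ and, because each connected component $\s{H}_i$ of $\s{H}$ has $|e(\s{H}_i)|\le\mu(\Gamma)\,|v(\s{H}_i)|$ (definition of maximum subgraph density applied componentwise), also $j\le\mu(\Gamma)\,\ell$. These two inequalities pin $j$ to the window $[\ell-m,\mu(\Gamma)\ell]$ and will be what converts the $\lambda^{2j}$ factor into the $(1+\chi^2(p\|q))^{\mu(\Gamma)}$ appearing in \eqref{eq:LowerMainCond1}.

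The core combinatorial step is to bound $|\calS_{m,\ell,j}|$. I would build a subgraph with $m$ components and $\ell$ vertices by the following encoding: pick the component sizes $\ell_1+\cdots+\ell_m=\ell$; for each component choose a spanning tree rooted somewhere in $\Gamma$ — a rooted subtree of $\Gamma$ on $\ell_i$ vertices can be grown greedily, each new vertex being a neighbour of an already-chosen one, so there are at most (roughly) $|v(\Gamma)|\cdot d_{\max}(\Gamma)^{\ell_i-1}$ of them, but since the roots collectively are $m$ vertices of $\Gamma$ that must be "anchored" I would instead anchor each component to the vertex cover: every edge of $\Gamma$, hence every component, touches $\vc(\Gamma)$, so each component can be specified by one of its cover-vertices (at most $\vc(\Gamma)$ choices) together with a $d_{\max}(\Gamma)$-branching growth process, giving $\le \vc(\Gamma)\,d_{\max}(\Gamma)^{\,\ell_i-1}$ per component; then the remaining $j-(\ell-m)$ edges are chosen among the $\binom{\ell}{2}$ possible slots. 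Multiplying and summing over compositions $(\ell_i)$ — whose number is $\binom{\ell-1}{m-1}\le 2^{\ell}$ — yields a bound of the shape $|\calS_{m,\ell,j}|\le (\s{C})^{\ell}\,\vc(\Gamma)^m\,d_{\max}(\Gamma)^{\ell-m}\,\binom{\binom{\ell}{2}}{\,j-\ell+m\,}$, i.e. exactly the combination $\vc(\Gamma)^m d_{\max}(\Gamma)^{\ell-m}$ that cancels the numerator of $\vartheta(m,\ell)$ up to the $1/(n-|v(\Gamma)|)^\ell$ denominator. This matching of $\vc\cdot d_{\max}^{\,\ell-m}$ between $|\calS_{m,\ell,j}|$ and $\vartheta$ is the mechanism that produces $\max(\vc\, d_{\max},d_{\max}^2)^{?}$ — more precisely, one gets a factor $[2\vc(\Gamma)d_{\max}(\Gamma)]^m [d_{\max}(\Gamma)]^{\ell-2m}\cdot(\text{edge slots})/(n-|v(\Gamma)|)^\ell$ and then bounds $[d_{\max}]^{-m}$ crudely and re-parametrizes so that the per-component cost is $\le \max(\vc\, d_{\max},d_{\max}^2)/(n-|v(\Gamma)|)$ times a geometric-series-summable remainder.

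After the combinatorial bound is in hand, the remaining work is analytic bookkeeping: substitute into $\sum_{m,\ell,j}\lambda^{2j}\vartheta(m,\ell)|\calS_{m,\ell,j}|$, use $j\le\mu(\Gamma)\ell$ to replace $\lambda^{2j}$ by $(1+\lambda^2)^{\mu(\Gamma)\ell}$ at the cost of the binomial sum over edge slots (which is $\le (1+\lambda^2)^{\binom{\ell}{2}}$-type bound but is actually controlled more tightly by noting $\sum_k\binom{\binom{\ell}{2}}{k}\lambda^{2k}=(1+\lambda^2)^{\binom{\ell}{2}}$ is too weak — instead I would keep the constraint $j\ge\ell-m$ inside and recognize the sum over $j$ as a truncated binomial that is dominated by $(1+\lambda^2)^{\mu(\Gamma)\ell}$ after using $\mu(\Gamma)\ell-(\ell-m)\le\mu(\Gamma)\ell$), and then the whole thing telescopes into a geometric series in $\ell$ with ratio
\[
\s{C}\cdot\frac{(1+\chi^2(p\|q))^{\mu(\Gamma)}\,\max\!\big(\vc(\Gamma)d_{\max}(\Gamma),\,d_{\max}^2(\Gamma)\big)}{n-|v(\Gamma)|},
\]
which is $\le 1/2$ by hypothesis \eqref{eq:LowerMainCond1} for a suitable absolute constant, so the series sums to $O(1)$; the $m=0$, $\ell=0$ term contributes the leading $1$. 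Finally $\E_{\calH_0}[\s{L}^2]=O(1)$ implies $d_{\s{TV}}(\pr_{\calH_0},\pr_{\calH_1})$ bounded away from $1$, hence strong detection is impossible, by the standard second-moment argument recalled around \eqref{eq:SecondMomentExpressionFolk}.

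The main obstacle I anticipate is getting the combinatorial count $|\calS_{m,\ell,j}|$ tight enough: a naive "choose $\ell$ vertices then $j$ edges of $\Gamma$ among them" bound loses badly, and one genuinely needs to exploit \emph{both} that components are connected (the $d_{\max}$-branching/spanning-tree growth) \emph{and} that $\Gamma$ has a small vertex cover (to anchor the $m$ components cheaply), while simultaneously not double-counting edges and keeping the per-component bookkeeping compatible with the componentwise density bound $j_i\le\mu(\Gamma)\ell_i$. Balancing the $\vc$ versus $d_{\max}^2$ trade-off — which is exactly why the $\max(\vc\, d_{\max},d_{\max}^2)$ appears rather than a single term — and making sure the leftover combinatorial factors ($2^\ell$ from compositions, constants from the growth process, the truncated binomial over extra edges) are all absorbable into the single constant $\s{C}$ and do not spoil the geometric summation, is where the real care is needed.
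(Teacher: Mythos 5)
Your overall architecture is the same as the paper's: expand the second moment via \eqref{eq:SecondMomentExpressioniNT}, control $\P_\Gamma[\s{H}\subseteq\Gamma]$ by the vertex-cover bound of Lemma~\ref{lem:probRandomSubgraph1}, reorganize the sum by $(m,\ell,j)$, and close with a geometric series whose ratio is the left-hand side of \eqref{eq:LowerMainCond1}. However, there is a genuine gap in your bound on $|\calS_{m,\ell,j}|$ at the step where the "extra" $j-(\ell-m)$ edges are chosen \emph{among the $\binom{\ell}{2}$ possible slots}. With that count, the sum over $j$ is $\sum_{t\le \mu\ell-\ell+m}\binom{\binom{\ell}{2}}{t}\lambda^{2t}$, and your claim that this truncated binomial is dominated by $(1+\lambda^2)^{\mu\ell}$ is false once $\mu(\Gamma)\ge1$ (precisely the super-logarithmic-density case where the $(1+\chi^2)^{\mu}$ term matters): already the single term $t=\mu\ell-\ell+m$ is of size $\binom{\binom{\ell}{2}}{t}\lambda^{2t}\gtrsim\bigl(\ell/(2\mu)\bigr)^{t}$, i.e.\ it grows like $\ell^{\Theta(\ell)}$, vastly exceeding $(1+\lambda^2)^{\mu\ell}=e^{O(\ell)}$. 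Carried through, this super-exponential factor destroys the geometric summation and would only yield a condition with $k^{\Theta(\mu)}$ in place of $(1+\lambda^2)^{\mu}$ — too weak to give, e.g., the $\mu(\Gamma)\lesssim\log n$ threshold for cliques. The fix is exactly the point of Lemma~\ref{lem:subgraphsBoundedDegreeCount1}: the chosen edges must be edges of $\Gamma$ inside the selected vertex set $U$, and $|e(G_U)|\le\mu(\Gamma)\ell$ by the definition of maximum subgraph density, so the binomial is over at most $\floor{\mu\ell}$ slots, and then $\sum_j\binom{\floor{\mu\ell}}{j}\lambda^{2j}\le(1+\lambda^2)^{\mu\ell}$ gives the correct per-vertex factor.

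A second, smaller issue: your per-component count "$\le\vc(\Gamma)\,d_{\max}(\Gamma)^{\ell_i-1}$ via greedy growth" is the right target but is not justified by the sketched argument. A naive growth encoding, in which each new vertex is a neighbour of \emph{some} previously chosen vertex, yields $(\ell_i-1)!\,d_{\max}^{\ell_i-1}$ labelled sequences, and one must argue that the overcounting cancels the factorial; this is exactly the content of the nontrivial connected-set counting bound (Lemma~\ref{lem:Bollobas1}), which gives $(e(d_{\max}-1))^{\ell_i-1}$ anchored connected sets. With that lemma (and with compositions replaced by partitions, or simply absorbing your $2^{\ell}$ into the constant $C$, which is legitimate), your counting matches the paper's $\vc^m(e\tilde d)^{\ell-m}$ and the rest of your plan — case split on $\vc^2$ versus $d^2$, geometric series, and the standard second-moment-to-TV conclusion — goes through as in the paper.
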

Replacing the constant $C$ with any $o(1)$ function in \eqref{eq:LowerMainCond1} would also imply the impossibility of weak detection. It is important to emphasize that, as will be seen later, Theorem~\ref{th:lowerVCD} is a special case (tailored to the dense regime) of a more general result (see Theorem~\ref{th:lowerVCFull}), which aims to capture a variety of radically different regimes, in terms of the subgraph edge density and edge probabilities. 

\paragraph{3. Reduction to ``balanced graphs".} Comparing the lower bound in Theorem~\ref{th:lowerVCD} with our upper bounds in Theorem~\ref{thm:upperBoundAlgo}, we observe that they depend on different quantities and, as a result, do not match. Specifically, the vertex cover number $\vc(\Gamma)$ plays a significant role in the lower bound, while the upper bounds do not account for it. An almost immediate observation here is that for the special family of \textit{vertex cover-degree balanced} graphs, for which $\vc(\Gamma)\cdot d_{\max}(\Gamma)\approx |e(\Gamma)|$ (for a formal definition, see Definition~\ref{def:coverdegreebalanced}), our bounds are in fact tight! To demonstrate this, consider the case where $\Gamma$ has sub-logarithmic density, i.e., $\mu(\Gamma)=o\p{\log|v(\Gamma)|}$, in which case the term $\max\p{\vc(\Gamma)d_{\max}(\Gamma),d_{\max}(\Gamma)^2}$ dominates in \eqref{eq:LowerMainCond1}. Therefore, detection is impossible if,
\begin{align}
    \max\p{|e(\Gamma)|^{1+o(1)},d_{\max}(\Gamma)^{2+o(1)}}=\max\p{\vc(\Gamma)d_{\max}(\Gamma),d_{\max}(\Gamma)^2}^{1+o(1)}\leq C\cdot n,
\end{align}
for some constant $C>0$. On the other hand, Theorem~\ref{thm:upperBoundAlgo} shows that the count and maximum degree tests achieve strong detection when
\begin{align}
    \max\p{|e(\Gamma)|,d_{\max}(\Gamma)^2}= \omega( n^{1+o(1)}).
\end{align}
Now, if $\Gamma$ has a super-logarithmic density, i.e., $\mu(\Gamma)=\Omega\p{\log|v(\Gamma)|}$, then $(1+\lambda^2)^{\mu(\Gamma)}$ becomes the dominant term in \eqref{eq:LowerMainCond1}. This implies that detection is impossible if,
\begin{align}
    \mu(\Gamma)\leq \underline{C}\cdot \log n,
\end{align}
while possible, using the scan test, provided that,
\begin{align}
       \mu(\Gamma)\geq \overline{C}\cdot \log n,
\end{align}
for some constants $\overline{C},\underline{C}>0$. Although the family of $\vcd$-balanced graphs is quite diverse and includes virtually all examples studied in the literature (e.g., cliques, complete bi-partite graphs, regular trees, and perfect matching), there are also fairly simple examples of graphs that are not $\vcd$-balanced, and for which the bound in Theorem~\ref{th:lowerVCD} is loose.
\begin{example}\label{ex:StartUnbalanced}
Consider the sequence $\Gamma=(\Gamma_n)_n$, where $\Gamma_n$ is composed of $k=k_n$ disjoint stars with degree $\floor{k^{1/4}}$ and an additional single star with degree $\floor{k^{3/4}}$. Then,
    \begin{align}
        \vc(\Gamma)=k+1, \quad d_{\max}(\Gamma)=\floor{k^{3/4}},\quad |e(\Gamma)|=k\cdot \floor{k^{1/4}}+\floor{k^{3/4}
        }.
    \end{align}
    In particular, if $k=\omega(1)$, then $\Gamma$ is not $\vcd$-balanced because,
    \begin{align}
       \lim_{n\to\infty} \frac{\log\p{ \vc(\Gamma)d_{\max}(\Gamma)}}{\log|e(\Gamma)|}=\frac{7}{5}>1,
    \end{align}
    which implies that there is a gap between the lower bound in Theorem~\ref{th:lowerVCD} and the upper bounds in Theorem~\ref{thm:upperBoundAlgo}. 
\end{example}
As it turns out, however, much more can be proved. To wit, note that the graphs in the example above can be decomposed into two disjoint $\vcd$-balanced graphs $(\Gamma_1, \Gamma_2)$, where $\Gamma_1$ consists of \emph{the} $k$ stars, each with degree $\floor{k^{1/4}}$, and $\Gamma_2$ is the remaining star with degree $\floor{k^{3/4}}$. Since $\Gamma$ has a sub-logarithmic density, Theorem~\ref{thm:upperBoundAlgo} tells us that detection of $\Gamma$ is possible if one of the following conditions holds.
\begin{enumerate}
    \item The count test is successful if $k^{5/4}\approx |e(\Gamma)|\geq n^{1+o(1)}$, \emph{which also serves as a sufficient condition for detecting $\Gamma_1$ if planted alone}.
    \item The degree test is successful if $ k^{3/2}\approx d_{\max}(\Gamma)^2\geq C\cdot n\log n= n^{1+o(1)}$, \emph{which also serves as a sufficient condition for detecting $\Gamma_2$ if planted alone}.
\end{enumerate} 
The example above raises the following natural question: \emph{is it true that whenever $\Gamma$ can be decomposed into, say, $\Gamma_1\cup \Gamma_2$, the conditions under which detection of $\Gamma$ is possible are sufficient for the detection of at least one of its components $\Gamma_1$ or $\Gamma_2$? Or, equivalently, is it true that the conditions under which both $\Gamma_1$ and $\Gamma_2$ are undetectable are sufficient to rule out the possibility of detection for $\Gamma$?}

We prove that the answer to the above question is positive, thereby bridging the gap in Theorem~\ref{th:lowerVCD}. Loosely speaking, we show that any graph $\Gamma$ can be decomposed into, or reduced to, a finite union of edge-disjoint subgraphs, each of which is ``locally" balanced. Consequently, each subgraph can be controlled individually using (a close variant of) Theorem~\ref{th:lowerVCD}. This reduction relies on the following two meta-arguments:
\begin{itemize}
    \item[($\s{Arg}_1$)] Let $\Gamma=\bigcup_{\ell=1}^{M}\Gamma_\ell$ be a decomposition of $\Gamma$ into a sequence of edge-disjoint subgraphs $\{\Gamma_\ell\}_{\ell=1}^{M}$, where $M$ is independent of $n$. If $\Gamma_\ell$ satisfies \eqref{eq:LowerMainCond1} with $\lambda_M^2\triangleq (1+\lambda^2)^{M^2}-1$, \emph{for all} $\ell\in[M]$, then detection of $\Gamma$ is impossible, i.e., $\E_{\calH_0}[\s{L}(\s{G})^2]=O(1)$.
    \item[($\s{Arg}_2$)] For any $\varepsilon>0$, there exists a decomposition $\Gamma=\bigcup_{\ell=1}^{M}\Gamma_\ell$, with $M=\ceil{1/\varepsilon}$, such that,
    \begin{align}
        \vc(\Gamma_\ell)\cdot d_{\max}(\Gamma_\ell)\leq |e(\Gamma)|\cdot n^{\varepsilon},\label{eq:decompostioinProperty}
    \end{align}
    for all $\ell\in[M]$.
\end{itemize}
Before delving into each argument, let us first demonstrate that, when combined, they yield a lower bound that matches the upper bounds in Theorem~\ref{thm:upperBoundAlgo}. Specifically, using $(\s{Arg}_1)$, detection is impossible if,
\begin{align}
    \max_{\ell\in[M]}\frac{ \p{1+\chi^2(p||q)}^{\mu(\Gamma_\ell) }\cdot\max\p{\vc(\Gamma_\ell)d_{\max}(\Gamma_\ell),d^2_{\max}(\Gamma_\ell)}}{n-|v(\Gamma_\ell)|}\leq C.\label{eqn:CondOverAll}
\end{align}
Conjugated with $(\s{Arg}_2)$, and considering the facts that $|v(\Gamma_\ell)|\leq|v(\Gamma)|$, $d_{\max}(\Gamma_\ell)\leq d_{\max}(\Gamma)$, and $\mu(\Gamma_\ell)\leq\mu(\Gamma)$, for all $\ell\in[M]$, the condition in \eqref{eqn:CondOverAll} is certainly implied by,
\begin{align}
    \frac{ \p{1+\chi^2(p||q)}^{\mu(\Gamma) }\cdot\max\p{|e(\Gamma)|\cdot n^{\varepsilon},d^2_{\max}(\Gamma)}}{n-|v(\Gamma)|}\leq C,\label{eqn:CondOverAll2}
\end{align}
which, by the arbitrariness of $\varepsilon$, matches our upper bounds asymptotically. Further details can be found in Subsection~\ref{subsec:denseRegime}).  

Let us briefly walk through the ideas behind $(\s{Arg}_1)$ and $(\s{Arg}_2)$, starting with the former. For any given decomposition $\Gamma=\bigcup_{\ell=1}^M\Gamma_\ell$, applying H\"{o}lder's inequality on \eqref{eq:SecondMomentExpressionFolk}, we get, 
\begin{align}
    \E_{\calH_0}\pp{\s{L}(\s{G})^2}   &=\E_{\Gamma\sim\s{Unif}(\calS_{\Gamma})}\pp{\prod_{i,j=1}^M(1+\lambda^2)^{ |e(\Gamma_i\cap \Gamma'_j)|}}\\
      &\leq\prod_{i,j=1}^M\E_{\Gamma_i\sim\s{Unif}(\calS_{\Gamma_i})}\pp{(1+\lambda^2_M)^{ |e(\Gamma_i\cap \Gamma'_j)|}}^{\frac{1}{M^2}},\label{eq:GammaIeq2Outline}
    \end{align}
    where $\Gamma'_j$ is a fixed copy of $\Gamma_j$ in $\calK_n$, for all $j\in[M]$, induced by the fixed copy $\Gamma'$ of $\Gamma$. Equipped with \eqref{eq:GammaIeq2Outline}, our goal is to analyze the moment-generating function on the r.h.s. of, \eqref{eq:GammaIeq2Outline}; it can be shown that,
    \begin{align}
        \E_{\Gamma_i}\pp{(1+\lambda^2_M)^{ |e(\Gamma_i\cap \Gamma'_j)|}}=\sum_{\s{H}\subseteq \Gamma_j'}\lambda^{2|\s{H}|}\cdot \P_{\Gamma_i} \pp{\s{H} \subseteq \Gamma_i},\label{eq:RunTheSummation}
    \end{align}
    which results in formula similar to the one in \eqref{eq:SecondMomentExpressioniNT}. Consequently, by applying the same ideas as in the proof of Theorem~\ref{th:lowerVCD}, it can be shown that \eqref{eq:RunTheSummation} is bounded if, roughly speaking,
    \begin{align}
        \frac{\p{1+\chi^2(p||q)}^{\mu(\Gamma_i)\wedge\mu(\Gamma_j)}\cdot \max\p{\sqrt{ \tau(\Gamma_i) \tau(\Gamma_j) d_{\max}(\Gamma_i) d_{\max}(\Gamma_j)},d_{\max}(\Gamma_i) d_{\max}(\Gamma_j)}}{n-|v(\Gamma_i)|\wedge|v(\Gamma_j)|}\leq C.\label{eq:CrossingTerms}
    \end{align}   
    Accordingly, for \eqref{eq:GammaIeq2Outline} to be bounded, it is necessary for \eqref{eq:CrossingTerms} to hold for all $i,j\in[M]$. Nonetheless, it is not difficult to verify that this condition is satisfied even if \eqref{eq:CrossingTerms} holds only for $j=i$, and all $i\in[M]$. In other words, each $\Gamma_\ell$ satisfies \eqref{eq:LowerMainCond1} for all $\ell\in[M]$, as argued in $(\s{Arg}_1)$. A detailed analysis of this step is provided in Subsection~\ref{sec:ReudctionToBalanced}. Finally, it remains to establish the existence of a decomposition that satisfies \eqref{eq:decompostioinProperty}, as outlined in $(\s{Arg}_2)$.
    
    \paragraph{4. Balanced decomposition.} We now construct the sequence $\{\Gamma_\ell\}_{\ell=1}^M$ recursively. Let $v_1,\dots,v_{|v(\Gamma)|}$ represent the vertices of $\Gamma$, ordered in descending order according to their degrees in $\Gamma$, and fix $M\in\mathbb{N}$. For each $i\in[M]$, define $\ell_i\in[|v(\Gamma)|]$ as the maximal index such that $\deg(v_{\ell_i})\geq [d_{\max}(\Gamma)]^{\frac{M-i}{M}}$, and set $\ell_0\triangleq0$. Our construction proceeds as follows: We first define $\Gamma_1$ as the subgraph of $\Gamma$ obtained by taking all edges that intersect with any vertex in $\{v_1,\dots,v_{\ell_1}\}$, including all of their neighboring vertices. Next, given $\{\Gamma_1,\dots,\Gamma_{i-1}\}$, we define $\Gamma_{i}$ as the subgraph of $\Gamma$ that includes all edges intersecting with any vertex in the set $\{v_{\ell_{i-1}+1},\dots,v_{\ell_{i}}\}$, excluding any edges already included in the previous subgraphs $\{\Gamma_1\dots,\Gamma_{i-1}\}$. By construction, it is evident that for all $i\in[M]$,
    \begin{align}
         d_{\max}(\Gamma_i)\leq [d_{\max}(\Gamma)]^{\frac{M-i+1}{M}},
    \end{align}
    and,
    \begin{align}
        [\ell_{i}-(\ell_{i-1}+1)]\cdot d_{\max}^{\frac{M-i}{M}}(\Gamma)&\leq \sum_{\ell=\ell_{i-1}+1}^{\ell_{i}} \deg(v_\ell)\leq 2|e(\Gamma)|.
    \end{align}
    Now, by construction $\{v_{\ell_{i-1}+1},\dots,v_{\ell_{i}}\}$ is a vertex cover of $\Gamma_i$, and therefore,
    \begin{align}
        \tau(\Gamma_i)\cdot d_{\max}(\Gamma_{i})&\leq [\ell_{i}-(\ell_{i-1}+1)]\cdot d_{\max}^{\frac{M-i+1}{M}}(\Gamma)\leq 2|e(\Gamma)|\cdot d_{\max}^{\frac{1}{M}}(\Gamma) \leq |e(\Gamma)|\cdot n^{\varepsilon},\label{eq:GoodDecomposition}
    \end{align}
    as argued in $(\s{Arg}_2)$. Complete formal derivations appear in Subsection~\ref{subsec:coverBalancedDecomposition}.

\subsection{Preliminaries: Analysis via polynomial decomposition}\label{sec:Polynomial}
Our goal is to establish a lower bound on the optimal risk from below, thereby ruling out the possibility of successful detection. We begin by recalling the likelihood ratio for our problem, 
\begin{equation}
    \s{L}(\s{G})\triangleq\frac{\mathrm{d}\P_{\calH_1}}{\mathrm{d}\P_{\calH_0}}(\s{G}),\label{eqn:LIKlihood}
\end{equation} 
which is the Radon-Nikodym derivative of $\P_{\calH_1}$ w.r.t. the measure $\P_{\calH_0}$. It is well known (see, e.g., \cite[Theorem 2.2]{tsybakov2004introduction}) that the optimal test $\phi^{\star}$ that minimizes the risk $\s{R}_n$ is the likelihood ratio test defined as,
\begin{align}
\phi^{\star}\left(\s{G}\right) \triangleq \begin{cases}
1,\ &\text{if }\mathsf{L}\left(\s{G}\right) \geq 1\\
0,\ &\text{otherwise},   
\end{cases}
\end{align}
and the associated optimal risk is $ \mathsf{R}^{\star} \triangleq \mathsf{R}\left(\phi^{\star}\left(\s{G}\right)\right)=1-d_{\s{TV}}(\P_{\calH_0},\P_{\calH_1})$. Recalling that $\chi^2(\P_{\calH_1},\P_{\calH_0})=\E_{\calH_0}[\s{L}(\s{G})^2]-1$, it can be shown that (see, e.g., \cite[Sec. 2]{tsybakov2004introduction} and \cite[Prop. 3]{sason2014bounds}),
\begin{align}
    \chi^2(\P_{\calH_1},\P_{\calH_0})\geq \max\p{\frac{1}{2\p{1-d_{\s{TV}}(\P_{\calH_0},\P_{\calH_1})}}-1,\p{2d_{\s{TV}}(\P_{\calH_0},\P_{\calH_1})}^2},
\end{align}
and thus,
\begin{align}
    \mathsf{R}^{\star}&=1-d_{\s{TV}}(\P_{\calH_0},\P_{\calH_1})\geq \max\p{1-\frac{1}{2}\sqrt{\chi^2(\P_{\calH_1},\P_{\calH_0})},\frac{1}{2(1+\chi^2(\P_{\calH_1},\P_{\calH_0}))}}.\label{eqn:lowerBoundSecond}
\end{align}
In particular, we see that $\s{R}^\star$ is bounded away from zero, namely, strong detection is impossible, if $\E[\s{L}(\s{G})^2]$ is bounded. Similarly, $\s{R}^\star$ converge to unity, i.e., weak detection is impossible if $\E_{\calH_0}[\s{L}(\s{G})^2]=1+o(1)$. Accordingly, to rule out the possibility of detection (either strong or weak) it suffices to upper bound the second moment of the likelihood function. Throughout the rest of this section, we will bound $\E_{\calH_0}[\s{L}(\s{G})^2]$, and derive conditions under which $\E_{\calH_0}[\s{L}(\s{G})^2]$ remains bounded or converges to unity.

To that end, we take a Fourier-analytic approach and decompose $\s{L}(\s{G})$ w.r.t. an orthonormal polynomial basis. We denote by $L^2(\calH_0)$ the Hilbert space of random variables over the probability space on which $\P_{\calH_0}$ is defined, with a finite second moment, and equipped with the inner product,
\begin{align}
    \innerP{\varphi(\s{G}),\psi(\s{G})}_{\calH_0}\triangleq \E_{\calH_0}\pp{\varphi(\s{G})\cdot \psi(\s{G})}.
\end{align}
For any non-empty subset of edges in the complete graph $H\subseteq \binom{[n]}{2}$, define the Fourier character $\chi_H$ as,
\begin{align}
     \chi_{H}(\s{G}) \triangleq \prod_{\{i,j\}\in H}\frac{\s{G}_{ij}-q}{\sqrt{q(1-q)}},\label{eqn:Fouriercharacter} 
\end{align}
and $\chi_{\emptyset}(\s{G})\equiv1$, for each $\s{G}\in\{0,1\}^{\binom{n}{2}}$. Note that any subset of edges $H\subseteq \binom{[n]}{2}$ induces a subgraph $\s{H}\subseteq \calK_n$ containing no isolated vertices. In fact, there is a one-to-one correspondence between subgraphs without isolated vertices and subsets of edges. We therefore identify each character $\chi_H$ with a subgraph $\s{H}$ and denote it by $\chi_\s{H}$. Observe that $\chi_\s{H}$ is polynomial in the entries of $\s{G}$, with degree $|e(\s{H})|$, which, with slight abuse of notation, we also denote by $|\s{H}|$. It is well known (and easy to verify) that the set $\ppp{\chi_{\s{H}}}_{\s{H}\subseteq \binom{[n]}{2}}$ forms an orthonormal basis for $L^2(\calH_0)$. Hence, by Parseval's identity we have,
     \begin{align}
         \E_{\calH_0}\pp{\s{L}(\s{G})^2}=\norm{\s{L}(\s{G})}_{\calH_0}^2=\sum_{\s{H}\subseteq \binom{[n]}{2}}\abs{\innerP{\chi_{\s{H}}(\s{G}),\s{L}(\s{G})}}^2.\label{eq:ParsevalInital}
     \end{align}
     Building on \eqref{eq:ParsevalInital}, let us give two equivalent characterizations for $\E_{\calH_0}\pp{\s{L}(\s{G})^2}$.
     \begin{prop}\label{prop:LikelihoodMomentExperession}
         Consider the setting in Section~\ref{sec:problem}, and denote $\lambda^2\triangleq\chi^2(p||q)$. Then,
         \begin{align}
             \E_{\calH_0}\pp{\s{L}(\s{G})^2}&=\sum_{\s{H}'\subseteq \Gamma'}\lambda^{2|\s{H}'|}\cdot\P_{\Gamma}\pp{\s{H}'
             \subseteq \Gamma}\label{eq:SecondMomentExpressionProb1}\\
             &=\sum_{\s{H}'\subseteq \Gamma'}\lambda^{2|\s{H}'|}\cdot\P_{\s{H}}\pp{\s{H}\subseteq \Gamma'}\label{eq:SecondMomentExpressionProb2}\\
             &=\E_{\Gamma}\pp{(1+\lambda^2)^{|e(\Gamma\cap \Gamma')|}},\label{eq:SecondMomentExpression}
         \end{align}
         where $\Gamma'$ is a fixed arbitrary copy of $\Gamma$ in $\calK_n$, the probability in \eqref{eq:SecondMomentExpressionProb2} is taken w.r.t. a random copy $\s{H}$ of $\s{H}'$, the probability and expectation in \eqref{eq:SecondMomentExpressionProb1} and \eqref{eq:SecondMomentExpression} are taken w.r.t. a random copy $\Gamma\sim \s{Unif}(\calS_\Gamma)$, the summation is over subgraphs containing no isolated vertices, and $|e(\Gamma\cap\Gamma')|$ is the number edges in the intersection of $\Gamma$ and $\Gamma'$.
     \end{prop}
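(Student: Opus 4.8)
The plan is to continue from the Parseval identity \eqref{eq:ParsevalInital}, which has already been set up. First I would compute the Fourier coefficients of the likelihood ratio. Changing measure, $\innerP{\chi_{\s{H}}(\s{G}),\s{L}(\s{G})}_{\calH_0}=\E_{\calH_1}[\chi_{\s{H}}(\s{G})]$, and conditioning on the planted copy $\Gamma\sim\s{Unif}(\calS_\Gamma)$, the edges of $\s{G}$ are independent under $\calH_1$, distributed as $\s{Bern}(p)$ on $e(\Gamma)$ and $\s{Bern}(q)$ elsewhere. Hence $\E_{\calH_1}[\chi_{\s{H}}(\s{G})\mid\Gamma]$ factorizes over the edges of $\s{H}$; each edge outside $e(\Gamma)$ contributes a factor $\E[(\s{G}_{ij}-q)/\sqrt{q(1-q)}\mid\Gamma]=0$, while each edge inside $e(\Gamma)$ contributes $\rho\triangleq(p-q)/\sqrt{q(1-q)}$, which obeys $\rho^2=\chi^2(p||q)=\lambda^2$. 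Thus the conditional expectation vanishes unless $\s{H}\subseteq\Gamma$, in which case it equals $\rho^{|\s{H}|}$, so $\innerP{\chi_{\s{H}},\s{L}}=\rho^{|\s{H}|}\,\P_{\Gamma}[\s{H}\subseteq\Gamma]$, and \eqref{eq:ParsevalInital} reads $\E_{\calH_0}[\s{L}(\s{G})^2]=\sum_{\s{H}}\lambda^{2|\s{H}|}\,\P_{\Gamma}[\s{H}\subseteq\Gamma]^2$, the sum ranging over all subsets of edges of $\calK_n$ (equivalently, all subgraphs without isolated vertices).

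The next step is a re-indexing that removes one of the two factors of $\P_{\Gamma}[\s{H}\subseteq\Gamma]$. Since $\P_{\Gamma}[\s{H}\subseteq\Gamma]$ depends on $\s{H}$ only through its isomorphism type, I would group the sum by type $\s{T}$: there are $|\calS_{\s{T}}|$ copies of $\s{T}$ in $\calK_n$, so the sum becomes $\sum_{\s{T}}|\calS_{\s{T}}|\,\lambda^{2|\s{T}|}\,\P_{\Gamma}[\s{T}\subseteq\Gamma]^2$. A short double-counting argument gives the elementary identity $\P_{\Gamma}[\s{T}\subseteq\Gamma]=\calN(\s{T},\Gamma)/|\calS_{\s{T}}|$, where $\calN(\s{T},\Gamma)$ is the number of copies of $\s{T}$ inside a fixed copy of $\Gamma$ (see Definition~\ref{def:CountsSubg}). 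Substituting this for one of the two factors turns the term into $\calN(\s{T},\Gamma)\,\lambda^{2|\s{T}|}\,\P_{\Gamma}[\s{T}\subseteq\Gamma]$, and since $\calN(\s{T},\Gamma)=\calN(\s{T},\Gamma')$ counts precisely the subgraphs of the fixed copy $\Gamma'$ of type $\s{T}$, un-grouping recovers exactly $\sum_{\s{H}'\subseteq\Gamma'}\lambda^{2|\s{H}'|}\,\P_{\Gamma}[\s{H}'\subseteq\Gamma]$, which is \eqref{eq:SecondMomentExpressionProb1}. The representation \eqref{eq:SecondMomentExpressionProb2} then follows from the same identity read symmetrically: for a uniform copy $\s{H}$ of the fixed subgraph $\s{H}'$, $\P_{\s{H}}[\s{H}\subseteq\Gamma']=\calN(\s{H}',\Gamma')/|\calS_{\s{H}'}|=\calN(\s{H}',\Gamma)/|\calS_{\s{H}'}|=\P_{\Gamma}[\s{H}'\subseteq\Gamma]$.

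Finally, \eqref{eq:SecondMomentExpression} is obtained by moving the expectation outside the (finite) sum in \eqref{eq:SecondMomentExpressionProb1}: writing $\P_{\Gamma}[\s{H}'\subseteq\Gamma]=\E_{\Gamma}[\Ind\{\s{H}'\subseteq\Gamma\}]$ gives $\E_{\calH_0}[\s{L}(\s{G})^2]=\E_{\Gamma}\big[\sum_{\s{H}'\subseteq\Gamma\cap\Gamma'}\lambda^{2|\s{H}'|}\big]$, using that a subgraph $\s{H}'$ of $\Gamma'$ also lies in $\Gamma$ iff $e(\s{H}')\subseteq e(\Gamma)\cap e(\Gamma')=e(\Gamma\cap\Gamma')$. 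Since subgraphs of $\Gamma\cap\Gamma'$ without isolated vertices are in bijection with subsets of $e(\Gamma\cap\Gamma')$, the inner sum equals $\prod_{e\in e(\Gamma\cap\Gamma')}(1+\lambda^2)=(1+\lambda^2)^{|e(\Gamma\cap\Gamma')|}$, as claimed. Alternatively, \eqref{eq:SecondMomentExpression} can be proved directly, bypassing the Fourier expansion: write $\s{L}(\s{G})$ as the average over planted copies of the per-edge likelihood ratios, expand the square as a double expectation over two independent copies $\Gamma,\Gamma'$, interchange with $\E_{\calH_0}$, and evaluate the resulting per-edge expectation under $\calH_0$ as $1+\lambda^2$ on $e(\Gamma\cap\Gamma')$ and $1$ on all other edges.

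The calculation is, overall, routine. The two points that require care are the pervasive identification of subsets of edges with subgraphs without isolated vertices — which dictates the range of every summation here — and, more substantively, the isomorphism-type bookkeeping that converts the squared probability $\P_{\Gamma}[\s{H}\subseteq\Gamma]^2$ produced by Parseval into the single-power expressions \eqref{eq:SecondMomentExpressionProb1}--\eqref{eq:SecondMomentExpressionProb2}; this rests on the double-counting identity $\P_{\Gamma}[\s{H}\subseteq\Gamma]=\calN(\s{H},\Gamma)/|\calS_{\s{H}}|$ and its symmetric reading $\P_{\Gamma}[\s{H}\subseteq\Gamma]=\P_{\s{H}}[\s{H}\subseteq\Gamma']$. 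There is no real analytic obstacle beyond this.
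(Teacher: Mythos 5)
Your proof is correct and follows essentially the same route as the paper: Parseval over the Fourier characters with the same per-edge coefficient computation, the double-counting identity $\P_{\Gamma}\pp{\s{H}\subseteq\Gamma}=\calN(\s{H},\Gamma)/|\calS_{\s{H}}|$ of Lemma~\ref{lem:equivRandomCopyOrSubgraphcopy}, and the isomorphism-class regrouping together with a binomial expansion over $e(\Gamma\cap\Gamma')$, which the paper packages into the more general Lemma~\ref{lem:intersectionMomentGenGeneral} (stated for two possibly different graphs because it is reused later). The only difference is organizational: you obtain the single-power sum \eqref{eq:SecondMomentExpressionProb1} first and then the moment-generating form by exchanging the expectation with the finite sum, whereas the paper derives the MGF form by conditioning on the exact intersection graph inside Lemma~\ref{lem:intersectionMomentGenGeneral}; the two derivations are equivalent.
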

    In order to prove Proposition~\ref{prop:LikelihoodMomentExperession}, we will need two technical results. We start with the following definition.
    \begin{definition}\label{def:CountsSubg} Let $\Gamma'$ be a subgraph of $\calK_n$, and $\s{H}'\subseteq \Gamma'$ be a subgraph of $\Gamma'$. Define $\calN(\s{H}',\Gamma')$ as the number of copies of $\s{H}'$ in $\Gamma'$, and $\calM(\s{H}',\Gamma')$ as the number of copies of $\Gamma'$ in $\calK_n$ which contain $\s{H}'$.
    \end{definition}

    \begin{lemma}\label{lem:equivRandomCopyOrSubgraphcopy} Let $\Gamma'$ be a subgraph of $\calK_n$, and $\s{H}'\subseteq \Gamma'$  be a subgraph. Let $\s{H}\sim\s{Unif}(\calS_{\s{H}'})$ be a uniform random copy of $\s{H}'$ in $\calK_n$, and 
    let $\Gamma$ be a uniform random copy of $\Gamma'$. Then,
    \begin{align}
        \P_{\s{H}}\pp{\s{H}\subseteq \Gamma'}=\frac{\calN(\s{H}',\Gamma')}{\abs{\calS_{\s{H}}}}=\frac{\calM(\s{H}',\Gamma')}{|\calS_\Gamma|}=\P_{\Gamma}\pp{\s{H}'\subseteq \Gamma}. \label{eq:calMcalN relation}
    \end{align}
    \end{lemma}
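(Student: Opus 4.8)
The plan is to read off the first and third equalities directly from the definition of the uniform measure, and to derive the middle equality from a double-counting identity. First note that, since $\Gamma$ is a uniform random copy of $\Gamma'$, we have $\calS_\Gamma=\calS_{\Gamma'}$; likewise $\calS_{\s{H}}=\calS_{\s{H}'}$ because $\s{H}$ is a uniform random copy of $\s{H}'$. For the first equality I would argue: the elements of $\calS_{\s{H}'}$ that are contained in $\Gamma'$ are, by Definition~\ref{def:CountsSubg}, exactly the $\calN(\s{H}',\Gamma')$ copies of $\s{H}'$ inside $\Gamma'$, so uniformity of $\s{H}$ on $\calS_{\s{H}'}$ gives $\P_{\s{H}}\pp{\s{H}\subseteq\Gamma'}=\calN(\s{H}',\Gamma')/\abs{\calS_{\s{H}}}$. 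Symmetrically, for the third equality, the elements of $\calS_{\Gamma'}$ that contain the fixed subgraph $\s{H}'$ are exactly the $\calM(\s{H}',\Gamma')$ copies of $\Gamma'$ enveloping $\s{H}'$, so $\P_{\Gamma}\pp{\s{H}'\subseteq\Gamma}=\calM(\s{H}',\Gamma')/\abs{\calS_\Gamma}$.

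It then remains to prove the middle identity, which after clearing denominators reads $\abs{\calS_{\Gamma'}}\cdot\calN(\s{H}',\Gamma')=\abs{\calS_{\s{H}'}}\cdot\calM(\s{H}',\Gamma')$. I would obtain this by counting in two ways the set $\calP$ of pairs $(\bar{\s{H}},\bar\Gamma)$ with $\bar{\s{H}}\in\calS_{\s{H}'}$, $\bar\Gamma\in\calS_{\Gamma'}$, and $\bar{\s{H}}\subseteq\bar\Gamma$. Summing first over $\bar\Gamma$: since $\bar\Gamma\cong\Gamma'$, an isomorphism $\Gamma'\to\bar\Gamma$ induces a bijection between the copies of $\s{H}'$ in $\Gamma'$ and those in $\bar\Gamma$, so the number of copies of $\s{H}'$ inside $\bar\Gamma$ equals the isomorphism invariant $\calN(\s{H}',\Gamma')$; hence $\abs{\calP}=\abs{\calS_{\Gamma'}}\cdot\calN(\s{H}',\Gamma')$. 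Summing first over $\bar{\s{H}}$: I claim that for every $\bar{\s{H}}\in\calS_{\s{H}'}$ the number of $\bar\Gamma\in\calS_{\Gamma'}$ with $\bar\Gamma\supseteq\bar{\s{H}}$ equals $\calM(\s{H}',\Gamma')$. To see this, pick a permutation $\pi$ of $[n]$ carrying $\s{H}'$ onto $\bar{\s{H}}$ — such $\pi$ exists because $\s{H}'$ and $\bar{\s{H}}$ are both subgraphs of $\calK_n$ isomorphic to the same abstract graph, so a graph isomorphism between their (non-isolated) vertex sets extends to a bijection of $[n]$. Since $\pi$ induces an automorphism of $\calK_n$, the map $\bar\Gamma\mapsto\pi(\bar\Gamma)$ is a bijection from $\{\bar\Gamma\in\calS_{\Gamma'}:\s{H}'\subseteq\bar\Gamma\}$ onto $\{\bar\Gamma\in\calS_{\Gamma'}:\bar{\s{H}}\subseteq\bar\Gamma\}$ — it preserves both ``being a copy of $\Gamma'$'' and containment — which proves the claim. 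Hence $\abs{\calP}=\abs{\calS_{\s{H}'}}\cdot\calM(\s{H}',\Gamma')$, and equating the two evaluations of $\abs{\calP}$ finishes the argument.

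There is no serious obstacle here; the only step requiring a touch of care — and the one I would state most carefully — is the transitivity argument showing that the count of enveloping copies of $\Gamma'$ is the same for every embedded copy of $\s{H}'$. This relies on the fact that $\mathrm{Sym}([n])$ acts on the subgraphs of $\calK_n$ by automorphisms and acts transitively on the set of embedded copies of any fixed abstract graph, together with reading ``copy'' (as in Definition~\ref{def:CountsSubg}) as an unlabeled subgraph rather than a labeled embedding, so that both counting passes refer to the same objects and no spurious automorphism factors appear.
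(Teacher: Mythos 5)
Your proof is correct and follows essentially the same route as the paper: the outer equalities come straight from the uniform distribution of a random copy, and the middle one is obtained by double-counting the set of nested pairs $\{(\s{H}'',\Gamma''):\s{H}''\subseteq\Gamma''\subseteq\calK_n,\ \s{H}''\cong\s{H}',\ \Gamma''\cong\Gamma'\}$, which is exactly the paper's argument. Your extra care in justifying, via the transitive $\mathrm{Sym}([n])$-action, that the per-copy counts are constant is a welcome elaboration of a step the paper leaves implicit, but it is not a different approach.
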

    \begin{proof}
        The first and third equalities follow directly from the definition of the distribution of a random copy. It remains to show that 
        \begin{align}
            \calN(\s{H'},\Gamma')\cdot |\calS_\Gamma|=\calM(\s{H'},\Gamma')\cdot |\calS_{\s{H}}|, \label{eq:counMcalN}
        \end{align}
        which follows from the observation that the terms on both sides of \eqref{eq:counMcalN} represent the cardinality of the set,
        \begin{align}
            \ppp{\left.(\s{H}'',\Gamma'') \right| \s{H}''\subseteq \Gamma''\subseteq \calK_n,  \s{H}''\cong\s{H}', \Gamma''\cong \Gamma' }.
        \end{align}
    \end{proof}
    Thus, Lemma~\ref{lem:equivRandomCopyOrSubgraphcopy} establishes the second equality in \eqref{eq:SecondMomentExpressionProb2}. Next, we prove the third equality in \eqref{eq:SecondMomentExpression} through the following lemma. In fact, we derive a slightly more general identity, which will prove useful in establishing ($\s{Arg}_1$), described in Subsection~\ref{subsec:outline}.
    
    \begin{lemma}\label{lem:intersectionMomentGenGeneral}  Let $\Gamma_1$ and $\Gamma_2$ be two arbitrary (perhaps different) subgraphs of $\calK_n$. Then,
\begin{align}
    \E_{\Gamma_1\indep \Gamma_2}\pp{(1+\lambda^2)^{|e(\Gamma_1\cap\Gamma_2)|}}&=\sum_{\s{H}\subseteq \binom{[n]}{2}}\lambda^{2|\s{H}|}\cdot \P_{\Gamma_1}\pp{\s{H} \subseteq \Gamma_{1}}\cdot \P_{\Gamma_2} \pp{\s{H} \subseteq \Gamma_2}\label{eq:inetersectionEquivalent}\\
    &=\sum_{\s{H}\subseteq \Gamma_1'}\lambda^{2|\s{H}|}\cdot \P_{\Gamma_2} \pp{\s{H} \subseteq \Gamma_2},\label{eq:inetersectionEquivalent2}
\end{align}
where $\Gamma_1'$ is a fixed arbitrary copy of $\Gamma_1$ in $\calK_n$, the summation is over graphs containing no isolated vertices, and the probabilities and the expectation are taken w.r.t. two independent random copies $\Gamma_i\sim \s{Unif}(\calS_{\Gamma_i})$, for $i=1,2$.
\end{lemma}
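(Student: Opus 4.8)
The plan is to expand the exponential $(1+\lambda^2)^{|e(\Gamma_1\cap\Gamma_2)|}$ into a finite sum over subsets of edges, and then exchange this sum with the expectation — once over all of $\binom{[n]}{2}$ to obtain \eqref{eq:inetersectionEquivalent}, and once restricted to a fixed copy of $\Gamma_1$ to obtain \eqref{eq:inetersectionEquivalent2}.

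For the first identity, I would write $|e(\Gamma_1\cap\Gamma_2)|=\sum_{e\in\binom{[n]}{2}}\Ind\{e\in\Gamma_1\}\Ind\{e\in\Gamma_2\}$ and use $x^{\sum_e a_e}=\prod_e x^{a_e}$, valid for $a_e\in\{0,1\}$, obtaining
\begin{align}
(1+\lambda^2)^{|e(\Gamma_1\cap\Gamma_2)|}=\prod_{e\in\binom{[n]}{2}}\p{1+\lambda^2\Ind\{e\in\Gamma_1\}\Ind\{e\in\Gamma_2\}}=\sum_{H\subseteq\binom{[n]}{2}}\lambda^{2|H|}\Ind\{H\subseteq\Gamma_1\}\Ind\{H\subseteq\Gamma_2\},
\end{align}
the last equality being the expansion of the product (the index $H=\emptyset$ contributing the term $1$). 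Identifying each edge subset $H$ with the isolated-vertex-free subgraph $\s{H}$ it induces — a bijection, as recalled in Section~\ref{sec:Polynomial} — then taking expectations and using $\Gamma_1\indep\Gamma_2$ to factor $\E[\Ind\{\s{H}\subseteq\Gamma_1\}\Ind\{\s{H}\subseteq\Gamma_2\}]=\P_{\Gamma_1}[\s{H}\subseteq\Gamma_1]\P_{\Gamma_2}[\s{H}\subseteq\Gamma_2]$, I get \eqref{eq:inetersectionEquivalent}; summands whose $\s{H}$ is not embeddable in $\Gamma_1$ (resp. $\Gamma_2$) drop out because the corresponding probability is $0$.

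For the second identity, I would first reduce the random copy $\Gamma_1$ to the fixed copy $\Gamma_1'$. Because $\Gamma_2\sim\s{Unif}(\calS_{\Gamma_2})$ is invariant in law under any relabeling of $[n]$, conditionally on $\Gamma_1=\gamma_1$ the distribution of $|e(\gamma_1\cap\Gamma_2)|$ depends on $\gamma_1$ only through its isomorphism type, hence is identical for every $\gamma_1\in\calS_{\Gamma_1}$; averaging over $\Gamma_1\sim\s{Unif}(\calS_{\Gamma_1})$ then gives $\E_{\Gamma_1\indep\Gamma_2}[(1+\lambda^2)^{|e(\Gamma_1\cap\Gamma_2)|}]=\E_{\Gamma_2}[(1+\lambda^2)^{|e(\Gamma_1'\cap\Gamma_2)|}]$. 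Repeating the expansion above, but with the product now running only over $e\in\Gamma_1'$, yields $(1+\lambda^2)^{|e(\Gamma_1'\cap\Gamma_2)|}=\sum_{\s{H}\subseteq\Gamma_1'}\lambda^{2|\s{H}|}\Ind\{\s{H}\subseteq\Gamma_2\}$, and taking the expectation over $\Gamma_2$ produces \eqref{eq:inetersectionEquivalent2}. As an alternative route that bypasses the symmetry step, I could instead pass from \eqref{eq:inetersectionEquivalent} to \eqref{eq:inetersectionEquivalent2} purely combinatorially: substitute $\P_{\Gamma_1}[\s{H}\subseteq\Gamma_1]=\calM(\s{H},\Gamma_1)/|\calS_{\Gamma_1}|$ via Lemma~\ref{lem:equivRandomCopyOrSubgraphcopy}, expand $\calM(\s{H},\Gamma_1)$ as a sum over the copies of $\Gamma_1$ in $\calK_n$ that contain $\s{H}$, swap the two summations, and use that $\sum_{\s{H}\subseteq\Gamma_1''}\lambda^{2|\s{H}|}\P_{\Gamma_2}[\s{H}\subseteq\Gamma_2]$ takes the same value for every copy $\Gamma_1''$ (each factor depending only on the isomorphism type of $\s{H}$), which exactly cancels the prefactor $|\calS_{\Gamma_1}|^{-1}$.

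I do not expect a genuine obstacle here: the content is a binomial/product expansion combined with a symmetry reduction. The only points that need care are matching the summation index in the statement — handled by the edge-subset/isolated-vertex-free-subgraph identification, ensuring the empty subgraph is included and nothing is double counted — and the freezing of $\Gamma_1$ to $\Gamma_1'$, which is precisely where the hypothesis that $\Gamma_2$ is a \emph{uniform} random copy is used.
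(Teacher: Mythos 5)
Your proposal is correct; the difference from the paper is mainly in how the two equalities are organized. For \eqref{eq:inetersectionEquivalent} the content is the same: the paper conditions on the realized intersection $e(\Gamma_1\cap\Gamma_2)$ and applies the binomial theorem $\sum_{\s{H}\subseteq\s{H}'}\lambda^{2|\s{H}|}=(1+\lambda^2)^{|\s{H}'|}$, while you expand the product $\prod_e\p{1+\lambda^2\Ind\{e\in\Gamma_1\}\Ind\{e\in\Gamma_2\}}$ and then take expectations using independence; these are the same computation read in opposite directions. For \eqref{eq:inetersectionEquivalent2} your primary route genuinely differs: the paper stays with the sum over all $\s{H}\subseteq\binom{[n]}{2}$, groups it into isomorphism classes of subgraphs of $\Gamma_1'$, and uses the counting identity of Lemma~\ref{lem:equivRandomCopyOrSubgraphcopy} in the form $\P_{\Gamma_1}\pp{\s{H}\subseteq\Gamma_1}\cdot|\calS_{\s{H}}|=\calN(\s{H},\Gamma_1')$ to collapse each class onto the subgraphs of the fixed copy, whereas you first freeze $\Gamma_1$ at $\Gamma_1'$ by exchangeability of the uniform copy $\Gamma_2$ under relabelings of $[n]$ and then expand $(1+\lambda^2)^{|e(\Gamma_1'\cap\Gamma_2)|}$ directly over the edges of $\Gamma_1'$. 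Your route buys a shorter derivation with no class-counting bookkeeping, at the cost of making the symmetry reduction explicit up front; this is legitimate (and is in fact the same symmetry the paper itself invokes immediately after the lemma, in the proof of Proposition~\ref{prop:LikelihoodMomentExperession}, to replace a uniform copy by a fixed one), and the invariance you use is exactly what Lemma~\ref{lem:equivRandomCopyOrSubgraphcopy} guarantees since $\P_{\Gamma_2}\pp{\s{H}\subseteq\Gamma_2}$ depends on $\s{H}$ only through its isomorphism type. Your fallback combinatorial route (expanding $\calM(\s{H},\Gamma_1)$ over copies of $\Gamma_1$ and swapping sums) is essentially the paper's argument in the $\calM$ rather than $\calN$ form, so nothing is missing.
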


\begin{proof}[Proof of Lemma~\ref{lem:intersectionMomentGenGeneral}]
Since we deal with graphs containing no isolated vertices, we slightly abuse notation by treating a graph $\s{H}$ as a subset of edges and denote $|\s{H}|\triangleq|e(\s{H})|$. Now, let us analyze the r.h.s. of \eqref{eq:inetersectionEquivalent}, beginning with the following chain of equalities,
\begin{align}
    \sum_{\s{H}\subseteq \binom{[n]}{2}}\lambda^{2|\s{H}|}\cdot \P_{\Gamma_1}\pp{\s{H} \subseteq \Gamma_1}\cdot \P_{\Gamma_2} \pp{\s{H} \subseteq \Gamma_2}&=\sum_{\s{H}\subseteq \binom{[n]}{2}}\lambda^{2|\s{H}|}\cdot \P_{\Gamma_1\indep \Gamma_2}\pp{\s{H} \subseteq \Gamma_1, \s{H} \subseteq \Gamma_2}\\
    &=\sum_{\s{H}\subseteq \binom{[n]}{2}}\lambda^{2|\s{H}|}\cdot \P_{\Gamma_1\indep \Gamma_2}\pp{\s{H} \subseteq e\p{\Gamma_1\cap  \Gamma_2}}\\
    &=\sum_{\s{H}\subseteq \binom{[n]}{2}}\lambda^{2|\s{H}|}\cdot \sum_{\substack{\s{H}'\subseteq \binom{[n]}{2}\\ \s{H}\subseteq \s{H}'}}\P_{\Gamma_1\indep \Gamma_2}\pp{\s{H}'=e\p{\Gamma_1\cap  \Gamma_2}}\\
    &=\sum_{\s{H}'\subseteq \binom{[n]}{2}} \P_{\Gamma_1\indep \Gamma_2}\pp{\s{H}'=e\p{\Gamma_1\cap  \Gamma_2}} \sum_{\substack{\s{H}\subseteq \binom{[n]}{2}\\ \s{H}\subseteq \s{H}'}}\lambda^{2|\s{H}|}\\
    &=\sum_{\s{H}'\subseteq \binom{[n]}{2}} \P_{\Gamma_1\indep \Gamma_2}\pp{\s{H}'=e\p{\Gamma_1\cap  \Gamma_2}} \sum_{i=0}^{\abs{\s{H}'}}\binom{|\s{H}'|}{i}\lambda^{2i}\\
    &=\sum_{\s{H}'\subseteq \binom{[n]}{2}} \P_{\Gamma_1\indep \Gamma_2}\pp{\s{H}'=e\p{\Gamma_1\cap  \Gamma_2}}\cdot\p{1+\lambda^{2}}^{\abs{\s{H}'}}\\
    &=  \E_{\Gamma_1\indep \Gamma_2}\pp{(1+\lambda^2)^{|e(\Gamma_1\cap\Gamma_2)|}},
\end{align}
which proves the first equality in \eqref{eq:inetersectionEquivalent}. For the second equality in \eqref{eq:inetersectionEquivalent2}, recall that by Lemma~\ref{lem:equivRandomCopyOrSubgraphcopy}, for any subgraph $\s{H}$ of a fixed copy $\Gamma_1'$ of $\Gamma_1$, we have that,
\begin{align}
    \label{eq:probcalc1}
    \pr_{\Gamma_1}\pp{\s{H}\subseteq \Gamma_1'}=\frac{\calM(\s{H},\Gamma_1')}{|\calS_{\Gamma_1'}|}=\frac{\calN(\s{H},\Gamma_1')}{|\calS_{\s{H}}|}.
\end{align}
By symmetry, the above expression is invariant under isomorphisms. Namely, for any $\s{H'}\subseteq \calK_n$,  if $\s{H}'$ is isomorphic to some subgraph $\s{H}\subseteq\Gamma_1'$, then 
\begin{align}
    \pr_{\Gamma_1}\pp{\s{H}\subseteq \Gamma_1'}=\pr_{\Gamma_1}\pp{\s{H'}\subseteq \Gamma_1'}.
\end{align}
If $\s{H}'$ is not isomorphic to any subgraph of $\Gamma_1$, then the probability above is clearly zero. We define an equivalence relation on subgraphs of $\Gamma_1'$, where two subgraphs are considered equivalent if and only if they are isomorphic. Let $[\s{H}]$ denote the equivalence class of a subgraph $\s{H}\subseteq \Gamma_1$, and let $\calP$ be the set of all equivalence classes. Then, using \eqref{eq:probcalc1} and the fact that $|[\s{H}]|=\calN(\s{H},\Gamma)$, we may write,
\begin{align}
     \sum_{\s{H}\subseteq \binom{[n]}{2}}\lambda^{2|\s{H}|}\cdot \P_{\Gamma_1}\pp{\s{H} \subseteq \Gamma_1}\cdot \P_{\Gamma_2} \pp{\s{H} \subseteq \Gamma_2}&=\sum_{[\s{H}]\in \calP}\sum_{\substack{\s{H}'\subseteq \binom{[n]}{2}\\ \s{H}'\cong \s{H} }}\lambda^{2|\s{H}'|}\cdot \P_{\Gamma_1}\pp{\s{H}' \subseteq \Gamma_1}\cdot \P_{\Gamma_2} \pp{\s{H}' \subseteq \Gamma_2}\\
     &=\sum_{[\s{H}]\in \calP}\sum_{\substack{\s{H}'\subseteq \binom{[n]}{2}\\ \s{H}'\cong \s{H} }}\lambda^{2|\s{H}|}\cdot \P_{\Gamma_1}\pp{\s{H} \subseteq \Gamma_1}\cdot \P_{\Gamma_2} \pp{\s{H} \subseteq \Gamma_2}\\
     &=\sum_{[\s{H}]\in \calP} |\calS_{\s{H}}|\lambda^{2|\s{H}|}\cdot \P_{\Gamma_1}\pp{\s{H} \subseteq \Gamma_1}\cdot \P_{\Gamma_2} \pp{\s{H} \subseteq \Gamma_2}\\
     &=\sum_{[\s{H}]\in \calP} \lambda^{2|\s{H}|}\cdot \calN(\s{H},\Gamma_1)\cdot \P_{\Gamma_2} \pp{\s{H} \subseteq \Gamma_2}\\
     &=\sum_{[\s{H}]\in \calP} \sum_{\s{H}'\in [\s{H}]}\lambda^{2|\s{H}|}\cdot \P_{\Gamma_2} \pp{\s{H} \subseteq \Gamma_2}\\
     &=\sum_{\s{H}\subseteq \Gamma_1'} \lambda^{2|\s{H}|}\cdot \P_{\Gamma_2} \pp{\s{H} \subseteq \Gamma_2}.
\end{align}
which completes the proof.
\end{proof}
We are now in a position to complete the proof of Proposition~\ref{prop:LikelihoodMomentExperession}.
\begin{proof}[Proof of Proposition~\ref{prop:LikelihoodMomentExperession}]
We recall \eqref{eq:ParsevalInital} and proceed by computing the Fourier coefficients. From the definition of $\s{L}(\s{G})$ in \eqref{eqn:LIKlihood}, it follows that for any character $\chi_{\s{H}}$, we have,
\begin{align}
    \innerP{\chi_{\s{H}}, \s{L}(\s{G})}_{\calH_0}=\E_{\calH_0}\pp{\chi_{\s{H}}(\s{G}) \cdot \s{L}(\s{G})}=\E_{\calH_1}\pp{\chi_{\s{H}}(\s{G}) }.
\end{align}
Let us compute $\bE_{\calH_1}[\chi_{\s{H}}(\s{G})]$, for each $\s{H}$. We have,
\begin{align}
    \bE_{\calH_1}\pp{\chi_{\s{H}}(\s{G})} &= \bE_{\Gamma}\pp{\bE_{\calH_1\vert\Gamma}\pp{\chi_{\s{H}}(\s{G})}}\\
    & = \bE_{\Gamma}\pp{\bE_{\calH_1\vert\Gamma}\pp{\prod_{\{i,j\}\in\s{H}}\frac{\s{G}_{ij}-q}{\sqrt{q(1-q)}}}}\\
    & = \bE_{\Gamma}\pp{\prod_{\{i,j\}\in\s{H}}\bE_{\calH_1\vert\Gamma}\pp{\frac{\s{G}_{ij}-q}{\sqrt{q(1-q)}}}},\label{eqn:ffDe}
\end{align}
where we have used the fact that conditioned on $\Gamma$, the edges of $\s{G}$ are independent. Depending on the edge $\{i,j\}$ and its relation to $\s{H}$ and $\s{G}$, there are two possible cases.
\begin{itemize}
\item If $\{i,j\}\in\s{H}$ is such that  $\{i,j\}\in e(\Gamma)$, then $\s{G}_{ij}\sim\s{Bern}(p)$, and accordingly,
\begin{align}
\bE\pp{\left.\frac{\s{G}_{ij}-q}{\sqrt{q(1-q)}}\right| \{i,j\}\in e(\Gamma)} = p\sqrt{\frac{1-q}{q}}-(1-p)\sqrt{\frac{q}{1-q}} = \frac{p-q}{\sqrt{q(1-q)}}\triangleq\lambda,
\end{align}
and we note that $\lambda=\sqrt{\chi^2(p||q)}$. 
\item If $\{i,j\}\in\s{H}$ is such that  $\{i,j\}\not\in e(\Gamma)$, then $\s{G}_{ij}\sim\s{Bern}(q)$, and accordingly,
\begin{align}
\bE\pp{\left.\frac{\s{G}_{ij}-q}{\sqrt{q(1-q)}}\right| \{i,j\}\not\in e(\Gamma)} = 0.
\end{align}
\end{itemize}
Combining the above results, we have,
\begin{align}
    \bE_{\calH_1\vert\Gamma}\pp{\frac{\s{G}_{ij}-q}{\sqrt{q(1-q)}}} = \lambda\cdot\Ind\ppp{\{i,j\}\in\s{H}\cap e(\Gamma)},
\end{align}
which in turn implies that,
\begin{align}\label{eq:coeffCalc}
    \prod_{\{i,j\}\in\s{H}}\bE_{\calH_1\vert\Gamma}\pp{\frac{\s{G}_{ij}-q}{\sqrt{q(1-q)}}} = \lambda^{|\s{H}|}\cdot\Ind_{\ppp{\s{H}\subseteq \Gamma}}.
\end{align}
Substituting in \eqref{eqn:ffDe} we get, 
\begin{align}
    \bE_{\calH_1}\pp{\chi_{\s{H}}(\s{G})} = \bE_\Gamma\pp{\lambda^{|\s{H}|}\cdot\Ind_{\ppp{\s{H}\subseteq e(\Gamma)}}}=\lambda^{|\s{H}|}\cdot\pr_{\Gamma}\pp{\s{H} \subseteq \Gamma}.\label{eq:coeffCalc2}
\end{align}
Therefore, inserting \eqref{eq:coeffCalc2} in \eqref{eq:ParsevalInital}, we get,
    \begin{align}
         \E_{\calH_0}\pp{\s{L}(\s{G})^2}&=\sum_{\s{H}\subseteq \binom{[n]}{2}}\bE_{\calH_1}\pp{\chi_{\s{H}}(\s{G})}^2\\
         &=\sum_{\s{H}\subseteq \binom{[n]}{2}}\lambda^{2|\s{H}|}\cdot\pr_{\Gamma}\pp{\s{H} \subseteq \Gamma}^2.
     \end{align}
     This proves the first equality in \eqref{eq:SecondMomentExpression}. The third equality in \eqref{eq:SecondMomentExpression} follows by setting $\Gamma_1=\Gamma_2=\Gamma$ in Lemma~\ref{lem:intersectionMomentGenGeneral}, and using the fact that, by symmetry, we can always consider $\Gamma_2$ as a fixed copy instead of a uniform random copy, while $\Gamma_1$ remains uniform and random.
\end{proof}

\subsection{The dense regime}\label{subsec:denseRegime}

Following the outline in Subsection~\ref{subsec:outline}, in this subsection, we consider the dense regime, where $\chi^2(p||q)=\Theta(1)$ (in particular, $\delta<p-q<1-\delta$, for some $\delta>0$), and provide a complete formal proof for the statistical lower bounds in Theorem~\ref{th:lowerboundDense}. We will generalize this proof strategy in Subsection~\ref{subsec:otherRegimes} to other regimes.
\begin{theorem}\label{th:lowerboundDense}
    Assume that $\lambda^2=\chi^2(p||q)=\Theta(1)$, and that $\Gamma=(\Gamma_n)_n$ is any sequence of subgraphs such that $|v(\Gamma)|\leq (1-\delta)n$, for some fixed $\delta>0$.  \begin{enumerate}
        \item  For $\mu(\Gamma)\geq \alpha\log|v(\Gamma)|$, weak detection is impossible if
    \begin{align}
        \mu(\Gamma) \leq \frac{(1-\varepsilon)\alpha}{2+\alpha\log(1+\lambda^2)}\log n,\label{eqn:MainResDenseLower1}
    \end{align}
    for some $\varepsilon>0$, and strong detection is possible if 
    \begin{align}
        \mu(\Gamma)\geq (1+\varepsilon)\frac{\log n}{d_{\s{KL}}(p||q)},\label{eqn:MainResDenseUpper1}
    \end{align}
    for some $\varepsilon>0$.
    \item For $\mu(\Gamma)=o\p{\log|v(\Gamma)|}$, strong detection is impossible if  
    \begin{align}
        \max\p{|e(\Gamma)|,d_{\max}^2(\Gamma)}\leq n^{1-\varepsilon}, \label{eqn:MainResDenseLower2}
    \end{align}
    for some $\varepsilon>0$, and strong detection is possible if 
    \begin{align}
        \max\p{|e(\Gamma)|,d_{\max}^2(\Gamma)}\geq n^{1+f(n)}, \label{eqn:MainResDenseUpper2}
    \end{align}
    where $f(n)$ is an $o(1)$ function. 
    \end{enumerate}
\end{theorem}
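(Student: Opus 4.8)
The plan is to read off the two positive statements directly from Theorem~\ref{thm:upperBoundAlgo}, and the two impossibility statements from the second-moment bound of Theorem~\ref{th:lowerVCD} --- used bare on $\Gamma$ itself in the super-logarithmic case of Part~1, and funnelled through the balanced decomposition of $(\s{Arg}_1)$--$(\s{Arg}_2)$ in the sub-logarithmic case of Part~2. Since $\lambda^2=\chi^2(p||q)=\Theta(1)$, I will also freely use that $p-q=\Theta(1)$, $d_{\s{KL}}(p||q)=\Theta(1)$, and that $p,q$ are bounded away from $0$ and $1$.

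\textbf{Part 1.} For \eqref{eqn:MainResDenseUpper1} I would invoke the scan test: the hypothesis $\mu(\Gamma)\geq(1+\varepsilon)\log n/d_{\s{KL}}(p||q)$ gives $\liminf_n\mu(\Gamma)d_{\s{KL}}(p||q)/\log n\geq1+\varepsilon>1$, and $|e(\Gamma_{\max})|\geq\mu(\Gamma)\to\infty$ forces $p\,|e(\Gamma_{\max})|\to\infty$, so Theorem~\ref{thm:upperBoundAlgo}(3) applies and yields strong detection. For the matching lower bound \eqref{eqn:MainResDenseLower1} I would deliberately \emph{not} pass through the decomposition --- inflating $\lambda^2$ to $\lambda_M^2=(1+\lambda^2)^{M^2}-1$ would spoil the constant multiplying $\mu(\Gamma)$ --- and instead apply Theorem~\ref{th:lowerVCD} to $\Gamma$, using only the crude bounds $\vc(\Gamma)\leq|v(\Gamma)|$ and $d_{\max}(\Gamma)\leq|v(\Gamma)|$, so that $\max(\vc(\Gamma)d_{\max}(\Gamma),d_{\max}^2(\Gamma))\leq|v(\Gamma)|^2$. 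From $\mu(\Gamma)\geq\alpha\log|v(\Gamma)|$ one gets $|v(\Gamma)|\leq e^{\mu(\Gamma)/\alpha}$, hence
\[
\p{1+\lambda^2}^{\mu(\Gamma)}\cdot|v(\Gamma)|^2\;\leq\;\exp\!\p{\mu(\Gamma)\cdot\tfrac{\alpha\log(1+\lambda^2)+2}{\alpha}}\;\leq\;n^{1-\varepsilon},
\]
the last step by plugging in $\mu(\Gamma)\leq\frac{(1-\varepsilon)\alpha}{2+\alpha\log(1+\lambda^2)}\log n$; the same two inequalities also give $|v(\Gamma)|\leq n^{1/2}$, so $n-|v(\Gamma)|\geq n/2$ eventually and the left-hand side of \eqref{eq:LowerMainCond1} is at most $2n^{-\varepsilon}=o(1)$. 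By the $o(1)$-strengthening of Theorem~\ref{th:lowerVCD} noted there, this makes $\E_{\calH_0}[\s{L}(\s{G})^2]=1+o(1)$, i.e.\ weak detection impossible.

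\textbf{Part 2.} For \eqref{eqn:MainResDenseUpper2} I would take any slowly vanishing $f$, say $f(n)=2\log\log n/\log n$, so that $n^{1+f(n)}=n(\log n)^2$: if $|e(\Gamma)|\geq n^{1+f(n)}$ then $\chi^2(p||q)|e(\Gamma)|^2/n^2=\Theta\!\p{(\log n)^4}\to\infty$ and $(p-q)|e(\Gamma)|\to\infty$, giving $\phi_{\s{count}}$ via Theorem~\ref{thm:upperBoundAlgo}(1); if $d_{\max}^2(\Gamma)\geq n^{1+f(n)}$ then $d_{\max}^2(\Gamma)\chi^2(p||q)/(n\log n)=\Theta(\log n)>16$ and $d_{\max}(\Gamma)(p-q)/\log n\to\infty$, giving $\phi_{\s{deg}}$ via Theorem~\ref{thm:upperBoundAlgo}(2). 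For the lower bound \eqref{eqn:MainResDenseLower2} I would fix $\varepsilon'\in(0,\varepsilon)$, set $M=\ceil{1/\varepsilon'}$, and take the decomposition $\Gamma=\bigcup_{\ell=1}^M\Gamma_\ell$ of $(\s{Arg}_2)$, which has $\vc(\Gamma_\ell)d_{\max}(\Gamma_\ell)\leq|e(\Gamma)|n^{\varepsilon'}$ for every $\ell$. Because $\Gamma_\ell\subseteq\Gamma$, one has $\mu(\Gamma_\ell)\leq\mu(\Gamma)=o(\log|v(\Gamma)|)=o(\log n)$, $d_{\max}(\Gamma_\ell)\leq d_{\max}(\Gamma)$, $|v(\Gamma_\ell)|\leq|v(\Gamma)|\leq(1-\delta)n$, and --- using the hypothesis $|e(\Gamma)|,d_{\max}^2(\Gamma)\leq n^{1-\varepsilon}$ --- $\max(\vc(\Gamma_\ell)d_{\max}(\Gamma_\ell),d_{\max}^2(\Gamma_\ell))\leq n^{1-\varepsilon+\varepsilon'}$; hence, with $\lambda_M^2=\Theta(1)$,
\[
\frac{\p{1+\lambda_M^2}^{\mu(\Gamma_\ell)}\cdot\max\!\p{\vc(\Gamma_\ell)d_{\max}(\Gamma_\ell),\,d_{\max}^2(\Gamma_\ell)}}{n-|v(\Gamma_\ell)|}\;\leq\;\frac{n^{o(1)}\cdot n^{1-\varepsilon+\varepsilon'}}{\delta n}\;=\;\frac{n^{o(1)}}{\delta\,n^{\varepsilon-\varepsilon'}}\;=\;o(1),
\]
uniformly over $\ell\in[M]$. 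Feeding this into $(\s{Arg}_1)$ gives $\E_{\calH_0}[\s{L}(\s{G})^2]=1+o(1)$, so (weak, hence) strong detection is impossible.

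\textbf{The main obstacle.} All the genuinely hard work --- Theorem~\ref{th:lowerVCD} and the balanced decomposition $(\s{Arg}_1)$--$(\s{Arg}_2)$ --- is done beforehand, so within this proof the only subtle point is recognizing the density-driven dichotomy: in the super-logarithmic regime $(1+\lambda^2)^{\mu(\Gamma)}$ is the constant-sensitive bottleneck and one must \emph{not} decompose, absorbing $\vc(\Gamma),d_{\max}(\Gamma)$ into $|v(\Gamma)|^2$ --- harmless exactly because the hypothesis on $\mu(\Gamma)$ forces $|v(\Gamma)|$ to be only polynomially small; in the sub-logarithmic regime $(1+\lambda^2)^{\mu(\Gamma)}=n^{o(1)}$ is negligible but $\vc(\Gamma)$ can be polynomially larger than $|e(\Gamma)|$ (Example~\ref{ex:StartUnbalanced}), so the decomposition is unavoidable. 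The residual bookkeeping --- checking that $\frac{(1-\varepsilon)\alpha}{2+\alpha\log(1+\lambda^2)}$ is precisely the exponent that makes the product above equal $n^{1-\varepsilon}$, and that this matches the scan-test exponent $1/d_{\s{KL}}(p||q)$ up to the (in general irreducible) gap between $\log(1+\lambda^2)$ and $d_{\s{KL}}(p||q)$, which closes for $p=1$ --- is routine.
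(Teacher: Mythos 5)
Your proposal is correct and follows essentially the same route as the paper: the upper bounds are read off Theorem~\ref{thm:upperBoundAlgo} (scan test in Part~1, count/degree tests in Part~2), the super-logarithmic lower bound applies Theorem~\ref{th:lowerVCD} directly with the crude bounds $\vc(\Gamma),d_{\max}(\Gamma)\leq|v(\Gamma)|$ and the same exponent computation, and the sub-logarithmic lower bound runs through the balanced decomposition of Proposition~\ref{prop:GoodDecompositionExists} combined with Proposition~\ref{prop:decompisitionThD}, exactly as in the paper. The only (harmless) overstatement is in Part~2, where you assert $\E_{\calH_0}[\s{L}(\s{G})^2]=1+o(1)$ although $(\s{Arg}_1)$/Proposition~\ref{prop:decompisitionThD} as stated only yields $O(1)$; since the theorem there claims only impossibility of strong detection, the $O(1)$ bound suffices.
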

The above result asserts that if $\Gamma$ has super-logarithmic density, i.e., $\mu(\Gamma)=\Omega(\log|v(\Gamma)|)$, then the statistical barrier is determined by $\mu(\Gamma)$ only. On the other hand, if $\Gamma$ has sub-logarithmic density, the statistical barrier is determined by $|e(\Gamma)|$ and $d_{\max}(\Gamma)$. As mentioned in Subsection~\ref{subsec:outline}, our proof consists of three main steps, as detailed below. Before we continue, we introduce the following two definitions, which play a significant role in our analysis.
\begin{definition}[Vertex cover]\label{def:coverNumber}
Let $\s{G}=(\s{V},\s{E})$ be an undirected graph. A set $\s{U}\subseteq \s{V}$ is called a vertex cover of $\s{G}$ if any edge in $\s{E}$ has a vertex in $\s{U}$. We define the vertex cover number $\vc(\s{G})$ of $\s{G}$ as the minimal cardinality vertex cover set in $\s{G}$. 
\end{definition}
\begin{definition}\label{def:coverdegreebalanced}
    A sequence of graphs $\Gamma=(\Gamma_n)_n$ is called \textit{vertex cover-degree balanced}, or simply $\vcd$-balanced, if 
    \begin{align}
        \lim_{n\to\infty} \frac{\log\p{\vc(\Gamma)\cdot d_{\max}(\Gamma)}}{\log|e(\Gamma)|}=1.
    \end{align}
\end{definition}

\subsubsection{Combinatorial bound through vertex covers} \label{sec:VertexCovers}

In this subsection, we prove the impossibility result in Theorem~\ref{th:lowerVCD}. To that end, as mentioned in the previous subsection, it suffices to upper bound $\E_{\calH_0}[\s{L}(\s{G})^2]$. 
Throughout this subsection, for the benefit of readability, we let $k\triangleq|v(\Gamma)|$, $d\triangleq d_{\max}(\Gamma)$, $\vc\triangleq\vc(\Gamma)$, and $\mu\triangleq\mu(\Gamma)$. In light of Proposition~\ref{prop:LikelihoodMomentExperession}, our goal is to bound $\P_{\Gamma}[\s{H}'\subseteq \Gamma]$ (or, equivalently, $\P_{\s{H}}[\s{H}\subseteq \Gamma']$). The following result shows that, from the perspective of dependence on graph-theoretic properties of $\Gamma$, this probability can be upper bounded in terms of $(d_{\max}(\Gamma),\vc(\Gamma))$ alone.
\begin{lemma}\label{lem:probRandomSubgraph1}
    Let $\Gamma'$ be a fixed copy of $\Gamma$, $\s{H}'\subseteq\Gamma'$ be a subgraph containing no isolated vertices, with $\ell$ vertices and $m$ connected components, and $\s{H}$ a random copy of $\s{H}'$ in $\calK_n$. Then, 
    \begin{align}
        \P_{\Gamma}\pp{\s{H}'\subseteq\Gamma}=\P_{\s{H}}\pp{\s{H}\subseteq\Gamma'}\leq \frac{(2\vc)^m d^{\ell-m}}{(n-k)^\ell}\triangleq\vartheta(m,\ell).\label{eqn:UpperBoundProbBasic}
    \end{align}
\end{lemma}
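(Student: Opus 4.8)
The plan is to turn the probability into a counting ratio via Lemma~\ref{lem:equivRandomCopyOrSubgraphcopy}, bound the denominator crudely, and then spend all the effort on the numerator — counting graph embeddings component-by-component, using a minimum vertex cover of $\Gamma$ to control the ``anchor'' vertex of each component. \emph{Step 1 (reduction to counting).} By Lemma~\ref{lem:equivRandomCopyOrSubgraphcopy}, $\P_{\Gamma}\pp{\s{H}'\subseteq\Gamma}=\P_{\s{H}}\pp{\s{H}\subseteq\Gamma'}=\calN(\s{H}',\Gamma')/\abs{\calS_{\s{H}}}$; equivalently, generating $\s{H}$ as $\sigma(\s{H}')$ for a uniformly random \emph{injective} map $\sigma\colon v(\s{H}')\to[n]$, this quantity equals $\P_\sigma\pp{\sigma(\s{H}')\subseteq\Gamma'}$. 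The number of injective $\sigma$ is $n^{\underline{\ell}}=\prod_{i=0}^{\ell-1}(n-i)$, and since every vertex of $\s{H}'$ is a vertex of $\Gamma'$ we have $\ell\le k$, hence each factor satisfies $n-i\ge n-\ell+1> n-k$ and therefore $n^{\underline{\ell}}\ge(n-k)^{\ell}$. Thus it suffices to prove that the number $N$ of injective maps $\sigma$ with $\sigma(\s{H}')\subseteq\Gamma'$ obeys $N\le(2\vc)^m d^{\,\ell-m}$.

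\emph{Step 2 (the combinatorial bound — the heart of the argument).} Let $C_1,\dots,C_m$ be the connected components of $\s{H}'$ and $\ell_i\triangleq\abs{v(C_i)}$, so $\sum_i\ell_i=\ell$ and (as $\s{H}'$ has no isolated vertices) each $C_i$ has at least one edge. A map $\sigma$ on $v(\s{H}')=\bigsqcup_i v(C_i)$ is exactly a tuple of maps on the $v(C_i)$, and $\sigma(\s{H}')\subseteq\Gamma'$ decomposes into the conditions that each $\sigma|_{C_i}$ sends every edge of $C_i$ into $e(\Gamma')$ (because $e(\s{H}')=\bigsqcup_i e(C_i)$). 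Dropping the global injectivity constraint can only increase the count, so $N\le\prod_{i=1}^m N_i$, where $N_i$ is the number of maps $v(C_i)\to v(\Gamma')$ carrying each edge of $C_i$ to an edge of $\Gamma'$. To bound $N_i$, fix a minimum vertex cover $\s{U}$ of $\Gamma$ (it transports to a vertex cover of the fixed copy $\Gamma'$), so $\abs{\s{U}}=\vc$, and fix any edge $\{a,b\}\in e(C_i)$. For any map counted by $N_i$ we have $\{\sigma(a),\sigma(b)\}\in e(\Gamma')$, hence at least one of $\sigma(a),\sigma(b)$ lies in $\s{U}$; so the maps split into the class with $\sigma(a)\in\s{U}$ and the class with $\sigma(b)\in\s{U}$. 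In the first class, order $v(C_i)$ by a breadth-first search rooted at $a$: there are at most $\abs{\s{U}}=\vc$ choices for $\sigma(a)$, and every subsequent vertex has an already-placed neighbor in $C_i$, whose image in $\Gamma'$ has at most $d$ neighbors, giving at most $d$ choices — at most $\vc\, d^{\,\ell_i-1}$ maps in all; the second class is bounded identically. Hence $N_i\le 2\vc\, d^{\,\ell_i-1}$, and so $N\le\prod_{i=1}^m 2\vc\, d^{\,\ell_i-1}=(2\vc)^m d^{\,\sum_i(\ell_i-1)}=(2\vc)^m d^{\,\ell-m}$.

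\emph{Step 3 (conclusion and main obstacle).} Combining Steps 1 and 2,
\[
\P_{\Gamma}\pp{\s{H}'\subseteq\Gamma}=\frac{N}{n^{\underline{\ell}}}\le\frac{(2\vc)^m d^{\,\ell-m}}{(n-k)^{\ell}}=\vartheta(m,\ell),
\]
which is the claim. The only non-routine point is the embedding count in Step 2: the key idea is that it is enough to anchor each component's BFS at an endpoint of a \emph{single} fixed edge of that component that happens to lie in the vertex cover, which costs only a factor $2\vc$ for the anchor vertex rather than the naive factor $k$ (or $\abs{v(\Gamma')}$); once that is in place, the rest is bookkeeping with BFS orders, the maximum-degree bound, and the product over components.
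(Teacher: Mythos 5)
Your proof is correct and follows essentially the same route as the paper's: the paper realizes the random copy by sequentially sampling vertices (Observation~\ref{obs:RandomCopy1}) and bounds conditional probabilities, while you count embeddings and divide by the falling factorial, but the core idea is identical — anchor one designated edge of each component through the minimum vertex cover (a factor $2\vc\, d$ per component) and charge at most $d$ choices to every remaining vertex, against a factor of at least $n-k$ per vertex in the denominator. The minor repackaging (dropping injectivity and splitting into the two cover-endpoint classes versus the paper's ``at most $2\vc\, d$ ordered pairs'' count) changes nothing of substance.
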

For the proof of Lemma~\ref{lem:probRandomSubgraph1} we will need the following important observation.
\begin{obs}\label{obs:RandomCopy1}
    Let $\s{H}$ be a graph with $\ell\leq n$ vertices enumerated as $v_1,\dots,v_\ell$ and let $(X_1,\dots,X_\ell)$ be distributed as 
    \[X_1\sim \s{Unif}\p{[n]}, \quad \text{and}\quad X_i|X_1,\dots X_{i-1} \sim  \s{Unif}\p{[n]\setminus \set{X_1,\dots, X_{i-1}}},\]
    for all $1<i<\ell$. Then, the random graph $\s{H_U}=\p{v({\s{H_U}}),E({\s{H_U}})}$ with
    \[V_{\s{U}}=\set{X_1,\dots X_\ell}, \quad \text{and} \quad E_{\s{U}}=\set{\left.\p{X_i,X_j}\right| (v_i,v_j)\in \s{H}},\]
    is a uniform random copy of $\s{H}$ in $\calK_n$.
\end{obs}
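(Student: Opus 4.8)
The plan is to reduce this to an elementary counting (or symmetry) argument; no real machinery is needed. First I would unpack the sampling scheme for $(X_1,\dots,X_\ell)$: by the chain rule, for any pairwise distinct $x_1,\dots,x_\ell\in[n]$,
\[
\P\pp{(X_1,\dots,X_\ell)=(x_1,\dots,x_\ell)}=\frac{1}{n}\cdot\frac{1}{n-1}\cdots\frac{1}{n-\ell+1}=\frac{(n-\ell)!}{n!},
\]
and the probability is $0$ unless the $x_i$ are distinct. Hence $(X_1,\dots,X_\ell)$ is uniform over the set of $n!/(n-\ell)!$ injective tuples $[\ell]\hookrightarrow[n]$, equivalently over the embeddings of the labeled vertex set $\{v_1,\dots,v_\ell\}$ of $\s{H}$ into $[n]$. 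Each such injection $(x_1,\dots,x_\ell)$ yields a subgraph $\s{H_U}$ of $\calK_n$ on vertex set $\{x_1,\dots,x_\ell\}$ for which $v_i\mapsto x_i$ is a graph isomorphism onto $\s{H_U}$; so $\s{H_U}$ always lies in $\calS_{\s{H}}$, the set of copies of $\s{H}$ in $\calK_n$ (a copy being a subgraph of $\calK_n$, together with its vertex set, that is isomorphic to $\s{H}$). It remains to show the induced law on $\calS_{\s{H}}$ is uniform.

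I would finish this in one of two equivalent ways. \emph{Counting:} fix a copy $\s{K}\in\calS_{\s{H}}$; the injections $(x_1,\dots,x_\ell)$ with $\s{H_U}=\s{K}$ are exactly those for which $v_i\mapsto x_i$ is an isomorphism from $\s{H}$ onto $\s{K}$, and fixing any one such isomorphism $\phi_0$, every other equals $\phi_0\circ\sigma$ for some $\sigma\in\s{Aut}(\s{H})$, so there are precisely $\abs{\s{Aut}(\s{H})}$ of them. Therefore $\P[\s{H_U}=\s{K}]=\abs{\s{Aut}(\s{H})}\cdot\frac{(n-\ell)!}{n!}$, which does not depend on $\s{K}$, and using the identity $\abs{\calS_{\s{H}}}=\binom{n}{\ell}\frac{\ell!}{\abs{\s{Aut}(\s{H})}}$ (recalled in the Notation section) one checks this equals $1/\abs{\calS_{\s{H}}}$. \emph{Symmetry:} the law of $(X_1,\dots,X_\ell)$ is invariant under the natural $S_n$-action on $[n]$, hence so is the law of $\s{H_U}$; since $S_n$ acts transitively on $\calS_{\s{H}}$ and $\s{H_U}$ is supported on $\calS_{\s{H}}$, an $S_n$-invariant distribution must give equal mass to every copy, i.e.\ it is uniform.

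There is essentially no obstacle here — this is a bookkeeping lemma used to translate between "random copy of $\Gamma$" and the vertex-by-vertex exposure process. The only points deserving a line of care are: (a) being precise that a "copy" carries its vertex set, so that $\s{H_U}$, built on exactly $\ell$ vertices, is a genuine element of $\calS_{\s{H}}$ even if $\s{H}$ has isolated vertices; and (b) the fiber-size claim in the counting argument, namely that the preimage of each copy has size exactly $\abs{\s{Aut}(\s{H})}$. Once these are pinned down, the computation is immediate, so I would present the symmetry argument as the main line and keep the counting version as a remark.
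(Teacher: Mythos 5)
Your proof is correct. The paper itself states this as an unproved observation (it is folklore bookkeeping), and your argument is exactly the standard justification one would supply: the chain rule shows $(X_1,\dots,X_\ell)$ is uniform over the $n!/(n-\ell)!$ injective tuples, the fiber over any fixed copy $\s{K}\in\calS_{\s{H}}$ consists precisely of the isomorphisms $\s{H}\to\s{K}$ and hence has size $\abs{\s{Aut}(\s{H})}$, and the resulting constant mass $\abs{\s{Aut}(\s{H})}\,(n-\ell)!/n!$ matches $1/\abs{\calS_{\s{H}}}$ via the identity $\abs{\calS_{\s{H}}}=\binom{n}{\ell}\frac{\ell!}{\abs{\s{Aut}(\s{H})}}$ recalled in the paper's notation section; the $S_n$-symmetry shortcut (invariance plus transitivity of the action on $\calS_{\s{H}}$) is an equally valid one-line alternative. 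Your two points of care — that a copy carries its vertex set (so isolated vertices of $\s{H}$ cause no trouble) and the fiber-size claim — are indeed the only places where precision is needed, and you handle both correctly.
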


\begin{proof}[Proof of Lemma~\ref{lem:probRandomSubgraph1}]
     Let $S\subseteq v(\Gamma)$ be a minimal vertex cover of  $\Gamma$, with $|S|=\vc$. Denote by $\s{C}_1,\dots,
     \s{C}_m$ the connected components of $\s{H}$. Since $\s{H}$ does not contain any isolated vertices, there exists an enumeration $v_1,\dots,v_\ell$ of the vertices of $\s{H}$, such that for any $1\leq i\leq m $ the pair $(v_{2i-1},v_{2i})$ is an edge in $\s{C_i}$, and furthermore, for all $2m+1\leq i\leq \ell$, the vertex $v_i$ is connected to some $v_j$, for $j<i$. Now, let $\s{H_U}$ be a random copy of $\s{H}$ in $\calK_n$, obtained by randomly picking the vertices $X_1,\dots X_\ell$ as described in Observation~\ref{obs:RandomCopy1}.
     For all $1\leq i \leq \ell$, let $\s{H}_{\s{U}}^{(i)}$ denote the subgraph induced by $X_1,\dots,X_i$, namely, 
     \begin{align}
         \s{H}_{\s{U}}^{(i)}&\triangleq\p{V_i, (V_i \times V_i)\cap E(\s{H_U}) },\\
         V_i&\triangleq\set{X_1,\dots,X_i}.
     \end{align}
     We next prove that 
     \begin{align}
     \P\pp{\s{H}_{\s{U}}^{(2m)}\subseteq \Gamma'}\leq \p{\frac{2 \vc d}{(n-k)^2}}^m.\label{eq:probConnectedComponents1}
     \end{align}
     Indeed, note that by Observation~\ref{obs:RandomCopy1}, and by the construction of the ordering $(v_i)_i$, the distribution of $X_1\dots,X_{2m}$ is uniform over all $2m$-tuples with no repetitions over $[n]$. Furthermore, $\s{H}_{\s{U}}^{(2m)}\subseteq \Gamma'$ only if $\ppp{X_{2i-1},X_{2i}}\in e(\Gamma')$, for all $1\leq i\leq m$. Recall that by the definition of a vertex cover, it must be that either $X_{2i-1}$ or $X_{2i}$ lies in $S$, and the other vertex must be in a neighborhood of other vertices in $\Gamma'$, which contains at most $d$ vertices. Thus, there are at most $2\vc d$ possible pairs $(X_{2i-1},X_{2i})$, and accordingly, 
     \begin{align}
         \P\pp{\s{H}_{\s{U}}^{(2m)}\subseteq \Gamma'}&=\P\pp{\bigcap_{i=1}^m \ppp{(X_{2i-1},X_{2i})\in e(\Gamma')}}\\
         &\leq \frac{(2\vc  d)^m}{\prod_{i=0}^{2m-1}(n-i)}\\
         &\leq \p{\frac{2\vc  d}{(n-k)^2}}^m.
     \end{align}
     Next, observe that for all $2m+1\leq i \leq \ell$,
     \begin{align}
         \P\pp{\s{H}_{\s{U}}^{(i)}\subseteq \Gamma'}=\P\pp{\left.\s{H}_{\s{U}}^{(i)}\subseteq \Gamma'\right|\s{H}_{\s{U}}^{(i-1)}\subseteq \Gamma'}\cdot \P\pp{\s{H}_{\s{U}}^{(i-1)}\subseteq \Gamma'}.\label{eq:recursiveProbCalc1}
     \end{align}
     Now, since for all $i\geq 2m+1$, the vertex $v_i$ must have a neighbor $v_j$ for some $j<i$, then given $\s{H}_{\s{U}}^{(i-1)}\subseteq \Gamma'$ it must be that vertex $X_i$ is one of the (at most $d$) neighbors of $X_j$ in $\Gamma'$. This in turn implies that, 
     \begin{align}
         \P\pp{\left.\s{H}_{\s{U}}^{(i)}\subseteq \Gamma' \right| \s{H}_{\s{U}}^{(i-1)}\subseteq \Gamma'}\leq \frac{d}{n-(i-1)}\leq \frac{d}{n-k}. \label{eq:probDegreeInGamma1}
     \end{align}
     Applying \eqref{eq:recursiveProbCalc1} and \eqref{eq:probDegreeInGamma1} recursively, and combining with \eqref{eq:probConnectedComponents1}, we finally obtain that,
     \begin{align}
         \P_{\s{H}}\pp{\s{H}\subseteq\Gamma'}&=\P\pp{\s{H}_{\s{U}}^{(\ell)}\subseteq\Gamma'}\\
         &\leq  \p{\frac{d}{n-k}}^{\ell-2m}\cdot \P\pp{\s{H}_{\s{U}}^{(2m)}\subseteq\Gamma'}\\
         &\leq \p{\frac{d}{n-k}}^{\ell-2m}\cdot \p{\frac{2\vc  d}{(n-k)^2}}^m \\
         &=\frac{(2 \vc )^m d^{\ell-m}}{(n-k)^\ell}.
     \end{align}
 \end{proof}
In light of Lemma~\ref{lem:probRandomSubgraph1} and \eqref{eq:SecondMomentExpressionProb1}, we see that the summands in \eqref{eq:SecondMomentExpressionProb1} depend on $\s{H}'$ only through the values of $(|v(\s{H}')|,|\s{H}'|,m(\s{H}'))$. Accordingly, for a fixed $2\leq \ell\leq k $, $1 \leq m \leq \floor{\frac{\ell}{2}}$ and $\ell -m \leq j\leq\floor{\mu \cdot \ell} $, let $\calS_{m,\ell,j}$ be the set of all subgraphs $\s{H}'\subseteq \Gamma $ with $m$ connected components, $\ell$ vertices, $j$ edges, and no isolated vertices. Then,
\begin{align}
    \E_{\calH_0}\pp{\s{L}(\s{G})^2}\leq\sum_{(m,\ell,j)}\lambda^{2j}\vartheta(m,\ell)|\calS_{m,\ell,j}|,\label{eqn:InterBound}
\end{align}
where the summation over $(m,\ell,j)$ is in the range described above. Thus, our next task is to upper bound $|\calS_{m,\ell,j}|$. 
One concept that has an important role in the following derivations is the notion of integer partitions.  
 \begin{definition}[Integer partitions]
 An integer partition $p$ of an integer $\ell$  to exactly $m$ elements is a superset of positive integers of size $m$, such that the sum of its elements is $\ell$. The set of integer partitions of $\ell$ to $m$ parts is denoted by $\s{Par}(\ell,m)$. 
 \end{definition}
We can also interpret an integer partition $p\in \s{Par}(\ell,m) $ as a function $p:[\ell] \to [\ell]$, where $p(i)$ denotes the number of parts of size $i$, or equivalently, the number of appearances of $i$ in $p$. Finally, from the definition of an integer partition, we have the following two identities,
 \begin{align}
     \sum_{i=1}^\ell p(i)=m \quad \s{and} \quad  \sum_{i=1}^\ell i\cdot p(i)=\ell.\label{eq:partitiondef1} 
 \end{align}
We have the following result.
\begin{lemma}\label{lem:subgraphsBoundedDegreeCount1}
     Fix $2\leq \ell\leq k $, $1 \leq m \leq \floor{\frac{\ell}{2}}$, $\ell-m \leq j\leq\floor{\mu  \ell }$, and define $\tilde{d}=\max(2,d-1)$. Then, 
    \begin{align}
         |\calS_{m,\ell,j} |&\leq  \abs{\s{Par}(\ell,m)} \vc^m (e\tilde{d})^{\ell-m}  \binom{\min\p{\floor{\mu  \ell},\binom{\ell}{2}}}{j}.\label{eq:subgraphcount1}
     \end{align}     
 \end{lemma}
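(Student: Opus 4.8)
The plan is to bound $|\calS_{m,\ell,j}|$ by encoding each $\s{H}'\in\calS_{m,\ell,j}$ through three pieces of data whose number of choices can be controlled separately: the multiset of orders of its connected components, its vertex set, and its edge set inside that vertex set. Write $d\triangleq d_{\max}(\Gamma)$, $\vc\triangleq\vc(\Gamma)$, $\mu\triangleq\mu(\Gamma)$, and fix once and for all a minimum vertex cover $S\subseteq v(\Gamma)$, so $|S|=\vc$. Since $\s{H}'$ has no isolated vertices and exactly $m$ connected components, each component is a connected graph on at least two vertices, and hence the multiset of component orders is an element $p$ of $\s{Par}(\ell,m)$ with all parts $\ge2$. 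Writing $\calS_{m,\ell,j}^{p}$ for the set of members of $\calS_{m,\ell,j}$ with component orders $p$, we have $|\calS_{m,\ell,j}|\le\sum_{p\in\s{Par}(\ell,m)}|\calS_{m,\ell,j}^{p}|$, so it suffices to prove $|\calS_{m,\ell,j}^{p}|\le\vc^m(e\tilde d)^{\ell-m}\binom{\min(\floor{\mu\ell},\binom{\ell}{2})}{j}$ for each $p$ (this is vacuous when $p$ has a part equal to $1$, since then $\calS_{m,\ell,j}^{p}=\emptyset$, so assume the parts $s_1,\dots,s_m$ of $p$ are all $\ge2$); summing over $p$ then supplies the factor $|\s{Par}(\ell,m)|$, because the right-hand side does not depend on $p$ once $\sum_i(s_i-1)=\ell-m$ is used.

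Fix such a $p$. If $\s{H}'\in\calS_{m,\ell,j}^{p}$, its vertex set $W\triangleq v(\s{H}')$ decomposes as a disjoint union $W_1\cup\dots\cup W_m$ with $|W_i|=s_i$, where $\Gamma_{W_i}$ is connected (a component of $\s{H}'$ is connected in $\s{H}'\subseteq\Gamma$, hence in $\Gamma$) and, having at least one internal edge of $\Gamma$, contains a vertex of $S$. Therefore the number of admissible $W$ is at most the number of ordered tuples $(W_1,\dots,W_m)$ of (not necessarily disjoint) vertex subsets with $|W_i|=s_i$, $\Gamma_{W_i}$ connected and $W_i\cap S\neq\emptyset$, which is at most $\prod_{i=1}^m\bigl(\vc\cdot N(s_i)\bigr)$, where $N(s)$ denotes the maximum over $v\in v(\Gamma)$ of the number of connected vertex subsets of $\Gamma$ of size $s$ containing $v$. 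Granting the bound $N(s)\le(e\tilde d)^{s-1}$ established below, the number of admissible $W$ is at most $\vc^m\prod_i(e\tilde d)^{s_i-1}=\vc^m(e\tilde d)^{\ell-m}$.

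It remains to count, for a fixed admissible $W$ with $|W|=\ell$, the members $\s{H}'\in\calS_{m,\ell,j}^{p}$ with $v(\s{H}')=W$: each such $\s{H}'$ has $e(\s{H}')$ equal to a $j$-element subset of $e(\Gamma_W)$, and distinct vertex sets give disjoint families, so $|\calS_{m,\ell,j}^{p}|\le(\#\text{admissible }W)\cdot\binom{|e(\Gamma_W)|}{j}$. Now $\Gamma_W$ is a subgraph of $\Gamma$ on $\ell$ vertices, so $|e(\Gamma_W)|=\ell\cdot\eta(\Gamma_W)\le\ell\mu$ by the definition of $\mu(\Gamma)$, and also $|e(\Gamma_W)|\le\binom{\ell}{2}$ trivially; since $|e(\Gamma_W)|$ is an integer, $|e(\Gamma_W)|\le\min(\floor{\mu\ell},\binom{\ell}{2})$, and by monotonicity of $x\mapsto\binom{x}{j}$ on $\mathbb{N}$ the number of $j$-subsets is at most $\binom{\min(\floor{\mu\ell},\binom{\ell}{2})}{j}$. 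Combining the three steps yields $|\calS_{m,\ell,j}|\le|\s{Par}(\ell,m)|\,\vc^m(e\tilde d)^{\ell-m}\binom{\min(\floor{\mu\ell},\binom{\ell}{2})}{j}$, as claimed.

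The remaining ingredient, and the only genuinely delicate point, is the bound $N(s)\le(e\tilde d)^{s-1}$ with $\tilde d=\max(2,d-1)$. Every connected vertex set of size $s$ has a spanning tree, so $N(s)$ is at most the number of subtrees of $\Gamma$ with $s$ vertices rooted at a fixed vertex $v$, which I would bound by a tree-exploration argument: fixing a global order on $v(\Gamma)$ realizes each such subtree as a rooted plane tree, determined by its shape together with, at each node of out-degree $c$, the choice of which $c$ of the $\le d$ incident neighbours are its children — only $\le d-1$ at non-root nodes, since one neighbour is the parent — i.e. $\binom{d}{c}$ resp.\ $\binom{d-1}{c}\le\binom{\tilde d}{c}$ possibilities. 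Summing $\prod_u\binom{\tilde d}{c_u}$ over rooted-tree shapes on $s$ nodes, via Lagrange inversion applied to $y=x(1+y)^{\tilde d}$ together with $(1+t)^{\tilde d}\le e^{\tilde d t}$ and $s^{s-1}/s!\le e^{s-1}$, yields $N(s)\le(e\tilde d)^{s-1}$; the degenerate cases $s\le2$ and $d\le2$ are checked directly, which is precisely why $\tilde d$ is floored at $2$. Everything in the first three steps is bookkeeping of over-counting that is harmless because only upper bounds are needed; extracting the honest constant $e$ here — rather than a cruder $4$ or $8$ from a naive Catalan-number estimate — is where the actual work lies.
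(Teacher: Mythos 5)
Your proposal is correct and follows essentially the same route as the paper's proof: sum over integer partitions recording the component sizes, bound the choice of each component's vertex set by $\vc\cdot(e\tilde d)^{s-1}$ using a minimum vertex cover together with the bound on connected sets of prescribed size through a fixed vertex, and then choose the $j$ edges inside the induced subgraph, whose edge count is at most $\min\p{\floor{\mu\ell},\binom{\ell}{2}}$ by the definition of $\mu(\Gamma)$. The only difference is that the paper simply invokes the Bollob\'as bound (Lemma~\ref{lem:Bollobas1}, with the same $d<3$ fix via $\tilde d=\max(2,d-1)$) for the quantity you call $N(s)$, whereas you sketch a tree-exploration/Lagrange-inversion derivation of it; that derivation is the standard one and does not change the argument.
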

To prove the above bound, we need the following definition and result.
\begin{definition}\label{def:connectedSet1}
     Let $G=(V,E)$ be an undirected graph and  $U\subseteq V$ be a set of vertices. The induced subgraph $G_U$ of $G$ obtained from $U$ is defined as $G_U=(U,U\times U \cap E)$. A set of vertices $U\subseteq V$ is called connected if the induced subgraph $G_U$ is connected. 
 \end{definition}
 \begin{lemma}\cite[pp. (129)-(130)]{bollobas2006art}\label{lem:Bollobas1} A graph $G=(V,E)$ of maximum degree $d\geq 3$ has at most $(e(d - 1))^{\ell-1}$ connected induced subgraphs (equivalently, connected sets) with $\ell $ vertices, one of which is a given vertex.
 \end{lemma}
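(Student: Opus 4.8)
The plan is to prove the bound in two stages. First, reduce from connected sets to subtrees: every connected set $S\subseteq V$ with $|S|=\ell$ and $v\in S$ induces a connected graph $G_S$, which therefore contains a spanning tree $T_S\subseteq G$; this $T_S$ is a subtree of $G$ on $\ell$ vertices containing $v$, and since the vertex set of $T_S$ equals $S$, the assignment $S\mapsto T_S$ is injective. Hence it suffices to bound the number of subtrees of $G$ on $\ell$ vertices containing $v$ by $(e(d-1))^{\ell-1}$; that a connected set may have several spanning trees only over-counts, which is harmless.

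To count such subtrees I would root $T$ at $v$ and encode it by one deterministic pass: list the vertices of $T$ in the order a fixed depth-first search visits them, and record, at each visited vertex $u$, the set of its children in $T$ as a subset of $N_G(u)$ of size $c_u$. Given the previously recorded sets the traversal order---and hence $T$---is determined, so $T$ is specified by the sequence $(c_{u_1}=c_v,c_{u_2},\dots,c_{u_\ell})$, which satisfies $c_i\ge 0$, $\sum_i c_i=\ell-1$ and $\sum_{i\le k}c_i\ge k$ for $k<\ell$ (the condition characterizing depth-first sequences of plane trees), together with a choice, at each vertex, of which $c_{u_i}$ of its at most $d-1$ non-parent neighbours (at most $d$ at the root) are its children. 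Consequently
\begin{align}
\#\{\text{subtrees of }G\text{ on }\ell\text{ vertices through }v\}\;\le\;\sum_{(c_i)}\binom{d}{c_1}\prod_{i=2}^{\ell}\binom{d-1}{c_i},\label{eqn:subtreeGF}
\end{align}
the sum being over all sequences $(c_i)$ as above.

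Next I would evaluate the right-hand side of \eqref{eqn:subtreeGF} via generating functions. With $a(x)=x(1+a(x))^{d-1}$ the Fuss--Catalan series, expanding this identity recursively reproduces the weighted sequences in \eqref{eqn:subtreeGF} for the non-root vertices, and adjoining the degree-$d$ root shows the right-hand side equals $[x^\ell]\bigl(x(1+a(x))^d\bigr)=[x^\ell]\bigl(a(x)+a(x)^2\bigr)$; Lagrange inversion then gives the closed form $\tfrac1\ell\bigl(\binom{(d-1)\ell}{\ell-1}+2\binom{(d-1)\ell}{\ell-2}\bigr)$ (equivalently, the exact number of size-$\ell$ subtrees through the root of the infinite $d$-regular tree, which one can also reach by lifting $T$ to the universal cover of $G$). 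Bounding this by $(e(d-1))^{\ell-1}$ is then an elementary estimate: apply $\binom{(d-1)\ell}{j}\le\bigl((d-1)\ell\bigr)^j/j!$ together with Stirling's inequality $j!\ge(j/e)^j$ and $(1+\tfrac1{d-2})^{d-2}\le e$---this is where $d\ge 3$ is used, to ensure $d-2\ge 1$---while keeping track of the factors $\tfrac1\ell$ and $\tfrac1{(d-2)\ell+1}$ that the crude binomial bound discards.

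I expect this last step, squeezing the constant down to exactly $e$, to be the main technical obstacle. The structural part---the spanning-tree reduction, and the observation that the subtree count is a Fuss--Catalan quantity and hence only singly exponential in $\ell$---is routine; the delicate part is absorbing the root's extra (degree $d$ rather than $d-1$) choice and the lower-order terms so as to land on $(e(d-1))^{\ell-1}$ and not merely on $C\cdot(d-1)^{\ell-1}$ for an absolute constant $C$. Small cases ($\ell=1,2,3$) checked directly show that the inequality carries ample slack, tightening only as $\ell\to\infty$; a convenient way to expose this slack is the probabilistic form of the encoding---include each non-parent neighbour as a child independently with probability $p$, use that the events $\{\tilde T=T\}$ are disjoint over subtrees $T$, and optimize at $p\approx 1/(d-1)$---which yields a bound of the form $\bigl((d-1)(1+\tfrac1{d-2})^{d-2}\bigr)^{\ell-1}\le(e(d-1))^{\ell-1}$ up to boundary corrections.
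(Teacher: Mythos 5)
The paper never proves this lemma---it is imported verbatim from Bollob\'as \cite[pp.\ 129--130]{bollobas2006art}---so there is no internal proof to compare against; judged on its own merits, your argument follows the standard route and is essentially sound. The reduction from connected sets to subtrees via a deterministically chosen spanning tree is injective (the vertex set is recoverable from the tree), the DFS child-set encoding correctly dominates the number of rooted subtrees of $G$ by the corresponding count in the infinite $d$-regular tree, and your Lagrange-inversion evaluation $\frac1\ell\binom{(d-1)\ell}{\ell-1}+\frac2\ell\binom{(d-1)\ell}{\ell-2}$ checks out (it gives $3$ and $9$ for $d=3$, $\ell=2,3$). The only place where the write-up is not yet a proof is the final estimate, exactly as you anticipate: applying $\binom{(d-1)\ell}{j}\le((d-1)\ell)^j/j!$ together with $j!\ge(j/e)^j$ as literally stated leaves a prefactor of order $\frac{e}{\ell}\bigl(1+\frac{2(\ell-1)}{(d-2)\ell+2}\bigr)$, which exceeds $1$ for small $\ell$ (e.g.\ $\ell\le 6$ when $d=3$), so the ``small cases checked directly'' must be verified uniformly in $d\ge 3$, not just for small $d$. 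Fortunately, for each fixed small $\ell$ the exact formula reduces this to a one-line polynomial inequality in $d$ (for instance, at $\ell=3$ the exact count is $\tfrac32 d(d-1)\le e^2(d-1)^2$ for all $d\ge3$), and for large $\ell$ your Stirling bound has room to spare because $(1+\tfrac1{d-2})^{d-2}<e$ strictly, so the exponential growth rate $(d-1)^{d-1}/(d-2)^{d-2}$ is strictly below $e(d-1)$. With that bookkeeping spelled out the argument is complete; the probabilistic (Galton--Watson) encoding you sketch at the end is an equally valid alternative, but as you note it suffers the same boundary corrections (the root has $d$ rather than $d-1$ available neighbours) before it lands on exactly $(e(d-1))^{\ell-1}$ rather than a constant multiple of it.
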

We are now in a position to prove Lemma~\ref{lem:subgraphsBoundedDegreeCount1}.
\begin{proof}[Proof of Lemma~\ref{lem:subgraphsBoundedDegreeCount1}] 
    We begin our enumeration by first running over all possible sizes (i.e., number of vertices) of the connected components $\s{C}_1,\dots,\s{C}_m$. We observe that any choice of the sizes of these components, or, equivalently, the number of components of a given size, correspond to a unique integer partition of the number $\ell$ to exactly $m$ elements. Accordingly, fixing a partition $p$, let us denote by $S(p,j)$ the set of $\Gamma$-subgraphs with exactly $p(i)$ connected components containing exactly $i$ vertices, for all $i\in[\ell]$. Then, we can enumerate $\calS_{m,\ell,j}$ as,
    \begin{align}
        |\calS_{m,\ell,j} |= \sum_{p\in\s{Par}(\ell,m)}|S(p,j)|.\label{eqn:enumPar}
    \end{align}
    Next, we bound $|S(p,j)|$.
    We start by bounding the number of ways to choose the vertices of subgraphs $\s{H}$ with sizes that correspond to the partition $p$. For each such subgraph $\s{H}$, we have $p(i)$ connected components of size $i$, which are supported on $p(i)$ disjoint connected sets of vertices (see, Definition~\ref{def:connectedSet1}). Recall that $\tilde{d}=\max(d-1,2)$. Then, observe that there are at most
    \begin{align}
        \vc\cdot (e \tilde{d})^{i-1}\label{eq:quantity1}
    \end{align} 
    connected sets of size $i$ in $\Gamma$. To see that, let $S$ be a minimal vertex cover set in $\Gamma$, and assume first that $d\geq 3$. Now, any edge must share a vertex in $S$, and therefore, any connected set must contain a vertex in $S$. Since we have $\vc$ possibilities for choosing a representative vertex $v_s\in S$, by Lemma~\ref{lem:Bollobas1}, there are at most $(e\tilde{d})^{i-1}$ connected sets containing this vertex.
    If $d<3$, note that by adding edges to $\Gamma$ the number of connected sets can only increase. Thus, we may repeat the same argument above and use Lemma~\ref{lem:Bollobas1} with $\tilde{d}=\max(2,d-1)$ (indeed, we can add at most two edges to $\Gamma$, and apply the bound). This proves \eqref{eq:quantity1}. Multiplying \eqref{eq:quantity1} over all $m$ connected components, we get that there are at most
    \begin{align}
        \prod_{i=1}^\ell \p{\vc\cdot (e  \tilde{d})^{i-1}}^{p(i)}=\vc^m\cdot (e  \tilde{d})^{\ell-m} \label{eq:overcountvertices1}
    \end{align}
    ways to choose the $\ell$ vertices on which the graph $\s{H}$ is supported; the equality in \eqref{eq:overcountvertices1} follows from \eqref{eq:partitiondef1}. We remark that by multiplying \eqref{eq:overcountvertices1} over $i$, we surely over count, as we also consider connected sets which may overlap. 
    
     With the vertices of the graph fixed, let us now bound the number of connected subgraphs supported on these vertices, with a total number of $j$ edges. Let $U\in v(\Gamma)$ be a fixed subset of vertices on which our connected components are supported. Because $G_{U}\subseteq \Gamma$, we have,
    \begin{align}
        |e(G_U)|\leq \mu(\Gamma)   \cdot |v(G_U)|=\mu  \ell.
    \end{align}
    Furthermore, it is clear that, 
    \begin{align}
        |e(G_U)|\leq \binom{|U|}{2}=\binom{\ell}{2},
    \end{align}
    and when combined with the above,  the number of subgraphs of $G_U$ can be upper bounded by, 
    \begin{align}
        \binom{|e(G_U)|}{j}\leq \binom{\min\p{\floor{ \mu \ell},\binom{\ell}{2}}}{j}.\label{eqn:boundSubG_U}
    \end{align}
    Finally, combining \eqref{eqn:enumPar}, \eqref{eq:overcountvertices1}, and \eqref{eqn:boundSubG_U}, we get,
    \begin{align}
        |\calS_{m,\ell,j} |&=\sum_{p\in\s{Par}(\ell,m)}\abs{S(p,j)}\\
        &\leq \sum_{p\in\s{Par}(\ell,m)}\vc^m\cdot (e\tilde{d})^{\ell-m}\cdot \binom{\min\p{\floor{\mu  \ell},\binom{\ell}{2}}}{j}\\
        &= \abs{\s{Par}(\ell,m)}\vc^m\cdot (e\tilde{d})^{\ell-m}\cdot \binom{\min\p{\floor{\mu \ell},\binom{\ell}{2}}}{j},
    \end{align}
    which concludes the proof. 
    \end{proof}
Finally, we prove Theorem~\ref{th:lowerVCD} as follows.
 \begin{proof}[Proof of Theorem~\ref{th:lowerVCD}] Applying Lemmas~\ref{lem:probRandomSubgraph1} and \ref{lem:subgraphsBoundedDegreeCount1} on \eqref{eqn:InterBound}, we get,
 \begin{align}
      \E_{\calH_0}[\s{L(G)}^2]&\leq1+\sum_{\ell=2}^k \sum_{m=1}^{\floor{\ell/2}}\sum_{j=\ell-m}^{\floor{\mu \ell}} |\calS_{m,\ell,j} |\cdot \lambda^{2j} \cdot \frac{(2\vc)^m d^{\ell-m}}{(n-k)^\ell}\label{eq:GenralLB-GeneralMuBegin}\\
      &\leq 1+\sum_{\ell=2}^k \sum_{m=1}^{\floor{\ell/2}} \frac{(2\vc)^m d^{\ell-m}}{(n-k)^\ell} \sum_{j=\ell-m}^{\floor{\mu \ell }}  \lambda^{2j} \cdot \abs{\s{Par}(\ell,m)}\vc^m\cdot (e\tilde{d})^{\ell-m}\binom{\floor{\mu \ell }}{j}\\
      &\leq 1+\sum_{\ell=2}^k \sum_{m=1}^{\floor{\ell/2}} \abs{\s{Par}(\ell,m)}\frac{2^m \vc^{2m} (ed^2)^{\ell-m}}{(n-k)^\ell} \sum_{j=\ell-m}^{\floor{\mu \ell }}  \lambda^{2j} \cdot \binom{\floor{\mu \ell }}{j}\\
      &\leq  1+\sum_{\ell=2}^k \sum_{m=1}^{\floor{\ell/2}} \abs{\s{Par}(\ell,m)}\frac{2^m \vc^{2m} (ed^2)^{\ell-m}}{(n-k)^\ell}(1+\lambda^2)^{\floor{\mu \ell }}\\
      &\overset{(a)}{\leq} 1+\sum_{\ell=2}^k \sum_{m=1}^{\floor{\ell/2}} e^{c \sqrt{\ell}}\cdot \frac{2^m \vc^{2m} (ed^2)^{\ell-m}}{(n-k)^\ell}(1+\lambda^2)^{\mu \ell }\\
      &= 1+\sum_{\ell=2}^k  e^{c \sqrt{\ell}}\p{\frac{(1+\lambda^2)^{\mu}\cdot ed^2}{n-k}}^\ell\sum_{m=1}^{\floor{\ell/2}} \p{ \frac{2\vc^{2} }{ed^2}}^m\label{eq:innerSum1}\\
       &\overset{(b)}{\leq } 1+C_\varepsilon \cdot \sum_{\ell=2}^k  \p{\frac{(1+\varepsilon)(1+\lambda^2)^{\mu}\cdot ed^2}{n-k}}^\ell\sum_{m=1}^{\floor{\ell/2}} \p{ \frac{2\vc^{2} }{ed^2}}^m,\label{eq:generallowerSplit1}\\
\end{align}
where $(a)$ follows from the Hardy-Ramanujan formula (see, e.g., \cite[Chapter 1.3]{flajolet2009analytic}), \begin{equation}\label{eq:Hardy-Ramanujan}
    \abs{\s{Par}(\ell,m)}\leq e^{c\sqrt{\ell}},
\end{equation} 
for some constant $c>0$, and $(b)$ follows since $e^{c\sqrt{\ell}}$ is sub-exponential. We now separate our analysis into two complementary cases. In the first, we assume that,
\begin{align}
             \frac{2\vc^2}{ed^2}\geq 1+\alpha,\label{eq:CondDenseInProof}
         \end{align}
for some fixed positive $\alpha$, independent of $n$. In this case, the second sum on the r.h.s. of \eqref{eq:generallowerSplit1} reduces to,
        \begin{align}
            \sum_{m=1}^{\floor{\ell/2}} \p{ \frac{2\vc^{2} }{ed^2}}^m\leq C\cdot \p{\frac{2\vc^{2} }{ed^2}}^{\frac{\ell}{2}},\label{eq:FracGadokMe1}
        \end{align}
for some constant $C$. We therefore obtain,
        \begin{align}
             \E_{\calH_0}[\s{L(G)}^2]
      &\leq 1+C'_\varepsilon \cdot \sum_{\ell=2}^k  \p{\frac{(1+\varepsilon)(1+\lambda^2)^{\mu} \vc  \sqrt{2ed^2}}{n-k}}^\ell,\label{eq:GenralLB-GeneralMuEnd}
        \end{align}
which is bounded (and accordingly strong detection is impossible) provided that,
     \begin{align}
         \frac{(1+\varepsilon)(1+\lambda^2)^{\mu}\cdot \vc d \cdot \sqrt{2e}}{n-k}< 1-\delta.  \label{eq:innderCondDom1}
     \end{align}
for some fixed $\delta>0$ and $\varepsilon>0$, and converge to unity (and accordingly weak detection is impossible) if $1-\delta$ is replaced with an $o(1)$ function. Next, we move to the other case, where, 
         \begin{align}
             \frac{2\vc^2}{ed^2}\leq 1+o(1).
         \end{align}
     Here, for any $\varepsilon>0$, we have that for sufficiently large $n$, the second sum on the r.h.s. of  \eqref{eq:generallowerSplit1} satisfies,
        \begin{align}
            \sum_{m=1}^{\floor{\ell/2}} \p{ \frac{2\vc^{2} }{ed^2}}^m\leq C\cdot (1+\varepsilon)^\ell,
        \end{align}
and thus,
\begin{align}
         \E_{\calH_0}[\s{L(G)}^2]&\leq 1+C''_\varepsilon  \sum_{\ell=2}^k  \p{\frac{(1+\varepsilon)^2\cdot (1+\lambda^2)^{\mu}\cdot ed^2}{n-k}}^\ell.\label{eq:casesAnoying1}
     \end{align}
    The above is bounded provided that,
     \begin{align}
         \frac{(1+\varepsilon)^2(1+\lambda^2)^{\mu}\cdot ed^2}{n-k}<1-\delta, \label{eq:innderCondDom2}
     \end{align}
     for some fixed $\delta>0$ and $\varepsilon>0$, and converge to unity whenever the $1-\delta$ is replaced with any $o(1)$ function.     Finally, we observe that the condition in \eqref{eq:CondDenseInProof} holds exactly when the condition in \eqref{eq:innderCondDom1} dominates the one in \eqref{eq:innderCondDom2} (perhaps, up to a multiplicative constant factor). Thus, we can find a sufficiently small constant $C>0$, such that $\E_{\calH_0}[\s{L(G)}^2]$ remains bounded, as long as,
    \begin{align}
        \frac{(1+\lambda^2)^{\mu}}{n-k}\cdot \max(\vc d, d^2)<C,
    \end{align}
    which concludes the proof.
\end{proof}

\subsubsection{Reduction to the vertex cover-degree balanced case}\label{sec:ReudctionToBalanced}

There is a gap between the lower bound in Theorem~\ref{th:lowerVCD} and the upper bounds in Theorem~\ref{thm:upperBoundAlgo}, which closes if $\Gamma$ is $\vcd$-balanced. As discussed in Subsection~\ref{subsec:outline}, we address this gap and prove that Theorem~\ref{thm:upperBoundAlgo} is tight through two main arguments: 1) We show that if $\Gamma$ can be decomposed into a set of $\vcd$-balanced graphs, then detection is impossible as long as each subgraph in the decomposition is undetectable, and 2) we construct such a decomposition. In this subsection, we focus on the first argument. Consider the subsequent definition.
\begin{definition}
    Let $G=(V,E)$ be simple graph containing no isolated vertices. We say that $G$ is decomposed into a set of $M$ edge-disjoint graphs $\{G_\ell=(V_\ell,E_\ell)\}_{\ell=1}^M$, and denote $G=\bigcup_{i=1}^MG_i$, if $E$ is the disjoint union of the sets $E_1,\dots,E_M$, and for all $\ell$ we have $V_\ell\subseteq V$. 
\end{definition}
The following is the main result of this subsection. 
\begin{prop}\label{prop:decompisitionThD}
    Let $\Gamma=(\Gamma_n)_n$ be a sequence of graphs, and assume that each can be decomposed into a set of edge-disjoint graphs, i.e., $\Gamma=\bigcup_{\ell=1}^M \Gamma_\ell$. Then, $\E_{\calH_0}[\s{L}(\s{G})^2]=O(1)$ if 
    \begin{align}
        \max_{\ell=1,\dots, M}\p{\frac{(1+\lambda_M^2)^{\mu(\Gamma_\ell)}}{n-|v(\Gamma_\ell)|}\cdot\max\p{\vc(\Gamma_\ell)\cdot d_{\max}(\Gamma_\ell),d_{\max}(\Gamma_\ell)^2}}\leq C,\label{eq:condintersectionD}
    \end{align}
    where $\lambda_M^2\triangleq(1+\lambda^2)^{M^2}-1$, and $C>0$ is a universal constant.
\end{prop}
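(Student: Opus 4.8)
I begin from the identity \eqref{eq:SecondMomentExpression}, $\E_{\calH_0}[\s{L}(\s{G})^2]=\E_{\Gamma}[(1+\lambda^2)^{|e(\Gamma\cap\Gamma')|}]$. Since the decomposition $\Gamma=\bigcup_{\ell=1}^M\Gamma_\ell$ is edge-disjoint (and so is the induced decomposition of the fixed copy, $\Gamma'=\bigcup_{\ell=1}^M\Gamma_\ell'$), each edge of $\Gamma\cap\Gamma'$ belongs to a unique pair $(\Gamma_i,\Gamma_j')$, so $|e(\Gamma\cap\Gamma')|=\sum_{i,j=1}^M|e(\Gamma_i\cap\Gamma_j')|$ and $(1+\lambda^2)^{|e(\Gamma\cap\Gamma')|}=\prod_{i,j}(1+\lambda^2)^{|e(\Gamma_i\cap\Gamma_j')|}$. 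Applying the generalized H\"older inequality with all $M^2$ exponents equal to $M^2$ and using $(1+\lambda^2)^{M^2}=1+\lambda_M^2$ yields, exactly as in \eqref{eq:GammaIeq2Outline}, $\E_{\calH_0}[\s{L}(\s{G})^2]\le\prod_{i,j=1}^M\p{\E[(1+\lambda_M^2)^{|e(\Gamma_i\cap\Gamma_j')|}]}^{1/M^2}$, where $\Gamma_j'$ is a fixed copy of $\Gamma_j$ and, by symmetry, the image of $\Gamma_i$ under the uniform embedding of $\Gamma$ has marginal law $\s{Unif}(\calS_{\Gamma_i})$. Since $M$ is a fixed constant, it suffices to prove that each of the $M^2$ factors $\E[(1+\lambda_M^2)^{|e(\Gamma_i\cap\Gamma_j')|}]$ is $O(1)$.

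The core step is a ``crossing'' analogue of Theorem~\ref{th:lowerVCD}: assuming (after relabeling) that $\mu(\Gamma_i)\le\mu(\Gamma_j)$, I claim $\E[(1+\lambda_M^2)^{|e(\Gamma_i\cap\Gamma_j')|}]=O(1)$ whenever the crossing condition \eqref{eq:CrossingTerms} holds for a suitable universal constant $C_0$. By Lemma~\ref{lem:intersectionMomentGenGeneral}, used in the form obtained by swapping the two arguments in \eqref{eq:inetersectionEquivalent2}, this factor equals $\sum_{\s{H}\subseteq\Gamma_i'}\lambda_M^{2|\s{H}|}\,\P_{\Gamma_j}[\s{H}\subseteq\Gamma_j]$, where $\Gamma_i'$ is a fixed copy of $\Gamma_i$. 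Two structural observations tame this sum. First, every $\s{H}\subseteq\Gamma_i'$ on $\ell$ vertices has at most $\mu(\Gamma_i)\ell=(\mu(\Gamma_i)\wedge\mu(\Gamma_j))\ell$ edges, so grouping the terms by the triple $(m,\ell,j)$ of (number of components, vertices, edges), bounding their number via Lemma~\ref{lem:subgraphsBoundedDegreeCount1} applied to $\Gamma_i$, and summing $\lambda_M^{2j}\binom{\min(\lfloor\mu(\Gamma_i)\ell\rfloor,\binom\ell2)}{j}$ over $j$ gives at most $(1+\lambda_M^2)^{\mu(\Gamma_i)\ell}$ -- the exponent $\mu(\Gamma_i)\wedge\mu(\Gamma_j)$ comes out with no slack. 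Second, a term with $\P_{\Gamma_j}[\s{H}\subseteq\Gamma_j]>0$ forces $\s{H}$ to be isomorphic to a subgraph of $\Gamma_j$, hence $\ell\le|v(\Gamma_i)|\wedge|v(\Gamma_j)|$; since the proof of Lemma~\ref{lem:probRandomSubgraph1} in fact produces $\P_{\Gamma_j}[\s{H}\subseteq\Gamma_j]\le(2\vc(\Gamma_j))^m d_{\max}(\Gamma_j)^{\ell-m}/(n-\ell+1)^\ell$, this is at most $(2\vc(\Gamma_j))^m d_{\max}(\Gamma_j)^{\ell-m}/(n-|v(\Gamma_i)|\wedge|v(\Gamma_j)|)^\ell$ on the support. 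Plugging both bounds in and bounding $|\s{Par}(\ell,m)|$ by the sub-exponential $e^{c\sqrt\ell}$ (Hardy--Ramanujan \eqref{eq:Hardy-Ramanujan}) reduces the sum to exactly the geometric series estimated in the proof of Theorem~\ref{th:lowerVCD}, but with $\vc(\Gamma)^2$, $d_{\max}(\Gamma)^2$, $(1+\lambda^2)^{\mu(\Gamma)}$, $n-|v(\Gamma)|$ replaced by $\vc(\Gamma_i)\vc(\Gamma_j)$, $d_{\max}(\Gamma_i)d_{\max}(\Gamma_j)$, $(1+\lambda_M^2)^{\mu(\Gamma_i)\wedge\mu(\Gamma_j)}$, $n-|v(\Gamma_i)|\wedge|v(\Gamma_j)|$; running the same dichotomy (according to whether $\vc(\Gamma_i)\vc(\Gamma_j)$ is dominated by $d_{\max}(\Gamma_i)d_{\max}(\Gamma_j)$ or not) then yields \eqref{eq:CrossingTerms}.

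Finally, I verify that the diagonal hypothesis \eqref{eq:condintersectionD} implies each of the crossing conditions, so that choosing $C=C_0$ makes every factor in the H\"older product $O(1)$, and hence so is their product, which is the asserted $\E_{\calH_0}[\s{L}(\s{G})^2]=O(1)$. Write $D_\ell=(1+\lambda_M^2)^{\mu(\Gamma_\ell)}$, $A_\ell=\max(\vc(\Gamma_\ell)d_{\max}(\Gamma_\ell),d_{\max}(\Gamma_\ell)^2)$, and $N_\ell=n-|v(\Gamma_\ell)|$; then $D_\ell A_\ell/N_\ell\le C_0$ for all $\ell$ by hypothesis. The numerator and denominator of the left side of \eqref{eq:CrossingTerms} for the pair $(i,j)$ satisfy $(1+\lambda_M^2)^{\mu(\Gamma_i)\wedge\mu(\Gamma_j)}\le\sqrt{D_iD_j}$ (since $\mu(\Gamma_i)\wedge\mu(\Gamma_j)\le\tfrac12(\mu(\Gamma_i)+\mu(\Gamma_j))$ and $1+\lambda_M^2\ge1$), $\max(\sqrt{\vc(\Gamma_i)\vc(\Gamma_j)d_{\max}(\Gamma_i)d_{\max}(\Gamma_j)},\,d_{\max}(\Gamma_i)d_{\max}(\Gamma_j))\le\sqrt{A_iA_j}$ (since $A_i\ge\vc(\Gamma_i)d_{\max}(\Gamma_i)$ and $A_i\ge d_{\max}(\Gamma_i)^2$, and likewise for $j$), and $n-|v(\Gamma_i)|\wedge|v(\Gamma_j)|=\max(N_i,N_j)\ge\sqrt{N_iN_j}$; multiplying these shows the left side of \eqref{eq:CrossingTerms} is at most $\sqrt{(D_iA_i/N_i)(D_jA_j/N_j)}\le C_0$.

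\emph{Main obstacle.} The H\"older splitting -- whose sole purpose is to trade $\lambda^2$ for $\lambda_M^2$ at the price of a fixed constant -- and the closing AM--GM reduction are routine. The substance is the crossing estimate \eqref{eq:CrossingTerms}, where one must \emph{simultaneously} obtain the sharp exponent $\mu(\Gamma_i)\wedge\mu(\Gamma_j)$ and the sharp denominator $n-|v(\Gamma_i)|\wedge|v(\Gamma_j)|$. Expanding over subgraphs of the copy of \emph{smaller} maximum subgraph density is what makes the edge-count cap -- and therefore the exponent -- tight; expanding over the other copy would overcount edge sets by a factor super-exponential in $\ell$ and lose the right exponent. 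Dually, using that subgraphs not embeddable in $\Gamma_j$ contribute zero probability is what lets one trim $|v(\s{H})|$ down to $|v(\Gamma_i)|\wedge|v(\Gamma_j)|$ and so recover the right denominator. Achieving both at once -- rather than one at the expense of the other -- is the crux, and it is exactly this sharpness that lets the diagonal conditions of \eqref{eq:condintersectionD} control all the crossing terms.
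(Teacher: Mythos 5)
Your proposal is correct and follows essentially the same route as the paper: the Hölder splitting with exponent $M^2$ to trade $\lambda^2$ for $\lambda_M^2$, a crossing-term estimate proved exactly as in Lemma~\ref{lem:interectionD} (polynomial expansion over subgraphs of the fixed copy, the vertex-cover probability bound, the subgraph-count bound, and the Hardy--Ramanujan estimate), and a final check that the diagonal hypothesis \eqref{eq:condintersectionD} dominates all crossing conditions. The only differences are cosmetic\textemdash{}you close the last step with a geometric-mean inequality where the paper uses a max-based chain, and you make explicit the sharper denominator $n-|v(\Gamma_i)|\wedge|v(\Gamma_j)|$ that the paper uses implicitly\textemdash{}so no further comment is needed.
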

To prove Proposition~\ref{prop:decompisitionThD} we need the following generalization of Theorem~\ref{th:lowerVCD}, which provides the conditions under which the moment generating function of the intersection of \emph{two arbitrary} subgraphs is bounded.
\begin{lemma}\label{lem:interectionD}
      Let $\Gamma_1=(\Gamma_{1,n})_n$ and $\Gamma_2=(\Gamma_{2,n})_n$ be two sequences of graphs. If,
     \begin{align}
         \frac{(1+\lambda^2)^{\min(\mu(\Gamma_1),\mu(\Gamma_2))}}{n-\min\p{|v(\Gamma_1)|,|v(\Gamma_2)|}}\max\p{\sqrt{\vc(\Gamma_1)\vc(\Gamma_2)d_{\max}(\Gamma_1)d_{\max}(\Gamma_2)},d_{\max}(\Gamma_1)d_{\max}(\Gamma_2)}\leq C,\label{eq:condIntersectionD1}
     \end{align}
     for some $C>0$, then,
    \begin{align}
        \E_{\Gamma_2}\pp{(1+\lambda^2)^{|e(\Gamma_1'\cap \Gamma_2)|}}=O(1),\label{eq:condIntersectionD2}
    \end{align}
     where the expectation is taken w.r.t. $\Gamma_2\sim\s{Unif}(\calS_{\Gamma_2})$ and $\Gamma_1'$ is fixed copy of $\Gamma_1$ in $\calK_n$. 
\end{lemma}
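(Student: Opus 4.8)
The plan is to retrace the proof of Theorem~\ref{th:lowerVCD}, now letting the two graphs play complementary roles, so that the \emph{minimum} of the two maximum‑subgraph densities and the symmetric product $\sqrt{\vc(\Gamma_1)\vc(\Gamma_2)d_{\max}(\Gamma_1)d_{\max}(\Gamma_2)}$ emerge. Since, as observed at the end of the proof of Proposition~\ref{prop:LikelihoodMomentExperession}, one may exchange a fixed copy for a uniformly random one, $\E_{\Gamma_2}\pp{(1+\lambda^2)^{|e(\Gamma_1'\cap\Gamma_2)|}}$ coincides with the fully symmetric quantity $\E_{\Gamma_1\indep\Gamma_2}\pp{(1+\lambda^2)^{|e(\Gamma_1\cap\Gamma_2)|}}$, to which Lemma~\ref{lem:intersectionMomentGenGeneral} applies. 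Relabelling if necessary, assume $\mu(\Gamma_1)\le\mu(\Gamma_2)$ and work with the representation $\sum_{\s{H}\subseteq\Gamma_1'}\lambda^{2|\s{H}|}\,\P_{\Gamma_2}\pp{\s{H}\subseteq\Gamma_2}$ of \eqref{eq:inetersectionEquivalent2}: subgraphs are enumerated inside the \emph{sparser} graph, while the copy‑probability refers to the other one.

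Next bound the two ingredients as in Theorem~\ref{th:lowerVCD}, but ``mixed''. Lemma~\ref{lem:probRandomSubgraph1} applied to $\Gamma_2$ gives, for any $\s{H}$ with $\ell\triangleq|v(\s{H})|$ vertices and $m\triangleq m(\s{H})$ components, $\P_{\Gamma_2}\pp{\s{H}\subseteq\Gamma_2}\le (2\vc(\Gamma_2))^{m}d_{\max}(\Gamma_2)^{\ell-m}(n-|v(\Gamma_2)|)^{-\ell}$ (trivially, both sides vanish, if $\s{H}$ is not isomorphic to a subgraph of $\Gamma_2$). Lemma~\ref{lem:subgraphsBoundedDegreeCount1} applied to $\Gamma_1$ bounds the number of $\s{H}\subseteq\Gamma_1'$ with parameters $(m,\ell,j)$ by $|\s{Par}(\ell,m)|\,\vc(\Gamma_1)^{m}(e\tilde d_1)^{\ell-m}\binom{\min(\floor{\mu(\Gamma_1)\ell},\binom{\ell}{2})}{j}$, with $\tilde d_1\triangleq\max(2,d_{\max}(\Gamma_1)-1)\le 2d_{\max}(\Gamma_1)$. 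Because we enumerate inside the sparser graph, the binomial's upper index is $\floor{\mu(\Gamma_1)\ell}=\floor{(\mu(\Gamma_1)\wedge\mu(\Gamma_2))\ell}$, so $\sum_{j}\lambda^{2j}\binom{\floor{\mu_{\min}\ell}}{j}\le(1+\lambda^2)^{\floor{\mu_{\min}\ell}}$ with $\mu_{\min}\triangleq\mu(\Gamma_1)\wedge\mu(\Gamma_2)$ — this is what inserts the minimum of the two densities into the final condition.

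Assembling these and repeating the computation of Theorem~\ref{th:lowerVCD}, the vertex‑cover factors multiply to $(2\vc(\Gamma_1)\vc(\Gamma_2))^{m}$, the degree factors to $(e\tilde d_1 d_{\max}(\Gamma_2))^{\ell-m}$, and after the Hardy--Ramanujan estimate $|\s{Par}(\ell,m)|\le e^{c\sqrt{\ell}}$ one is left with
\[
\E_{\Gamma_2}\pp{(1+\lambda^2)^{|e(\Gamma_1'\cap\Gamma_2)|}}\le 1+\sum_{\ell}e^{c\sqrt{\ell}}\left(\frac{(1+\lambda^2)^{\mu_{\min}}e\tilde d_1 d_{\max}(\Gamma_2)}{n-|v(\Gamma_2)|}\right)^{\ell}\sum_{m=1}^{\floor{\ell/2}}\left(\frac{2\vc(\Gamma_1)\vc(\Gamma_2)}{e\tilde d_1 d_{\max}(\Gamma_2)}\right)^{m}.
\]
As in Theorem~\ref{th:lowerVCD}, split on whether $\tfrac{2\vc(\Gamma_1)\vc(\Gamma_2)}{e\tilde d_1 d_{\max}(\Gamma_2)}$ exceeds a fixed $1+\alpha$ or is $\le 1+o(1)$: in the first case the inner $m$‑sum is dominated by $m=\floor{\ell/2}$, turning the outer sum into a geometric series of ratio $\asymp \tfrac{(1+\lambda^2)^{\mu_{\min}}\sqrt{\vc(\Gamma_1)\vc(\Gamma_2)d_{\max}(\Gamma_1)d_{\max}(\Gamma_2)}}{n-|v(\Gamma_2)|}$; in the second it is $O((1+\varepsilon)^{\ell})$, giving ratio $\asymp \tfrac{(1+\lambda^2)^{\mu_{\min}}d_{\max}(\Gamma_1)d_{\max}(\Gamma_2)}{n-|v(\Gamma_2)|}$. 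Either series converges (so the whole expectation is $O(1)$) once the relevant ratio is below a small universal constant, i.e., once $\tfrac{(1+\lambda^2)^{\mu_{\min}}}{n-|v(\Gamma_2)|}\max\left(\sqrt{\vc(\Gamma_1)\vc(\Gamma_2)d_{\max}(\Gamma_1)d_{\max}(\Gamma_2)},\,d_{\max}(\Gamma_1)d_{\max}(\Gamma_2)\right)\le C$. By symmetry the same bound with $n-|v(\Gamma_1)|$ holds when $\mu(\Gamma_2)<\mu(\Gamma_1)$, and since both denominators are within a constant factor of $n-\min(|v(\Gamma_1)|,|v(\Gamma_2)|)$ under the standing assumption that the vertex counts stay bounded away from $n$, \eqref{eq:condIntersectionD1} implies \eqref{eq:condIntersectionD2}.

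I expect the only real obstacle to be organisational: routing the asymmetry correctly. One must enumerate against the graph of smaller maximum‑subgraph density, so that the binomial cutoff is $\floor{\mu_{\min}\ell}$ and the density appearing is the minimum (copies that do not embed in the other graph simply contribute $0$ and need not be tracked); and one must check that, no matter which graph is given the enumeration role, the bound collapses to the symmetric quantity $\sqrt{\vc(\Gamma_1)\vc(\Gamma_2)d_{\max}(\Gamma_1)d_{\max}(\Gamma_2)}$ — which it does exactly because that expression is invariant under $\Gamma_1\leftrightarrow\Gamma_2$. The remaining loose end, the precise denominator ($n-|v(\Gamma_1)|$ vs.\ $n-|v(\Gamma_2)|$ vs.\ $n-\min_i|v(\Gamma_i)|$), is harmless under the size normalization used throughout the paper.
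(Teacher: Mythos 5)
Your proof is correct and follows essentially the same route as the paper's: the representation from Lemma~\ref{lem:intersectionMomentGenGeneral}, the probability bound of Lemma~\ref{lem:probRandomSubgraph1} applied to $\Gamma_2$, the subgraph count of Lemma~\ref{lem:subgraphsBoundedDegreeCount1} applied to $\Gamma_1$, the Hardy--Ramanujan estimate, and the identical two-case geometric-series analysis yielding the $\sqrt{\vc(\Gamma_1)\vc(\Gamma_2)d_{\max}(\Gamma_1)d_{\max}(\Gamma_2)}$ versus $d_{\max}(\Gamma_1)d_{\max}(\Gamma_2)$ dichotomy. The only cosmetic difference is how $\min(\mu(\Gamma_1),\mu(\Gamma_2))$ enters\textemdash{}you relabel so that the enumeration runs inside the sparser graph (justified by the symmetry of the intersection distribution), whereas the paper keeps $\Gamma_1'$ fixed and caps the edge count by noting that any $\s{H}\subseteq\Gamma_1'$ with nonzero containment probability must embed in $\Gamma_2$\textemdash{}and your explicit remark that the $n-|v(\Gamma_2)|$ versus $n-\min(|v(\Gamma_1)|,|v(\Gamma_2)|)$ denominator is absorbed by the standing size normalization addresses a point the paper passes over silently.
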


\begin{proof}[Proof of Lemma~\ref{lem:interectionD}]
    Throughout this proof, we denote $\vc_i=\vc(\Gamma_i)$, $d_i=d_{\max}(\Gamma_i)$, $\mu_i=\mu(\Gamma_i)$, and $k_i=|v(\Gamma_i)|$, for $i=1,2$. The proof follows by generalizing the arguments in the proof of Theorem~\ref{th:lowerVCD}. Specifically, recall that by Lemma~\ref{lem:intersectionMomentGenGeneral} we have (by symmetry we can take $\Gamma_1$ to be a fixed copy $\Gamma_1'$),
   \begin{align}
    \E_{\Gamma_1}\pp{(1+\lambda^2)^{|e(\Gamma_1'\cap\Gamma_2)|}}
    &=\sum_{\s{H}\subseteq \Gamma_1'}\lambda^{2|\s{H}|}\cdot \P_{\Gamma_2} \pp{\s{H} \subseteq \Gamma_2}.\label{eqn:momenGenG1G2}
\end{align}
As in the proof of Theorem~\ref{th:lowerVCD}, we use Lemma~\ref{lem:probRandomSubgraph1} to upper bound the probability $\P_{\Gamma_2}[\s{H}\subseteq \Gamma_2]$, which in turn implies that the summand in \eqref{eqn:momenGenG1G2} depends on $\s{H}$ through $(|v(\s{H})|,|e(\s{H})|,m(\s{H}))$ only. Note that any subgraph $\s{H}$ of $\Gamma_1'$ for which the probability $\P_{\Gamma_2}[\s{H}\subseteq \Gamma_2]$ is non-zero, must be isomorphic to a subgraph of $\Gamma_2$. Thus, the number of vertices and edges in $\s{H}$ cannot exceed $\min(k_1,k_2)=\bar{k}$ and $|v(\s{H})|\cdot \min(\mu_1,\mu_2)\triangleq |v(\s{H})|\cdot \bar{\mu}$,  respectively. Denote by $\calS_{m,\ell,j}(\Gamma_1)$ the number of subgraphs of $\Gamma_1$ with exactly $m$ connected components, $\ell$ vertices, and $j$ edges. Using Lemma~\ref{lem:probRandomSubgraph1} and \ref{lem:subgraphsBoundedDegreeCount1}, we get,
\begin{align}
      \E_{\Gamma_2}&\pp{(1+\lambda^2)^{|e(\Gamma_1'\cap\Gamma_2)|}}\\
      &=\sum_{\s{H}\subseteq \Gamma_1'}\lambda^{2|\s{H}|}\cdot \P_{\Gamma_2}[\s{H}\subseteq \Gamma_2]\\
      & \leq1+\sum_{\ell=2}^{\bar{k}} \sum_{m=1}^{\floor{\ell/2}}\sum_{j=\ell-m}^{\floor{\ell \bar{\mu}}} |\calS_{m,\ell,j}(\Gamma_1)|\cdot \lambda^{2j} \cdot \frac{(2\vc_2)^m d_2^{\ell-m}}{(n-{\bar{k}})^\ell}\\
      &\leq 1+\sum_{\ell=2}^{\bar{k}} \sum_{m=1}^{\floor{\ell/2}} \frac{(2\vc_2)^m d_2^{\ell-m}}{(n-{\bar{k}})^\ell} \sum_{j=\ell-m}^{\floor{\ell \bar{\mu} }}  \lambda^{2j} \cdot \abs{\s{Par}(\ell,m)}\vc_1^m\cdot (ed_1)^{\ell-m}\binom{\floor{\ell \bar{\mu}}}{j}\\
      &\leq 1+C_\varepsilon \cdot \sum_{\ell=2}^{\bar{k}}  \p{\frac{(1+\varepsilon)(1+\lambda^2)^{\bar{\mu}}\cdot e  d_1d_2}{n-{\bar{k}}}}^\ell\sum_{m=1}^{\floor{\ell/2}} \p{ \frac{2 \vc_1\vc_2 }{e d_1 d_2}}^m.\label{eq:intersectionSums1}
\end{align}
On one hand, if, 
\begin{align}
   \frac{2\vc_1\vc_2}{ed_1d_2}\geq 1+\alpha,\label{eq:CondDenseInProofInt}
\end{align} 
for some $\alpha>0$, then the inner sum on the r.h.s. of \eqref{eq:intersectionSums1} is dominated by $\p{\frac{2\vc_1\vc_2}{ed_{1}d_2}}^{\ell/2}$, and accordingly,
\begin{align}
    \E_{\Gamma_2}\pp{(1+\lambda^2)^{|e(\Gamma_1'\cap \Gamma_2)|}}
    &\leq 1+C'_\varepsilon \cdot \sum_{\ell=2}^{\bar{k}}  \p{\frac{(1+\varepsilon)(1+\lambda^2)^{\bar{\mu}}\cdot \sqrt{2e \cdot  \vc_1\vc_2 \cdot d_1 d_2}}{n-{\bar{k}}}}^\ell ,
\end{align}
which is bounded provided that,
\begin{align}
    \frac{(1+\lambda^2)^{\bar{\mu}}\cdot \sqrt{ \vc_1\vc_2 \cdot d_1 d_2}}{n-{\bar{k}}}\leq C,\label{eq:innderCondDom1Int}
\end{align}
for a sufficiently small universal constant $C>0$. On the other hand, if,
\begin{align}
    \frac{2 \vc_1\vc_2}{e d_1 d_2 }\leq 1+o(1),
\end{align}
then the inner sum on the r.h.s. of \eqref{eq:intersectionSums1} is dominated by $(1+\varepsilon)^{\ell}$, and accordingly,
\begin{align}
    \bE_{\Gamma_2}\pp{(1+\lambda^2)^{|e(\Gamma_1'\cap \Gamma_2)|}}& \leq 1+C''_\varepsilon \cdot \sum_{\ell=2}^{\bar{k}}  \p{\frac{(1+\varepsilon)^2(1+\lambda^2)^{\bar{\mu}}\cdot e d_1 d_2}{n-{\bar{k}}}}^\ell ,
\end{align}
which is bounded provided that,
\begin{align}
    \frac{(1+\lambda^2)^{\bar{\mu}}\cdot d_1 d_2}{n-{\bar{k}}}\leq C,\label{eq:innderCondDom2Int}
\end{align}
for a sufficiently small constant  universal $C$. Finally, we note that \eqref{eq:CondDenseInProofInt} holds exactly when \eqref{eq:innderCondDom1Int} dominates \eqref{eq:innderCondDom2Int} (perhaps, up to a multiplicative constant factor). Thus, the condition in \eqref{eq:condIntersectionD1} is sufficient for \eqref{eq:condIntersectionD2} to hold.
\end{proof}

We can now prove Proposition~\ref{prop:decompisitionThD}.
\begin{proof}[Proof of Proposition~\ref{prop:decompisitionThD}] 
For any $1\leq\ell\leq M$, we denote $k_\ell\triangleq|v(\Gamma_\ell)|$, $d_{\ell}\triangleq d_{\max}(\Gamma_\ell)$, $\mu_\ell\triangleq\mu(\Gamma_\ell)$, and $\vc_\ell\triangleq\vc(\Gamma_\ell)$. Below, we first assume that $M$ is fixed, independent with $n$, and treat the case where $M = M_n$. Using Proposition~\ref{prop:LikelihoodMomentExperession} and H\"{o}lder's inequality, we have,
\begin{align}
    \E_{\calH_0}\pp{\s{L}(\s{G})^2}&=\E_{\Gamma}\pp{(1+\lambda^2)^{|e(\Gamma\cap \Gamma')|}}\\
    &=\E_{\Gamma}\pp{(1+\lambda^2)^{\bigcup_{i,j=1}^M |e(\Gamma_i\cap \Gamma'_j)|}}\\
    &=\E_{\Gamma}\pp{\prod_{i,j=1}^M(1+\lambda^2)^{ |e(\Gamma_i\cap \Gamma'_j)|}}\\
    &\leq  \prod_{i,j=1}^M\E_{\Gamma}^{\frac{1}{M^2}}\pp{(1+\lambda^2)^{ M^2\cdot |e(\Gamma_i\cap \Gamma'_j)|}}\\&
      =\prod_{i,j=1}^M\E_{\Gamma_i}^{\frac{1}{M^2}}\pp{(1+\lambda^2)^{ M^2\cdot |e(\Gamma_i\cap \Gamma'_j)|}} \label{eq:GammaIeq}\\
      &=\prod_{i,j=1}^M\E_{\Gamma_i}^{\frac{1}{M^2}}\pp{(1+\lambda^2_M)^{ |e(\Gamma_i\cap \Gamma'_j)|}},\label{eq:GammaIeq2}
    \end{align}
    where for all $i,j\in[M]$, $\Gamma'_j$ is a fixed copy of $\Gamma_j$ in $\calK_n$ (induced by the fixed copy $\Gamma'$), and the expectation in \eqref{eq:GammaIeq} and \eqref{eq:GammaIeq2} is taken w.r.t. a uniformly distributed random copy of $\Gamma_i$ in $\calK_n$. Let $C>0$ be the smallest constant for which Theorem~\ref{th:lowerVCD} and Lemma~\ref{lem:interectionD} hold simultaneously. For $i=j$, by Theorem~\ref{th:lowerVCD}, we know that,
    \begin{align}
        \E_{\calH_0}\pp{(1+\lambda_M^2)^{|e(\Gamma_i\cap\Gamma_i)|}}=O(1),
    \end{align}
    provided that,
    \begin{align}
        \frac{(1+\lambda_M^2)^{\mu_i}\max(\vc_i d_i,d_i^2)}{n-k_i}\leq C.
    \end{align}
    This, however, is satisfied when \eqref{eq:condintersectionD} holds because, 
    \begin{align}
        \frac{(1+\lambda_M^2)^{\mu_i}\max(\vc_i d_i,d_i^2)}{n-k_i}\leq \max_{1\leq j\leq M}{\frac{(1+\lambda_M^2)^{\mu_j}\max(\vc_j d_j,d_j^2)}{n-k_j}}\leq C.
    \end{align}
    Next, for $i\neq j$, we note that,
    \begin{align}
        \frac{(1+\lambda_M^2)^{\min(\mu_i,\mu_j)}\max(\sqrt{\vc_i \vc_j d_i d_j},d_i d_j)}{n-\min(k_i,k_j)}&\leq  \frac{(1+\lambda_M^2)^{\min(\mu_i,\mu_j)}\max(\sqrt{\vc_i \vc_j d_i d_j},d_i d_j)}{n-\min(k_i,k_j)}\\
        &\leq  \frac{(1+\lambda_M^2)^{\min(\mu_i,\mu_j)}\max(\vc_i d_i,\vc_j d_j, d_i^2, d_j^2)}{n-\min(k_i,k_j)}\\
        &\leq \max_{\ell=i,j} \frac{(1+\lambda_M^2)^{\mu_\ell}\max(\vc_\ell d_\ell ,d_\ell^2)}{n-k_\ell}\\
        &\leq \max_{1\leq \ell\leq M} \frac{(1+\lambda_M^2)^{\mu_\ell}\max(\vc_\ell d_\ell,d_\ell^2)}{n-k_\ell}\leq C,
    \end{align}
    where the last equality is due to \eqref{eq:condintersectionD}, and therefore, by Lemma~\ref{lem:interectionD},
    \begin{align}
        \E_{\calH_0}\pp{(1+\lambda_M^2)^{|e(\Gamma_i\cap\Gamma_j)|}}=O(1).
    \end{align}
    Thus, we proved that under \eqref{eq:condintersectionD} each term in the product on the r.h.s. of \eqref{eq:GammaIeq2} is bounded, and since $M$ is finite, the entire product of \eqref{eq:GammaIeq2} is bounded as well. Finally, to deal with the case where $M=M_n$ depends on $n$, we note that the proofs of Theorem~\ref{th:lowerVCD} and Lemma~\ref{lem:interectionD} provide uniform bounds on the second moment. To be precise, it is easily seen from the proofs that there is a sufficiently small constant  $C>0$ such that if the conditions in \eqref{eq:condIntersectionD1} and \eqref{eq:LowerMainCond1} are satisfied, then,
    \begin{align}
        \E_{\calH_0}\pp{(1+\lambda_M^2)^{|e(\Gamma_i\cap\Gamma_j')|}}\leq 2,
    \end{align}
    and accordingly, the geometric average in \eqref{eq:GammaIeq2} is bounded by $2$ as well. This concludes the proof.
 
\end{proof}

\subsubsection{Degree-vertex cover balanced decomposition}\label{subsec:coverBalancedDecomposition} 

In this subsection, we construct a locally $\vcd$-balanced decomposition of $\Gamma$, and thus establishing our second argument. We have the following result.
\begin{prop}\label{prop:GoodDecompositionExists}
    Any non-empty graph $\Gamma$ can be decomposed into a set of edge-disjoint subgraphs $\{\Gamma_\ell\}_{\ell=1}^M$, such that,
    \begin{align}
        \max_{1\leq \ell \leq M} \p{\vc(\Gamma_\ell)\cdot d_{\max}(\Gamma_\ell)} \leq 2 |e(\Gamma)|\cdot d_{\max}(\Gamma)^{\frac{1}{M}},\label{eq:decompositionPropoety}
    \end{align}
    where the decomposition may includes some empty graphs.
\end{prop}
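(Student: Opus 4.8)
The plan is to formalize the greedy ``peel-by-degree'' construction sketched in Subsection~\ref{subsec:outline}. Discarding isolated vertices (they lie on no edge), enumerate $v(\Gamma)=\{v_1,\dots,v_{|v(\Gamma)|}\}$ in non-increasing degree order, so $\deg_\Gamma(v_1)=d_{\max}(\Gamma)\ge\deg_\Gamma(v_2)\ge\cdots$. Fix $M$, set $\ell_0\triangleq0$, and for $i\in[M]$ let $\ell_i$ be the largest index with $\deg_\Gamma(v_{\ell_i})\ge d_{\max}(\Gamma)^{(M-i)/M}$; since $\deg_\Gamma(v_1)=d_{\max}(\Gamma)\ge d_{\max}(\Gamma)^{(M-i)/M}$ and $\deg_\Gamma(v_{|v(\Gamma)|})\ge1=d_{\max}(\Gamma)^{0}$, we get a well-defined chain $0=\ell_0\le\ell_1\le\cdots\le\ell_M=|v(\Gamma)|$. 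Let $B_i\triangleq\{v_{\ell_{i-1}+1},\dots,v_{\ell_i}\}$, and define $\Gamma_i$ to be the graph whose edges are all edges of $\Gamma$ meeting $B_i$ that were not already placed in $\Gamma_1,\dots,\Gamma_{i-1}$ (some $\Gamma_i$ may be empty, which the statement permits). As $\bigcup_i B_i=v(\Gamma)$ every edge is placed somewhere, and by construction the $\Gamma_i$ are edge-disjoint, so this is a valid decomposition.

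I would then prove two estimates for each $i\in[M]$. First, a vertex-cover/degree-count bound: $B_i$ covers $\Gamma_i$ (every edge of $\Gamma_i$ meets $B_i$ by definition), hence $\vc(\Gamma_i)\le|B_i|=\ell_i-\ell_{i-1}$; and since each vertex of $B_i$ has degree at least $\deg_\Gamma(v_{\ell_i})\ge d_{\max}(\Gamma)^{(M-i)/M}$, summing over $B_i$ gives $(\ell_i-\ell_{i-1})\,d_{\max}(\Gamma)^{(M-i)/M}\le\sum_{v\in v(\Gamma)}\deg_\Gamma(v)=2|e(\Gamma)|$, i.e.\ $\ell_i-\ell_{i-1}\le2|e(\Gamma)|\,d_{\max}(\Gamma)^{-(M-i)/M}$. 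Second, a maximum-degree bound: $d_{\max}(\Gamma_i)\le d_{\max}(\Gamma)^{(M-i+1)/M}$. For $i=1$ this is immediate. For $i\ge2$, a non-isolated vertex $v_j$ of $\Gamma_i$ is either in $B_i$, so $j\ge\ell_{i-1}+1$ and $\deg_\Gamma(v_j)\le\deg_\Gamma(v_{\ell_{i-1}+1})<d_{\max}(\Gamma)^{(M-i+1)/M}$ by maximality of $\ell_{i-1}$; or $v_j$ is the far endpoint of an edge $e\in\Gamma_i$ with one endpoint in $B_i$, and I claim $j>\ell_{i-1}$ again — for if $j\le\ell_{i-1}$ then $v_j\in B_{i'}$ for some $i'\le i-1$, so $e$ meets $B_{i'}$ and was assigned to $\Gamma_{i'}$ or earlier, contradicting $e\in\Gamma_i$. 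Either way $\deg_{\Gamma_i}(v_j)\le\deg_\Gamma(v_j)<d_{\max}(\Gamma)^{(M-i+1)/M}$.

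Combining the two estimates yields, for every $i\in[M]$,
\[
\vc(\Gamma_i)\cdot d_{\max}(\Gamma_i)\;\le\;(\ell_i-\ell_{i-1})\,d_{\max}(\Gamma)^{(M-i+1)/M}\;\le\;2|e(\Gamma)|\,d_{\max}(\Gamma)^{-(M-i)/M}\cdot d_{\max}(\Gamma)^{(M-i+1)/M}\;=\;2|e(\Gamma)|\,d_{\max}(\Gamma)^{1/M},
\]
which is exactly \eqref{eq:decompositionPropoety}; empty components and the degenerate case $d_{\max}(\Gamma)=1$ are trivial. The only delicate point is the degree bound $d_{\max}(\Gamma_i)\le d_{\max}(\Gamma)^{(M-i+1)/M}$: one must rule out a high-degree, low-index vertex entering $\Gamma_i$ merely as a pendant endpoint, and this is precisely where the ``earliest block claims the edge'' feature of the construction, together with edge-disjointness, is essential. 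Everything else is routine bookkeeping on the sorted degree sequence.
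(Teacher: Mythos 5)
Your proof is correct and follows essentially the same construction as the paper: the greedy peel by degree thresholds $d_{\max}(\Gamma)^{(M-i)/M}$, the bound $\vc(\Gamma_i)\le|S_i|\le 2|e(\Gamma)|\,d_{\max}(\Gamma)^{-(M-i)/M}$ via the degree-sum, and the bound $d_{\max}(\Gamma_i)\le d_{\max}(\Gamma)^{(M-i+1)/M}$ using the fact that earlier blocks have already claimed every edge touching a higher-degree vertex. Your explicit check that $\ell_M=|v(\Gamma)|$ (so every edge is assigned) and your contrapositive phrasing of the pendant-endpoint argument are only cosmetic differences from the paper's proof.
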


\begin{proof}[Proof of Proposition~\ref{prop:GoodDecompositionExists}]
Let $k\triangleq|v(\Gamma)|$, and fix $M\in\mathbb{N}$. Consider the following recursive decomposition of $\Gamma$ to at most $M$ edge-disjoint subgraphs $\{\Gamma_\ell\}_{\ell=1}^M$. Let $v_1,\dots,v_{k}$ be an enumeration of $v(\Gamma)$ ordered in a descending ordering according to the graph degrees in $\Gamma$.  Let $\ell_i\in[|v(\Gamma)|]$ denote the maximal index for which $\deg(v_{\ell_i})\geq [d_{\max}(\Gamma)]^{\frac{M-i}{M}}$, that is, 
\begin{align}
    \ell_i\triangleq\sup\ppp{\ell\in[k]:\deg_{\Gamma}(v_\ell)\geq d_{\max}(\Gamma)^{\frac{M-i}{M}}}.
\end{align}
and set $\ell_0\triangleq 0$. We now define $\Gamma_1,\dots,\Gamma_M$ recursively. For $\ell=1$, let $E_1$ be the set of edges with (at least) one of their endpoints is in the set $S_1\triangleq\{v_1,\ldots,v_{\ell_1}\}$ and $V_1$ be the set of all vertices in $v(\Gamma)$ which participate in an edge of $E_1$. We define $\Gamma_1$ as $\Gamma_1=(V_1,E_1)$. Given $\Gamma_1,\dots, \Gamma_{i-1}$, we define $\Gamma_i$. We let $E_i$ be the set of all edges in $e(\Gamma)\setminus \cup_{j=1}^{i-1}E_j$ with (at least) one of their endpoints is in the set $S_i\triangleq\{v_{\ell_{i-1}+1},\ldots,v_{\ell_i}\}$, and $V_i$ be the set of all vertices in $v(\Gamma)$ which are one of the endpoints of any edge in $E_i$. 

Now, we note that for all $i\in[M]$, by the definition of $\Gamma_i$, the set $S_i$ is a vertex cover of $\Gamma_i$ (perhaps, not a minimal one). Furthermore, we observe that,
\begin{align}
      d_{\max}(\Gamma_i) \leq  d_{\max}(\Gamma)^{\frac{M-i+1}{M}}.\label{eq:degreeDocomp}
\end{align}
Indeed, let $v\in V_i$ be a vertex. If $v\in S_i$, then by the definition of $S_i$,
\begin{align}
    \deg_{\Gamma_i}(v)\leq  \deg_{\Gamma}(v) \leq  d_{\max}(\Gamma)^{\frac{M-i+1}{M}}.
\end{align}
In the other case, if $v\in V_i\setminus S_i$, then assume to the contrary that,
\begin{align}
    \deg_{\Gamma_i}(v)> d_{\max}(\Gamma)^{\frac{M-i+1}{M}}.
\end{align}
By the definition of $S_1,\dots,S_i$, we have that $v\in S_j$ for some $j<i$. In particular, for any edge $e$ which includes $v$ we have $e\in E_j$, and therefore,
\begin{align}
    e\notin\p{ e(\Gamma)\setminus \bigcup_{\ell=1}^{i-1}E_\ell}  \supseteq E_i.
\end{align}
In particular $v\notin V_i$, which is a contradiction. Next, since the smallest degree (in $\Gamma$) of any vertex in $S_i$ is at least $d_{\max}(\Gamma)^{\frac{M-i}{M}}$, we have,
\begin{align}
    |S_i|\cdot  d_{\max}(\Gamma)^{\frac{M-i}{M}}\leq \sum_{\ell=\ell_{i-1}+1}^{\ell_i} \deg_{\Gamma}(v_\ell)\leq \sum_{\ell=1}^{k} \deg_{\Gamma}(v_\ell) \leq 2 |e(\Gamma)|,
\end{align}
and thus,
\begin{align}
    \vc(\Gamma_i)&\leq |S_i|\leq 2|e(\Gamma)|\cdot d_{\max}^{-\frac{M-i}{M}}(\Gamma).
\end{align}
Combining the above we get, 
\begin{align}
  \vc(\Gamma_i)\cdot  d_{\max}(\Gamma_i)\leq 2|e(\Gamma)|\cdot d_{\max}(\Gamma) ^{\frac{M-i+1}{M}-\frac{M-i}{M}}= 2|e(\Gamma)|\cdot d_{\max}(\Gamma)^{\frac{1}{M}},
\end{align}
for all $i\in[M]$. It should be emphasized that it could be the case that some of the subgraphs in the decomposition above might be empty; indeed it could be that $\ell_i=\ell_{i+1}$, for some $i\in[M]$, and then $S_i$ is empty. Nonetheless, the above decomposition is certainly not empty because $\Gamma_1\neq\emptyset$. This concludes the proof.
\end{proof}

\subsubsection{Combining everything together}
In this subsection, we combine all the results established in the previous subsections and prove Theorem~\ref{th:lowerboundDense}.
\begin{proof}[Proof of Theorem~\ref{th:lowerboundDense}]
    Let $C>0$ be the universal constant from Proposition~\ref{prop:decompisitionThD}, and let $\Gamma=(\Gamma_n)_n$ be a sequence of graphs. We separate our proof into two cases. We first consider graphs $\Gamma$ with super-logarithmic density, and then those with sub-logarithmic density.

    \paragraph{Super-logarithmic density.}
    Assume that $\mu(\Gamma)\geq \alpha\log|v(\Gamma)|$ for a fixed $\alpha>0$. Using Theorem~\ref{th:lowerVCD}, we know that strong detection is impossible if,
    \begin{align}
        \mu(\Gamma)\log(1+\lambda^2)+\log\p{\max\p{\vc(\Gamma)d_{\max}(\Gamma),d_{\max}(\Gamma)^2}}\leq  \log C + \log n+\log\delta,\label{eq:explainDenseSuer}     \end{align}
    In the super-logarithmic density regime we have,
    \begin{align}
     \log\p{\max\p{\vc(\Gamma),d_{\max}(\Gamma)}}\leq \log|v(\Gamma)|\leq \frac{\mu(\Gamma)}{\alpha},
    \end{align}
   and therefore,
    \begin{align}
        \mu(\Gamma)\log(1+\lambda^2)+\log\p{\max\p{\vc(\Gamma)d_{\max}(\Gamma),d_{\max}(\Gamma)^2}}\leq \frac{\alpha\log(1+\lambda^2)+2}{\alpha}\cdot \mu(\Gamma).
    \end{align}
    Thus, \eqref{eq:explainDenseSuer} holds if,
    \begin{align}
        \mu(\Gamma)\leq \frac{(1-\varepsilon)\alpha}{2+\alpha\log(1+\lambda^2)}\log n,
    \end{align}
    for some $\varepsilon>0$, which proves \eqref{eqn:MainResDenseLower1}. The corresponding upper bound in \eqref{eqn:MainResDenseUpper1} follows from Theorem~\ref{thm:upperBoundAlgo} immediately.
    
    \paragraph{Sub-logarithmic density.} Assume now that $\Gamma$ has a sub-logarithmic density, i.e., $\mu(\Gamma)=o(\log|v(\Gamma)|)$. Fix $\varepsilon>0$. From Proposition~\ref{prop:GoodDecompositionExists} we know that there exists a decomposition $\Gamma=\bigcup_{\ell=1}^M\Gamma_\ell$, for which \eqref{eq:decompositionPropoety} holds, and we take,
    \begin{align}
        M=\ceil{\frac{2}{\varepsilon}}.
    \end{align} By Proposition~\ref{prop:decompisitionThD} strong detection is impossible if,
    \begin{align}
         \max_{\ell=1,\dots, M}\p{\frac{(1+\lambda_M^2)^{\mu(\Gamma_\ell)}}{n-|v(\Gamma_\ell)|}\cdot\max\p{\vc(\Gamma_\ell)\cdot d_{\max}(\Gamma_\ell),d_{\max}(\Gamma_\ell)^2}}\leq C,\label{eq:CondFinalD}
    \end{align}
     where 
     \begin{align}
         \lambda_M^2&=(1+\lambda^2)^{M^2}-1=(1+\lambda^2)^{4\varepsilon^{-2}}-1.\label{eq:lambdaMsize}
     \end{align}
     Now, by \eqref{eq:decompositionPropoety} and the definition of $\mu(\Gamma)$, we have,
     \begin{align}
          \max_{\ell=1,\dots, M}&\p{\frac{(1+\lambda_M^2)^{\mu(\Gamma_\ell)}}{n-|v(\Gamma_\ell)|}\cdot\max\p{\vc(\Gamma_\ell)\cdot d_{\max}(\Gamma_\ell),d_{\max}(\Gamma_\ell)^2}}\\
          &\qquad\leq   \frac{(1+\lambda_M^2)^{\mu(\Gamma)}}{n-|v(\Gamma)|}\cdot\max\p{|e(\Gamma)|\cdot d_{\max}(\Gamma)^{\frac{1}{M_n}},d_{\max}(\Gamma)^2}\\
          &\qquad\leq  \frac{(1+\lambda_M^2)^{\mu(\Gamma)}}{\delta n}\cdot\max\p{|e(\Gamma)|\cdot d_{\max}(\Gamma)^{\frac{1}{M_n}},d_{\max}(\Gamma)^2}\\
            &\qquad\leq \frac{(1+\lambda^2)^{4\frac{\mu(\Gamma)}{\varepsilon^2}}}{\delta n^{1-\frac{1}{M_n}}}\cdot\max\p{|e(\Gamma)|,d_{\max}(\Gamma)^2}\\
            &\qquad= \frac{|v(\Gamma)|^{\frac{4\log(1+\lambda^2)\mu(\Gamma)}{\varepsilon^2\log|v(\Gamma)|}}}{\delta n^{1-\frac{1}{M_n}}}\cdot\max\p{|e(\Gamma)|,d_{\max}(\Gamma)^2}\\
            &\qquad\leq \frac{n^{\frac{4\log(1+\lambda^2)\mu(\Gamma)}{\varepsilon^2\log|v(\Gamma)|}}}{\delta n^{1-\frac{1}{M_n}}}\cdot\max\p{|e(\Gamma)|,d_{\max}(\Gamma)^2},
            \label{eq:DenseNiceEq}
     \end{align}
     where the first equality follows from the fact that $\mu(\Gamma_\ell)\leq\mu(\Gamma)$ and $|v(\Gamma_\ell)|\leq |v(\Gamma)|$, for all $\ell\in[M]$, the second equality is because $|v(\Gamma)|\leq (1-\delta)n$, for some fixed $\delta>0$, the third equality follows from $d_{\max}(\Gamma)\leq |v(\Gamma)|$, and the last equality is because $|v(\Gamma)|\leq n$. Since $\mu(\Gamma)=o(\log|v(\Gamma)|)$, we have that for a sufficiently large $n$,
     \begin{align}
         \frac{4\log(1+\lambda^2)\mu(\Gamma)}{\varepsilon^2\log|v(\Gamma)|}\leq \frac{\varepsilon}{2},
     \end{align}
     and therefore, \eqref{eq:DenseNiceEq} is upper bounded by, 
     \begin{align}
         \frac{\max\p{|e(\Gamma)|,d_{\max}(\Gamma)^2}}{ n^{1-\varepsilon}}.
     \end{align}
     Thus, if 
     \begin{align}
         \max\p{|e(\Gamma)|,d_{\max}(\Gamma)^2}\leq  n^{1-\varepsilon},
     \end{align}
     then \eqref{eq:CondFinalD} is satisfied for a sufficiently large $n$ (regardless of the constant $C$), which proves \eqref{eqn:MainResDenseLower2}. Finally, the corresponding upper bound in \eqref{eqn:MainResDenseUpper2} follows from Theorem~\ref{thm:upperBoundAlgo}, for, e.g., $f(n)=\log_n(17\log n)=o(1)$ in \eqref{eqn:MainResDenseUpper2}.
    
\end{proof}


\subsection{Other regimes}\label{subsec:otherRegimes}
In this subsection, we will present and prove the most refined version of our lower bounds, and subsequently analyze those bounds
in the two distinct regimes that garnered the most attention in the literature:
\begin{enumerate}
    \item \emph{Sparse regime}, where $\chi^2(p||q)=\Theta(n^{-\alpha})$, for $0<\alpha\leq 2$. 
    \item \emph{Critical regime}, where $p=1-o(1)$ and $q = \Theta(n^{-\alpha})$, for $0\leq \alpha\leq 2$ (in particular, $\chi^2(p||q)=\Theta(n^{\alpha})$). 
\end{enumerate}
\subsubsection{General bounds}
Building on the ideas and concepts introduced in Section~\ref{subsec:denseRegime}, in this subsection we prove the following result, which can be viewed as an extension of Theorem~\ref{th:lowerVCD}.
\begin{theorem}\label{th:lowerVCFull}
  There exists a constant $C>0$ such that for any sequence of graphs $\Gamma=(\Gamma_n)_n$, we have that $\E_{\calH_0}[\s{L}(\s{G})^2]=O(1)$, if at least one of the following hold.
             \begin{enumerate}
                 \item If, \begin{align}
                      \frac{ \p{1+\lambda^2}^{\mu(\Gamma) } \cdot d_{\max}(\Gamma)}{n-|v(\Gamma)|}\cdot \max\p{\vc(\Gamma),d_{\max}(\Gamma)}\leq C.\label{eq:LowerMainCond1Other}
                 \end{align}
                 \item If $\mu(\Gamma)\geq 1$ and, 
                 \begin{align}
                     \frac{ \p{1+\lambda^2}^{\mu(\Gamma)-1 }\cdot \lambda^2 \cdot d_{\max}(\Gamma)}{n-|v(\Gamma)|}\cdot \max\p{\vc(\Gamma)\cdot \sqrt{\frac{1+\lambda^2}{\lambda^2}},d_{\max}(\Gamma)\cdot \mu(\Gamma)}\leq C.\label{eq:LowerMainCond2}
                 \end{align}

                 \item If $\mu(\Gamma)<1$ and, 
                 \begin{align}
                      \frac{ \lambda\cdot d_{\max}(\Gamma)}{n-|v(\Gamma)|}\cdot \max\p{\vc(\Gamma),d_{\max}(\Gamma)\cdot \lambda}\leq C.\label{eq:LowerMainCond3}
                 \end{align}
                 \item If $\mu(\Gamma)<1$ and,
                 \begin{align}
                     \frac{\vc(\Gamma)^2}{\lambda^2 \cdot d_{\max}(\Gamma)^2}<1-\delta, \quad \text{and }\quad \frac{e\cdot \lambda^2 \cdot d_{\max}^2}{n-|v(\Gamma)|}\geq 1+\delta,\label{eq:LowerMainCond4}
                 \end{align}
                 for some fixed $\delta>0$ and,
                 \begin{align}
                     \frac{\vc(\Gamma)^2\cdot \p{e  \lambda^2  d_{\max}(\Gamma)^2}^{|v(\Gamma)|-1}}{(n-|v(\Gamma)|)^{|v(\Gamma)|}}\leq C.\label{eq:LowerMainCond5}
                 \end{align}
                 \item If $\limsup \mu(\Gamma)=1-\delta<1$, for some $0<\delta<1$, and,
                \begin{align}
                     \frac{|v(\Gamma)|^2}{\lambda^2}\cdot \max\p{1,\frac{\delta^2 (n-|v(\Gamma)|)}{|v(\Gamma)|^2}, \frac{1}{\delta 
                     }\p{\frac{ e \lambda^{2}}{\delta\p{n-|v(\Gamma)|}}}^{\frac{1}{\delta}}}\leq C.\label{eq:LowerMainCond6}
                 \end{align}
             \end{enumerate}
         \end{theorem}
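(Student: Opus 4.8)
The plan is to rerun the machinery behind the proof of Theorem~\ref{th:lowerVCD} — polynomial decomposition (Proposition~\ref{prop:LikelihoodMomentExperession}), the subgraph-probability bound (Lemma~\ref{lem:probRandomSubgraph1}) and the subgraph-count bound (Lemma~\ref{lem:subgraphsBoundedDegreeCount1}) — but to estimate the innermost (edge-count) sum in several different ways, one adapted to each of the six conditions. Writing $k=|v(\Gamma)|$, $d=d_{\max}(\Gamma)$, $\vc=\vc(\Gamma)$, $\mu=\mu(\Gamma)$, and repeating the steps leading to \eqref{eq:generallowerSplit1}, one obtains the master inequality
\begin{align}
\E_{\calH_0}[\s{L}(\s{G})^2]\le 1+\sum_{\ell=2}^{k}\sum_{m=1}^{\floor{\ell/2}}\abs{\s{Par}(\ell,m)}\,\frac{2^{m}\vc^{2m}(ed^2)^{\ell-m}}{(n-k)^{\ell}}\sum_{j=\ell-m}^{\floor{\mu\ell}}\lambda^{2j}\binom{\floor{\mu\ell}}{j}.\label{eq:masterFull}
\end{align}
Each of the six conditions is then read off from \eqref{eq:masterFull} by (i) choosing an estimate for the inner sum over $j$; (ii) performing the sum over $m$ via the same dichotomy as in Theorem~\ref{th:lowerVCD} (it is either dominated by its top term $m=\floor{\ell/2}$, producing a $(\,\cdot\,)^{\ell/2}$ contribution, or it is sub-exponential and bounded by $C(1+\varepsilon)^{\ell}$); and (iii) evaluating the resulting geometric series in $\ell$ — bounding it by a constant when its ratio is $\le 1-\delta$, or, crucially, by its last term $\ell=k$ when its ratio is $\ge 1+\delta$, which is exactly what makes $|v(\Gamma)|$ enter conditions (4)--(6).

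For condition (1) the inner sum is bounded crudely by $(1+\lambda^2)^{\floor{\mu\ell}}\le(1+\lambda^2)^{\mu\ell}$, and the argument is verbatim that of Theorem~\ref{th:lowerVCD} (which is the case $\lambda^2=\Theta(1)$). For condition (2), assuming $\mu\ge 1$, one instead pays separately for a spanning forest: using $\binom{N}{a+i}\le\binom{N}{a}\binom{N-a}{i}$ together with the geometric series and $\binom{\floor{\mu\ell}}{\ell-m}\le(2e\mu)^{\ell-m}$ (valid since $\ell-m\ge\ell/2$),
\begin{align}
\sum_{j=\ell-m}^{\floor{\mu\ell}}\lambda^{2j}\binom{\floor{\mu\ell}}{j}\le\lambda^{2(\ell-m)}\binom{\floor{\mu\ell}}{\ell-m}(1+\lambda^2)^{\floor{\mu\ell}-(\ell-m)}\le(2e\mu\lambda^2)^{\ell-m}(1+\lambda^2)^{(\mu-1)\ell+m}.\label{eq:refinedInner}
\end{align}
Substituting \eqref{eq:refinedInner} into \eqref{eq:masterFull} and separating the powers of $\ell$ and of $m$, the top-term branch of the $m$-dichotomy produces the factor $\vc\sqrt{(1+\lambda^2)/\lambda^2}$ inside the maximum of \eqref{eq:LowerMainCond2}, while the sub-exponential branch produces $d\mu$; the overall gain relative to condition (1) is the extra factor $\lambda^2/(1+\lambda^2)$, which is precisely what is needed in the sparse regime where $\lambda\ll1$.

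When $\mu<1$ the graph $\Gamma$ is a forest, so every subgraph counted by $\calS_{m,\ell,j}$ has $j=\ell-m$ and the inner sum in \eqref{eq:masterFull} collapses to a single term; \eqref{eq:masterFull} then reads $1+\sum_{\ell}\bigl(\tfrac{e\lambda^2 d^2}{n-k}\bigr)^{\ell}\sum_{m}\abs{\s{Par}(\ell,m)}\bigl(\tfrac{2\vc^2}{e\lambda^2 d^2}\bigr)^{m}$, with $m$ ranging over $\ceil{(1-\mu)\ell}\le m\le\floor{\ell/2}$ (non-empty since $\mu\ge\tfrac12$ for any graph with an edge). From here the $m$-dichotomy on the ratio $2\vc^2/(e\lambda^2 d^2)$ gives condition (3): its $\ge 1+\alpha$ branch is bounded iff $\lambda d\vc/(n-k)\le C$, its $\le 1+o(1)$ branch iff $\lambda^2 d^2/(n-k)\le C$, and the maximum $\max(\vc,\lambda d)$ in \eqref{eq:LowerMainCond3} records which branch applies. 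Conditions (4)--(5) treat the same $\mu<1$, $\vc\lesssim\lambda d$ situation but in the regime $e\lambda^2 d^2/(n-k)\ge 1+\delta$ of \eqref{eq:LowerMainCond4}, where the series in $\ell$ diverges and must be replaced by its term at $\ell=k$; taking $m=1$ (the dominant term when $2\vc^2/(e\lambda^2 d^2)<1-\delta$) yields the quantity $\vc^2(e\lambda^2 d^2)^{k-1}/(n-k)^{k}$ of \eqref{eq:LowerMainCond5}. Finally, for condition (6) one has $\mu$ bounded away from $1$, so the smallest admissible $m$ is at least $\delta\ell$; feeding $m\approx\delta\ell$ into the $m$-sum and bounding $\vc,d\le|v(\Gamma)|$ turns the effective ratio into a power of $\lambda^{2}|v(\Gamma)|^{2}/(n-k)$, and splitting on whether this ratio is $<1$ or $\ge1$ (the latter again evaluated at $\ell=k$, which generates the $(\,\cdot\,)^{1/\delta}$ term) produces the three entries inside the maximum of \eqref{eq:LowerMainCond6}.

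The routine part is the bookkeeping of the constants $C,C_\varepsilon,\dots$ and of the sub-exponential Hardy--Ramanujan factors $\abs{\s{Par}(\ell,m)}\le e^{c\sqrt\ell}$, absorbed exactly as around \eqref{eq:generallowerSplit1}, as well as pinning down the precise shape of the maxima in \eqref{eq:LowerMainCond2} and \eqref{eq:LowerMainCond6} (which requires optimizing the crude estimates $\binom{\floor{\mu\ell}}{\ell-m}\le(2e\mu)^{\ell-m}$ and $\vc,d\le|v(\Gamma)|$). I expect the genuine obstacle to be step (iii) in the ``heavy'' regimes behind conditions (4)--(6): there the $\ell$-series is truly increasing, so one cannot bound it by a constant and must simultaneously truncate in $\ell$ at $|v(\Gamma)|$ and control the $m$-sum at that value of $\ell$; making these two interacting truncations line up with the stated exponents $|v(\Gamma)|-1$ and $1/\delta$, while only using $\vc,d\le|v(\Gamma)|$ and $\mu\le1-\delta$, is the delicate point and is what forces the somewhat unusual form of \eqref{eq:LowerMainCond5} and \eqref{eq:LowerMainCond6}.
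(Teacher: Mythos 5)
Your master inequality and your treatment of conditions (1), (3), (4), (5) match the paper's proof (which indeed reduces the $\mu<1$ cases to $j=\ell-m$ and runs the same $m$-dichotomy / last-term-at-$\ell=k$ analysis). The gap is in conditions (2) and (6). For (2), your estimate \eqref{eq:refinedInner} is correct as an inequality, but it is too lossy to give the stated condition: starting only from the crude count $|\calS_{m,\ell,j}|\le \kappa(m,\ell,\vc,d)\binom{\floor{\mu\ell}}{j}$ of Lemma~\ref{lem:subgraphsBoundedDegreeCount1}, your per-$(\ell,m)$ factor is $(2e\mu\lambda^2)^{\ell-m}(1+\lambda^2)^{(\mu-1)\ell+m}$, whereas the paper's refined count (Lemma~\ref{lem:subgraphsBoundedDegreeCount2}, built on the spanning-tree bound of Lemma~\ref{lem:AlonSpanning}: a connected graph of density at most $\mu$ has at most $e(2\mu)^{v-2}$ spanning trees) yields $e^m(2\mu)^{\ell-2m}\lambda^{2(\ell-m)}(1+\lambda^2)^{(\mu-1)\ell+m}$, smaller by a factor of order $\mu^{m}$. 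That extra $\mu^m$ is exactly what turns the inner ratio $\vc^2(1+\lambda^2)/(\lambda^2 d^2\mu)$ into $\vc^2(1+\lambda^2)/(\lambda^2 d^2\mu^{2})$, so that in the top-term branch the $\mu$'s cancel and one gets the first entry $\vc\sqrt{(1+\lambda^2)/\lambda^2}$ of \eqref{eq:LowerMainCond2}; your route only yields $\vc\sqrt{\mu(1+\lambda^2)/\lambda^2}$, which is genuinely weaker when $\mu\to\infty$ and the vertex-cover term dominates (e.g.\ a dense blob plus a large matching). No rearrangement of the $j$-sum repairs this, because the loss sits in the count itself: the $\ell-m$ forest edges are not arbitrary edges of $G_U$, and the spanning-tree refinement is the missing ingredient.

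For (6), the decisive structural fact you never invoke is that $\limsup\mu(\Gamma)=1-\delta$ forces every connected component of $\Gamma$ to be a tree on at most $t=1/\delta$ vertices, whence $d_{\max}(\Gamma)\le t$ and the number of components is comparable to $\vc(\Gamma)$ and to $|v(\Gamma)|$. Your plan to bound $\vc,d\le|v(\Gamma)|$ breaks the argument: with $d$ replaced by $|v(\Gamma)|$ the outer ratio becomes of order $\lambda^2|v(\Gamma)|^2/(n-|v(\Gamma)|)$ to the \emph{first} power, which is far more restrictive than \eqref{eq:LowerMainCond6} (in the critical regime it never holds). Also, the $(\cdot)^{1/\delta}$ exponent does not come from evaluating the $\ell$-series at $\ell=|v(\Gamma)|$; with the correct inputs it arises either from the constraint $m\ge\delta\ell$ (the inner ratio enters to the power $\delta$ per unit $\ell$, and inverting "$\rho\le1$" produces the $1/\delta$ power), or, as in the paper, from the fact that each intersected $\Gamma$-component contributes at most $t$ vertices; the paper additionally needs the refined enumeration of Lemma~\ref{lem:subgraphsBoundedDegreeCountL} (tracking how many $\Gamma$-components the subgraph meets, with the $\binom{L}{i}t^i\binom{i(t-1)}{m-i}m!$ anchor count) to land on \eqref{eq:LowerMainCond6} with a universal constant; the cruder route at best yields it with a $\delta$-dependent constant. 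So as written, your argument proves weakened versions of (2) and (6), not the statements \eqref{eq:LowerMainCond2} and \eqref{eq:LowerMainCond6}.
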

         The proof of the above theorem follows the same approach to the proof of Theorem~\ref{th:lowerVCD}. The key difference is that we use an extended version of Lemma~\ref{lem:subgraphsBoundedDegreeCount1}, and bound $|\calS_{m,i,j}|$ in multiple different ways (depending on the value of $\mu(\Gamma)$), which surprisingly, are all essential. For simplicity of presentation, we define the following functions,
\begin{align}
   \kappa(m,\ell,\vc,d)&\triangleq \abs{\s{Par}(\ell,m)}  \vc^m  (e  \tilde{d})^{\ell-m},\\
   \theta(m,\ell,j,\mu)&\triangleq e^m(2\mu)^{\ell-2m}  \binom{\floor{\mu   \ell}-\ell+m}{j-\ell+m},
\end{align}
for $2\leq \ell\leq k $, $1 \leq m \leq \floor{\frac{\ell}{2}}$ and $\ell -m \leq j\leq\floor{\mu   \ell}$, and we let $\tilde{d}=\max(2,d-1)$. 
 \begin{lemma}\label{lem:subgraphsBoundedDegreeCount2}
     Fix $2\leq \ell\leq k $ and $1 \leq m \leq \floor{\frac{\ell}{2}}$.
     \begin{itemize}
     \item If $\mu\geq 1$, then for $\ell-m \leq j\leq\floor{\mu \cdot \ell }$,  
     \begin{align}
         |\calS_{m,\ell,j}| &\leq  \kappa(m,\ell,\vc,d)\cdot \min\p{\binom{\floor{\mu  \ell}}{j},\theta(m,\ell,j,\mu)},\label{eq:subgraphcount1Geq}
     \end{align}
      and for $\ell-m \leq j\leq \binom{\ell}{2}$,
      \begin{align}
         |\calS_{m,\ell,j}| &\leq  \kappa(m,\ell,\vc,d)\cdot \min\p{\binom{\binom{\ell}{2}}{j},\theta(m,\ell,j,\mu)}.\label{eq:subgraphcount1Geq.1}
     \end{align}
    \item If $\mu<1$, then,
          \begin{align}
         |\calS_{m,\ell,j}| &\leq \begin{cases}
             \kappa(m,\ell,\vc,d) & j=\ell-m\\
             0 & j\neq \ell-m.
         \end{cases}\label{eq:subgraphcount2}
     \end{align}
     \end{itemize}
     
 \end{lemma}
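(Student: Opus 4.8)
Here is how I would prove Lemma~\ref{lem:subgraphsBoundedDegreeCount2}.

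The plan is to reuse the two–stage counting scheme already employed in the proof of Lemma~\ref{lem:subgraphsBoundedDegreeCount1}---first fix, for each $\s{H}\in\calS_{m,\ell,j}$, the vertex sets $C_1,\dots,C_m$ of its connected components, then count the admissible edge sets supported on $U\triangleq\bigcup_t C_t$---but to replace the crude ``choose $j$ of the at most $\floor{\mu\ell}$ edges of $G_U$'' estimate of the second stage by two sharper ones. For the first stage I would argue verbatim as in Lemma~\ref{lem:subgraphsBoundedDegreeCount1}: writing $|\calS_{m,\ell,j}|=\sum_{p\in\s{Par}(\ell,m)}|S(p,j)|$, every connected set of size $i\geq 2$ in $\Gamma$ meets a fixed minimum vertex cover $S$, and by Lemma~\ref{lem:Bollobas1} (with $\tilde d=\max(2,d-1)$ to cover $d<3$) there are at most $(e\tilde d)^{i-1}$ connected sets of size $i$ through a given vertex; hence the number of ways to pick $C_1,\dots,C_m$ compatible with a partition $p$ is at most $\prod_i[\vc\,(e\tilde d)^{i-1}]^{p(i)}=\vc^{m}(e\tilde d)^{\ell-m}$ by the partition identities \eqref{eq:partitiondef1}, and summing over $p$ produces the prefactor $\kappa(m,\ell,\vc,d)$ common to all three displayed bounds. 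It then remains to bound, for a fixed admissible tuple $(C_1,\dots,C_m)$, the number of $\s{H}\subseteq G_U$ with $j$ edges whose components have vertex sets exactly $C_1,\dots,C_m$.

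For $\mu\geq 1$ I would give two such bounds, each multiplied by $\kappa$. The trivial one: $\s{H}$ amounts to a choice of $j$ edges from $e(G_U)$, and $G_U\subseteq\Gamma$ gives $|e(G_U)|\leq\floor{\mu\ell}$ as well as $|e(G_U)|\leq\binom{\ell}{2}$, hence at most $\binom{\floor{\mu\ell}}{j}$ choices (when $j\leq\floor{\mu\ell}$) and at most $\binom{\binom{\ell}{2}}{j}$ choices (when $j\leq\binom{\ell}{2}$), which yields the ``$\binom{\cdot}{j}$'' terms of \eqref{eq:subgraphcount1Geq}--\eqref{eq:subgraphcount1Geq.1}. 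The refined ``spanning‑forest $+$ surplus'' bound giving $\theta$: each component $\s{H}_t$ is connected on $C_t$, so $e(\s{H})$ splits as a spanning forest $F$ of $\s{H}$ (with $\sum_t(|C_t|-1)=\ell-m$ edges) together with $j-\ell+m$ further edges drawn from $e(G_U)\setminus e(F)$, a set of size $|e(G_U)|-(\ell-m)\leq\floor{\mu\ell}-\ell+m$; since the restriction of $F$ to $C_t$ is a spanning tree of $G_{C_t}$, the number of such forests is at most $\prod_{t=1}^m(\#\{\text{spanning trees of }G_{C_t}\})$. The one new ingredient I need is the elementary fact that a connected simple graph on $v\geq 2$ vertices with $\epsilon$ edges has at most $e\cdot(2\epsilon/v)^{v-2}$ spanning trees; applied with $\epsilon=|e(G_{C_t})|\leq\mu|C_t|$ this gives at most $e\cdot(2\mu)^{|C_t|-2}$ spanning trees of $G_{C_t}$, hence at most $e^{m}(2\mu)^{\ell-2m}$ forests, and together with the $\binom{\floor{\mu\ell}-\ell+m}{j-\ell+m}$ choices for the surplus edges this is exactly $\theta(m,\ell,j,\mu)$. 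The spanning‑tree estimate itself follows from Kirchhoff's matrix–tree theorem, $\#\{\text{spanning trees}\}=\tfrac1v\prod_{i=2}^v\lambda_i$ for the Laplacian eigenvalues $0=\lambda_1\leq\cdots\leq\lambda_v$, combined with $\sum_{i=2}^v\lambda_i=2\epsilon$, AM–GM, and the crude bounds $\epsilon\leq\binom v2$ and $(1+\tfrac1{v-1})^{v-2}\leq e$. Taking the minimum of the two bounds gives \eqref{eq:subgraphcount1Geq}--\eqref{eq:subgraphcount1Geq.1}.

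For $\mu<1$ the situation collapses. Every subgraph of $\Gamma$ has edge density strictly below $1$, so each component $\s{H}_t$, being connected on $|C_t|\geq 2$ vertices, has $|e(\s{H}_t)|<|C_t|$ and is therefore a tree on $|C_t|-1$ edges; thus $j=\sum_t(|C_t|-1)=\ell-m$, which already gives $|\calS_{m,\ell,j}|=0$ for $j\neq\ell-m$. When $j=\ell-m$ the same density bound forces $G_{C_t}$ (connected, $|C_t|$ vertices, fewer than $|C_t|$ edges) to be a tree, and then $\s{H}_t\subseteq G_{C_t}$ with $|e(\s{H}_t)|=|C_t|-1=|e(G_{C_t})|$ forces $\s{H}_t=G_{C_t}$; hence $\s{H}=\bigcup_t G_{C_t}=G_U$ is determined by the tuple $(C_1,\dots,C_m)$, so the number of admissible $\s{H}$ is at most the number of admissible vertex tuples, namely $\kappa(m,\ell,\vc,d)$, which is \eqref{eq:subgraphcount2}. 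None of the steps is genuinely hard; the only places requiring care are extracting the exact constant $e^{m}(2\mu)^{\ell-2m}$ for the forest count (this is precisely what the matrix–tree bound buys over the naive ``each non‑root vertex picks a parent'' estimate, which would only give $(4\mu)^{\ell-m}$) and keeping the $\floor{\cdot}$'s and the ranges of $j$ consistent with the statement.
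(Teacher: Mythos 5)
Your proof is correct and follows essentially the same route as the paper: the $\kappa(m,\ell,\vc,d)$ prefactor via a minimum vertex cover and the Bollob\'as connected-set bound, the spanning-forest-plus-surplus-edges count yielding $\theta(m,\ell,j,\mu)$, and the forced-tree argument (hence $j=\ell-m$ and uniqueness of $\s{H}$ given the component vertex sets) for $\mu<1$ are exactly the paper's steps. The only deviation is that you derive the spanning-tree estimate $\cT(G)\leq e\,(2\mu(G))^{|v(G)|-2}$ from Kirchhoff's matrix-tree theorem and AM--GM on the Laplacian spectrum, whereas the paper's Lemma~\ref{lem:AlonSpanning} obtains the same bound from the degree-product bound of \cite[Theorem 1]{grone1988bound}; both arguments are valid and interchangeable.
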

 We will also need a slightly modified version of Lemma~\ref{lem:subgraphsBoundedDegreeCount1}, specifically for the case where $\limsup \mu(\Gamma)<1$.
  \begin{lemma}\label{lem:subgraphsBoundedDegreeCountL}
     Assume that $\Gamma$ has exactly $L(\Gamma)=L$ connected components, each of size at most $t$, and let us assume that that $\mu<1$. 
     For $1\leq i \leq L$,  $
     2i\leq \ell\leq \floor{i t} $, $i \leq m \leq \floor{\frac{\ell}{2}}$ and $ j=\ell-m$, let $\calS_{i,m,\ell, j}$ be the number of subgraphs of $\Gamma$ with exactly $m$ connected components, $\ell$ vertices, $j$ edges, which intersects exactly $i$ out of the $L$ connected components of $\Gamma$.  Then,
     \begin{align} 
        |\calS_{i,m,\ell,j}| &\leq\abs{  \s{Par}(\ell,m)} \binom{L}{i} t^i \binom{i(t-1)}{m-i} m!  (e\tilde{d})^{\ell-m}.\label{eq:subgraphcountL}
     \end{align}
 \end{lemma}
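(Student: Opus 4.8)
The plan is to adapt the enumeration argument behind Lemma~\ref{lem:subgraphsBoundedDegreeCount1}, using the small–component structure of $\Gamma$ in place of a vertex cover. The key preliminary observation is that, since $\mu<1$, every subgraph $\s{H}\subseteq\Gamma$ is a disjoint union of connected sets of $\Gamma$. Indeed, if $\s{C}$ is a connected component of $\s{H}$ and $V$ is its vertex set, then the induced subgraph $\Gamma_{V}$ is connected and has $|e(\Gamma_{V})|\le\mu\,|V|<|V|$ edges, hence is a tree; as $\s{C}$ is a connected spanning subgraph of $\Gamma_{V}$, we get $\s{C}=\Gamma_{V}$. Therefore an element of $\calS_{i,m,\ell,j}$ is exactly an unordered family of $m$ pairwise–disjoint connected sets $V_1,\dots,V_m$ of $\Gamma$ with $\sum_q|V_q|=\ell$, contained in and meeting exactly $i$ of the $L$ components of $\Gamma$; in particular $j=\ell-m$, consistent with the hypothesis.

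I would then bound $|\calS_{i,m,\ell,j}|$ by counting, with multiplicity, the following enriched data: \emph{(i)} the set of $i$ touched components of $\Gamma$, giving $\binom{L}{i}$ choices; \emph{(ii)} a set of $m$ distinct root vertices $r_1,\dots,r_m$ lying inside the union of those $i$ components and meeting each of them — choosing first one distinguished root inside each chosen component (at most $t$ per component, hence $t^i$) and then the remaining $m-i$ roots among the at most $i(t-1)$ leftover vertices of these components ($\binom{i(t-1)}{m-i}$ choices); \emph{(iii)} an integer partition of $\ell$ into $m$ parts $a_1,\dots,a_m$ (the sizes of the connected sets), giving $\abs{\s{Par}(\ell,m)}$ choices, together with a bijection between these $m$ parts and the $m$ roots, giving at most $m!$ choices; and \emph{(iv)} for each $q$, a connected set of $\Gamma$ of size $a_q$ containing the corresponding root, of which there are at most $(e\tilde{d})^{a_q-1}$ by Lemma~\ref{lem:Bollobas1} — applied directly when $d\ge 3$, and applied to $\Gamma$ augmented by a few edges reaching maximum degree $3$ when $d<3$, exactly as in Lemma~\ref{lem:subgraphsBoundedDegreeCount1} (adding edges only enlarges the family of connected sets). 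Multiplying over $q$ gives $\prod_q(e\tilde{d})^{a_q-1}=(e\tilde{d})^{\ell-m}$, and multiplying all factors yields the claimed bound $\abs{\s{Par}(\ell,m)}\binom{L}{i}t^i\binom{i(t-1)}{m-i}m!\,(e\tilde{d})^{\ell-m}$. One finally checks that every $\s{H}\in\calS_{i,m,\ell,j}$ is produced by at least one such tuple: take the subset in \emph{(i)} to be its own set of touched components, pick in each of them any vertex of $\s{H}$ lying there as the distinguished root and the other roots arbitrarily inside the $V_q$'s, read off the sizes, and recover each $V_q$ as the connected set attached to its root in \emph{(iv)}.

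The only genuine subtlety — and the step I expect to require the most care — is the bookkeeping around the root selection in \emph{(ii)}: one must verify that the scheme does not silently undercount (no admissible family is missed) and that it produces only families touching exactly $i$ components. The latter is automatic, since all roots are forced to lie inside the $i$ chosen components (so each connected set, being connected, lies inside one of them) and each chosen component contains a root (so it is touched); moreover two distinct $\s{H}$'s touching exactly $i$ components are accounted for in distinct terms of the sum over size-$i$ subsets (each in the term indexed by its own touched set), so there is no cross-term overcounting. A minor technical point is the passage to $\tilde{d}=\max(2,d-1)$ when $d_{\max}(\Gamma)\in\{1,2\}$, handled verbatim as in Lemma~\ref{lem:subgraphsBoundedDegreeCount1}. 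The remaining bookkeeping — matching the $m$ partition parts with the $m$ roots and telescoping the Bollob\'as factors — is routine.
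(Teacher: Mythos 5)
Your proposal is correct and matches the paper's own argument almost step for step: the same anchor/root scheme giving the factors $\binom{L}{i}\,t^i\binom{i(t-1)}{m-i}\,m!$, the same sum over $\s{Par}(\ell,m)$, and the same use of Lemma~\ref{lem:Bollobas1} with $\tilde{d}$ to bound connected sets through each anchor by $(e\tilde{d})^{a_q-1}$. Your explicit justification that, when $\mu<1$, each component of $\s{H}$ is the induced tree on its vertex set (so $\s{H}$ is determined by the disjoint connected vertex sets) and your surjectivity check are slightly more detailed than the paper's, but substantively identical.
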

In order to prove the above results we will need some combinatorial results concerning spanning trees.
\begin{definition}\label{def:spaningTree}
     Let $G=(V,E)$ be a connected undirected graph. A subset of edges $T\subseteq E$ is called a spanning tree of $G$ if any vertex in $T$ is connected to an edge in $T$, and $T=|V|-1$.
 \end{definition}
 The following is a corollary of \cite[Theorem 1]{grone1988bound}.
  \begin{lemma}\label{lem:AlonSpanning} Let $G=(V,E)$ be a connected graph. Then, 
    \begin{align}
        \cT(G)\triangleq\abs{\set{T\subseteq E:T \text{ is a spanning tree of }G }}\leq e(2\mu(G))^{|V|-2}.
    \end{align}
 \end{lemma}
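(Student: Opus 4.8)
\textbf{Proof proposal for Lemma~\ref{lem:AlonSpanning}.}

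The plan is to combine the Matrix--Tree theorem (in the sharpened form due to Grone--Merris, \cite{grone1988bound}) with the elementary fact that the maximum subgraph density of any graph on $n$ vertices is strictly less than $n/2$. Write $n\triangleq|V|$ and $m\triangleq|E|$. First I would recall that Theorem~1 of \cite{grone1988bound} gives, for any connected graph $G$,
\begin{align}
    \cT(G)\leq \frac{1}{n}\p{\frac{2m}{n-1}}^{n-1}.
\end{align}
(This is itself a consequence of the Matrix--Tree identity $\cT(G)=\tfrac1n\prod_{i=1}^{n-1}\lambda_i$, where $\lambda_1\geq\cdots\geq\lambda_{n-1}>0$ are the nonzero Laplacian eigenvalues, together with AM--GM applied to $\sum_i\lambda_i=\tr L=2m$; I would cite it directly rather than reprove it.)

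Next I would massage the right-hand side into the desired form. Writing $\p{\frac{2m}{n-1}}^{n-1}=\p{\frac{2m}{n}}^{n-1}\p{\frac{n}{n-1}}^{n-1}$ and using $\p{1+\tfrac1{n-1}}^{n-1}\leq e$, we get
\begin{align}
    \cT(G)\leq \frac{e}{n}\p{\frac{2m}{n}}^{n-1}.
\end{align}
Now $\frac{2m}{n}=2\eta(G)\leq 2\mu(G)$ by definition of the maximum subgraph density (taking $\s{H}=G$ in Definition~\ref{def:maxsubden}), so $\cT(G)\leq \frac{e}{n}\,(2\mu(G))^{n-1}$.

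Finally I would absorb one factor of $2\mu(G)$ against $1/n$. For any subgraph $\s{H}\subseteq G$ on $k\leq n$ vertices we have $\eta(\s{H})=|e(\s{H})|/k\leq \binom{k}{2}/k=(k-1)/2\leq (n-1)/2$, hence $2\mu(G)\leq n-1<n$, i.e.\ $\frac{2\mu(G)}{n}<1$. Therefore
\begin{align}
    \cT(G)\leq \frac{e}{n}\,(2\mu(G))^{n-1}=e\cdot\frac{2\mu(G)}{n}\cdot(2\mu(G))^{n-2}\leq e\,(2\mu(G))^{|V|-2},
\end{align}
which is the claim. The argument is short and essentially routine; the only point requiring a little care is the last step, where getting the exponent $|V|-2$ rather than the immediate $|V|-1$ relies precisely on the bound $2\mu(G)<n$, so I would make sure that estimate is stated cleanly. (If one prefers to avoid invoking \cite{grone1988bound} as a black box, the same conclusion follows by writing out the Matrix--Tree formula and applying AM--GM directly, at the cost of a couple of extra lines.)
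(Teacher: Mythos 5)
Your proof is correct and follows essentially the same route as the paper: invoke a classical spanning-tree bound, apply AM--GM to reduce it to $\cT(G)\leq \frac{e}{|V|}\p{\frac{2|E|}{|V|}}^{|V|-1}$, and then absorb the factor $\frac{1}{|V|}$ using $2\mu(G)<|V|$ (the paper does this last step via $|E|\leq\binom{|V|}{2}$, which is the same estimate). The only caveat is bibliographic: the inequality you attribute to Theorem~1 of \cite{grone1988bound} is the eigenvalue form $\cT(G)\leq\frac{1}{|V|}\p{\frac{2|E|}{|V|-1}}^{|V|-1}$ (Matrix--Tree plus AM--GM on the Laplacian spectrum), whereas that theorem as quoted in the paper is the degree-product bound $\cT(G)\leq\p{\frac{|V|}{|V|-1}}^{|V|-1}\frac{\prod_{v}\deg(v)}{\sum_{v}\deg(v)}$; since you also supply the self-contained spectral justification, the argument stands, but the citation should be adjusted or the one-line derivation kept.
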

 \begin{proof}
     It was shown in \cite[Theorem 1]{grone1988bound} that,
     \begin{align}
         \cT(G)\leq \p{\frac{|V|}{|V|-1}}^{|V|-1}\cdot \frac{\prod_{v\in V}\deg(v)}{\sum_{v\in V}\deg(v)},
     \end{align}
     where $\deg(v)$ is the degree of $v$ in $G$. Since $\sum_{v}\deg(v)=2|E|$, by the AM-GM inequality we have,
     \begin{align}
         \cT(G)&\leq \p{1+\frac{1}{|V|-1}}^{|V|-1}\frac{\prod_{v\in V}\deg(v)}{2|E|}\\
         &\leq e \frac{\p{\frac{1}{|V|}\sum_{v\in V}\deg(v)}^{|V|}}{2|E|}\\
         & =e \frac{\p{\frac{2|E|}{|V|}}^{|V|}}{2|E|}\\
         &=e \p{\frac{2|E|}{|V|}}^{|V|-1}\cdot \frac{1}{|V|}\\
         &\overset{(a)}{\leq}   e  \p{\frac{2|E|}{|V|}}^{|V|-1}\cdot \frac{1}{2(|E|/|V|)}\\
         &\leq  e   (2\mu(G))^{|V|-2},
     \end{align}
     where $(a)$ is because $|E|\leq \binom{|V|}{2}$.
 \end{proof}
We are now in a position to prove Lemmas~\ref{lem:subgraphsBoundedDegreeCount2} and \ref{lem:subgraphsBoundedDegreeCountL}.

\begin{proof}[Proof of Lemma~\ref{lem:subgraphsBoundedDegreeCount2}] 
    Recall that $\calS_{m,\ell,j} $ is the set of all subgraphs $\s{H}\subseteq \Gamma $ containing no isolated vertices, and $\ell$ vertices, $j$ edges, and $m$ connected components. Our analysis starts with the identity in \eqref{eqn:enumPar}, namely,
    \begin{align}
        |\calS_{m,\ell,j}|=\sum_{p\in\s{Par}(\ell,m)}|S(p,j)|,
    \end{align}
    where $S(p,j)$ is the set of all subgraphs in $\calS_{m,\ell,j}$, whose connected components' sizes correspond to a given partition $p$. As in Lemma~\ref{lem:subgraphsBoundedDegreeCount1}, we bound $|S(p,j)|$ by first bounding the number of ways to choose the set $U$ of $\ell$ vertices that supports the subgraph by $\vc^m(e\tilde{d})^{\ell-m}$. Next, we bound the number of ways to choose the $j$ edges of $\s{H}$. This is where our analysis diverges from Lemma~\ref{lem:subgraphsBoundedDegreeCount1}, in which we ignored the connectivity constraints when choosing edges. Here, we account for the connectivity constraint, and analyze the cases where $\mu\geq 1$ and $\mu<1$ separately.
    \begin{itemize}[leftmargin=*]
        \item \underline{\textbf{Proof of \eqref{eq:subgraphcount1Geq}--\eqref{eq:subgraphcount1Geq.1}.}} Let us now bound the number of subgraphs of $G_U$. Recall that at this point, we have fixed the vertex set $U$, which contains exactly $p(i)$ connected subsets of size $i$. Our goal is to bound the number of ways to choose $j$ edges on these graphs while respecting the connectivity constraints. Since every connected component must contain a spanning tree, we can bound the number of ways to choose these edges by multiplying the number of spanning trees of the connected components by the number of ways to choose the remaining edges. Using Lemma~\ref{lem:AlonSpanning}, the number of spanning trees $\cT(G_U)$ of an induced subgraph $G_U$, supported on a set $U$ with $i$ vertices, is bounded as,
   \begin{align}
       \cT(G_U)\leq e\p{2\mu(G_U)}^{i-2}\leq e\p{2\mu}^{i-2},
   \end{align} 
   where the second inequality follows from the definition of $\mu(\Gamma)$. Multiplying over all connected components we have at most,
    \begin{align}
        \prod_{i=1}^\ell \p{e\p{2\mu}^{i-2}}^{p(i)}=e^{m}(2\mu)^{\ell-2m}
    \end{align}
    ways to choose $m$ spanning trees, one for each component. Since each spanning tree of a component contains $|v(\s{C}_i)|-1$ edges exactly, there are at most,
    \begin{align}
        |e(G_U)|-\sum_{i=1}^m (|v(\s{C}_i)|-1)= |e(G_U)|-\ell+m\leq \min\p{\floor{ \mu \ell},\binom{\ell}{2}} -\ell+m
    \end{align}
    edges in the induced subgraph, from which we are to choose the remaining $j-(\ell-m)$ edges. Thus, the number of subgraphs for a fixed set of vertices $U$ can be upper bounded by,
    \begin{align}
         e^m(2\mu)^{\ell-2m}  \binom{\min\p{\floor{ \mu \ell},\binom{\ell}{2}}-\ell+m}{j-\ell+m}\triangleq f(\ell,m,j).
    \end{align}
    Summing over all possible partitions we obtain,
    \begin{align}
        \calS_{m,\ell,j}&=\sum_{p\in\s{Par}(\ell,m)}\abs{S(p,j)}\\
        &\leq \sum_{p\in\s{Par}(\ell,m)}\vc^m\cdot (e\tilde{d})^{\ell-m}\cdot f(\ell,m,j).\\
        &\leq \abs{\s{Par}(\ell,m)}\vc^m\cdot (e\tilde{d})^{\ell-m}\cdot f(\ell,m,j),
    \end{align}
    which completes the proof.
    \item \underline{\textbf{Proof of \eqref{eq:subgraphcount2}.}} When $\mu<1$ the connected components must be all trees. Indeed, since $\mu<1$, for any connected component ${\s{C}}_i$ we have, 
    \begin{align}
        \abs{e(\s{C}_i)}\leq \mu(\Gamma) \cdot \abs{v(\s{C}_i)}< 1\cdot \abs{v(\s{C}_i)}.
    \end{align}
    Because $\abs{v(\s{C}_i)}$ and $\abs{e(\s{C}_i)}$ are integers, we have $\abs{e(\s{C}_i)}\leq \abs{v(\s{C}_i)}- 1$. In the other direction, since $\s{C}_i$ is connected we also have $\abs{e(\s{C}_i)}\geq \abs{v(\s{C}_i)}-1$, and thus equality holds. Hence, in this case, the number of edges of a subgraph $\s{H}$ with exactly $m$ connected components is $\ell-m$. The proof then follows by repeating the exact same steps as in the case where $\mu\geq 1$, with the only observation that now a choice of the $m$ connected components determine a unique subgraph in $\calS_{\ell,m,j}$, for any $j$ such that $\calS_{\ell,m,j}\neq\emptyset$.
    \end{itemize}
       
    \end{proof}

\begin{proof}[Proof of Lemma~\ref{lem:subgraphsBoundedDegreeCountL}]
The proof follows the same approach as the case where $\mu<1$ in Lemma~\ref{lem:subgraphsBoundedDegreeCount1}, but now we refine the enumeration of the connected components. Specifically, we proceed as follows: we consider partitions in $\s{Par}(\ell,m)$, which dictate how the $\ell$ vertices are distributed among the $m$ different connected components. Once a partition $p$ is fixed, we proceed to choose $m$ connected components with the corresponding sizes in the following manner: First, we select $m$ fixed vertices, which we call \emph{anchors}, from exactly $i$ of the $L$ connected components of $\Gamma$, one anchor per each connected component. Then, we select the remaining vertices for each of these $m$ connected components, each containing exactly one of the previously chosen anchors. We observe that the number of ways to choose $m$ anchors from exactly $i$ connected components is at most,
 \begin{align}
     \binom{L}{i} t^i \binom{i(t-1)}{m-i} m!. \label{eq:HishuvBenZona}
 \end{align}
 Indeed, the factor $\binom{L}{i}$ counts the number of ways to choose which $i$ connected components of $\Gamma$ intersects the subgraph $\s{H}$. Once these components are fixed, there are at most $t^i$ ways to choose one anchor from each of these $i$ components. After that, we have at most $\binom{i(t-1)}{m-i}$ ways to choose the remaining $m-i$ anchors. Finally, we multiply by $m!$ to account for the number of ways to assign a size to the component containing each anchor (note that the sizes of the $m$ components are specified by the partition). The rest of the proof follows exactly as in the proof of \eqref{eq:subgraphcount2} in Lemma~\ref{lem:subgraphsBoundedDegreeCount1}, where the number of ways to choose a connected set of size $j$ that contains a fixed anchor is bounded using Lemma~\ref{lem:Bollobas1} by $(e\cdot \tilde{d})^{j-1}$. Combining this with \eqref{eq:HishuvBenZona} we get,
 \begin{align}
     \calS_{i,m,\ell,\ell-m}&\leq \sum_{p\in \s{Par}(\ell,m)} \binom{L}{i} t^i \binom{i(t-1)}{m-i} m!  \prod_{j=1}^{\ell}(e \tilde{d})^{p(j)\cdot(j-1)}\\
     &= \abs{\s{Par}(\ell,m)} \binom{L}{i} t^i \binom{i(t-1)}{m-i} m!  (e\tilde{d})^{\ell-m},
 \end{align}
 as claimed.     
 \end{proof}
We are now ready to prove Theorem~\ref{th:lowerVCFull}. 
\begin{proof}[Proof of Theorem~\ref{th:lowerVCFull}] 
Note that \eqref{eq:LowerMainCond1Other} is in fact the same condition in Theorem~\ref{th:lowerVCD}, which we already proved in Section~\ref{subsec:denseRegime}. It is left to prove \eqref{eq:LowerMainCond2}--\eqref{eq:LowerMainCond6}. To that end, we follow the steps of the proof of Theorem~\ref{th:lowerVCD}, but now apply the refined bounds on $|\calS_{m,\ell,j}|$ in Lemmas~\ref{lem:subgraphsBoundedDegreeCount2} and \ref{lem:subgraphsBoundedDegreeCountL}. 
\begin{itemize}[leftmargin=*]
    \item \underline{\textbf{Proof of \eqref{eq:LowerMainCond2}.}} From \eqref{eq:subgraphcount1Geq} we know that,
    \begin{align}
       |\calS_{m,\ell,j}|\leq  \kappa(m,\ell,\vc,d)\cdot \theta(m,\ell,j,\mu).\label{eqn:215bound}
    \end{align}
    Then, applying Lemmas~\ref{lem:probRandomSubgraph1} and \eqref{eqn:215bound} on \eqref{eqn:InterBound}, we get,
    \begin{align}
        &\E_{\calH_0}[\s{L(G)}^2]\\
      &\leq 1+\sum_{\ell=2}^k \sum_{m=1}^{\floor{\ell/2}}\sum_{j=\ell-m}^{\floor{\mu \ell}} |\calS_{m,\ell,j}| \lambda^{2j}  \frac{(2\vc)^m d^{\ell-m}}{(n-k)^\ell}\\
      &\leq 1+\sum_{\ell=2}^k \sum_{m=1}^{\floor{\ell/2}} \frac{(2\vc)^m d^{\ell-m}}{(n-k)^\ell} \sum_{j=\ell-m}^{\floor{\mu \ell }}  \lambda^{2j} \abs{\s{Par}(\ell,m)}\vc^m (ed)^{\ell-m}e^m(2\mu)^{\ell-2m}  \binom{\floor{\mu \ell}-\ell+m}{j-\ell+m}\\
      &\leq 1+C_{\varepsilon}\sum_{\ell=2}^k \p{\frac{2e(1+\varepsilon) \lambda^2 d^2 \mu}{n-k} }^\ell \sum_{m=1}^{\floor{\ell/2}} \p{\frac{\vc^2}{2\lambda^2 d^2 \mu^2}}^m\sum_{j=\ell-m}^{\floor{\mu \ell }}  \lambda^{2(j-\ell+m)} \binom{\floor{\mu \ell}-\ell+m}{j-\ell+m}\\
      &=1+C_{\varepsilon}\sum_{\ell=2}^k \p{\frac{2e(1+\varepsilon)\lambda^2   d^2 \mu}{n-k} }^\ell \sum_{m=1}^{\floor{\ell/2}} \p{\frac{\vc^2}{2 \lambda^2 d^2 \mu^2}}^m\sum_{j'=0}^{\floor{\mu \ell }-\ell+m}  \lambda^{2j'}  \binom{\floor{\mu \ell}-\ell+m}{j'}\\
      &\leq 1+C_{\varepsilon}\sum_{\ell=2}^k \p{\frac{2e(1+\varepsilon)\lambda^2  d^2 \mu}{n-k} }^\ell \sum_{m=1}^{\floor{\ell/2}} \p{\frac{\vc^2}{2 \lambda^2 d^2  \mu^2}}^m \cdot (1+\lambda^2)^{ \mu \ell-\ell+m}\\
      &\leq 1+C_{\varepsilon}\sum_{\ell=2}^k \p{\frac{2e(1+\varepsilon)(1+\lambda^2)^{\mu-1}\cdot\lambda^2  d^2 \mu}{n-k} }^\ell \cdot \sum_{m=1}^{\floor{\ell/2}} \p{\frac{(1+\lambda^2) \vc^2}{2\lambda^2 d^2  \mu^2}}^m.\label{eq:GenBoundSparse}
      \end{align}
      We now separate our analysis into two complementary cases. In the first, we assume that,
      \begin{align}
             \frac{ (1+\lambda^2) \vc^2}{ 2\lambda^2  d^2 \mu^2}\geq 1+\alpha, \label{eq:repatativeDOm2}
         \end{align}
         for some fixed positive $\alpha$. In this case, the inner sum on the r.h.s. of \eqref{eq:GenBoundSparse} is dominated by $\p{\frac{(1+\lambda^2)\vc^2}{2\lambda^2 d^2 \mu^2}}^{\ell/2}$, and therefore,
         \begin{align}
             \E_{\calH_0}\pp{\s{L}(\s{G})^2}\leq 1+C'_\varepsilon\cdot \sum_{\ell=2}^k \p{\frac{e (1+\varepsilon)  (1+\lambda^2)^{\mu-\frac{1}{2}}\lambda \vc  d}{\sqrt{2}(n-k)} }^\ell,
         \end{align}
         which is bounded (and accordingly strong detection is impossible) provided that,
         \begin{align}
         \frac{(1+\lambda^2)^{\mu-\frac{1}{2}}\lambda \vc d }{n-k}<C,\label{eq:rePETATIVEthresh1}
         \end{align}
         for some universal $C>0$. Next, we move to the other case, where,
         \begin{align}
                 \frac{ (1+\lambda^2) \vc^2}{ 2\lambda^2  d^2 \mu^2}\leq 1+o(1),
         \end{align}
        for some $o(1)$ function. Here, for a sufficiently large $n$, the inner sum on the r.h.s. of \eqref{eq:GenBoundSparse} is dominated by $(1+\varepsilon)^\ell$, and therefore,
        \begin{align}
            \E_{\calH_0}[\s{L(G)}^2]&\leq 1+C'_\varepsilon\cdot \sum_{\ell=2}^k \p{\frac{2e (1+\varepsilon)^2(1+\lambda^2)^{\mu-1}  \lambda^2   d^2  \mu }{n-k} }^\ell.\label{eq:AnoyningCases2}
        \end{align}
        The above is bounded provided that,
        \begin{align}
            \frac{(1+\lambda^2)^{\mu-1} \lambda^2   d^2  \mu }{n-k} <C, \label{eq:rePETATIVEthresh2}
        \end{align}
        for some universal constant $C>0$. Since \eqref{eq:repatativeDOm2} holds exactly when \eqref{eq:rePETATIVEthresh1} dominates \eqref{eq:rePETATIVEthresh2} (perhaps up to a multiplicative constant factor), we have that \eqref{eq:LowerMainCond2} is sufficient for $\E_{\calH_0}[\s{L}(\s{G})^2]$ to be bounded.
        \item \underline{\textbf{Proof of \eqref{eq:LowerMainCond3}-\eqref{eq:LowerMainCond5}.}} We are now in the regime where $\mu<1$. We repeat the analysis as in the pervious case, but bound $|\calS_{m,\ell,j}|$ using \eqref{eq:subgraphcount2}. Specifically, in this case, we sum over the vertices $\ell$ and connected components $m$ of $\s{H}$, while the number of edges $j$ is fixed and given by $j=\ell-m$, as explained in the proof of \eqref{eq:subgraphcount2}. Specifically, we have,
          \begin{align}
               \E_{\calH_0}[\s{L(G)}^2]
      &=1+\sum_{\ell=2}^k \sum_{m=1}^{\floor{\ell/2}} |\calS_{m,\ell,j}|\cdot \lambda^{2(\ell-m)} \cdot \frac{(2\vc)^m d^{\ell-m}}{(n-k)^\ell}\\
      &\leq 1+\sum_{\ell=2}^k \sum_{m=1}^{\floor{\ell/2}} \frac{(2\vc)^m d^{\ell-m}}{(n-k)^\ell} \lambda^{2(\ell-m)} \cdot \abs{\s{Par}(\ell,m)}\vc^m (ed)^{\ell-m}\\
      &\leq 1+C_\varepsilon \cdot \sum_{\ell=2}^k \p{\frac{e(1+\varepsilon)\lambda^2  d^2}{n-k}}^\ell \sum_{m=1}^{\floor{\ell/2}} \p{\frac{2\vc^{2} }{e \lambda^2  d^2} }^m  .\label{eq:GenBoundMuLeq1}
          \end{align}
    Again, we separate our analysis into two complementary cases. In the first, we assume that,
      \begin{align}
             \frac{ 2 \vc^2}{ e\lambda^2  d^2 }\geq 1+\alpha, \label{eq:repatativeDOm3}
         \end{align}
         for some fixed positive $\alpha$. In this case, the inner sum on the r.h.s. of \eqref{eq:GenBoundSparse} is dominated by $\p{\frac{ 2 \vc^2}{ e\lambda^2  d^2 }}^{\ell/2}$, and therefore, we have,
        \begin{align}
          \E_{\calH_0}\pp{\s{L}(\s{G})^2}\leq  1+C'_\varepsilon \cdot \sum_{\ell=2}^k \p{\frac{\sqrt{ 2e }(1+\varepsilon)\lambda \vc  d}{n-k}}^\ell. 
        \end{align}
        The above is bounded whenever 
        \begin{align}
            \frac{\lambda\vc  d}{n-k}<C,\label{eq:AnoyningCases31}
        \end{align} 
        for some constant $C>0$. Next, we move to the other case, where,
         \begin{align}
             \frac{ 2\vc^2}{ e\lambda^2d^2}\leq 1+o(1),
         \end{align}
        for some $o(1)$ function. Here, for a sufficiently large $n$, the inner sum on the r.h.s. of \eqref{eq:GenBoundSparse} is dominated by $\p{1+\varepsilon}^\ell$, and thus,
        \begin{align}
            \E_{\calH_0}[\s{L(G)}^2]&\leq 1+C''_\varepsilon\cdot  \sum_{\ell=2}^k \p{\frac{e(1+\varepsilon)^2\lambda^2 d^2}{n-k}}^\ell<C.  \label{eq:AnoyningCases3}
        \end{align}
        The above is bounded provided that,
        \begin{align}
           \frac{\lambda^2 d^2 }{n-k}<C,
        \end{align}
        for some constant $C$. Since \eqref{eq:repatativeDOm3} holds exactly when \eqref{eq:AnoyningCases31} dominates \eqref{eq:AnoyningCases3} (perhaps, up to a multiplicative constant factor), we have that \eqref{eq:LowerMainCond3} is sufficient for $\E_{\calH_0}[\s{L}(\s{G})^2]$ to be bounded.    
          
          In order to prove \eqref{eq:LowerMainCond4}, consider the scenario where \begin{align}
               \frac{ \vc^2}{ e\lambda^2d^2}\leq  1-\alpha,
               \quad \text{and} \quad \frac{e\lambda^2 d^2}{n-k}>1+\alpha,
          \end{align}
          for some fixed constant $\alpha$. In this case, the inner sum on the r.h.s. of \eqref{eq:GenBoundMuLeq1} is dominated by $\frac{\vc^2}{e\lambda^2 d^2}$, while the outer sum is dominated by $\p{\frac{e(1+\varepsilon)\lambda^2d^2}{n-k}}^k$. We therefore have,
          \begin{align}
            \E_{\calH_0}[\s{L(G)}^2]&\leq 1+C''_\varepsilon\cdot \p{\frac{\vc^2}{e\lambda^2 d^2 }} \p{\frac{e(1+\varepsilon)\lambda^2\cdot d^2}{n-k}}^k,
        \end{align}
        which is bounded if, 
        \begin{align}
            \frac{\vc^2\cdot (e\lambda^2 d^2)^{k-1}}{(n-k)^k}=O(1).
        \end{align}
        This proves that \eqref{eq:LowerMainCond4} and \eqref{eq:LowerMainCond5} are sufficient for the impossibility of strong detection. 
        
        \item\underline{\textbf{Proof of \eqref{eq:LowerMainCond6}.}} Consider the case where,
          \begin{equation}
              \limsup_{n\to\infty} \mu=1-\delta, \label{eq:limsupMu}
          \end{equation} 
          for $\delta>0$. Here, as explained before, $\Gamma$ must be a forest. We begin with the observation that in this case each tree in $\Gamma$ contains at most $\frac{1}{\delta}$ vertices. Indeed, if $\s{T}$ is a tree in $\Gamma$ with $\ell$ vertices, we have 
          \begin{align}
              1-\frac{1}{\ell}=\frac{\ell-1}{\ell}=\mu(\s{T})\leq \mu \leq 1-\delta.
          \end{align}
          Let us denote $t=\frac{1}{\delta}$, which as stated above, is a constant that bounds the sizes of the connected components  in $\Gamma$, and in particular, the maximum degree of $\Gamma$. Recall that $L=L(\Gamma)$ denotes the number of connected components of $\Gamma$. We note that,
          \begin{align}
             \frac{k}{t} \leq L \leq \frac{k}{2}.
          \end{align}
          We also observe that any vertex cover set must contain at least one vertex from each connected component of $\Gamma$, and therefore, we have $L\leq \vc$. On the other hand, from the definition of $\vc$ we have $ \vc\leq k$. Combining together, we get,
          \begin{align}
             \frac{k}{t}\leq L \leq \vc \leq k\leq t\cdot L, \label{eq:L-S.relations}
          \end{align}
          where the first inequity holds for a sufficiently large $n$, as $t$ is bounded with $n$.
          
          We next evaluate \eqref{eq:SecondMomentExpressionProb2} by running over subgraphs with fixed number of vertices and connected components, but also take into count the number $i(\s{H})$ of connected components of $\Gamma$ which intersects with $\s{H}$. Formally, let $\calS_{i,m,\ell,j}$ be the number of subgraphs $\s{H}\subseteq \Gamma$ with $m$ connected components, $\ell$ vertices, and $j$ edges, which satisfy $i(\s{H})=i$. As in the previous case, if $j\neq \ell-m$, we have $S_{i,m,\ell,j}=0$. We note that with $i$ fixed, the number of vertices $\ell$ must be between $2i$ and $it$ (which bounds the maximal number of vertices in the $i$ connected components of $\Gamma)$. With $i$ and $\ell$ fixed, the number of connected components $m$ must be between $i$ and $\floor{\frac{\ell}{2}}$. Combining the above observations with \eqref{eq:SecondMomentExpressionProb2}, Lemmas~\ref{lem:probRandomSubgraph1} and \ref{lem:subgraphsBoundedDegreeCountL} we obtain,
          \begin{align}
              &\E_{\calH_0}[\s{L(G)}^2]\\
      &\leq1+\sum_{i=1}^{L}\sum_{\ell=2i}^{\floor{it}} \sum_{m=i}^{\floor{\ell/2}} |\calS_{i,m,\ell,\ell-m}| \lambda^{2(\ell-m)} \cdot \frac{(2\vc)^m d^{\ell-m}}{(n-k)^\ell}\\
      &\leq 1+\sum_{i=1}^{L}\sum_{\ell=2i}^{\floor{it}} \sum_{m=i}^{\floor{\ell/2}} |\calS_{i,m,\ell,\ell-m}| \lambda^{2(\ell-m)} \cdot \frac{(2t  L)^m t^{\ell-m}}{(n-k)^\ell}\\
      &\leq 1+\sum_{i=1}^{L}\sum_{\ell=2i}^{\floor{it}} \sum_{m=i}^{\floor{\ell/2}} \abs{  \s{Par}(\ell,m)} \binom{L}{i} t^i \binom{i(t-1)}{m-i} m!  (ed)^{\ell-m} \lambda^{2(\ell-m)} \cdot \frac{(2t  L)^m t^{\ell-m}}{(n-k)^\ell}\\
       &\leq 1+\sum_{i=1}^{L}\sum_{\ell=2i}^{\floor{it}} \sum_{m=i}^{\floor{\ell/2}} \abs{  \s{Par}(\ell,m)} \binom{L}{i} t^i \binom{i(t-1)}{m-i} m!  (et)^{\ell-m} \lambda^{2(\ell-m)} \cdot \frac{(2t  L)^m t^{\ell-m}}{(n-k)^\ell}\\
      &= 1+C_{\varepsilon} \sum_{i=1}^{L}t^i\sum_{\ell=2i}^{\floor{it}} \p{\frac{e(1+\varepsilon)\lambda^2 t^2}{n-k}}^\ell  \sum_{m=i}^{\floor{\ell/2}} \frac{L!}{(L-i)!}\cdot \frac{(i(t-1))!}{\p{i(t-1)-(m-i)}!} \binom{m}{i} \p{\frac{2L}{et  \lambda^2}}^m\\
      &\overset{(a)}{\leq }1+C_{\varepsilon} \sum_{i=1}^{L}t^i\sum_{\ell=2i}^{\floor{it}} \p{\frac{e(1+\varepsilon)\lambda^2 t^2}{n-k}}^\ell  \sum_{m=i}^{\floor{\ell/2}} \frac{L!}{(L-i)!}\cdot \frac{(L(t-1))!}{\p{L(t-1)-(m-i)}!}  \p{\frac{e  m}{i}}^i  \p{\frac{2L}{et  \lambda^2}}^m\\
      &\overset{(b)}{\leq }1+C_{\varepsilon} \sum_{i=1}^{L}t^i\sum_{\ell=2i}^{\floor{it}} \p{\frac{e(1+\varepsilon)\lambda^2 t^2}{n-k}}^\ell  \sum_{m=i}^{\floor{\ell/2}} L^i  (L(t-1))^{m-i}  \p{\frac{et}{2}}^i  \p{\frac{2L}{et  \lambda^2}}^m\\
       &\overset{(b)}{\leq }1+C_{\varepsilon} \sum_{i=1}^{L}t^i\sum_{\ell=2i}^{\floor{it}} \p{\frac{e(1+\varepsilon)\lambda^2 t^2}{n-k}}^\ell  \sum_{m=i}^{\floor{\ell/2}} L^i  (Lt)^{m-i}  \p{\frac{et}{2}}^i  \p{\frac{2L}{et  \lambda^2}}^m\\
      &\leq 1+C_{\varepsilon} \sum_{i=1}^{L}\p{\frac{ et}{2}}^i\sum_{\ell=2i}^{\floor{it}} \p{\frac{e(1+\varepsilon)\lambda^2 t^2}{n-k}}^\ell  \sum_{m=i}^{\floor{\ell/2}} \p{\frac{2L^2}{e  \lambda^2}}^m\\
      &\leq 1+C_{\varepsilon} \sum_{i=1}^{k}\p{\frac{ et}{2}}^i\sum_{\ell=2i}^{\floor{it}} \p{\frac{e(1+\varepsilon)\lambda^2 t^2}{n-k}}^\ell  \sum_{m=i}^{\floor{\ell/2}} \p{\frac{2k^2}{e  \lambda^2}}^m,
          \end{align}
          where (a) follows from the fact that $\binom{n}{m}\leq\p{\frac{en}{m}}^m$, for any $m\leq n$, and (b) is because $m\leq \frac{1}{2}\ell\leq \frac{1}{2}\cdot t\cdot i$. We separate our analysis into two complementary cases. In the first, we assume that,
          \begin{align}
              \frac{2k^2}{e\lambda^2}\leq 1-\alpha,
          \end{align}
          and 
          \begin{align}
              \frac{e(1+\varepsilon)\lambda^2 t^2}{n-k}\geq 1+\alpha,
          \end{align}
          for a fixed $\alpha>0$. Then, we bound the above sum as,
          \begin{align}
              \E_{\calH_0}[\s{L(G)}^2]&\leq 1+C'_\varepsilon \cdot \sum_{i=1}^{k}\p{\frac{ k^2 t}{ \lambda^2}}^i\sum_{\ell=2i}^{\floor{it}} \p{\frac{e(1+\varepsilon)\lambda^2 t^2}{n-k}}^\ell.  \\
              &\leq 1+C''_\varepsilon \cdot \sum_{i=1}^{k}\p{\frac{ k^2 t  \p{e(1+\varepsilon)  t  \lambda^2}^{t}}{\lambda^2  (n-k)^t}}^i,
          \end{align}
          which is bounded provided that, 
          \begin{align}
              \frac{k^2 t^{t+1}\lambda^{2(t-1)}}{(n-k)^t}\leq C,
          \end{align}
          which concludes the proof of \eqref{eq:LowerMainCond6}. We remark that we do not need to analyze the regimes where 
          \begin{align}
              \frac{2k^2}{e\lambda^2}>1+\alpha,
          \end{align}
          or,
          \begin{align}
              \frac{e(1+\varepsilon)\lambda^2 t^2}{n-k}\leq  1-\alpha,
          \end{align}
          because the resulting conditions overlap with \eqref{eq:LowerMainCond3}--\eqref{eq:LowerMainCond5}. 
\end{itemize}

 \end{proof}

\subsubsection{Reduction to the vertex cover-degree balanced case}

In this subsection, we extend the decomposition argument in Proposition~\ref{prop:decompisitionThD} in the dense regime, to the more general scenario we consider here, using the refined combinatorial analysis developed in the proof of Theorem~\ref{th:lowerVCFull}. 
\begin{prop}\label{prop:decompisitionThFull}
Let $\Gamma=(\Gamma_n)_n$ be a sequence of graphs, and assume that each can be decomposed into a set of edge-disjoint graphs, i.e., $\Gamma=\bigcup_{\ell=1}^M \Gamma_\ell$. Then, $\E_{\calH_0}[\s{L}(\s{G})^2]=O(1)$ if     \begin{align}
        \max_{\ell=1,\dots, M}\p{\frac{(1+\lambda_M^2)^{\tilde{\mu}(\Gamma_\ell)-1}\lambda_M^2}{n-|v(\Gamma_\ell)|}\cdot\max\p{\vc(\Gamma_\ell) d_{\max}(\Gamma_\ell)\sqrt{\frac{(1+\lambda_M^2)}{\lambda_M^2}},d_{\max}(\Gamma_\ell)^2\mu(\Gamma_\ell)}}\leq C,\label{eq:condintersectionFull}
    \end{align}
    for some universal $C>0$, where $\lambda_M^2\triangleq(1+\lambda^2)^{M^2}-1$ and $\tilde{\mu}(\Gamma_\ell)=\max(\mu(\Gamma_\ell),1 )$.
\end{prop}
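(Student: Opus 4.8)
The plan is to follow, almost verbatim, the template of the proof of Proposition~\ref{prop:decompisitionThD}, but replacing every appeal to Theorem~\ref{th:lowerVCD} and Lemma~\ref{lem:interectionD} by their refined counterparts built on the spanning-tree estimates of Lemma~\ref{lem:subgraphsBoundedDegreeCount2} (chiefly the $\mu\ge 1$ bound \eqref{eq:subgraphcount1Geq}). First I would fix a decomposition $\Gamma=\bigcup_{\ell=1}^M\Gamma_\ell$ and, exactly as in \eqref{eq:GammaIeq}--\eqref{eq:GammaIeq2}, apply H\"older's inequality to $\E_{\calH_0}[\s{L}(\s{G})^2]=\E_{\Gamma}[(1+\lambda^2)^{|e(\Gamma\cap\Gamma')|}]$ to obtain
\[
\E_{\calH_0}[\s{L}(\s{G})^2]\le\prod_{i,j=1}^{M}\E_{\Gamma_i}^{1/M^2}\pp{(1+\lambda_M^2)^{|e(\Gamma_i\cap\Gamma'_j)|}},\qquad \lambda_M^2\triangleq(1+\lambda^2)^{M^2}-1 .
\]
It then suffices to bound each of the $M^2$ factors by a universal constant. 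For the diagonal factors ($i=j$) this is immediate from Theorem~\ref{th:lowerVCFull}: when $\mu(\Gamma_i)\ge 1$ we invoke case \eqref{eq:LowerMainCond2} with $\lambda$ replaced by $\lambda_M$, and when $\mu(\Gamma_i)<1$ we have $\tilde\mu(\Gamma_i)=1$, so the hypothesis \eqref{eq:condintersectionFull} degenerates to a condition strictly stronger than \eqref{eq:LowerMainCond3} (the extra factors $\sqrt{(1+\lambda_M^2)/\lambda_M^2}$ and $\mu(\Gamma_i)\le 1$ only help), and case \eqref{eq:LowerMainCond3} of Theorem~\ref{th:lowerVCFull} applies; either way the diagonal factor is $O(1)$. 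That the relevant ``$\max(\vc\, d,d^2\mu)$-type'' quantity of $\Gamma_i$ is dominated by the maximum over $\ell\in[M]$ appearing in \eqref{eq:condintersectionFull} uses only $\mu(\Gamma_i)\le\mu(\Gamma)$, $d_{\max}(\Gamma_i)\le d_{\max}(\Gamma)$, $|v(\Gamma_i)|\le|v(\Gamma)|$, exactly as in Proposition~\ref{prop:decompisitionThD}.

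The substantive new ingredient, and the step I expect to be the main obstacle, is a \emph{refined intersection lemma} generalizing Lemma~\ref{lem:interectionD}: if $\Gamma_1,\Gamma_2$ are sequences of graphs, $\bar\mu\triangleq\min(\mu(\Gamma_1),\mu(\Gamma_2))$, $\bar k\triangleq\min(|v(\Gamma_1)|,|v(\Gamma_2)|)$, and (in the case $\bar\mu\ge 1$)
\[
\frac{(1+\lambda^2)^{\bar\mu-1}\lambda^2}{n-\bar k}\cdot\max\!\p{\sqrt{\tfrac{(1+\lambda^2)\,\vc(\Gamma_1)\vc(\Gamma_2)\,d_{\max}(\Gamma_1)d_{\max}(\Gamma_2)}{\lambda^2}},\ d_{\max}(\Gamma_1)d_{\max}(\Gamma_2)\sqrt{\mu(\Gamma_1)\mu(\Gamma_2)}}\le C ,
\]
then $\E_{\Gamma_2}[(1+\lambda^2)^{|e(\Gamma_1'\cap\Gamma_2)|}]=O(1)$; for $\bar\mu<1$ the corresponding statement is the general-regime version of Lemma~\ref{lem:interectionD} itself. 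The proof mirrors that of Lemma~\ref{lem:interectionD}: start from \eqref{eqn:momenGenG1G2}, bound $\P_{\Gamma_2}[\s{H}\subseteq\Gamma_2]\le\vartheta$ (Lemma~\ref{lem:probRandomSubgraph1}) in terms of $\vc(\Gamma_2),d_{\max}(\Gamma_2)$, bound $|\calS_{m,\ell,j}(\Gamma_1)|$ by \eqref{eq:subgraphcount1Geq} (resp.\ \eqref{eq:subgraphcount2}) in terms of $\vc(\Gamma_1),d_{\max}(\Gamma_1)$, with density parameter $\bar\mu$ since any $\s{H}$ with $\P_{\Gamma_2}[\s{H}\subseteq\Gamma_2]>0$ embeds in both graphs and hence satisfies $|e(\s{H})|\le\bar\mu\,|v(\s{H})|$, collapse the inner $j$-sum as in \eqref{eq:GenBoundSparse} into a factor $(1+\lambda^2)^{(\bar\mu-1)\ell+m}$, and finally split according to whether $\tfrac{(1+\lambda^2)\vc(\Gamma_1)\vc(\Gamma_2)}{2\lambda^2 d_{\max}(\Gamma_1)d_{\max}(\Gamma_2)\mu(\Gamma_1)\mu(\Gamma_2)}$ exceeds $1+\alpha$ or not, exactly as in \eqref{eq:GenBoundSparse}--\eqref{eq:rePETATIVEthresh2}. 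The two delicate points here are: (i) carrying the asymmetric roles of $(\vc(\Gamma_1),d_{\max}(\Gamma_1),\mu(\Gamma_1))$ and $(\vc(\Gamma_2),d_{\max}(\Gamma_2))$ through the $j$-sum and the geometric-series step so that the AM--GM estimate on $\sum_m(\cdot)^m$ produces precisely the geometric means $\sqrt{\vc(\Gamma_1)\vc(\Gamma_2)}$, $\sqrt{d_{\max}(\Gamma_1)d_{\max}(\Gamma_2)}$, $\sqrt{\mu(\Gamma_1)\mu(\Gamma_2)}$; and (ii) verifying that this pairwise condition, for every $i\ne j$, is implied by imposing \eqref{eq:condintersectionFull} for $\ell=i$ and $\ell=j$ alone — this is a routine application of $\sqrt{ab}\le\max(a,b)$ together with $\bar\mu\le\mu(\Gamma_i)$ and the subadditivity of the exponent, just as in the $i\ne j$ estimate of Proposition~\ref{prop:decompisitionThD}.

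Finally, to handle $M=M_n$ growing with $n$, I would invoke — as in the closing paragraph of the proof of Proposition~\ref{prop:decompisitionThD} — that the proofs of Theorem~\ref{th:lowerVCFull} and of the refined intersection lemma are uniform: there is a sufficiently small universal $C>0$ such that the hypotheses force each factor $\E_{\Gamma_i}[(1+\lambda_M^2)^{|e(\Gamma_i\cap\Gamma'_j)|}]\le 2$, whence the geometric average in the H\"older bound is at most $2$ regardless of $M$. Combining the diagonal bound, the refined off-diagonal bound, and this uniformity yields $\E_{\calH_0}[\s{L}(\s{G})^2]=O(1)$ under \eqref{eq:condintersectionFull}, completing the proof.
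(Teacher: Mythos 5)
Your proposal is correct and follows essentially the same route as the paper: H\"older's inequality to reduce to the $M^2$ pairwise factors, a generalized intersection lemma proved by combining Lemma~\ref{lem:probRandomSubgraph1} with the refined counts \eqref{eq:subgraphcount1Geq}/\eqref{eq:subgraphcount2} and the usual geometric-series case split, and then checking that the per-part condition \eqref{eq:condintersectionFull} dominates every pairwise condition (using $n-\min(k_i,k_j)\ge n-k_\ell$, $\sqrt{ab}\le\max(a,b)$, and $\min(\mu_i,\mu_j)\le\mu_\ell$). The only cosmetic differences are that the paper's unified intersection lemma covers the diagonal terms directly rather than citing Theorem~\ref{th:lowerVCFull}, and that the derivation actually yields $\bar\mu=\min(\mu_i,\mu_j)$ (coming from the spanning-tree factor of the lower-density graph, not from AM--GM) rather than your $\sqrt{\mu_i\mu_j}$ -- a stronger form that still implies your stated lemma and preserves the domination step.
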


\begin{proof}
    The proof follows the exact same steps as in the proof of Proposition~\ref{prop:decompisitionThD}. Specifically, applying H\"{o}lder inequality on \eqref{eq:SecondMomentExpression} we get,
    \begin{align}
        \E_{\Gamma}\pp{(1+\lambda^2)^{|e(\Gamma\cap\Gamma')|}}\leq \prod_{i,j=1}^M \E_{\Gamma_i}\pp{(1+\lambda^2_M)^{|e(\Gamma_i\cap\Gamma_j')|}}^{\frac{1}{M^2}}.\label{eq:prodMfull}
    \end{align}
    Accordingly, in order to find the conditions under which \eqref{eq:prodMfull} is bounded it remains to generalize the argument of Lemma~\ref{lem:interectionD}, for our case. Let us follow the notation used in the proof of Lemma~\ref{lem:interectionD}, where $\vc_i=\vc(\Gamma_i)$, $d_i=d_{\max}(\Gamma_i)$, $\mu_i=\mu(\Gamma_i)$, $k_i=|v(\Gamma_i)|$, $\bar{k}=\min(k_1,k_2)$, and $\bar{\mu}=\min(\mu_1,\mu_2)$. Below we prove that $\E_{\Gamma_2}\pp{(1+\lambda^2)^{|e(\Gamma_1'\cap\Gamma_2)|}}$ is bounded if the following hold:
    \begin{enumerate}
        \item If $\bar{\mu}<1$, and 
        \begin{align}
            \frac{\lambda }{n-\bar{k}}\cdot \max\p{\sqrt{\vc_1 d_1 \vc_2 d_2}, d_1 d_2 \lambda }\leq C,\label{eq:lemCondInter1}
        \end{align}
        for some universal $C>0$.
        \item If $\bar{\mu}\geq1$, and 
        \begin{align}
            \frac{(1+\lambda^2)^{\bar{\mu}-1}\lambda^2}{n-\bar{k}}\cdot \max\p{\sqrt{\frac{\vc_1 d_1\vc_2 d_2  (1+\lambda^2)}{\lambda^2}},d_1 d_2 \bar{\mu}}\leq C,\label{eq:lemCondInter2}
        \end{align}
        for some universal $C>0$.
    \end{enumerate}
    We start with the case where $\bar{\mu}>1$. By symmetry, the distributions of $|e(\Gamma_1'\cap \Gamma_2)|$ and $|e(\Gamma_1\cap \Gamma_2')|$ are exactly the same, where $\Gamma_i$ is a uniform random copy and $\Gamma_j'$ is a fixed copy in $\calK_n$. Thus, we can assume, without loss of generality, that $\bar{\mu}=\mu_1$. As in the proof of Lemma~\ref{lem:interectionD}, we apply Lemma~\ref{lem:probRandomSubgraph1} and the bound in \eqref{eq:subgraphcount1Geq} on \eqref{eq:inetersectionEquivalent2}, and get,
    \begin{align}
        \E_{\Gamma_2}&\pp{(1+\lambda^2)^{|e(\Gamma_1'\cap\Gamma_2)|}}\\
      &=\sum_{\s{H}\subseteq \Gamma_1'}\lambda^{2|\s{H}|}  \P_{\Gamma_2}[\s{H}\subseteq \Gamma_2]\\
      & \leq1+\sum_{\ell=2}^{\bar{k}} \sum_{m=1}^{\floor{\ell/2}}\sum_{j=\ell-m}^{\floor{\ell \bar{\mu}}} |\calS_{m,\ell,j}(\Gamma_1)|  \lambda^{2j}   \frac{(2\vc_2)^m d_2^{\ell-m}}{(n-{\bar{k}})^\ell}\\
      &\leq 1+\sum_{\ell=2}^{\bar{k}} \sum_{m=1}^{\floor{\ell/2}} \frac{(2\vc_2)^m d_2^{\ell-m}}{(n-{\bar{k}})^\ell} \sum_{j=\ell-m}^{\floor{\ell \bar{\mu} }}  \lambda^{2j}   \abs{\s{Par}(\ell,m)}\vc_1^m  (ed_1)^{\ell-m}  e^m(2\mu_1)^{\ell-2m}\binom{\floor{\ell \bar{\mu}}-\ell+m}{j-\ell+m}\\
      &\leq 1+C_\varepsilon\sum_{\ell=2}^{\bar{k}} \p{\frac{ 2e(1+\varepsilon)d_1d_2\lambda^2 \bar{\mu}}{n-{\bar{k}}}}^\ell \sum_{m=1}^{\floor{\ell/2}} \p{\frac{\vc_1\vc_2}{2d_1d_2\lambda^2 \bar{\mu}^2}}^m\sum_{j=0}^{\floor{\ell \bar{\mu} }-\ell+m}  \lambda^{2j'} \binom{\floor{\ell \bar{\mu}}-\ell+m}{j'}\\
      &= 1+C_\varepsilon\sum_{\ell=2}^{\bar{k}} \p{\frac{ 2e(1+\varepsilon)(1+\lambda^2)^{\bar{\mu}-1}d_1d_2\lambda^2 \bar{\mu}}{n-{\bar{k}}}}^\ell \sum_{m=1}^{\floor{\ell/2}} \p{\frac{\vc_1\vc_2(1+\lambda^2)}{2d_1d_1\lambda^2  \bar{\mu}^2}}^m.\label{eqn:GenBounG1G2}
    \end{align}
    We separate our analysis into two complementary cases. In the first, we assume that, 
    \begin{align}
        \frac{\vc_1\vc_2(1+\lambda^2)}{2d_1d_1\lambda^2  \bar{\mu}^2}> 1+\alpha, \label{eq:condIntersectBoring1}
    \end{align}
    for some fixed positive $\alpha$. Then, we bound the above sum as,
    \begin{align}
        \E_{\Gamma_2}&\pp{(1+\lambda^2)^{|e(\Gamma_1'\cap\Gamma_2)|}}\leq 1+C_\varepsilon'\sum_{\ell=2}^{\bar{k}} \p{\frac{ e(1+\varepsilon)(1+\lambda^2)^{\bar{\mu}-\frac{1}{2}}\lambda\sqrt{2d_1d_2 \vc_1\vc_2} }{n-{\bar{k}}}}^\ell,
    \end{align}
    which is bounded provided that, 
    \begin{align}
        \frac{ (1+\lambda^2)^{\bar{\mu}-\frac{1}{2}}\lambda\sqrt{d_1d_2 \vc_1\vc_2} }{n-{\bar{k}}}<C,\label{eq:condIntersectBoringCond1}
    \end{align}
    for some universal $C>0$. Next, we move to the other case, where,
    \begin{align}
        \frac{\vc_1\vc_2(1+\lambda^2)}{2d_1d_1\lambda^2  \bar{\mu}^2}\leq1+o(1), \label{eq:condIntersectBoring2}
    \end{align}
    in which case the inner sum on the r.h.s. of \eqref{eqn:GenBounG1G2} is dominated by $(1+\varepsilon)^\ell$, and therefore,
    \begin{align}
       \E_{\Gamma_2}\pp{(1+\lambda^2)^{|e(\Gamma_1'\cap\Gamma_2)|}}\leq 1+C_\varepsilon''\sum_{\ell=2}^{\bar{k}}\p{\frac{ 2e(1+\varepsilon)^2(1+\lambda^2)^{\bar{\mu}-1}d_1d_2\lambda^2 \bar{\mu}}{n-{\bar{k}}}}^\ell,
    \end{align}
    which is bounded provided that,
     \begin{align}
        \frac{ (1+\lambda^2)^{\bar{\mu}-1}\lambda^2 d_1d_2\bar{\mu} }{n-{\bar{k}}}<C,\label{eq:condIntersectBoringCond2}
    \end{align}
    for some universal $C>0$. Since \eqref{eq:condIntersectBoring1} holds exactly when \eqref{eq:condIntersectBoringCond1} dominates \eqref{eq:condIntersectBoringCond2}, we get that $\E_{\Gamma_2}\pp{(1+\lambda^2)^{|e(\Gamma_1'\cap\Gamma_2)|}}=O(1)$ if \eqref{eq:lemCondInter2} holds.

Next, we proceed to the case where $\mu_1=\bar{\mu}<1$. This case follows similarly to the previous one, with the only difference being the use of the bound in \eqref{eq:subgraphcount2}. Specifically, we obtain,
\begin{align}
    \E_{\Gamma_2}\pp{(1+\lambda^2)^{|e(\Gamma_1'\cap\Gamma_2)|}}&
      \leq1+\sum_{\ell=2}^{\bar{k}} \sum_{m=1}^{\floor{\ell/2}}|\calS_{m,\ell,\ell-m}(\Gamma_1)|  \lambda^{2(\ell-m)}   \frac{(2\vc_2)^m d_2^{\ell-m}}{(n-{\bar{k}})^\ell}\\
      &\leq1+\sum_{\ell=2}^{\bar{k}} \sum_{m=1}^{\floor{\ell/2}}|\s{Par}(\ell,m)|\vc_1^m (ed_1)^{\ell-m}
      \lambda^{2(\ell-m)}   \frac{(2\vc_2)^m d_2^{\ell-m}}{(n-{\bar{k}})^\ell}\\
      &=1+C_\varepsilon\sum_{\ell=2}^{\bar{k}} \p{\frac{e(1+\varepsilon) d_1d_2 \lambda^2}{n-\bar{k}}}^\ell\sum_{m=1}^{\floor{\ell/2}}\p{\frac{2\vc_1\vc_2}{ed_1d_2\lambda^2}}^m. \label{eq:SumSAgainDay}
\end{align}
Similarly to the previous case, if the inner sum on the r.h.s. of \eqref{eq:SumSAgainDay} is dominated by the last term then \eqref{eq:SumSAgainDay} is bounded if,
\begin{align}
\frac{\lambda \sqrt{d_1d_2\vc_1\vc_2}}{n-k}<C,    \label{eq:condIntersectBoringCond4}
\end{align}
for some universal $C>0$. Otherwise, if the common ratio of the inner sum is bounded by $1+o(1)$, then \eqref{eq:SumSAgainDay} is bounded if,
\begin{align}
    \frac{\lambda^2d_1d_2}{n-k}<C,   \label{eq:condIntersectBoringCond5}
\end{align}
for some universal $C>0$. Since the inner sum is dominated by the last term exactly when the condition in \eqref{eq:condIntersectBoringCond4} dominates \eqref{eq:condIntersectBoringCond5}, we get that $\E_{\Gamma_2}\pp{(1+\lambda^2)^{|e(\Gamma_1'\cap\Gamma_2)|}}=O(1)$ if \eqref{eq:lemCondInter1} holds. Finally, note that for any $i,j\in[M]$, the terms on the l.h.s. of \eqref{eq:lemCondInter1} and \eqref{eq:lemCondInter2} are bounded by the term on the l.h.s. of \eqref{eq:condintersectionFull}. This implies that \eqref{eq:condintersectionFull}, with an appropriate choice of $C>0$, is a sufficient condition for 
\begin{align}
    \E_{\Gamma_i}\pp{(1+\lambda^2_M)^{|e(\Gamma_i\cap\Gamma_j')|}}^{\frac{1}{M^2}}\leq 2,
\end{align}
and therefore the entire product in \eqref{eq:prodMfull} is bounded as well, which concludes the proof.
\end{proof}

\subsubsection{The sparse regime}\label{subsec:SparseRegimeProofs}

In this subsection, we specialize the general bounds from Theorem~\ref{th:lowerVCFull} to the sparse regime, where $\chi^2(p||q)=(n^{-\alpha})$, for $0<\alpha\leq 2$. This occurs when $|p-q|,p,q=\Theta(n^{-\alpha})$. In this regime, we focus on the polynomial (or more precisely, signomial, as we allow for non-integer exponents) growth/decay of each of the parameters, rather than the sub-polynomial factors, although our bounds apply also to those. Accordingly, we assume that $k\triangleq|v(\Gamma_n)|=\Theta(n^{\beta})$, for $0<\beta\leq 1$, and investigate the statistical limits of our detection problem in terms of the exponential relations between the parameters $\mu(\Gamma),|e(\Gamma)|, |v(\Gamma)|$ and $d_{\max}(\Gamma)$. Consider the following definition.
\begin{definition} A sequence of graphs  $\Gamma=(\Gamma_n)_n$ is called an $(\epsilon, \delta,\smu)$-polynomial family if,
\begin{align}
    \lim_{n\to \infty }\frac{\log|e(\Gamma)|}{\log|v(\Gamma)|}=\epsilon, \quad   \lim_{n\to \infty }\frac{\log d_{\max}(\Gamma)}{\log|v(\Gamma)|}=\delta, \quad   \lim_{n\to \infty }\frac{\log\mu(\Gamma)}{\log|v(\Gamma)|}=\smu.
\end{align}
\end{definition}
The following is our main result for the sparse regime.
\begin{theorem}\label{th:polynomial}
    Let $\Gamma=(\Gamma_n)_n$ be an $(\epsilon, \delta, \smu)$-polynomial family of graphs with  $k=|v(\Gamma)|=\Theta(n^{\beta})$, $0<\beta< 1$, and let $\chi^2(p||q)=\Theta(n^{-\alpha})$, for $0<\alpha< 2$. Then, weak detection is impossible if,
        \begin{align}
            \beta< \min\p{\frac{\alpha}{\smu}, \frac{1+\alpha}{2\delta+\smu},\frac{2+\alpha}{2\epsilon}},\label{eq:sparceLowerBoundCond}
        \end{align}
         while strong detection is possible if, 
        \begin{align}
            \beta> \min\p{\frac{\alpha}{\smu}, \frac{1+\alpha}{2\delta},\frac{2+\alpha}{2\epsilon}},\label{eq:sparceUpperBoundCond}
        \end{align}
        where $\frac{\alpha}{a}\triangleq \infty$ if $a=0$.
\end{theorem}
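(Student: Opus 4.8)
The plan is to read off the upper bound directly from the algorithmic guarantees of Theorem~\ref{thm:upperBoundAlgo}, and to obtain the lower bound by feeding the balanced decomposition of Proposition~\ref{prop:GoodDecompositionExists} into the general second-moment estimate of Proposition~\ref{prop:decompisitionThFull}. Throughout, write $\lambda^2=\chi^2(p||q)=\Theta(n^{-\alpha})$; since we are in the regime $p,q,|p-q|=\Theta(n^{-\alpha})$, we also have $d_{\s{KL}}(p||q)\asymp\chi^2(p||q)=\Theta(n^{-\alpha})$, and by the $(\epsilon,\delta,\smu)$-polynomial assumption $|e(\Gamma)|=n^{\beta\epsilon+o(1)}$, $d_{\max}(\Gamma)=n^{\beta\delta+o(1)}$, $\mu(\Gamma)=n^{\beta\smu+o(1)}$, while $|v(\Gamma)|=\Theta(n^{\beta})=o(n)$.

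For the upper bound \eqref{eq:sparceUpperBoundCond}, substitute these rates into the three conditions of Theorem~\ref{thm:upperBoundAlgo}. The count test succeeds once $\chi^2(p||q)|e(\Gamma)|^2/n^2=n^{2\beta\epsilon-\alpha-2+o(1)}\to\infty$, i.e.\ $\beta>\tfrac{2+\alpha}{2\epsilon}$; here the companion requirement $(p-q)|e(\Gamma)|=n^{\beta\epsilon-\alpha+o(1)}\to\infty$ is automatic because $2\beta\epsilon>2+\alpha>2\alpha$ when $\alpha<2$. The degree test succeeds once $d_{\max}^2(\Gamma)\chi^2(p||q)/(n\log n)=n^{2\beta\delta-\alpha-1+o(1)}\to\infty$, i.e.\ $\beta>\tfrac{1+\alpha}{2\delta}$; since $\delta\le1$, this branch is vacuous for $\alpha\ge1$, and for $\alpha<1$ the companion term $d_{\max}(\Gamma)(p-q)/\log n$ also diverges because $\beta\delta>\tfrac{1+\alpha}{2}>\alpha$. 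The scan test succeeds once $\mu(\Gamma)d_{\s{KL}}(p||q)/\log n=n^{\beta\smu-\alpha+o(1)}\to\infty$, i.e.\ $\beta>\tfrac{\alpha}{\smu}$, with $p\,|e(\Gamma_{\max})|\ge p\,\mu(\Gamma)\to\infty$ then automatic. Taking the best of the three tests gives strong detection whenever $\beta>\min\big(\tfrac{\alpha}{\smu},\tfrac{1+\alpha}{2\delta},\tfrac{2+\alpha}{2\epsilon}\big)$.

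For the lower bound \eqref{eq:sparceLowerBoundCond}, fix $\varepsilon>0$ and invoke Proposition~\ref{prop:GoodDecompositionExists} with $M=\lceil 2/\varepsilon\rceil$ to obtain an edge-disjoint decomposition $\Gamma=\bigcup_{\ell=1}^M\Gamma_\ell$ with $\vc(\Gamma_\ell)\,d_{\max}(\Gamma_\ell)\le 2|e(\Gamma)|\,d_{\max}(\Gamma)^{1/M}\le|e(\Gamma)|\,n^{\varepsilon}$ for all $\ell$, and with $d_{\max}(\Gamma_\ell)\le d_{\max}(\Gamma)$, $\mu(\Gamma_\ell)\le\mu(\Gamma)$, $|v(\Gamma_\ell)|\le|v(\Gamma)|$ by monotonicity. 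Plug this into Proposition~\ref{prop:decompisitionThFull}. Since $\lambda_M^2=(1+\lambda^2)^{M^2}-1=\Theta(n^{-\alpha})\to0$, we have $\sqrt{(1+\lambda_M^2)/\lambda_M^2}=\Theta(n^{\alpha/2})$; and, because $\beta<\tfrac{\alpha}{\smu}$ forces $\mu(\Gamma)\lambda_M^2=n^{\beta\smu-\alpha+o(1)}\to0$, the prefactor $(1+\lambda_M^2)^{\tilde{\mu}(\Gamma_\ell)-1}=\exp\!\big(O(\mu(\Gamma)\lambda_M^2)\big)=1+o(1)$. Hence the hypothesis of Proposition~\ref{prop:decompisitionThFull} is implied, up to constants depending only on $M$ (hence on $\varepsilon$), by
\[
\max\!\Big(\tfrac{\lambda\,|e(\Gamma)|\,n^{\varepsilon}}{n},\ \tfrac{\lambda^2 d_{\max}^2(\Gamma)\,\mu(\Gamma)}{n}\Big)\le C,
\]
i.e.\ by the two polynomial inequalities $\beta\epsilon+\varepsilon-\tfrac{\alpha}{2}-1<0$ and $\beta(2\delta+\smu)-\alpha-1<0$. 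If $\beta$ is strictly below all of $\tfrac{\alpha}{\smu}$, $\tfrac{1+\alpha}{2\delta+\smu}$, $\tfrac{2+\alpha}{2\epsilon}$, then for $\varepsilon$ small enough both exponents are strictly negative, so the left-hand side is not just $O(1)$ but $o(1)$; consequently $\E_{\calH_0}[\s{L}(\s{G})^2]=1+o(1)$ and weak detection is impossible. The convention $\alpha/0=\infty$ handles $\smu=0$, where $\mu(\Gamma)$ is sub-polynomial and $\mu(\Gamma)\lambda_M^2\to0$ unconditionally.

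I expect the arithmetic of exponents to be routine; the delicate step is the reduction to balanced subgraphs. Applying Theorem~\ref{th:lowerVCFull} to $\Gamma$ itself would only let us bound $\vc(\Gamma)d_{\max}(\Gamma)$ by the trivial $|v(\Gamma)|d_{\max}(\Gamma)=n^{\beta(1+\delta)+o(1)}$, which for unbalanced $\Gamma$ is polynomially larger than $|e(\Gamma)|$ and would yield a strictly weaker impossibility threshold than $\tfrac{2+\alpha}{2\epsilon}$. The crux is therefore to verify that the specific decomposition of Proposition~\ref{prop:GoodDecompositionExists}, inserted into Proposition~\ref{prop:decompisitionThFull}, faithfully recovers the $|e(\Gamma)|$-dependence; once this is done the count and scan branches are tight, while the degree branch leaves exactly the gap between $\tfrac{1+\alpha}{2\delta+\smu}$ and $\tfrac{1+\alpha}{2\delta}$ that Theorem~\ref{th:polynomial} does not claim to close.
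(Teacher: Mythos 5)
Your proposal is correct and follows essentially the same route as the paper: the upper bound is read off from Theorem~\ref{thm:upperBoundAlgo} with the same exponent bookkeeping (including the observations that $\frac{2+\alpha}{2\epsilon}$ dominates $\frac{\alpha}{\epsilon}$ and that the degree-test side conditions are automatic in the feasible range), and the lower bound combines Proposition~\ref{prop:GoodDecompositionExists} with Proposition~\ref{prop:decompisitionThFull}, using $\lambda_M^2=n^{-\alpha+o(1)}$ and $\mu(\Gamma)\lambda_M^2\to0$ exactly as in the paper. The only cosmetic difference is that the paper first records the $\vcd$-balanced case via Theorem~\ref{th:lowerVCFull} before invoking the decomposition, whereas you go directly to the decomposition; the substance is identical.
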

\begin{proof}
    The barrier in \eqref{eq:sparceUpperBoundCond} is obtained by projecting Theorem~\ref{thm:upperBoundAlgo} onto the $(\epsilon, \delta, \smu)$-polynomial family. 
    Specifically, it is immediately seen that the scan test is successful if $\beta>\frac{\alpha}{\smu}$. The count test is successful if  \begin{align}
        \beta>\max\p{\frac{2+\alpha}{2\epsilon}, \frac{\alpha}{\epsilon}}.
    \end{align} Note that in region $\alpha \in [0,2]$ we always have  $\frac{2+\alpha}{2\epsilon}>\frac{\alpha}{\epsilon}$ meaning that the maximum is dominated by $\frac{2+\alpha}{2\epsilon}$. Finally, the condition in \eqref{eq:MaxDegGeneral} implies that the maximum degree test succeeds if 
     \begin{align}
        \beta>\max\p{\frac{1+\alpha}{2\delta}, \frac{\alpha}{\delta}}.\label{eqn:maxDegsparse}
    \end{align}  
    Since $0<\beta\leq1$ we claim that $\frac{1+\alpha}{2\delta}$ dominates the maximum in \eqref{eqn:maxDegsparse} in the feasible regime. Indeed, observe that $\frac{\alpha}{\delta}$ dominates the maximum exactly when $\alpha>1$. Because $0<\delta\leq 1$, we have $\frac{\alpha}{\delta}>1$, and therefore, $\beta$ cannot exceed $\frac{\alpha}{\delta}$.
    Therefore, the maximum degree test achieves strong detection provided that $\beta>\frac{1+\alpha}{2\delta}$, which concludes the proof of \eqref{eq:sparceUpperBoundCond}.
    
    Let us now prove \eqref{eq:sparceLowerBoundCond}. For simplicity, we start by assuming that $\Gamma$ is $\vcd$-balanced, and then generalize. For $\vcd$-balanced graphs,
    \begin{align}
    \vc(\Gamma)\cdot d_{\max}(\Gamma)=|e(\Gamma)|^{1+o(1)}=k^{\epsilon+o(1)}.    
    \end{align} 
    Then, Theorem~\ref{th:lowerVCFull} implies that weak detection is impossible if \eqref{eq:LowerMainCond2} holds, which translates to,
    \begin{align}
        \frac{(1+n^{-\alpha+o(1)})^{k^{\smu+o(1)}}n^{-\alpha+o(1)}}{n^{1+o(1)}}\max\p{\frac{k^{\epsilon+o(1)}}{n^{-\frac{\alpha}{2}+o(1)}},k^{2\delta+\smu+o(1)}}\leq C.
    \end{align}
    Since $k=\Theta(n^{\beta})$, the above is equivalent to,
    \begin{align}
        \frac{(1+n^{-\alpha+o(1)})^{n^{\smu\beta}}n^{-\alpha}}{n^{1+o(1)}}\max\p{\frac{n^{\epsilon\beta}}{n^{-\frac{\alpha}{2}}},n^{(2\delta+\smu)\beta}}\leq C.\label{eq:RawCondSparse}
    \end{align}
    A simple calculation reveals that the l.h.s. of \eqref{eq:RawCondSparse} decays to zero if, 
    \begin{align}
        \beta< \min\p{\frac{\alpha}{\smu}, \frac{1+\alpha}{2\delta+\smu},  \frac{2+\alpha}{2\epsilon}},
    \end{align}
    as stated in \eqref{eq:sparceLowerBoundCond}. In order to reduce the general case to the $\vcd$-balanced scenario, we repeat the ideas in the proof of Theorem~\ref{th:lowerboundDense}, where we have used the $\vcd$-balanced decomposition argument presented in Subsection~\ref{subsec:coverBalancedDecomposition}. Assume that \eqref{eq:sparceLowerBoundCond} holds, and let us denote,
    \begin{align}
        2\rho \triangleq \beta- \min\p{\frac{\alpha}{\smu}, \frac{1+\alpha}{2\delta+\smu},  \frac{2+\alpha}{2\epsilon}} >0.\label{eq:AssumptionSparse}
    \end{align}
    Using Proposition~\ref{prop:GoodDecompositionExists}, consider the decomposition of $\Gamma$ into $\Gamma_1,\dots,\Gamma_M$, where $M=\ceil{\rho^{-1}}$, such that for all $1\leq \ell\leq M$, we have 
    \begin{align}
        \vc(\Gamma_\ell)d_{\max}(\Gamma_\ell)&\leq |e(\Gamma)|d_{\max}(\Gamma)^{\frac{1}{M}}\leq n^{\beta (\epsilon+\rho)+o(1)}.\label{eq:decomposintiosparse}
    \end{align}
    Denote $k_\ell\triangleq|v(\Gamma_\ell)|$, $d_{\ell}\triangleq d_{\max}(\Gamma_\ell)$, $\mu_\ell\triangleq\mu(\Gamma_\ell)$, and $\vc_\ell\triangleq\vc(\Gamma_\ell)$, for all $1\leq \ell\leq M$. Furthermore, recall that $d\triangleq d_{\max}(\Gamma)$, $\mu\triangleq\mu(\Gamma)$, and $\vc\triangleq\vc(\Gamma)$. By 
   \eqref{eq:decomposintiosparse} observe that for all $\ell\in[M]$,  
    \begin{align}
        &\frac{(1+\lambda_M^2)^{\max(\mu_\ell,1)-1}\lambda_M^2}{n-k_\ell}\max \p{\vc_\ell d_\ell\sqrt{\frac{1+\lambda_M^2}{\lambda_M^2}},d_\ell^2\mu_\ell}\\
         &\qquad \leq \frac{(1+\lambda_M^2)^{\max(\mu,1)-1}\lambda_M^2}{n-k}\max \p{\vc_\ell d_\ell\sqrt{\frac{1+\lambda_M^2}{\lambda_M^2}},d^2\mu}\\
         &\qquad \leq \frac{(1+\lambda_M^2)^{n^{\beta\smu+o(1)}}\lambda_M^2}{n^{1+o(1)}}\max \p{n^{\epsilon\beta+\rho}\sqrt{\frac{1+\lambda_M^2}{\lambda_M^2}},n^{\beta(2\delta +\smu)}}.\label{eq:SparceDecompAlmost}
    \end{align}
    Furthermore, note that for a finite $M$ independent of $n$,
    \begin{align}
        \lambda_M^2=(1+\lambda^2)^{M^2}-1=M^2\lambda^2 +O(\lambda^4)=n^{-\alpha+o(1)}.
    \end{align}
    Thus, using \eqref{eq:SparceDecompAlmost} we get,
    \begin{align}
        &\frac{(1+\lambda_M^2)^{\max(\mu_\ell,1)-1}\lambda_M^2}{n-k_\ell}\max \p{\vc_\ell d_\ell\sqrt{\frac{1+\lambda_M^2}{\lambda_M^2}},d_\ell^2\mu_\ell}\\
         &\qquad \leq \frac{(1+n^{-\alpha+o(1)})^{n^{\beta\smu+o(1)}}n^{-\alpha}}{n^{1+o(1)}}\max \p{\frac{n^{\epsilon\beta+\rho}}{n^{-\frac{\alpha}{2}}},n^{\beta(2\delta +\smu)}}.\label{eq:finalSparseSTAT}
    \end{align}
    Under \eqref{eq:AssumptionSparse}, the expression in \eqref{eq:finalSparseSTAT} is upper bounded by $n^{-\rho+o(1)}=o(1)$. In particular, the condition in Proposition~\ref{prop:decompisitionThFull} is satisfied, which imply that weak detection is impossible.
\end{proof}

As can seen from Theorem~\ref{th:polynomial}, our bounds are not tight in general. In fact, for any set of parameters $(\smu,\delta,\epsilon)$ for which the set,
\begin{align}
    A\triangleq\ppp{0<\alpha<2 ~\Big\vert~ \frac{1+\alpha}{2\delta+\smu}>\min\p{\frac{\alpha}{\smu}, \frac{2+\alpha}{2\epsilon}} },\label{eq:EmptylikeSet}
\end{align}
is non-empty, the lower and upper bounds in Theorem~\ref{th:polynomial} do not match. Nonetheless, in the following, we show that for several special non-trivial families of graphs our bounds are, in fact, tight.

\paragraph{A condition for tightness.} A simple algebraic analysis shows that in some non-empty region of the possible values of $(\smu,\delta,\epsilon)$ in the parameter space, the set $A$ can be empty (and then our bounds are tight). This is captured in the following result.
\begin{lemma}[Optimality of the count and scan tests]
    Let $\Gamma=(\Gamma_n)_n$ be an $(\epsilon, \delta, \smu)$-polynomial family of graphs with $ \delta,\smu>0$, $k=|v(\Gamma)|=\Theta(k^{\beta})$, for $0<\beta< 1$, and let $\chi^2(p||q)=\Theta(n^{-\alpha})$, for $0<\alpha< 2$. If
    \begin{align}
    \epsilon> 2\delta+\frac{\smu}{2},\label{eq:whenBoundIsGood}
\end{align} 
         then weak detection is impossible if, 
        \begin{align}
            \beta< \min\p{\frac{\alpha}{\smu},\frac{2+\alpha}{2\epsilon}},
        \end{align}
        while strong detection is possible if,
        \begin{align}
            \beta> \min\p{\frac{\alpha}{\smu},\frac{2+\alpha}{2\epsilon}}.
        \end{align}
\end{lemma}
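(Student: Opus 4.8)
\textbf{The plan} is to deduce the lemma directly from Theorem~\ref{th:polynomial}, whose lower bound gives impossibility of weak detection for $\beta<\min\p{\frac{\alpha}{\smu},\frac{1+\alpha}{2\delta+\smu},\frac{2+\alpha}{2\epsilon}}$ and whose upper bound gives possibility of strong detection for $\beta>\min\p{\frac{\alpha}{\smu},\frac{1+\alpha}{2\delta},\frac{2+\alpha}{2\epsilon}}$. It therefore suffices to show that, under the hypothesis \eqref{eq:whenBoundIsGood}, both minima equal $\min\p{\frac{\alpha}{\smu},\frac{2+\alpha}{2\epsilon}}$ for every $\alpha\in(0,2)$. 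Equivalently (cf.\ the set $A$ in \eqref{eq:EmptylikeSet}), I need that the middle term $\frac{1+\alpha}{2\delta+\smu}$ is never strictly smaller than \emph{both} $\frac{\alpha}{\smu}$ and $\frac{2+\alpha}{2\epsilon}$; since $\frac{1+\alpha}{2\delta}\ge\frac{1+\alpha}{2\delta+\smu}$, this single fact collapses both minima at once.

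\textbf{First} I would fix $\alpha\in(0,2)$ and argue by contradiction, assuming $\frac{1+\alpha}{2\delta+\smu}<\frac{\alpha}{\smu}$ and $\frac{1+\alpha}{2\delta+\smu}<\frac{2+\alpha}{2\epsilon}$. Clearing denominators (legitimate as $\delta,\smu,\epsilon>0$), the first becomes $\smu<2\delta\alpha$, i.e.\ $\alpha>\tfrac{\smu}{2\delta}$, and the second becomes
\begin{align}
    2\epsilon+\alpha\p{2\epsilon-2\delta-\smu}<2\p{2\delta+\smu}.\label{eq:auxTight}
\end{align}
Now I would invoke \eqref{eq:whenBoundIsGood} in the form $2\epsilon>4\delta+\smu$, which forces the coefficient $2\epsilon-2\delta-\smu>2\delta>0$ to be positive; hence the left side of \eqref{eq:auxTight} is strictly increasing in $\alpha$, and using $\alpha>\tfrac{\smu}{2\delta}$,
\begin{align}
    2\epsilon+\alpha\p{2\epsilon-2\delta-\smu}>2\epsilon+\tfrac{\smu}{2\delta}\p{2\epsilon-2\delta-\smu}.
\end{align}
A short computation shows the right-hand side is at least $2\p{2\delta+\smu}$: multiplying through by $2\delta$ and using the identity $\p{2\delta+\smu}\p{4\delta+\smu}=8\delta^2+6\delta\smu+\smu^2$, the inequality reduces exactly to $2\epsilon\ge 4\delta+\smu$, which is \eqref{eq:whenBoundIsGood}. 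Chaining these bounds gives $2\epsilon+\alpha\p{2\epsilon-2\delta-\smu}>2\p{2\delta+\smu}$, contradicting \eqref{eq:auxTight}. (If instead $\tfrac{\smu}{2\delta}\ge 2$, no $\alpha\in(0,2)$ can satisfy $\alpha>\tfrac{\smu}{2\delta}$, so the first assumed inequality fails outright on $(0,2)$ and there is nothing to prove.)

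\textbf{Finally}, with $\frac{1+\alpha}{2\delta+\smu}\ge\min\p{\frac{\alpha}{\smu},\frac{2+\alpha}{2\epsilon}}$ established for all $\alpha\in(0,2)$, I would conclude that the lower-bound minimum in Theorem~\ref{th:polynomial} equals $\min\p{\frac{\alpha}{\smu},\frac{2+\alpha}{2\epsilon}}$, and since $\frac{1+\alpha}{2\delta}\ge\frac{1+\alpha}{2\delta+\smu}$ the upper-bound minimum equals the same quantity; plugging both identities back into Theorem~\ref{th:polynomial} yields the stated thresholds. I do not expect any real obstacle: the argument is entirely elementary algebra, and the only delicate points are keeping strict versus non-strict inequalities consistent (which is precisely why \eqref{eq:whenBoundIsGood} is imposed strictly) and disposing of the degenerate range $\tfrac{\smu}{2\delta}\ge 2$ separately, as noted above.
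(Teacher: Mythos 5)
Your proposal is correct and takes essentially the same route as the paper: both deduce the lemma from Theorem~\ref{th:polynomial} by checking, with elementary algebra, that under \eqref{eq:whenBoundIsGood} the middle threshold $\frac{1+\alpha}{2\delta+\smu}$ is never strictly below both $\frac{\alpha}{\smu}$ and $\frac{2+\alpha}{2\epsilon}$ on the relevant range, so the three-term and two-term minima in the lower and upper bounds coincide. The paper organizes this algebra as a case analysis on the crossing points $\alpha_0$ and $\alpha_1$ of the threshold lines, whereas your contradiction-plus-monotonicity argument is an equivalent, slightly streamlined rearrangement (resting on the same identity $(2\delta+\smu)(4\delta+\smu)=8\delta^2+6\delta\smu+\smu^2$) that in particular does not even use the restriction $\alpha<2$.
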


\begin{proof} For the set $A$ in \eqref{eq:EmptylikeSet} is empty, we investigate when $\beta\leq \frac{2+\alpha}{2\epsilon}$ is dominated by $\beta\leq \frac{1+\alpha}{2\delta+\smu}$. In the parameter space,these two lines cross at,
\begin{align}
    \alpha = \alpha_{0}\triangleq\frac{2(\epsilon-2\delta-\smu)}{2\delta+\smu-2\epsilon}.
\end{align}
Accordingly, the inequality $\frac{2+\alpha}{2\epsilon}> \frac{1+\alpha}{2\delta+\smu}$ is possible only in one of the following two cases:
\begin{itemize}
    \item If, \begin{equation}
\epsilon<\frac{2\delta+\smu}{2} \quad \text{and} \quad   \alpha>\alpha_{0}.\label{eq:StupiedCalc1}
    \end{equation} In this case, for $A$ to be empty, the intersection point must satisfy $\alpha_0>2$, which holds if and only if, \begin{align}
        \epsilon>\frac{2(2\delta+\smu)}{3}.\label{eq:StupiedCalc2}
    \end{align}
    The intersection of the conditions on the l.h.s. of \eqref{eq:StupiedCalc1} and \eqref{eq:StupiedCalc2} is empty. 
    \item  If, \begin{equation}
\epsilon>\frac{2\delta+\smu}{2}  \quad \text{and} \quad   \alpha<\alpha_{0}.\label{eq:StupiedCalc3}
    \end{equation}  
    In this case, for $A$ to be empty, the interval $[0,\alpha_0)$ must be contained in the region where the line $\beta=\frac{\alpha}{\smu}$ is below $\beta=\frac{1+\alpha}{2\delta+\smu}$. A straightforward calculation shows that these lines intersect at $\alpha_1\triangleq\frac{\smu}{2\delta}$, and accordingly, the line $\beta=\frac{\alpha}{\smu}$ is below the line $\beta=\frac{1+\alpha}{2\delta+\smu}$, if $\alpha_0<\alpha_1$. Thus, the set $A$ is empty if $\alpha_1>\alpha_0$ which boils down to,
    \begin{align}
       \frac{2(\epsilon-2\delta-\smu)}{2\delta+\smu-2\epsilon}<\frac{\smu}{\delta},\label{eq:StupiedCalc4}
    \end{align}
    and is equivalent to,
    \begin{align}
        \epsilon > \frac{6\smu\delta +\smu^2+8\delta^2}{4\delta+2\smu}= \frac{(2\delta+\smu)^2+4\delta^2+2\smu\delta}{2(2\delta+\smu)} =\frac{2\delta+\smu}{2}+\frac{4\delta^2+2\smu\delta}{2(2\delta+\smu)} =2\delta+\frac{\smu}{2}.
    \end{align}
    We conclude by noting that the condition $\epsilon>2\delta+\frac{\smu}{2}$ dominates the condition $\epsilon>\frac{2\delta+\smu}{2}$.
\end{itemize}
    
\end{proof}

The condition in \eqref{eq:whenBoundIsGood} is satisfied for several non-trivial families of graphs. As an example consider the family of graphs with moderate degrees, defined as follows: Let $(\epsilon,\delta,\smu)$ be a polynomial sequence of graphs, where $\delta\leq \frac{2}{5}$ (namely, $d_{\max}(\Gamma)\ll k^{2/5}$). In this case, the set $A$ is empty and our lower and upper bounds align. Indeed, note that by the definition of maximal density, we have $\delta>\smu$, and since we consider only graphs containing no isolated vertices we also have that,
\begin{align}
    \epsilon&\geq 1 \geq \frac{5\delta}{2}\geq 2\delta+\frac{\smu}{2}.
\end{align}
Thus, the condition in \eqref{eq:whenBoundIsGood} is satisfied.

\paragraph{Sub-polynomial density.} We consider now a family of graphs, for which the condition in Lemma~\eqref{eq:whenBoundIsGood} is not (necessarily) satisfied, but still Theorem~\ref{th:polynomial} gives tight results. Consider the following definition.
\begin{definition}
    A sequence of graphs $\Gamma=(\Gamma_n)_n$ has sub-polynomial density if, 
    \begin{align}
        \limsup_{n\to\infty} \frac{\log(\mu(\Gamma))}{\log|v(\Gamma)|}=0.
    \end{align}
    Otherwise, $\Gamma$ has super-polynomial density. 
\end{definition}
For graphs $\Gamma$ with sub-polynomial density it must be that $\smu=0$, and then we see that the lower and upper bounds in Theorem~\ref{th:polynomial} coincide. Specifically, weak detection is impossible if,
 \begin{align}
     \beta<\min\p{\frac{2+\alpha}{2\epsilon},\frac{1+\alpha}{2\delta}},
 \end{align}
 while strong detection is possible if, \begin{align}
      \beta>\min\p{\frac{2+\alpha}{2\epsilon},\frac{1+\alpha}{2\delta}}.
 \end{align} 
Note that here the upper bounds are achieved by the count and maximum degree tests, which run in polynomial-time. As so, there is no computational-statistical gap for detecting graphs with sub-polynomial density. In Section~\ref{sec:complowerbound}, we will prove that for graphs with super-polynomial density, a computational-statistical gap exist always.

\paragraph{Super-dense graphs.} Consider the following family of graphs.
\begin{definition}
    A sequence of graphs $\Gamma=(\Gamma)_n$ is super-dense if,
\begin{align}
    \lim_{n\to\infty}=\frac{\log(\mu(\Gamma))}{\log|v(\Gamma)|}=1. \label{eq:superDenseDef}
\end{align}
\end{definition}
Examples for super-dense graphs are: cliques, and balanced bipartite graphs (i.e., bipartite graphs with the same order of left and right vertices). We have the following result. 
\begin{lemma}\label{obs:SuperDense}
    Any family of super-dense graphs is a $\vcd$-balanced $(2,1,1)$-polynomial family.
\end{lemma}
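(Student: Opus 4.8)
The plan is to read everything off the single relation $\mu(\Gamma)=|v(\Gamma)|^{1+o(1)}$, which is just a reformulation of \eqref{eq:superDenseDef} and which in particular forces $|v(\Gamma)|\to\infty$ and $\mu(\Gamma)\to\infty$. First, $\smu=1$ holds by definition. The two ``easy'' inequalities are the upper bounds on the remaining exponents: $|e(\Gamma)|\le\binom{|v(\Gamma)|}{2}<|v(\Gamma)|^2$ gives $\limsup_{n}\log|e(\Gamma)|/\log|v(\Gamma)|\le 2$, and $d_{\max}(\Gamma)\le|v(\Gamma)|-1$ gives $\limsup_{n}\log d_{\max}(\Gamma)/\log|v(\Gamma)|\le 1$.

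The substantive step is to obtain the matching lower bounds, and the natural device is the densest subgraph. Let $\s{H}\subseteq\Gamma$ be a subgraph attaining $\eta(\s{H})=\mu(\Gamma)$ (one exists by Definition~\ref{def:maxsubden}, and it necessarily has at least one edge). From $|e(\s{H})|=\mu(\Gamma)\,|v(\s{H})|$ together with $|e(\s{H})|\le\binom{|v(\s{H})|}{2}<|v(\s{H})|^2/2$ we deduce $|v(\s{H})|>2\mu(\Gamma)$, and therefore
\[
|e(\Gamma)|\ \ge\ |e(\s{H})|\ =\ \mu(\Gamma)\,|v(\s{H})|\ >\ 2\,\mu(\Gamma)^2\ =\ |v(\Gamma)|^{2+o(1)},
\]
so $\liminf_{n}\log|e(\Gamma)|/\log|v(\Gamma)|\ge 2$ and hence $\epsilon=2$. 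Likewise, the average degree of $\s{H}$ equals $2|e(\s{H})|/|v(\s{H})|=2\eta(\s{H})=2\mu(\Gamma)$, so $\s{H}$ contains a vertex of degree at least $2\mu(\Gamma)$, which is also a vertex of $\Gamma$ of at least that degree; thus $d_{\max}(\Gamma)\ge 2\mu(\Gamma)=|v(\Gamma)|^{1+o(1)}$, forcing $\delta=1$. This proves that every super-dense family is a $(2,1,1)$-polynomial family.

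For the $\vcd$-balancedness, I would sandwich $\vc(\Gamma)\,d_{\max}(\Gamma)$ between $|e(\Gamma)|$ and $|v(\Gamma)|^2$. A minimum vertex cover has $\vc(\Gamma)$ vertices, each incident to at most $d_{\max}(\Gamma)$ edges, and together they meet every edge, so $\vc(\Gamma)\,d_{\max}(\Gamma)\ge|e(\Gamma)|$; on the other hand $\vc(\Gamma)\le|v(\Gamma)|$ and $d_{\max}(\Gamma)\le|v(\Gamma)|$ give $\vc(\Gamma)\,d_{\max}(\Gamma)\le|v(\Gamma)|^2$. Since $\epsilon=2$ we have $|v(\Gamma)|^2=|e(\Gamma)|^{1+o(1)}$ with $\log|e(\Gamma)|\to\infty$, hence
\[
1\ \le\ \frac{\log\p{\vc(\Gamma)\,d_{\max}(\Gamma)}}{\log|e(\Gamma)|}\ \le\ \frac{2\log|v(\Gamma)|}{\log|e(\Gamma)|}\ \longrightarrow\ 1,
\]
which is exactly the condition in Definition~\ref{def:coverdegreebalanced}.

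Apart from this, the argument is routine bookkeeping; the one point that requires a moment's thought is the inequality $|v(\s{H})|>2\mu(\Gamma)$ in the second paragraph, i.e.\ that a graph of density $\mu(\Gamma)$ must have $\Omega(\mu(\Gamma))$ vertices. This is what upgrades the trivial bound $|e(\Gamma)|\ge\mu(\Gamma)$ to the quadratic bound needed to pin down $\epsilon=2$ (and, through the average-degree computation, $\delta=1$); I do not expect any further obstacle.
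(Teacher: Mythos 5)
Your proof is correct, and for the exponent computations it is essentially the paper's argument: both you and the paper use the densest subgraph $\s{H}$ to force $|v(\s{H})|\gtrsim 2\mu(\Gamma)$, hence $|e(\Gamma)|\geq 2\mu(\Gamma)^2=|v(\Gamma)|^{2+o(1)}$ (giving $\epsilon=2$) and $d_{\max}(\Gamma)\geq\mu(\Gamma)$ via the average degree of $\s{H}$ (giving $\delta=1$), with the trivial upper bounds $|e(\Gamma)|<|v(\Gamma)|^2$ and $d_{\max}(\Gamma)<|v(\Gamma)|$ closing the matching.

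The one place where you diverge is the $\vcd$-balancedness. The paper proves the extra inequality $\mu(\Gamma)\leq\vc(\Gamma)$ (every edge of the densest subgraph $(V',E')$ meets a minimum cover $S$, so $|E'|\leq|V'|\cdot|S|$), which pins down $\vc(\Gamma)=|v(\Gamma)|^{1+o(1)}$ and then yields $\vc(\Gamma)d_{\max}(\Gamma)=|v(\Gamma)|^{2+o(1)}=|e(\Gamma)|^{1+o(1)}$. You instead sandwich the product directly, $|e(\Gamma)|\leq\vc(\Gamma)\,d_{\max}(\Gamma)\leq|v(\Gamma)|^{2}=|e(\Gamma)|^{1+o(1)}$, using the standard covering bound $\vc(\Gamma)\,d_{\max}(\Gamma)\geq|e(\Gamma)|$; this is equally valid and slightly more economical, since it never needs a lower bound on $\vc(\Gamma)$ alone, whereas the paper's route gives the exact polynomial order of the vertex cover number as a byproduct. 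Both arguments rely on $\log|e(\Gamma)|\to\infty$, which your observation that super-density forces $|v(\Gamma)|,\mu(\Gamma)\to\infty$ justifies. No gaps.
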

\begin{proof}[Proof of Lemma~\ref{obs:SuperDense}]
    We begin by observing that from the definition of $\mu(\Gamma)$, we have $\mu(\Gamma)\leq d_{\max}(\Gamma)$. Thus, \eqref{eq:superDenseDef} implies that,
    \begin{align}
        \lim_{n\to\infty}\frac{\log d_{\max}(\Gamma)}{\log|v(\Gamma)|}\geq\lim_{n\to\infty}\frac{\log \mu(\Gamma)}{\log|v(\Gamma)|}=1.\label{eq:superDenseParameters}
    \end{align}
    In addition, note that if $\mu(\Gamma)=k^{1+o(1)}$ then $|e(\Gamma)|=k^{2+o(1)}$. To see that, let $G'=(V',E')$ be a subgraph of $\Gamma$ with maximal density. We have,
        \begin{align}
            k^{1+o(1)}=\mu(\Gamma)=\frac{|E'|}{|V'|}\leq \frac{\binom{|V'|}{2}}{|V'|}\leq \frac{|V'|}{2},
        \end{align}
        and in particular,
        \begin{align}
            \frac{k^2}{2}\geq |e(\Gamma)|\geq |E'|=\mu(\Gamma)\cdot |V'|\geq 2\cdot \mu(\Gamma)^2=k^{2+o(1)}.
        \end{align}
        Combined with \eqref{eq:superDenseParameters}, we get that $\Gamma$ is a $(2,1,1)$-polynomial family. Finally, we note that for any graph $\Gamma$, we have $\mu(\Gamma)\leq \vc(\Gamma)$. To see that, let $S\subseteq V$ be a minimal vertex cover of $\Gamma$, and let $\Gamma'=(V',E') $ be a subgraph such that $|E'|/|V'|=\mu(\Gamma)$. Since any edge in $\Gamma$ must contain a vertex in $S$, the number of edges in $\Gamma$, which intersect $V'$, is at most $|V'|\cdot |S|$. In particular, we have $|E'|\leq |V'|\cdot |S|$, and therefore,
    \begin{align}
        k^{1+o(1)}=\mu(\Gamma)=\frac{|E'|}{|V'|}\leq \frac{|S|\cdot |V'|}{|V'|}=|S|=\vc(\Gamma)\leq k.
    \end{align}
    This proves that $\Gamma$ is $\vcd$-balanced.
\end{proof}

 Consider the case where $\Gamma_n$ is a clique with $n^{\beta}$ vertices. The bound in Theorem~\ref{th:polynomial} imply that detection is impossible if,
    \begin{align}
        \beta < \min\p{\alpha, \frac{\alpha+1}{3} }.
    \end{align}
    This bound is loose. Indeed, as was shown in \cite{hajek2015computational}, detection is impossible if,
    \begin{align}
        \beta <\min\p{\alpha, \frac{\alpha}{4}+\frac{1}{2}},
    \end{align}
    while possible if $\beta$ is exceeds the above threshold. This sub-optimality is somewhat expected in the light of Lemma~\ref{obs:SuperDense}. Indeed, the bounding technique used in Theorem~\ref{th:lowerVCFull} hings on the evaluation of the second moment of the likelihood, by running over subgraphs with a fixed number of connected components, and bounding the number of such subgraphs in terms of $\mu(\Gamma)$ and $\vc(\Gamma)$. When $\vc(\Gamma)$ and $\mu(\Gamma)$ are large w.r.t. $|v(\Gamma)|$, this bound becomes loose. Furthermore, our bound in Lemma~\ref{lem:probRandomSubgraph1}, i.e.,
\begin{align}
    \P_{\s{H}}\pp{\s{H}\subseteq \Gamma'} \leq \frac{(2\vc(\Gamma))^m\cdot d_{\max}(\Gamma)^{|v(\s{H})|-m} }{(n-|v(\Gamma)|^{|v(\s{H})|})},
\end{align}
becomes exponentially equivalent to the trivial bound,
\begin{align}
    \P_{\Gamma}\pp{\s{H}'\subseteq \Gamma} \leq \P\pp{v(\s{H}')\subseteq v(\Gamma)} \leq \p{\frac{|v(\s{H}')|}{n-|v(\Gamma)|}}^{|v(\s{H}')|},\label{eq:trivialProbBound}
\end{align}
indicating that bounding the likelihood's second moment by summing subgraphs according to their connected components does not exploit any special properties of the planted structure, and therefore, results in a sub-optimal lower bound. 

As a remedy, we now show that by slightly modifying our techniques\textemdash{}essentially by ignoring the number of connected components\textemdash{}we obtain tight results. 
\begin{prop}\label{prop:denseGraphSparseRegimeStat}
    Let $\Gamma=(\Gamma_n)$ be a sequence of super-dense graphs, with $|v(\Gamma)|=\Theta\p{n^{\beta}}$, and let $\chi^2(p||q)=\Theta\p{n^{-\alpha}}$, with $0<\alpha\leq 2$ and $0\leq \beta\leq 1$. Then, weak detection is impossible if, 
         \begin{align}
        \beta <\min\p{\alpha, \frac{\alpha}{4}+\frac{1}{2}},\label{eq:condSuperDenneSparseReg}
    \end{align}
    while strong detection is possible if, 
    \begin{align}
        \beta >\min\p{\alpha, \frac{\alpha}{4}+\frac{1}{2}}.\label{eq:condSuperDenneSparseReg2}
    \end{align}
\end{prop}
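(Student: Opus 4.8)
## Proof proposal for Proposition \ref{prop:denseGraphSparseRegimeStat}

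The plan is to treat the upper and lower bounds separately, with the upper bound being essentially a specialization of Theorem~\ref{thm:upperBoundAlgo} and the lower bound requiring the refined second-moment computation alluded to in the discussion preceding the proposition. For the upper bound, I would use Lemma~\ref{obs:SuperDense}, which tells us that a super-dense family is a $\vcd$-balanced $(2,1,1)$-polynomial family, i.e.\ $|e(\Gamma)| = n^{2\beta + o(1)}$, $d_{\max}(\Gamma) = n^{\beta + o(1)}$, and $\mu(\Gamma) = n^{\beta+o(1)}$. Plugging into the scan, count, and degree test conditions of Theorem~\ref{thm:upperBoundAlgo}: the scan test succeeds when $\mu(\Gamma)\chi^2(p\|q) \gg \log n$, i.e.\ $n^{\beta - \alpha + o(1)} \to \infty$, so $\beta > \alpha$; the count test succeeds when $\chi^2(p\|q)|e(\Gamma)|^2 \gg n^2$, i.e.\ $n^{4\beta - \alpha + o(1)} \gg n^2$, so $\beta > \tfrac{\alpha}{4} + \tfrac12$ (one should check the secondary condition $(p-q)|e(\Gamma)|\to\infty$ holds in this range, which it does since $(p-q) = \Theta(n^{-\alpha/2})$ when $q = \Theta(n^{-\alpha})$ with $q \le p$ comparable, giving $n^{2\beta - \alpha/2}\to\infty$). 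Since $\alpha \ge \tfrac{\alpha}{4}+\tfrac12$ precisely when $\alpha \ge \tfrac23$, the minimum of the two thresholds is exactly $\min(\alpha, \tfrac{\alpha}{4}+\tfrac12)$, giving \eqref{eq:condSuperDenneSparseReg2}.

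For the lower bound, the key is to redo the second-moment estimate of Section~\ref{subsec:denseRegime} \emph{without} stratifying by the number of connected components $m(\s{H})$, since that stratification is exactly what makes the bound loose for dense $\Gamma$ (as explained around \eqref{eq:trivialProbBound}). Starting from the polynomial expansion \eqref{eq:SecondMomentExpressionProb1}, I would bound $\P_\Gamma[\s{H}' \subseteq \Gamma]$ more crudely by $\P[v(\s{H}')\subseteq v(\Gamma)] \le \bigl(|v(\s{H}')|/(n-|v(\Gamma)|)\bigr)^{|v(\s{H}')|}$, and then count subgraphs $\s{H}' \subseteq \Gamma$ by their vertex count $\ell$ and edge count $j$ only: there are at most $\binom{|v(\Gamma)|}{\ell}\binom{\binom{\ell}{2}}{j}$ of them. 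This gives
\begin{align}
    \E_{\calH_0}[\s{L}(\s{G})^2] \le \sum_{\ell=2}^{|v(\Gamma)|}\sum_{j} \lambda^{2j}\binom{|v(\Gamma)|}{\ell}\binom{\binom{\ell}{2}}{j}\left(\frac{\ell}{n-|v(\Gamma)|}\right)^{\ell}.
\end{align}
Using $\binom{|v(\Gamma)|}{\ell} \le (e|v(\Gamma)|/\ell)^\ell$ and summing the inner sum over $j$ to $(1+\lambda^2)^{\binom{\ell}{2}}$, the summand becomes roughly $\bigl(e|v(\Gamma)|/(n-|v(\Gamma)|)\bigr)^\ell (1+\lambda^2)^{\ell(\ell-1)/2}$. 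With $|v(\Gamma)| = \Theta(n^\beta)$ and $\lambda^2 = \Theta(n^{-\alpha})$, the $\ell$-th term is of order $\exp\bigl(\ell[(\beta-1)\log n + O(1)] + \tfrac{\ell^2}{2}\cdot(-\alpha\log n + O(1))\bigr)$; this is a Gaussian-type sum in $\ell$ maximized near $\ell^\star \sim (1-\beta)/\alpha$, and the whole sum is $O(1)$ (indeed $1+o(1)$) provided the exponent never becomes positive, which, after optimizing over $\ell$, translates to $\beta < \tfrac{\alpha}{4} + \tfrac12$. Combined with the separate, easier observation that when $\beta < \alpha$ the scan-threshold regime is not reached, one recovers exactly \eqref{eq:condSuperDenneSparseReg}. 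The case $\beta < \alpha$ when $\alpha < \tfrac23$ (so $\alpha$ is the binding constraint) should be handled by noting that in that range $n^\beta = |v(\Gamma)| = o(n^{\alpha/4 + 1/2})$ automatically makes the above sum bounded as well, or alternatively by a direct argument that $\mu(\Gamma)\chi^2(p\|q) = o(\log n)$ forces $\E_{\calH_0}[\s{L}^2] = O(1)$ via the expansion restricted to $\s{H}'$ equal to the maximum-density subgraph.

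The main obstacle I anticipate is making the combinatorial counting tight enough in the regime where $\ell^\star$ is a bounded fraction of $|v(\Gamma)|$: the crude bound $\binom{|v(\Gamma)|}{\ell}$ on the number of $\ell$-vertex subgraphs of $\Gamma$ is wasteful unless $\Gamma$ really is (close to) a clique, so one needs to verify that for \emph{arbitrary} super-dense $\Gamma$ — not just cliques — the $\binom{\binom{\ell}{2}}{j}$ edge-count factor is still a valid upper bound on the number of $j$-edge subgraphs on a fixed $\ell$-set (it is, trivially, since $G_U$ has at most $\binom{\ell}{2}$ edges), and that the optimizing $\ell^\star$ genuinely controls the sum. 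A secondary subtlety is the boundary behavior at $\beta = \alpha$ versus $\beta = \tfrac{\alpha}{4}+\tfrac12$ and the $o(1)$ slack in the exponents coming from the polynomial-family definition; these are handled by introducing an arbitrarily small $\varepsilon$ as in the proof of Theorem~\ref{th:lowerboundDense} and noting all $o(1)$ terms are absorbed. I would also double check that replacing the constant bound with an $o(1)$ bound (needed for \emph{weak} detection impossibility) goes through, which it does because the geometric/Gaussian sum is actually $1 + o(1)$ strictly below the threshold.
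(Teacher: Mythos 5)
Your upper bound is fine and is essentially the paper's: the paper reads \eqref{eq:condSuperDenneSparseReg2} off Theorem~\ref{th:polynomial} via Lemma~\ref{obs:SuperDense} (the degree-test threshold $(1+\alpha)/2$ is dominated by $(2+\alpha)/4$), and invoking the scan and count tests of Theorem~\ref{thm:upperBoundAlgo} directly amounts to the same computation.

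The lower bound, however, has genuine gaps, and they sit exactly where the exponent $\frac{\alpha}{4}+\frac12$ has to come from. First, the containment bound you start from, $\P\pp{v(\s{H}')\subseteq v(\Gamma)}\le \p{|v(\s{H}')|/(n-|v(\Gamma)|)}^{|v(\s{H}')|}$, is not a valid upper bound: for a uniform $k$-subset of $[n]$ the probability that a fixed $\ell$-set is contained in it is $\prod_{i=0}^{\ell-1}\frac{k-i}{n-i}\le \p{k/(n-k)}^{\ell}$, so the numerator must be $k=|v(\Gamma)|=\Theta(n^{\beta})$, not $\ell$ (the display \eqref{eq:trivialProbBound} contains a typo; the paper's proof of the proposition uses $k/(n-k)$). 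With the correct bound your summand picks up the factor $(k/\ell)^{\ell}$ that you silently dropped. Second, by summing the edge index up to the full binomial $(1+\lambda^2)^{\binom{\ell}{2}}$ you discard the constraint $j\ge\lceil \ell/2\rceil$ coming from the absence of isolated vertices in $\s{H}'$. That constraint is what supplies the factor $\lambda^{\ell}\approx n^{-\alpha\ell/2}$ per term, and it is precisely this factor, combined with $(k^2/n)^{\ell}$, that yields the condition $k^2\lambda=o(n)$, i.e.\ $\beta<\frac12+\frac{\alpha}{4}$. Concretely, at $\ell=2$ the correct contribution is $\binom{k}{2}\lambda^2\p{k/(n-k)}^{2}\approx n^{4\beta-\alpha-2}$, which is $o(1)$ exactly when $\beta<\frac12+\frac{\alpha}{4}$, whereas with $j=0$ allowed the $\ell=2$ term is of order $(k^2/n)^2$ and already forces $\beta<\frac12$. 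The two errors partially cancel in your write-up: with the wrong numerator and the relaxed $j$-sum one gets a bound that is $1+o(1)$ whenever $\beta<\alpha$ (and $\beta<1$), which would "prove" impossibility in a regime where the count test provably succeeds when $\alpha>2/3$ --- a sign that an inequality in the chain is invalid. Relatedly, your asymptotics are off: $\log\pp{(1+\lambda^2)^{\binom{\ell}{2}}}\approx \frac{\ell^2}{2}n^{-\alpha}$, not $\frac{\ell^2}{2}(-\alpha\log n)$ (you have in effect replaced $(1+\lambda^2)^{\binom{\ell}{2}}$ by $\lambda^{\ell^2}$), so the Gaussian-sum optimization you sketch does not produce $\beta<\frac{\alpha}{4}+\frac12$ from the expressions you wrote. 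Finally, the hypothesis $\beta<\alpha$ is not a side case for small $\alpha$: the paper uses it for every $\alpha$ to get $k\lambda^2=o(1)$, so that the geometric sum over $j\ge\lceil\ell/2\rceil$ is dominated by its first term $(e(\ell-1)\lambda^2)^{\lceil\ell/2\rceil}$ rather than its last; this is the step your proposal is missing, and restoring it (together with the correct $\p{k/(n-k)}^{\ell}$ and the $j\ge\lceil\ell/2\rceil$ restriction) recovers the paper's argument.
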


\begin{proof}
        The barrier in \eqref{eq:condSuperDenneSparseReg2} follows immediately from Theorem~\ref{th:polynomial} and Lemma~\ref{obs:SuperDense}. Indeed, since $\Gamma$ is a $(2,1,1)$-polynomial family, a simple calculation shows that, 
        \begin{align}
            \min\p{\frac{\alpha}{1},\frac{1+\alpha}{2\cdot 1}, \frac{2+\alpha}{2\cdot 2}}=\min\p{\alpha,\frac{\alpha}{4}+\frac{1}{2}}.
        \end{align}
        We now prove that detection is impossible when \eqref{eq:condSuperDenneSparseReg} holds. Assume that $\beta<\alpha$. We repeat the main arguments used in the proof of Theorem~\ref{th:lowerVCFull}, where we sum over subgraphs with a fixed number of edges $j$ and vertices $\ell$; we do not constrain/fix the number of connected components. Note that for a fixed $\ell$, the number of subgraphs of $\Gamma$ with $\ell$ vertices and $\ell/2 \leq j\leq \binom{\ell}{2}$ edges is at most,  \begin{align}
            \binom{k}{\ell}\cdot \binom{\binom{\ell}{2}}{j}.
        \end{align}
        Thus, by Lemma~\ref{lem:probRandomSubgraph1} and \eqref{eq:trivialProbBound} we have,
        \begin{align}
            \E\pp{\s{L}(\s{G})}^2&=\sum_{\s{H}'\subseteq \Gamma'} \P_{\Gamma}\pp{\s{H}'\subseteq \Gamma}  \lambda^{2|\s{H}|}\\
            &\overset{(a)}{\leq } 1+\sum_{\ell=2}^k\sum_{j=\ceil{\frac{\ell}{2}}}^{\binom{\ell}{2}}\binom{k}{\ell}  \binom{\binom{\ell}{2}}{j}\lambda^{2j}  \p{\frac{k}{n-k}}^{\ell}\\
            &\overset{(b)}{\leq} 1+\sum_{\ell=2}^k\p{\frac{e   k^2}{(n-k)  \ell}}^{\ell}  \sum_{j=\ceil{\frac{\ell}{2}}}^{\binom{\ell}{2}}\p{  \frac{e\binom{\ell}{2}}{j} \lambda^{2} }^{j}\\
            &\leq 1+\sum_{\ell=2}^k\p{\frac{e   k^2}{(n-k)  \ell}}^{\ell}  \sum_{j=\ceil{\frac{\ell}{2}}}^{\binom{\ell}{2}}\p{e  (\ell-1) \lambda^{2} }^{j}\\
            &\overset{(c)}{\leq} 1+C  \sum_{\ell=2}^k\p{\frac{e^2   k^2  \sqrt{\lambda^2}}{(n-k)}\cdot \frac{\sqrt{\ell-1}}{\ell}}^{\ell} \label{eq:superdenseFinal}
        \end{align}
        where $(a)$ is because we sum over subgraphs containing no isolated vertices which implies that $j\geq \ceil{\ell/2}$, $(b)$ follows from the fact that $\binom{n}{m}\leq\p{\frac{en}{m}}^m$, for any $m\leq n$, and in $(c)$ we used the assumption that $\beta<\alpha$, and so $(\ell-1)\lambda^2\leq k\lambda^2=o(1)$. The sum in \eqref{eq:superdenseFinal} vanishes if $k^2=o\p{n/\lambda}$, and in the $(\alpha,\beta)$ parameter space this is equivalent to $\beta<\frac{1}{2}+\frac{\alpha}{4}$, which concludes the proof. 
    \end{proof}

     \begin{remark}
         The results of Proposition~\ref{prop:denseGraphSparseRegimeStat} are somewhat expected in the light of \cite{rotenberg2024planted}, where the detection problem of a planted bipartite graph was studied. Specifically, it is shown in \cite{rotenberg2024planted} that in the balanced case (where both sides of the bipartite graph have approximately the same number of vertices), the computational and statistical thresholds for the possibility and impossibility of detection are the same as those in Proposition~\ref{prop:denseGraphSparseRegimeStat}. By Szemer\'{e}di's regularity lemma, any super-dense sequence of graphs can be approximated by multi-partite graphs, where the edges between different parts are regular, and the parts are of the same size. In particular, there always exists a decomposition of $\Gamma$ into finitely many (possibly overleaping) approximately regular bipartite graphs $\{\Gamma^{(i)}\}_i$, such that at least one of the parts contains $\Theta(|e(\Gamma)|)$ edges. Using a decomposition-type argument (as in Section~\ref{sec:ReudctionToBalanced}), the risk of detecting $\Gamma$ can be bounded by the second moment $\E_{\calH_0}[\s{L}(\Gamma^{(i)})^2]$ of the largest component $\Gamma^{(i)}$, which, according to \cite{rotenberg2024planted}, is bounded exactly when the condition in Proposition~\ref{prop:denseGraphSparseRegimeStat} is satisfied. Finally, using the same argument as above, the computational thresholds in Proposition~\ref{prop:denseGraphSparseRegimeStat} can be explained as well. 
     \end{remark}

\subsubsection{The critical regime}\label{sec:CriticalRegime}

In this subsection, we study the regime where $p=1-o(1)$ and $q=\Theta(n^{-\alpha})$, for $0<\alpha\leq 2$, and thus, $\chi^2(p||q)=\Theta(n^{\alpha})$. We refer to this regime as the \textit{critical regime}; as will be seen later on, in this regime phase transition phenomena occur. 

We remark that while our $\vcd$-balanced decomposition argument, presented in Section~\ref{sec:ReudctionToBalanced}, is generally applicable, it is somewhat ineffective in the critical regime. Indeed, when $\Gamma$ is decomposed into $M$ parts, the conditions of Proposition~\ref{prop:decompisitionThD} are formulated using $\lambda_M^2=(1+\lambda^2)^{M^2}-1$. When $\lambda^2\approx n^{\alpha}$, we have $\lambda_M^2\approx n^{M^2\alpha}$. As $M^2$ is expected to be large for a ``good $\vcd$-balanced approximation", the condition of Proposition~\ref{prop:decompisitionThD} becomes meaningless (since $M^2\alpha$ will exceed $1$ when $\alpha$ is fixed). Despite the general applicability of our vertex cover-based lower bounds, for simplicity of representation, we will focus our investigation on $\vcd$-balanced graphs. It is worth noting that in the bounded degree case\textemdash{}of particular interest due to the phase transition phenomena it exhibits\textemdash{}the $\vcd$-balanceness constraint is automatically satisfied. Finally, as will be evident in the sequel, it is quite convenient to separate the investigation into three distinct cases: $\mu(\Gamma)\geq 1$, $\mu(\Gamma)<1-\delta$, and $\mu(\Gamma)=1-o(1)$. 
\begin{itemize}[leftmargin=*]
    \item \underline{\textbf{The case where $\mu(\Gamma)=1-o(1)$}}: We begin with the case where $\mu(\Gamma)$ converge to unity from below, which is perhaps the most intriguing case, due to the emergence of a phase transition that occurs when $\alpha=1$. We have the following result.
\begin{theorem}\label{thm:phaseTansitionsCrit}
Let $\Gamma=(\Gamma_n)_n$ be a $\vcd$-balanced sequence of  graphs such that $1-o(1)\leq \mu(\Gamma_n)<1$ for some $o(1)$ function, and assume that $p=1-o(1)$ and $q=\Theta(n^{-\alpha})$. Then, there exists a function $f(n)=o(1)$ such that the following hold.
\begin{enumerate}[leftmargin=*]
    \item Fix $0<\alpha<1$. Then, weak detection is impossible if,
    \begin{align}
        |e(\Gamma)|\leq n^{1-\frac{\alpha}{2}-\varepsilon} \quad \text{and} \quad d_{\max}(\Gamma)\leq n^{\frac{1-\alpha}{2}-\varepsilon},\label{eq:CriticalCond1}
    \end{align}
    for some $\varepsilon>0$, while strong detection is possible if 
    \begin{align}
        |e(\Gamma)|\geq n^{1-\frac{\alpha}{2}+f(n)} \quad \text{or} \quad d_{\max}(\Gamma)\geq n^{\frac{1-\alpha}{2}+f(n)}.\label{eq:CriticalCond1.1}
    \end{align}
    \item Assume $q=\frac{\sigma}{n}$ (in particular, $\alpha=1$), for some $\sigma>0$.
    \begin{enumerate}
        \item If $d_{\max}(\Gamma)=\Omega\p{|v(\Gamma)|^\beta}$, for $0<\beta\leq 1$, then weak detection is impossible if, 
        \begin{align}
            |e(\Gamma)|\leq n^{\frac{1}{2}-\varepsilon} \quad \text{and} \quad  d_{\max}^{\frac{1}{\beta}}(\Gamma)\cdot \log(d_{\max}(\Gamma))\leq \frac{1-\varepsilon}{2} \log n, \label{eq:CriticalCond2}
        \end{align}
        while strong detection is possible if, 
        \begin{align}
             |e(\Gamma)|\leq n^{\frac{1}{2}+\varepsilon} \quad \text{or} \quad  d_{\max}(\Gamma)\geq (16+\varepsilon)\log n,
        \end{align}
        for some  fixed $\varepsilon>0$.
        \item If $d_{\max}(\Gamma)=O(1)$, then:
        \begin{enumerate}
            \item If $\sigma > 2e{d^2}$,
            then weak detection is impossible if, 
        \begin{align}
          |e(\Gamma)|\leq n^{\frac{1}{2}-\varepsilon},
        \end{align}
        while strong detection is possible if 
        \begin{align}
           |e(\Gamma)|\geq n^{\frac{1}{2}+\varepsilon},
        \end{align}
        for some $\varepsilon>0$.
            \item If $ \sigma <1$ and $\mu(\Gamma)\geq 1-|v(\Gamma)|^{-\beta}$, for $0<\beta\leq 1$,
            then weak detection is impossible if, 
        \begin{align}
         |v(\Gamma)|\leq \frac{\log\p{\frac{e d^2}{\sigma}}}{1+\varepsilon}\cdot \log n,\label{eq:CriticalCond4.1}
        \end{align}
        while strong detection is possible if, 
        \begin{align}
            |v(\Gamma)|\geq (1+\varepsilon)\cdot\p{\log n}^{\frac{1}{\beta}},\label{eq:CriticalCond4.2}
        \end{align}
        for some $\varepsilon>0$.
        \end{enumerate}
    \end{enumerate}
\end{enumerate}

\end{theorem}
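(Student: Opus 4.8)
Since $\mu(\Gamma_n)=1-o(1)<1$, no subgraph of $\Gamma_n$ has density at least $1$, so every connected component of $\Gamma_n$ is a tree; thus $\Gamma$ is a forest, its largest component has $t=t(\Gamma)=(1-\mu(\Gamma))^{-1}=\omega(1)$ vertices, and $\mu(\Gamma)\ge 1-|v(\Gamma)|^{-\beta}$ is equivalent to $t\ge|v(\Gamma)|^{\beta}$. When $d_{\max}(\Gamma)=O(1)$ we have $|e(\Gamma)|/d_{\max}(\Gamma)\le\vc(\Gamma)\le|e(\Gamma)|$, so $\Gamma$ is automatically $\vcd$-balanced, i.e.\ $\vc(\Gamma)d_{\max}(\Gamma)=|e(\Gamma)|^{1+o(1)}$; we use this relation throughout. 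The plan is to get the positive parts by specializing the count, degree and scan tests of Theorem~\ref{thm:upperBoundAlgo} (plus one extra ``tree-scan'' test in the subcritical case), and the impossibility parts by specializing Theorem~\ref{th:lowerVCFull}, after substituting $\chi^2(p||q)=\lambda^2=\Theta(n^{\alpha})$, $p-q=1-o(1)$, $d_{\s{KL}}(p||q)=\alpha\log n\,(1+o(1))$ (since $\log(1/q)=\alpha\log n\,(1+o(1))$) and $\vc(\Gamma)d_{\max}(\Gamma)=|e(\Gamma)|^{1+o(1)}$; the $\varepsilon$ present in each impossibility condition is precisely the slack that turns the ``$\E_{\calH_0}[\s{L}(\s{G})^2]=O(1)$'' of Theorem~\ref{th:lowerVCFull} into ``$1+o(1)$'', i.e.\ impossibility of weak detection.

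\emph{Upper bounds.} Plugging $\chi^2(p||q)=\Theta(n^{\alpha})$, $p-q=1-o(1)$ into \eqref{eqn:countCondUpper} shows the count test succeeds once $|e(\Gamma)|\ge n^{1-\alpha/2+f(n)}$ for a suitable $f(n)=o(1)$, and \eqref{eq:MaxDegGeneral} shows the degree test succeeds once $d_{\max}(\Gamma)\ge n^{(1-\alpha)/2+f(n)}$ if $0<\alpha<1$, and once $d_{\max}(\Gamma)\ge(16+\varepsilon)\log n$ if $\alpha=1$ (its second term then dominating). These deliver \eqref{eq:CriticalCond1.1} and the positive halves of cases 2(a) and 2(b)i. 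The scan test is useless here, as its threshold $\mu(\Gamma)\,d_{\s{KL}}(p||q)>\log n$ reads $\mu(\Gamma)\alpha>1$, impossible for $\mu(\Gamma)<1\le 1/\alpha$. For the subcritical case $q=\sigma/n$, $\sigma<1$, we add a test that scans all copies in $\calK_n$ of the largest component $\Gamma_{\max}$ (a tree on $t$ vertices) and rejects if one is entirely present: a first-moment bound gives at most $n^{t}q^{t-1}=n\,\sigma^{t-1}=o(1)$ present copies under $\calH_0$ once $t$ exceeds $(1+\varepsilon)\log n/\log(1/\sigma)$, while under $\calH_1$ the planted copy survives with probability $p^{t-1}=1-o(1)$; combined with $t\ge|v(\Gamma)|^{\beta}$ this yields detection for $|v(\Gamma)|\ge(1+\varepsilon)(\log n)^{1/\beta}$, which is \eqref{eq:CriticalCond4.2}.

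\emph{Lower bounds.} For $0<\alpha<1$, item~3 of Theorem~\ref{th:lowerVCFull} (condition \eqref{eq:LowerMainCond3}, valid since $\mu(\Gamma)<1$) has left-hand side $\max\!\p{|e(\Gamma)|^{1+o(1)}n^{\alpha/2-1},\,d_{\max}^2(\Gamma)\,n^{\alpha-1}}$, which is $o(1)$ exactly in the regime \eqref{eq:CriticalCond1}. For $\alpha=1$: if $d_{\max}(\Gamma)=d=O(1)$ and $\sigma>2ed^2$, then $e\lambda^2 d^2/(n-|v(\Gamma)|)\asymp ed^2/\sigma<1$ renders item~4 vacuous but makes the geometric series behind item~3 converge, its leading term carrying the prefactor $\vc^2(\Gamma)/(\lambda^2 d^2)=|e(\Gamma)|^{2+o(1)}\sigma/(n\,d^4)$, which is $o(1)$ once $|e(\Gamma)|\le n^{1/2-\varepsilon}$ — proving that part of case 2(b)i. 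If instead $d_{\max}(\Gamma)\to\infty$ (case 2(a), where $d_{\max}(\Gamma)=\Omega(|v(\Gamma)|^{\beta})$), then $e\lambda^2 d_{\max}^2(\Gamma)/(n-|v(\Gamma)|)\asymp e\,d_{\max}^2(\Gamma)/\sigma\to\infty$ and $\vc^2(\Gamma)/(\lambda^2 d_{\max}^2(\Gamma))=|e(\Gamma)|^{2+o(1)}\sigma/(n\,d_{\max}^4(\Gamma))<1-\delta$ whenever $|e(\Gamma)|\le n^{1/2-\varepsilon}$, so the side conditions \eqref{eq:LowerMainCond4} of item~4 hold, and — using $|v(\Gamma)|=O\!\p{d_{\max}^{1/\beta}(\Gamma)}$ — the remaining condition \eqref{eq:LowerMainCond5} reduces, after taking logarithms, to $d_{\max}^{1/\beta}(\Gamma)\log d_{\max}(\Gamma)\le\tfrac{1-\varepsilon}{2}\log n$, giving \eqref{eq:CriticalCond2}. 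Finally, if $d_{\max}(\Gamma)=d=O(1)$ and $\sigma<1$, then $ed^2/\sigma>e>1$, so \eqref{eq:LowerMainCond4} holds and \eqref{eq:LowerMainCond5} becomes $\vc^2(\Gamma)\,(ed^2/\sigma)^{|v(\Gamma)|-1}/n\le C$, i.e.\ $|v(\Gamma)|\lesssim\log n/\log(ed^2/\sigma)$, the logarithmic barrier \eqref{eq:CriticalCond4.1}. Item~5, \eqref{eq:LowerMainCond6}, is unavailable here since it requires $\limsup\mu(\Gamma)<1$ strictly, whereas $\mu(\Gamma)=1-o(1)$.

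\emph{Main obstacle.} The genuinely delicate part is the subcritical regime 2(b)ii and the phase transition at $\alpha=1$. First, the $\vcd$-balanced decomposition of Section~\ref{sec:ReudctionToBalanced}/Proposition~\ref{prop:decompisitionThFull} is unusable: its impossibility condition is stated in terms of $\lambda_M^2=(1+\lambda^2)^{M^2}-1\asymp n^{M^2\alpha}$, which overwhelms $n$ for every $M\ge 2$; hence one must work directly with $\vcd$-balanced $\Gamma$ (automatic for $d_{\max}(\Gamma)=O(1)$). Second, extracting the precise constant $\log(ed^2/\sigma)$ in \eqref{eq:CriticalCond4.1} requires verifying the side inequalities \eqref{eq:LowerMainCond4} and carefully tracking the lower-order factor $\vc^2(\Gamma)\le|v(\Gamma)|^2$ inside the exponential of \eqref{eq:LowerMainCond5}, together with the forest-specific first-moment bounds (Lemma~\ref{lem:AlonSpanning} and the count of $|\calS_{i,m,\ell,j}|$) underlying item~4. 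On the algorithmic side, the tree-scan first-moment estimate must be made sharp in $\sigma$ and $t$ to place the detection threshold at $(\log n)^{1/\beta}$; a gap between the two thresholds persists for $\beta<1$, as expected since our bounds are not tight in general. Lastly, one checks that the detection boundary genuinely jumps — from the $|e(\Gamma)|\asymp\sqrt n$ (equivalently $|v(\Gamma)|\asymp\sqrt n$) law valid for $\sigma>\overline{\sigma}_d\asymp ed^2$ to the $|v(\Gamma)|\asymp\log n$ law valid for smaller $\sigma$ — and that this discontinuity occurs only at $\alpha=1$, the thresholds in \eqref{eq:CriticalCond1}–\eqref{eq:CriticalCond1.1} being $\sigma$-independent.
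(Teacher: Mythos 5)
Most of your proposal tracks the paper's own proof: the same count/degree tests give \eqref{eq:CriticalCond1.1} and the positive halves of 2(a) and 2(b)i, and the impossibility parts are obtained, as in the paper, by feeding $\lambda^2=\Theta(n^{\alpha})$ and $\vc(\Gamma)d_{\max}(\Gamma)=|e(\Gamma)|^{1+o(1)}$ into items 3 and 4 of Theorem~\ref{th:lowerVCFull} (the paper likewise avoids the decomposition of Proposition~\ref{prop:decompisitionThFull} here, for exactly the reason you give). Your handling of 2(b)i is in fact slightly sharper than the paper's: by tracking the $o(1)$ prefactor $\vc^2(\Gamma)/(\lambda^2 d^2)$ in the inner sum of \eqref{eq:GenBoundMuLeq1} you get $\E_{\calH_0}[\s{L}(\s{G})^2]=1+o(1)$, i.e.\ genuinely weak-detection impossibility, whereas invoking \eqref{eq:LowerMainCond3} with the constant $2e$ (as the paper does) only gives $O(1)$ directly. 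Also, in 2(b)ii you should state explicitly that the first inequality of \eqref{eq:LowerMainCond4}, $\vc^2(\Gamma)/(\lambda^2 d^2)<1-\delta$, holds; it does, but only because in that regime $\vc(\Gamma)\le|v(\Gamma)|=O(\log n)=o(\sqrt n)$ — it is not a consequence of $ed^2/\sigma>1$ alone.

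The genuine gap is the positive part of case 2(b)ii. Your ``tree-scan'' test declares $\calH_1$ only if some copy of the largest tree component $\Gamma_{\max}$ (on $t$ vertices) is \emph{entirely} present, and you claim the planted copy survives with probability $p^{t-1}=1-o(1)$. This requires $(1-p)\,t\to 0$, which is not implied by the hypotheses: $p=1-o(1)$ with an arbitrary rate, while in the regime you need, $t\ge|v(\Gamma)|^{\beta}\gtrsim\log n$. For instance $1-p=1/\log\log n$ and $t\asymp\log n$ give $p^{t-1}=e^{-\Theta(\log n/\log\log n)}\to 0$, so your test has Type-II error $1-o(1)$ and the argument fails. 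The remedy is to tolerate a constant fraction of missing edges, i.e.\ scan for copies of $\Gamma_{\max}$ with at least $\kappa\,|e(\Gamma_{\max})|$ present edges for some $q<\kappa<p$ bounded away from $p$; then the Type-II error is controlled by Chebyshev exactly as in the proof of Theorem~\ref{thm:upperBoundAlgo} (this is what the paper does, using the scan test together with $d_{\s{KL}}(p||q)=(1+o(1))(\log n+\log(1/\sigma))$ and $\mu(\Gamma)\ge 1-|v(\Gamma)|^{-\beta}$), at the price of redoing the Type-I first-moment bound with the Chernoff exponent $d_{\s{KL}}(\kappa||q)$ instead of the exact-containment probability $q^{t-1}$. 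Relatedly, your dismissal of the scan test (``$\mu(\Gamma)\alpha>1$ is impossible'') is too quick at $\alpha=1$: there the relevant comparison is between $1-\mu(\Gamma)=1/t$ and $\log(1/\sigma)/\log n$, which is precisely the mechanism producing the logarithmic threshold in \eqref{eq:CriticalCond4.2}.
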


 \begin{proof}[Proof of Theorem~\ref{thm:phaseTansitionsCrit}]
    The proof essentially follows by analyzing the conditions of  Theorems~\ref{thm:upperBoundAlgo} and \ref{th:lowerVCFull}. Let  $\lambda^2\triangleq\chi^2(p||q)=\Theta(n^{\alpha})$,  $k\triangleq|v(\Gamma)|$, $\vc\triangleq\vc(\Gamma)$, $d\triangleq d_{\max}(\Gamma)$, and $\calE\triangleq|e(\Gamma)|$. Let us start with the case where $\alpha<1$. 
    \begin{enumerate}[leftmargin=*]
    \item Assume $0<\alpha<1$. Note that the $\vcd$-balances assumption implies that,
    \begin{align}
        \vc\cdot d=\calE^{1+o(1)}\ll \calE^{1+\frac{\varepsilon}{2}},\label{eq:balancedCriticial}
    \end{align}
    for every $\varepsilon>0$, and a sufficiently large $n$. Using the fact that $k\leq \calE\ll n$, condition \eqref{eq:LowerMainCond3} in Theorem~\ref{th:lowerVCFull} implies that weak detection is impossible if, 
    \begin{align}
        \max\p{\frac{\vc d\lambda}{n},\frac{d^2\lambda^2}{n^2}}=o(1).\label{eq:StopEq}
    \end{align}
    By \eqref{eq:balancedCriticial}, since $\lambda^2=\Theta(n^{\alpha})$, the condition in \eqref{eq:CriticalCond1} guarantees,
    \begin{align}
       \max\p{\frac{\vc d\lambda}{n},\frac{d^2\lambda^2}{n^2}}\leq  \max\p{\frac{\calE^{1+\frac{\varepsilon}{2}}\lambda}{n},\frac{d^2\lambda^2}{n^2}} =O\p{n^{-\varepsilon/4}},
    \end{align}
    and in particular \eqref{eq:StopEq} holds. The upper bound in \eqref{eq:CriticalCond1.1} follows immediately from Theorem~\ref{thm:upperBoundAlgo}, by taking,
    \begin{align}
        f(n)=\log_n\p{17\log n}=o(1).
    \end{align}
    \item Assume $q=\frac{\sigma}{n}$. Let us begin with the scenario where $\Gamma$ has polynomial order maximum degree, i.e., \begin{align}
        d= \Omega(k^{\beta} ),\label{eq:AssumptLog}
    \end{align} for some $\beta>0$. Assuming that $\calE\leq n^{\frac{1}{2}-\varepsilon}$, because, 
    \begin{align}
        \vc=\vc(\Gamma)\leq |v(\Gamma)|=|e(\Gamma)|=\calE \quad \text{and} \quad \lambda^2=(1+o(1))\cdot \frac{n}{\sigma},\label{eq:balancedIsGood}
    \end{align}
    we get,
    \begin{align}
        \frac{\vc^2}{\lambda^2 d^2 }\leq \frac{\calE^2}{\lambda^2}\leq n^{-\varepsilon+o(1)}=o(1). 
    \end{align}
    Furthermore, the assumption in \eqref{eq:AssumptLog} implies that, 
    \begin{align}
        \frac{e\lambda^2d^2}{n-k}\geq C  k^{\beta}=\omega(1),
    \end{align}
    where the last equality is because it is assumed that $k\to\infty$. The above two inequalities show that the condition in \eqref{eq:LowerMainCond4} holds. Thus, by \eqref{eq:LowerMainCond5} in Theorem~\ref{th:lowerVCFull}, weak detection is impossible if, 
    \begin{align}
        \frac{\vc^2}{n-k}\cdot\p{ \frac{e\cdot \lambda^2 d^2}{n-k}}^{k-1}=o(1).\label{eq:ineqCritLog}
    \end{align}
    We note that since $k\leq \calE$, we have $k\leq n^{\frac{1}{2}-\varepsilon}$ and thus, 
    \begin{align}
       \p{ \frac{\lambda^2}{n-k}}^{k-1}&\leq \p{\frac{n}{\sigma(n-k)}}^k\\
        &=\frac{1}{\sigma^k}\p{1+\frac{k}{n-k}}^k\\& \leq \frac{1}{\sigma^k}\cdot\exp\overset{o(1)}{\overbrace{\p{\frac{k^2}{n-k}}}}=\frac{1+o(1)}{\sigma^k}.
    \end{align}
    Since $\vc\leq k$, the above shows that \eqref{eq:ineqCritLog} holds if, 
    \begin{align}
        \frac{k^2}{n-k}\cdot \p{\frac{(1+\varepsilon)ed^2}{\sigma}}^k=o(1),
    \end{align}
    which is equivalent to,
    \begin{align}
        2\log k-\log(n-k) + k(1+\log(1+\varepsilon)-\log\sigma +2\log d )\to -\infty,
    \end{align}
    and clearly holds if, 
    \begin{align}
        2k\log d\leq 2d^{\frac{1}{\beta
        }}\log d \leq (1-\varepsilon) \log n,
    \end{align}
    for some fixed $\varepsilon>0$. For the possibility proof, note that by Theorem~\ref{thm:upperBoundAlgo}, as in the previous case, the count test succeeds if, 
    \begin{align}
        \calE\geq n^{1+\frac{\alpha}{2}+\varepsilon}.
    \end{align}
    Furthermore, by Theorem~\ref{thm:upperBoundAlgo} the maximum degree test achieves strong detection if,
    \begin{align}
        \min\p{\frac{d^2(1+o(1))}{\sigma \log n},\frac{d(1-n^{-\alpha})}{\log n}}=\min\p{\frac{d^2\lambda^2}{n\log n},\frac{d(p-q)}{\log n}}>16,
    \end{align}
    which concludes the proof.
    \item Assume $q=\frac{\sigma}{n}$, and let us now handle the case of bounded degrees, and begin with the case where,
    \begin{align}
        \sigma>(1+\varepsilon)2ed^2.
    \end{align}
    A close inspection of the proof of \eqref{eq:LowerMainCond3} in Theorem~\ref{th:lowerVCFull} reveals that the constant $C$ in \eqref{eq:LowerMainCond3} can be upper bounded by $2e$. Thus, assuming further that $\calE\leq n^{\frac{1}{2}-\varepsilon}$, by using \eqref{eq:balancedIsGood} we obtain, 
    \begin{align}
        \frac{d\lambda}{n-k}\max\p{\vc,d\lambda }\leq (1+o(1))\max\p{\frac{\calE^{1+\varepsilon}}{(\sigma n)^{\frac{1}{2}}},\frac{d^2}{\sigma}}\leq \frac{2e}{1+\varepsilon}.
    \end{align}
    Thus, \eqref{eq:LowerMainCond3} holds and detection is impossible. The possibility thresholds follow from Theorem~\ref{thm:upperBoundAlgo}, as in the previous case.
    
    Finally, we handle the case where $\sigma<1$. Here, assuming that $k\ll \sqrt{n}$, we have  
    \begin{align}
         \frac{\vc^2}{\lambda^2 d^2 }\leq \frac{\sigma k^2}{n}=o(1),
    \end{align}
    and 
    \begin{align}
         \frac{e\lambda^2d^2}{n-k}\geq C  k^{\beta}\geq (1+o(1))\frac{ed^2}{\sigma}>1.
    \end{align}
    In particular, the condition in \eqref{eq:LowerMainCond4} is satisfied. Similarly to the scenario of unbounded degrees, the condition in \eqref{eq:LowerMainCond5} is satisfied if,
    \begin{align} 2\log k-\log(n-k) + k(1+\log(1+\varepsilon)-\log\sigma +2\log d )\to -\infty, \end{align}
    which holds if, 
    \begin{align}
        k\leq \frac{\log\p{\frac{e d^2}{\sigma}}}{1+\varepsilon}\cdot \log n,
    \end{align}
    for some $\varepsilon>0$. On the  other hand, by Theorem~\ref{thm:upperBoundAlgo}, the scan test succeeds if,
    \begin{align}
        \liminf_{n\to\infty} \mu\cdot \frac{d_{\s{KL}}(p||q)}{\log n}>1.\label{eq:condKL}
    \end{align}
    Note that under the assumption that $p=1-o(1)$ and $q=\frac{\sigma}{n}$, we get, 
    \begin{align}
        d_{\s{KL}}(p||q)=(1+o(1))\log\p{\frac{n}{\sigma}}=(1+o(1)) \p{\log n-\log\sigma}.
    \end{align}
    Since $\mu\geq  1-k^{-\beta}$, for some constant $\alpha$, we have,
    \begin{align}
         \mu\cdot \frac{d_{\s{KL}}(p||q)}{\log n}&\geq \p{ 1- k^{-\beta}}\p{1+\frac{\log(\sigma^{-1})}{\log n}}.
    \end{align}
    In particular, \eqref{eq:condKL} holds if, 
    \begin{align}
        k>(1+\varepsilon)\cdot\p{\log n}^{\frac{1}{\beta}}, 
    \end{align}
    for some fixed $\varepsilon>0$.
    \end{enumerate}
\end{proof}

Before we proceed to the other scenarios, let us compare Theorem~\ref{thm:phaseTansitionsCrit} with the findings of \cite{massoulie19a} on the case where $\Gamma$ is a regular tree and $q=\sigma/n$. In the situation where the maximum degree is polynomial w.r.t. the number of vertices, our bounds suggest that the statistical limits are determined by both the number of edges (as compared to $\sqrt{n}$), and the maximum degree (as compared to poly-logarithmic function of $n$). This includes the example of a planted star, studied in \cite{massoulie19a}, which, in our terminology, corresponds to the case where $d_{\max}(\Gamma)=|v(\Gamma)|^\beta$, with $\beta=1$. In this case our lower bound matches the one in \cite{massoulie19a} (up to a factor of $2$), and the upper bound is the same up to a factor of $\log\log d_{\max}$. We comment that the upper bound in \cite{massoulie19a} uses the same test (maximum degree) that we employ, but analyzed specifically for the critical regime, leading to a slightly sharper result. 

In the bounded degree case, our results generalize those of \cite{massoulie19a} concerning $D$-regular trees. Note that bounded degree graphs are $\vcd$-balanced, and the assumption of Theorem~\ref{thm:phaseTansitionsCrit} is automatically satisfied. Our results then reveal that the phase transition observed in \cite{massoulie19a} is, in fact, a general phenomena. Indeed, Theorem~\ref{thm:phaseTansitionsCrit} suggests that for any sequence of bounded degree graphs, there are thresholds $\underline{\sigma}(d_{\max})\leq \overline{\sigma}(d_{\max})$, such that for $\sigma<\overline{\sigma}(d_{\max})$, the barrier for detection is determined by comparing $|v(\Gamma)|$ to a poly-logarithmic function of $n$, while if $\sigma>\overline{\sigma}(d_{\max})$, the barrier for detection is governed by the count test, namely, $|v(\Gamma)|=\omega(\sqrt{n})$. 

We conclude this discussion by pointing out that the proof techniques used for proving the lower bounds in \cite{massoulie19a}, in the case where $\alpha=1$, do not generalize well to other values of $\alpha$. For example, consider the case where $\Gamma$ is a path with $k$ edges. Then, when $0\leq \alpha<1$, the coupling approach used in \cite{massoulie19a} shows that detection is impossible for $k\ll \sqrt{n}$, which is loose as compared to Theorem~\ref{thm:phaseTansitionsCrit}, which states that detection is impossible as long as $k\ll n^{1-\frac{\alpha}{2}}$.


\item \underline{\textbf{The case where $\mu(\Gamma)<1-\delta$:}} Let $\Gamma$ be a sequence of graphs such that,
\begin{align}
    \lim_{n\to \infty} \mu(\Gamma_n)\triangleq \mu =1-\delta,\label{eq:condOneDelta}
\end{align}
for some $\delta>0$. Below, we show that if $\Gamma$ is $\vcd$-balanced and satisfies \eqref{eq:condOneDelta}, then our lower and upper bounds coincide. 
\begin{theorem}\label{thm:CritOneDelta}
    Let $\Gamma=(\Gamma_n)_n$ be a sequence of graphs satisfying \eqref{eq:condOneDelta}. Then, for all $0\leq \alpha < \frac{1}{\mu}$, weak detection is impossible if,
    \begin{align}
        |v(\Gamma)|= o\p{n^{g_{\mu}(\alpha)}},
    \end{align}
    while strong detection is possible if,
    \begin{align}
        |v(\Gamma)|=\omega\p{n^{1-\frac{\alpha}{2}}},
    \end{align}
    where 
    \begin{align}
        g_{\mu}(\alpha)\triangleq\begin{cases}
            1-\frac{\alpha}{2} & 0\leq \alpha \leq 1\\
            \frac{1-\mu \alpha }{2(1-\mu)} & 1\leq \alpha < \frac{1}{\mu}.
        \end{cases}
    \end{align}

\end{theorem}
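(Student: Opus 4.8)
The plan is to derive both directions by specializing earlier results to the critical regime $p = 1-o(1)$, $q = \Theta(n^{-\alpha})$, $\lambda^2 = \chi^2(p\|q) = \Theta(n^\alpha)$, under the standing $\vcd$-balanced hypothesis, so that $\vc(\Gamma)\,d_{\max}(\Gamma) = |e(\Gamma)|^{1+o(1)}$, together with the relation $|e(\Gamma)| = \mu(\Gamma)^{1+o(1)}\cdot|v(\Gamma)| = |v(\Gamma)|^{1+o(1)}$ valid since $\mu(\Gamma)\to\mu$ is a constant (here $|v(\Gamma)| = |e(\Gamma)|^{1+o(1)}$ because $\mu$ is bounded, and also because $\Gamma$ has no isolated vertices). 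For the upper bound (strong detection possible when $|v(\Gamma)| = \omega(n^{1-\alpha/2})$), I would invoke the count test from Theorem~\ref{thm:upperBoundAlgo}: since $|e(\Gamma)| = |v(\Gamma)|^{1+o(1)}$ and $\chi^2(p\|q) = \Theta(n^\alpha)$, the condition $\chi^2(p\|q)|e(\Gamma)|^2/n^2 \to \infty$ becomes $n^\alpha |v(\Gamma)|^{2+o(1)}/n^2 \to \infty$, i.e. $|v(\Gamma)| = \omega(n^{1-\alpha/2})$; the companion requirement $(p-q)|e(\Gamma)|\to\infty$ is automatic in this regime since $p-q = 1-o(1)$. (One also needs $\alpha < 1/\mu$ so that the threshold is meaningful, matching the remark in the excerpt that $\mu > 1/\alpha$ forces detection.)

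For the lower bound, the main work is showing $\E_{\calH_0}[\s{L}(\s{G})^2] = 1+o(1)$ under $|v(\Gamma)| = o(n^{g_\mu(\alpha)})$. Since $\mu < 1$, $\Gamma$ is a forest, so I would apply the $\mu < 1$ branches of Theorem~\ref{th:lowerVCFull}, namely conditions \eqref{eq:LowerMainCond3}--\eqref{eq:LowerMainCond5}, with $\vc = \vc(\Gamma)$, $d = d_{\max}(\Gamma)$, $\lambda^2 = \Theta(n^\alpha)$, $k = |v(\Gamma)|$, and $\vc\, d = k^{1+o(1)}$ by $\vcd$-balancedness. The analysis splits at $\alpha = 1$:
\begin{itemize}
\item For $0 \le \alpha \le 1$: here $\lambda\cdot d/(n-k)\cdot\max(\vc, d\lambda) \le \lambda \vc d/(n-k) \cdot \max(1, d\lambda/\vc)$. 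Using $\vc d = k^{1+o(1)}$ and $d \le k$, the dominant term is $\lambda\vc d/n = n^{\alpha/2} k^{1+o(1)}/n$, which is $o(1)$ exactly when $k = o(n^{1-\alpha/2})$; the cross term $d^2\lambda^2/n^2 \le k^2\lambda^2/n^2 = k^{2+o(1)}n^{\alpha-2}$ is also $o(1)$ under the same condition since $1-\alpha/2 \le 1$. So condition \eqref{eq:LowerMainCond3} holds (with $C$ replaced by an $o(1)$ function, giving impossibility of weak detection), proving the bound $g_\mu(\alpha) = 1-\alpha/2$.
\item For $1 \le \alpha < 1/\mu$: here $\lambda$ is large, so \eqref{eq:LowerMainCond3} is too weak and I would instead use \eqref{eq:LowerMainCond4}--\eqref{eq:LowerMainCond5}. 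One checks $\vc^2/(\lambda^2 d^2) = k^{o(1)}/\lambda^2 \to 0$ and $e\lambda^2 d^2/(n-k) = \omega(1)$ (provided $d = \omega(1)$; the bounded-degree sub-case is handled as in Theorem~\ref{thm:phaseTansitionsCrit}), so \eqref{eq:LowerMainCond4} is satisfied, and then \eqref{eq:LowerMainCond5}, $\vc^2 (e\lambda^2 d^2)^{k-1}/(n-k)^k = o(1)$, is the binding constraint. Taking logarithms and using $\vc \le k$, $d^2 = k^{2\delta/\smu + o(1)}$ where I would instead bound $d^2$ via the density: since $|e(\Gamma)| \le \binom{k}{2}$ and $\Gamma$ is $\vcd$-balanced with bounded $\mu$, $d = k^{1+o(1)}/\vc$; after substituting and simplifying, the condition $k\bigl(\log(\lambda^2 d^2/(n-k))\bigr) \to -\infty$ becomes, with $\lambda^2 = \Theta(n^\alpha)$, the inequality $\alpha + 2\log_n d - 1 < 0$ scaled by $k$; unwinding $2\log_n d$ using $d = k^{1+o(1)}/\vc$ and $k = |v(\Gamma)|$ yields the threshold exponent $g_\mu(\alpha) = \frac{1-\mu\alpha}{2(1-\mu)}$. (The factor $1-\mu$ in the denominator enters precisely because a forest with density $\mu$ close to $1-\delta = 1-(1-\mu)$ has components whose edge-to-vertex balance is governed by $1-\mu$, which is exactly the regime analyzed in \eqref{eq:LowerMainCond5}--\eqref{eq:LowerMainCond6}.)
\end{itemize}

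The hard part will be the second case, $\alpha \in (1, 1/\mu)$: one must verify carefully that among the several $\mu<1$ conditions of Theorem~\ref{th:lowerVCFull} it is precisely \eqref{eq:LowerMainCond4}--\eqref{eq:LowerMainCond5} (not \eqref{eq:LowerMainCond3} or \eqref{eq:LowerMainCond6}) that gives the sharp exponent, and that the algebra of taking $\frac{1}{k}\log$ of \eqref{eq:LowerMainCond5} and collecting terms in $\alpha$, $\mu$, and $\log_n k$ really produces $\frac{1-\mu\alpha}{2(1-\mu)}$ and not some other rational function; this requires tracking the $o(1)$ error terms in $\vc d = |e(\Gamma)|^{1+o(1)}$ and $|e(\Gamma)| = |v(\Gamma)|^{1+o(1)}$ and confirming they do not affect the leading exponent. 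A secondary subtlety is the matching of the two pieces of $g_\mu(\alpha)$ at $\alpha = 1$: one should check $1-\tfrac12 = \tfrac{1-\mu}{2(1-\mu)}$, which indeed holds, confirming continuity of the threshold, and also that for $\alpha$ slightly above $1$ the second branch is the smaller (hence binding) one, so that the two lower-bound arguments genuinely tile the whole interval $[0, 1/\mu)$ without a gap. Finally, I would note that the bounded-degree case $d_{\max}(\Gamma) = O(1)$ needs separate treatment (as in the $\mu = 1-o(1)$ theorem), since then $e\lambda^2 d^2/(n-k) = \omega(1)$ still holds for $\alpha > 0$ but the constant $\sigma$-dependence disappears because $\alpha < 1/\mu$ keeps $q$ polynomially small; here \eqref{eq:LowerMainCond5} again applies directly with $d$ a constant.
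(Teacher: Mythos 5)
Your upper bound and your treatment of the range $0\le\alpha\le 1$ are essentially the paper's: since $\mu(\Gamma_n)\to\mu=1-\delta<1$, every connected component of $\Gamma$ is a tree on at most $1/\delta$ vertices, hence $d_{\max}(\Gamma)=O(1)$, $|e(\Gamma)|=\Theta(|v(\Gamma)|)$ and $\vc(\Gamma)=\Theta(|v(\Gamma)|)$, so the count test gives the $\omega(n^{1-\alpha/2})$ threshold and condition \eqref{eq:LowerMainCond3} gives $g_\mu(\alpha)=1-\alpha/2$ on $[0,1]$, exactly as in the first case of Theorem~\ref{thm:phaseTansitionsCrit}.

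The gap is in the range $1<\alpha<1/\mu$, where you insist that \eqref{eq:LowerMainCond4}--\eqref{eq:LowerMainCond5} (and "not \eqref{eq:LowerMainCond6}") yield the sharp exponent. They cannot. First, the bounded-degree situation is not a side "sub-case to be handled as in Theorem~\ref{thm:phaseTansitionsCrit}": because $\mu\le 1-\delta$ forces all components to have at most $1/\delta$ vertices, $d=d_{\max}(\Gamma)$ is a constant and $\vc(\Gamma)=\Theta(k)$ with $k=|v(\Gamma)|$, so this is the only case, and your intermediate identities such as $d^2=k^{2\delta/\smu+o(1)}$ or "unwinding $2\log_n d$" contribute nothing polynomial and cannot produce the factor $\tfrac{1}{1-\mu}$. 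Second, with $d=O(1)$ and $\lambda^2=\Theta(n^{\alpha})$, $\alpha>1$, condition \eqref{eq:LowerMainCond5} reads $\tfrac{\vc^2}{e\lambda^2d^2}\bigl(\tfrac{e\lambda^2 d^2}{n-k}\bigr)^{k}=O(1)$ with $\tfrac{e\lambda^2 d^2}{n-k}=\Theta(n^{\alpha-1})=\omega(1)$; taking logarithms forces $k(\alpha-1)\log n\lesssim \alpha\log n$, i.e.\ $k=O(1)$, which is nowhere near the claimed $o\bigl(n^{\frac{1-\mu\alpha}{2(1-\mu)}}\bigr)$. The reason \eqref{eq:LowerMainCond5} is hopeless here is that its outer sum is dominated by subgraphs on $\ell\approx k$ vertices, ignoring that $\Gamma$ is a union of many tiny components. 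The paper's proof instead invokes \eqref{eq:LowerMainCond6}, whose derivation (via Lemma~\ref{lem:subgraphsBoundedDegreeCountL}) counts subgraphs according to how many of the components of $\Gamma$ they intersect, each component contributing at most $t=1/\delta$ vertices; this caps the dangerous geometric growth at exponent $1/\delta=1/(1-\mu)$, and the resulting condition $\tfrac{k^2}{\lambda^2}\cdot\tfrac1\delta\bigl(\tfrac{e\lambda^2}{\delta(n-k)}\bigr)^{1/\delta}\le C$ with $\lambda^2=\Theta(n^\alpha)$ gives precisely $k=O\bigl(n^{\frac{1-\mu\alpha}{2(1-\mu)}}\bigr)$. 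So the missing idea is the per-component counting behind \eqref{eq:LowerMainCond6}; the tool you singled out as the right one is exactly the one that fails.
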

\begin{proof}[Proof of Theorem~\ref{thm:CritOneDelta}]
    When $0<\alpha\leq 1$, the analysis of the lower bound is exactly the same as in the proof of the first case in Theorem~\ref{thm:phaseTansitionsCrit}, where we rely on the fact that if $\mu<1-\delta$, then $\Gamma$ is a sequence of bounded degree graphs (see, the proof of Lemma~\ref{lem:subgraphsBoundedDegreeCountL}, for a detailed explanation). Hence, $\Gamma$ is clearly $\vcd$-balanced and,
    \begin{align}
        |v(\Gamma)| \leq |e(\Gamma)|\leq v(\Gamma)\cdot d_{\max}(\Gamma). \label{eq:boundedDegG}
    \end{align}
    We therefore focus on the case where $1<\alpha\leq \mu^{-1}$. Denote $k\triangleq|v(\Gamma)|$, and note that if $k\ll n^{g_\mu(\alpha)}$, then $k\ll \sqrt{n}$. Thus, 
    \begin{align}
        \frac{k^2}{\lambda^2}\max\p{1,\frac{n-k}{k^2}}\leq C\cdot \frac{k^2}{n^{\alpha}}\max\p{1,\frac{n-k}{k^2}}=o(1).
    \end{align}
    Accordingly, due to \eqref{eq:LowerMainCond6} in Theorem~\ref{th:lowerVCFull}, weak detection is impossible if, 
    \begin{align}
        \frac{k^2}{\delta \lambda^2}\p{\frac{e\lambda^2}{\delta(n-k)}}^{\frac{1}{\delta}}=o(1).
    \end{align}
    Since $\delta=\frac{1}{1-\mu}$, the above holds provided that, 
    \begin{align}
        k=o\p{n^{\frac{1-\mu\alpha}{2(1-\mu)}}}. 
    \end{align}
    Finally, the upper bound follows from the performance of the count test in Theorem~\ref{thm:upperBoundAlgo}, combined with \eqref{eq:boundedDegG}, which asserts that $k=\Theta(|e(\Gamma)|)$.
\end{proof}

We note that there is a gap between our lower and upper bounds in the regime where $1<\alpha<\frac{1}{\mu}$; we suspect that the upper bound is loose, and the lower bound, is tight. Indeed, we note that for $\alpha>\frac{1}{\mu}$, detection is trivial, even if $\Gamma$ is finite. To see that, note that by Theorem~\ref{thm:upperBoundAlgo}, the scan test achieves detection if, 
\begin{align}
    \mu>(1+\varepsilon)\frac{\log n}{d_{\s{KL}}(p||q)}>\frac{\log n}{\alpha\log n+C}=\frac{1}{\alpha}(1+o(1)).\label{eq:trivialMU}
\end{align}
We also observe that $g_\mu(\alpha)$ in Proposition~\ref{thm:CritOneDelta} approaches zero when $\alpha\to \frac{1}{\mu
}$, and the lower bound becomes trivial exactly when detection becomes trivial. 

\item \underline{\textbf{The case where $\mu(\Gamma)\geq 1$:}} Let $\mu$ be defined as the limit value of $\mu(\Gamma)$, as in \eqref{eq:condOneDelta}. Note that by the analysis of \eqref{eq:trivialMU}, if $\alpha>\frac{1}{\mu}$, then detection becomes trivial because the scan test is successful even if $\Gamma$ is finite. We therefore assume that $0<\alpha<\frac{1}{\mu}$. 
\begin{theorem}\label{thm:lastTheorem}
            Let $\Gamma=(\Gamma_n)_n$ be a sequence of $\vcd$-balanced graphs such that $\mu=\lim \mu(\Gamma_n)<\infty$, and let $0<\alpha<\frac{1}{\mu}$. Then weak detection is impossible if 
        \begin{align}
            \max\p{|e(\Gamma)|,d_{\max}^2(\Gamma)}\leq n^{1-\alpha \mu-\varepsilon},
        \end{align}
        for some $\varepsilon>0$, while strong detection is possible if 
        \begin{align}
             |e(\Gamma)|=n^{1+\frac{\alpha}{2}+f(n)} \quad \text{or}\quad d_{\max}^2\geq n^{\frac{1+\alpha}{2}+f(n)},
        \end{align}
        where $f(n)$ is and $o(1)$ function.
\end{theorem}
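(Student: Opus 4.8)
The plan is to run the same two-sided template used for Theorems~\ref{thm:phaseTansitionsCrit} and \ref{thm:CritOneDelta}: read the impossibility side off the general lower bound Theorem~\ref{th:lowerVCFull} and the possibility side off the algorithmic guarantees of Theorem~\ref{thm:upperBoundAlgo}. First I would record the asymptotics specific to this regime: since $p=1-o(1)$ and $q=\Theta(n^{-\alpha})$, we have $\lambda^2\triangleq\chi^2(p||q)=\Theta(n^{\alpha})$, $p-q=1-o(1)$, and $d_{\s{KL}}(p||q)=(1+o(1))\alpha\log n$. I would also note that $\mu=\lim_n\mu(\Gamma_n)\geq1$ lets us take $\mu(\Gamma_n)\geq1$ for all large $n$, so that the ``$\mu\geq1$'' branch~\eqref{eq:LowerMainCond2} of Theorem~\ref{th:lowerVCFull} is the relevant one, and that the hypothesis $0<\alpha<1/\mu$ is exactly what keeps the exponent $1-\alpha\mu$ appearing below strictly positive (for $\alpha\geq1/\mu$ the scan test detects even finite $\Gamma$, so the problem is trivial, as recalled just above the theorem).

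For the impossibility half I would substitute these asymptotics into~\eqref{eq:LowerMainCond2}. Since $\lambda^2\to\infty$, the factor $\sqrt{(1+\lambda^2)/\lambda^2}$ is $\Theta(1)$ and $\mu(\Gamma_n)=\Theta(1)$, so the ``$\max$'' factor in~\eqref{eq:LowerMainCond2} is $\Theta(\max(\vc(\Gamma)d_{\max}(\Gamma),d_{\max}^2(\Gamma)))$, while $(1+\lambda^2)^{\mu(\Gamma)-1}\lambda^2=n^{\alpha\mu+o(1)}$. Using $\vcd$-balancedness to replace $\vc(\Gamma)d_{\max}(\Gamma)$ by $|e(\Gamma)|^{1+o(1)}$, and $|v(\Gamma)|\leq2|e(\Gamma)|=o(n)$ so that $n-|v(\Gamma)|=\Theta(n)$, the left-hand side of~\eqref{eq:LowerMainCond2} is at most $n^{\alpha\mu-1+o(1)}\max(|e(\Gamma)|,d_{\max}^2(\Gamma))$; under the hypothesis $\max(|e(\Gamma)|,d_{\max}^2(\Gamma))\leq n^{1-\alpha\mu-\varepsilon}$ this is $n^{-\varepsilon+o(1)}=o(1)$. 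Then the $o(1)$-strengthened form of Theorem~\ref{th:lowerVCFull} (the analogue of the remark after Theorem~\ref{th:lowerVCD}) gives $\E_{\calH_0}[\s{L}(\s{G})^2]=1+o(1)$, i.e.\ weak detection is impossible.

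For the possibility half I would simply invoke Theorem~\ref{thm:upperBoundAlgo}. Plugging $\chi^2(p||q)=\Theta(n^{\alpha})$ and $p-q=1-o(1)$ into the count-test condition~\eqref{eqn:countCondUpper} reduces it to a polynomial lower bound on $|e(\Gamma)|$, and plugging them into the degree-test condition~\eqref{eq:MaxDegGeneral} reduces it to a polynomial lower bound on $d_{\max}(\Gamma)$ (the side requirement $d_{\max}(\Gamma)=\omega(\log n)$ being implied); these match the two displayed alternatives, with $f(n)$ any sufficiently slowly vanishing function (e.g.\ $f(n)=\log_n(17\log n)$, as in the proof of Theorem~\ref{th:lowerboundDense}), and either test achieves strong detection.

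I expect the only real work to be on the impossibility side, and the main obstacle there is bookkeeping rather than a new idea: one must check that~\eqref{eq:LowerMainCond2} — and not~\eqref{eq:LowerMainCond1Other} nor any of the ``$\mu<1$'' branches — is the binding condition here, that every $o(1)$ correction (from $\mu(\Gamma_n)\to\mu$, from $\vcd$-balancedness, from $\lambda^2=\Theta(n^\alpha)$, from $(1+\lambda^2)/\lambda^2\to1$) can be absorbed into the fixed slack $\varepsilon$, and that the window $\max(|e(\Gamma)|,d_{\max}^2(\Gamma))\leq n^{1-\alpha\mu-\varepsilon}$ stays comfortably inside the range $|v(\Gamma)|=o(n)$. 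A secondary point worth flagging is that, unlike the dense and sparse regimes, one cannot first reduce to the $\vcd$-balanced case via the decomposition of Section~\ref{sec:ReudctionToBalanced}: there $\lambda_M^2=(1+\lambda^2)^{M^2}-1\approx n^{M^2\alpha}$ overshoots the usable range of Proposition~\ref{prop:decompisitionThFull}, so the $\vcd$-balancedness hypothesis is genuinely needed for this proof.
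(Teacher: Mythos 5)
Your proposal is correct and follows essentially the same route as the paper: the impossibility half is read off Theorem~\ref{th:lowerVCFull} combined with the $\vcd$-balancedness assumption (with the weak-detection strengthening via the $o(1)$ remark), and the possibility half is an immediate invocation of the count and degree tests of Theorem~\ref{thm:upperBoundAlgo}. The only cosmetic difference is that you work with branch \eqref{eq:LowerMainCond2} whereas the paper cites the first branch \eqref{eq:LowerMainCond1Other}; since $\lambda^2=\Theta(n^{\alpha})\to\infty$ these two conditions coincide up to constants in this regime, so this is not a genuinely different argument.
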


\begin{proof}[Proof of Theorem~\ref{thm:lastTheorem}]
    The proof of the upper bound follows immediately from Theorem~\ref{thm:upperBoundAlgo}. The lower bounds are a direct consequence of \eqref{eq:LowerMainCond1} in Theorem~\ref{th:lowerVCFull}, combined with the $\vcd$-balanceness assumption.
\end{proof}
\end{itemize}

\section{Computational Lower Bounds}\label{sec:complowerbound}

This section is devoted to establishing and proving general computational lower bounds for the detection problem introduced in Section~\ref{sec:problem}. We start with a brief background on the low-degree polynomial (LDP) method; the interested reader is referred to \cite{Dmitriy19}, for a detailed exposition.

\subsection{Background: LDP method}\label{subsec:LDPBack}

The LDP framework hings on the hypothesis that all polynomial-time algorithms for solving detection problems are captured/represented by low-degree polynomials. To date, there is increasing and compelling evidence supporting this conjecture. The concepts described below were developed through a fundamental sequence of works in the sum-of-squares optimization literature \cite{barak2016nearly,Hopkins18,hopkins2017bayesian,hopkins2017power}. 

We begin by outlining the fundamentals of of the LDP framework, adhering to the notations and definitions established in \cite{Hopkins18,Dmitriy19}. Recall that any distribution $\pr_{\calH_0}$
defines an inner product of measurable functions $f,g:\Omega_n\to\mathbb{R}$ given by $\left\langle f,g \right\rangle_{\calH_0} = \bE_{\calH_0}[f(\s{G})g(\s{G})]$, along with a norm defined as $\norm{f}_{\calH_0} = \left\langle f,f \right\rangle_{\calH_0}^{1/2}$. Also, recall that the space $L^2(\calH_0)$ represents the Hilbert space of function $f$ with  $\norm{f}_{\calH_0}<\infty$, equipped with the above inner product. The core idea of the LDP method is to identify the ``low-degree" polynomial that distinguishes $\pr_{\calH_0}$ from $\pr_{\calH_1}$ best in the $L^2$ sense. To define this mathematically, let $V_{n, \leq\s{D}}\subset L^2(\calH_0)$ denote the subspace of polynomials of degree at most $\s{D}\in\mathbb{N}$. Then, the \emph{$ \s{D}$-low-degree likelihood ratio} $\s{L}_{n,\leq  \s{D}}$ is defined as the projection of the likelihood $\s{L}_{n}$ onto $V_{n, \leq\s{D}}$, w.r.t. $\left\langle \cdot,\cdot \right\rangle_{\calH_0}$. Now, recall that the likelihood ratio is the optimal test to distinguish $\pr_{\calH_0}$ from $\pr_{\calH_1}$, in the $L^2$ sense. As it turns out, the $\s{D}$-low-degree likelihood ratio shares a similar property \cite{hopkins2017bayesian,hopkins2017power,Dmitriy19}.
\begin{lemma}[Optimally of $\s{L}_{n, \leq\s{D}}$ {\cite{hopkins2017bayesian,hopkins2017power,Dmitriy19}}]\label{lem:Dmitriy}
Consider the following optimization problem:
\begin{equation}
\begin{aligned}
\mathrm{max}
\;\bE_{\calH_1}f(\s{G})
\quad\mathrm{s.t.}
\quad\bE_{\calH_0}f^2(\s{G}) = 1,\; f\in V_{n, \leq\s{D}},
\end{aligned}\label{eqn:optimizationProblem}
\end{equation}
Then, the unique solution $f^\star$ for \eqref{eqn:optimizationProblem} is the $\s{D}$-low degree likelihood ratio $f^\star = \s{L}_{n, \leq\s{D}}/\norm{\s{L}_{n, \leq\s{D}}}_{\calH_0}$, and the value of the optimization problem is $\norm{\s{L}_{n, \leq\s{D}}}_{\calH_0}$. 
\end{lemma}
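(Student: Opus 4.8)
The plan is to prove this by the standard orthogonal-projection argument in the Hilbert space $L^2(\calH_0)$, viewing \eqref{eqn:optimizationProblem} as a constrained linear optimization over a finite-dimensional subspace. The first step is to rewrite the objective in terms of the likelihood ratio: for any $f\in L^2(\calH_0)$, a change of measure through the Radon--Nikodym derivative $\s{L}(\s{G})$ defined in \eqref{eqn:LIKlihood} gives
\[
\bE_{\calH_1}f(\s{G}) \;=\; \bE_{\calH_0}\!\left[f(\s{G})\,\s{L}(\s{G})\right] \;=\; \innerP{f,\s{L}}_{\calH_0}.
\]
This step silently requires $\s{L}\in L^2(\calH_0)$, i.e.\ $\chi^2(\P_{\calH_1}||\P_{\calH_0})<\infty$; I would record this as a standing hypothesis, noting that in all regimes we consider it follows from the second-moment computations already in the paper (e.g.\ Proposition~\ref{prop:LikelihoodMomentExperession} and Theorem~\ref{th:lowerVCFull}).

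Next I would use that $V_{n, \leq\s{D}}$ is a finite-dimensional --- hence closed --- subspace of $L^2(\calH_0)$, so the orthogonal projection onto it is well defined and $\s{L}_{n, \leq\s{D}}$ is, by definition, this projection of $\s{L}$. Orthogonality of $\s{L}-\s{L}_{n, \leq\s{D}}$ to $V_{n, \leq\s{D}}$ then yields, for every feasible $f\in V_{n, \leq\s{D}}$,
\[
\innerP{f,\s{L}}_{\calH_0} \;=\; \innerP{f,\s{L}_{n, \leq\s{D}}}_{\calH_0} \;\le\; \norm{f}_{\calH_0}\,\norm{\s{L}_{n, \leq\s{D}}}_{\calH_0} \;=\; \norm{\s{L}_{n, \leq\s{D}}}_{\calH_0},
\]
where the inequality is Cauchy--Schwarz and the last equality uses the constraint $\norm{f}_{\calH_0}=1$. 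This already shows the value of \eqref{eqn:optimizationProblem} is at most $\norm{\s{L}_{n, \leq\s{D}}}_{\calH_0}$. For attainment and uniqueness I would invoke the equality case of Cauchy--Schwarz: it forces $f$ to be a scalar multiple of $\s{L}_{n, \leq\s{D}}$, and then the norm constraint together with the fact that we are maximizing (so the scalar is positive) pins down $f^\star=\s{L}_{n, \leq\s{D}}/\norm{\s{L}_{n, \leq\s{D}}}_{\calH_0}$ uniquely. I would also observe that $\s{L}_{n, \leq\s{D}}\neq 0$ --- its degree-zero Fourier component equals $\innerP{\s{L},\chi_\emptyset}_{\calH_0}=\bE_{\calH_0}[\s{L}]=1$, so $\norm{\s{L}_{n, \leq\s{D}}}_{\calH_0}\ge 1$ --- which makes the normalization legitimate and the optimizer well defined.

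There is no real obstacle here: the statement is the textbook characterization of the low-degree likelihood ratio, and the only points to be careful about are the two technical ones above, namely finiteness of $\chi^2$ (so that $\s{L}\in L^2(\calH_0)$ and the projection is meaningful) and closedness/finite-dimensionality of $V_{n, \leq\s{D}}$ (so that the projection and the maximizer exist). I would therefore keep the write-up to a few lines and cite \cite{hopkins2017bayesian,hopkins2017power,Dmitriy19}, where this is proved in exactly this form. The substance of the computational lower bounds then comes not from this lemma but from \emph{upper bounding} $\norm{\s{L}_{n, \leq\s{D}}}_{\calH_0}$ for our planted-subgraph model, which is the part requiring genuine work and which I would carry out using the orthogonal decomposition of $\s{L}$ over the Fourier characters $\chi_{\s{H}}$ established in Section~\ref{sec:Polynomial}.
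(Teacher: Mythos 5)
Your argument is correct and is exactly the standard projection--plus--Cauchy--Schwarz proof given in the references the paper cites (\cite{hopkins2017bayesian,hopkins2017power,Dmitriy19}); the paper itself does not reprove the lemma but relies on those sources. Your two technical caveats are fine and in fact automatic here, since $\Omega_n=\{0,1\}^{\binom{n}{2}}$ is finite and $q\in(0,1)$ gives $\P_{\calH_0}$ full support, so $\s{L}\in L^2(\calH_0)$ without further assumptions.
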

As we have already seen in Section~\ref{sec:Polynomial}, a key characteristic of the likelihood ratio is that when $\norm{\s{L}_n}_{\calH_0}=O(1)$, then $\pr_{\calH_0}$ and $\pr_{\calH_1}$ are statistically indistinguishable (or, strong detection is impossible). The conjecture below extends this principle to the computational realm. In a nutshell, it suggests that polynomials of degree $\approx\log n$ can effectively represent/cover all polynomial-time algorithms. The statement below is inspired by  \cite{Hopkins18,hopkins2017bayesian,hopkins2017power}, and \cite[Conj. 2.2.4]{Hopkins18}. Here, we present the informal version which appears in \cite[Conj. 1.16]{Dmitriy19}, while more formal statements can be found in, e.g., \cite[Conj. 2.2.4]{Hopkins18} and \cite[Sec. 4]{Dmitriy19}.
\begin{conjecture}[Low-degree conj., informal]\label{conj:1}
Given a sequence of probability measures $\pr_{\calH_0}$ and $\pr_{\calH_1}$, if there exists $\epsilon>0$ and $\s{D} =  \s{D}(n)\geq (\log n)^{1+\epsilon}$, such that $\norm{\s{L}_{n, \leq\s{D}}}_{\calH_0}$ remains bounded as $n\to\infty$, then there is no polynomial-time algorithm that distinguishes $\pr_{\calH_0}$ and $\pr_{\calH_1}$.
\end{conjecture}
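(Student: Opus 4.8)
The plan here is necessarily a heuristic one, since the final statement is a \emph{conjecture} rather than a theorem: it is a prediction about average-case computational complexity, strictly stronger in many concrete instances than standard hardness assumptions such as planted clique, and there is at present no route to an unconditional proof. In fact the literal universal statement is known to be false: one can engineer sequences $(\pr_{\calH_0},\pr_{\calH_1})$ that are distinguishable in polynomial time yet for which $\norm{\s{L}_{n,\leq\s{D}}}_{\calH_0}=O(1)$ with $\s{D}=\mathrm{polylog}(n)$ (e.g.\ via parity- or lattice-structured planted problems). So any ``proof'' must either restrict to a class of \emph{natural} high-dimensional detection problems --- product or near-product nulls, noise-robust and permutation-symmetric planted signals, which is exactly the setting of \eqref{eqn:super_hypo} --- or restrict the class of efficient algorithms one hopes to capture. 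What I would aim to write down, then, is a \emph{conditional simulation theorem} together with an account of the supporting evidence.

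The core step would be a simulation claim: for a $\mathrm{poly}(n)$-time randomized algorithm $\mathcal{A}$ outputting a bit with advantage $\Pr_{\calH_1}[\mathcal{A}=1]-\Pr_{\calH_0}[\mathcal{A}=1]\geq\delta$, and under regularity of $\pr_{\calH_0}$ (here the entries of $\s{G}$ are independent $\s{Bern}(q_n)$ under $\calH_0$, which is the ideal case), one shows that the success probability of $\mathcal{A}$ is matched up to $o(1)$ by a thresholded polynomial of degree $O(\log n)$ in the entries of $\s{G}$. One would try to build this polynomial by replacing each elementary operation of $\mathcal{A}$ by its low-degree Fourier truncation in $L^2(\calH_0)$, controlling the accumulated truncation error by a hybrid over the $\mathrm{poly}(n)$ steps and invoking noise-stability/concentration of the resulting function. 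Granting this, Lemma~\ref{lem:Dmitriy} closes the argument: the best degree-$\s{D}$ polynomial has $L^2(\calH_0)$-advantage exactly $\norm{\s{L}_{n,\leq\s{D}}}_{\calH_0}$, so if that norm is $O(1)$ no degree-$O(\log n)$ polynomial --- hence, by the simulation, no poly-time $\mathcal{A}$ --- can have advantage bounded away from zero.

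Because the naive hybrid above provably fails on adversarial instances, the realistic programme is to prove the statement only in the restricted forms for which it is believed and partially established: for statistical-query algorithms (where low-degree and SQ lower bounds are known to be largely equivalent), for degree-$n^{\Omega(1)}$ sum-of-squares relaxations (where pseudo-calibration makes the low-degree norm the natural witness), and for local, MCMC, and AMP-type iterations (reducible to low-degree hardness by now-standard arguments). Assembling these into a single meta-theorem for a precisely delineated class of noise-robust, permutation-symmetric planted problems --- the class containing the union-ensemble model of \eqref{eqn:super_hypo} --- is the only form in which I would expect Conjecture~\ref{conj:1} to be provable.

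The genuine obstacle is precisely this simulation step: there is no known general principle forcing a $\mathrm{poly}(n)$-time algorithm to be approximable by an $O(\log n)$-degree polynomial, and the counterexamples show no such principle can hold verbatim; a full proof of Conjecture~\ref{conj:1} as literally stated would, among other things, entail strong average-case circuit lower bounds and is out of reach with current techniques. Accordingly, the honest ``proof plan'' is the one the paper already follows: \emph{use} the conjecture as a hardness hypothesis after verifying, via Proposition~\ref{prop:LikelihoodMomentExperession} and the combinatorial bounds of Section~\ref{sec:statlowerbound}, that its hypothesis $\norm{\s{L}_{n,\leq\s{D}}}_{\calH_0}=O(1)$ holds in the regimes of interest, rather than attempt to establish the conjecture itself.
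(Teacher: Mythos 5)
Conjecture~\ref{conj:1} is not proved in the paper at all: it is imported verbatim from the low-degree literature and used only as a hardness hypothesis, with the paper's actual work being to verify that $\norm{\s{L}_{n,\leq\s{D}}}_{\calH_0}=O(1)$ in the relevant regimes. Your proposal correctly recognizes this (including the known counterexamples to the literal universal statement and the restriction to noise-robust, symmetric planted problems) and lands on exactly the paper's stance, so it is essentially the same approach.
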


Let us derive a general expression for $\norm{\s{L}_{n, \leq\s{D}}}_{\calH_0}$ in our setting. To that end, we expand the likelihood ratio $\s{L}_n$ in a basis of orthogonal polynomials w.r.t. $\pr_{\calH_0}$. Specifically, suppose that $f_0,f_1,\ldots,f_B:\Omega^n\to\mathbb{R}$ is an orthonormal basis for the coordinate degree-$\s{D}$ functions, and that $f_0$ is the unit constant function. Then, we have,
\begin{align}
\norm{\s{L}_{n, \leq\s{D}}}_{\calH_0}^2 &= \sum_{1\leq i\leq B}\left\langle f_i,\s{L}_{n, \leq\s{D}} \right\rangle_{\calH_0}^2\\
& = \sum_{1\leq i\leq B}\pp{\bE_{\calH_0}\pp{\s{L}_n(\s{G})f_i(\s{G})}}^2\\
& = \sum_{1\leq i\leq B}\pp{\bE_{\calH_1}f_i(\s{G})}^2,\label{eqn:orthogonalDecomposition}
\end{align}
where we have used the fact that $(\s{L}_n-\s{L}_{n, \leq\s{D}})$ is orthogonal to $\{f_i\}_{i=0}^B$. Therefore, it remains to compute $\bE_{\calH_1}f_i(\s{G})$. As we have already seen, in our setting, the corresponding basis is the Fourier character $\{\chi_{\s{H}}\}_{\s{H}\subseteq\binom{[n]}{2}}$ defined in \eqref{eqn:Fouriercharacter}. As established in \cite{odonnell_2014}, the set $\{\chi_{\s{H}}\}_{\s{H}\subseteq\binom{[n]}{2},|e(\s{H})|\leq \s{D}}$ forms an orthonormal basis for the degree-$\s{D}$ functions w.r.t. $\pr_{\calH_0}$. Thus, using \eqref{eq:coeffCalc2}, we obtain that,
\begin{align}
\norm{\s{L}_{n, \leq\s{D}}}_{\calH_0}^2 &= \sum_{\substack{\s{H}'\subseteq \binom{[n]}{2}\\ |e(\s{H}')|\leq \s{D}}} \E_{\calH_0}\pp{\s{L}\chi_{\s{H}'}}^2\\
&=\sum_{\substack{\s{H}'\subseteq \binom{[n]}{2}\\ |e(\s{H}')|\leq \s{D}}} \E_{\calH_1}\pp{\chi_{\s{H}'}}^2\\
        &=\sum_{\substack{\s{H}'\subseteq \binom{[n]}{2}\\ |e(\s{H}')|\leq \s{D}}} \lambda^{2|e(\s{H}')|}\P_\Gamma\pp{\s{H}'\subseteq \Gamma}^2.\label{eq:LDPstart}
\end{align}
Furthermore, following the exact same steps in the proof of Lemma~\ref{lem:intersectionMomentGenGeneral}, an alternative equivalent expression is,
    \begin{align}
         \norm{\s{L}_{n, \leq\s{D}}}_{\calH_0}^2 &=\sum_{\substack{\s{H}'\subseteq \binom{e(\Gamma)}{2}\\ |e(\s{H}')|\leq \s{D}}} \lambda^{2\abs{e(\s{H}')}} \P_{\s{H}}[\s{H}\subseteq \Gamma],\label{eq:LDPstart2}
    \end{align}
where $\s{H}$ is an independent copy of $\s{H}'$ in $\calK_n$. We remark that \eqref{eq:LDPstart2} also follows as a corollary of Lemma~\ref{lem:intersectionMomentGenGeneralComp}.

\subsection{LDP analysis for planted subgraphs}\label{subsec:DenseLDP}

Our investigation begins with the following fundamental property: all planted graphs $\Gamma$ with super-logarithmic density exhibit a (conjecturally) non-empty hard region, where detection is statistically possible, yet no polynomial-time testing algorithm exists. In the second part of this section, we establish general computational lower bounds.

\subsubsection{Warm up: super-logarithmic density and hardness}\label{subsec:supperLDP}

Before delving into the LDP analysis of our detection problem, we begin with a simple yet intriguing observation. Recall that a sequence $(\Gamma_n)_n$ has super-logarithmic density if 
\begin{align}
    \mu(\Gamma_n)=\Omega\p{\log |v(\Gamma_n)|}. 
\end{align}
As mentioned in Section~\ref{sec:mainresults}, super-logarithmic density is a necessary condition/property for the existence of a computational-statistical gap. Conversely, for planted graphs with sub-logarithmic density, no such gap exists. In this subsection we prove that super-logarithmic density is also a sufficient condition, namely, all planted graphs with this property exhibit a computational-statistical gap. To establish this, we start with the following result.
\begin{theorem}\label{obs:lowdegreeClique}
    Let $\s{D}=(\s{D}_n)_n$ be an increasing sequence such that $\omega(1)  \leq \s{D}\leq o(\sqrt{n})$, and assume that $\lambda^2=\chi^2(p||q)= o(\sqrt{\s{D}})$. Then, for every sequence of graphs $\Gamma=(\Gamma_n)_n$, such that,
    \begin{align}
        |v(\Gamma)|= O\p{\p{2\lambda^2 \s{D}}^{-\frac{\sqrt{\s{D}}}{2\sqrt{2}}}\sqrt{n}},\label{eqn:CompCliqueBased}
    \end{align}
     we have $\norm{\s{L}_{n, \leq\s{D}}}_{\calH_0} = O(1)$.
\end{theorem}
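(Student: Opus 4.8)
The plan is to start from the degree-truncated expression \eqref{eq:LDPstart2} for $\norm{\s{L}_{n, \leq\s{D}}}_{\calH_0}^2$ and reduce it to a one-dimensional computation that sees $\Gamma$ only through $k\triangleq\abs{v(\Gamma)}$, the worst case being a clique. First I would regroup the sum over subgraphs in \eqref{eq:LDPstart2} according to the number of edges: following the same manipulations as in the proof of Lemma~\ref{lem:intersectionMomentGenGeneral}, but discarding all contributions of degree larger than $\s{D}$ (and using $\binom{X}{i}=0$ for $i>X$), one obtains
\[
\norm{\s{L}_{n, \leq\s{D}}}_{\calH_0}^2=\E_{\Gamma\sim\s{Unif}(\calS_{\Gamma})}\!\left[\sum_{j=0}^{\s{D}}\binom{\abs{e(\Gamma\cap\Gamma')}}{j}\lambda^{2j}\right],
\]
where $\Gamma'$ is a fixed copy of $\Gamma$ in $\calK_n$ and $\lambda^2=\chi^2(p\|q)$; dropping the truncation would collapse the inner sum to $(1+\lambda^2)^{\abs{e(\Gamma\cap\Gamma')}}$ and recover \eqref{eq:SecondMomentExpression}.

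Next I would eliminate the dependence on the structure of $\Gamma$. An edge can belong to $e(\Gamma\cap\Gamma')$ only if both of its endpoints lie in $v(\Gamma)\cap v(\Gamma')$, so $\abs{e(\Gamma\cap\Gamma')}\leq\binom{Z}{2}$ with $Z\triangleq\abs{v(\Gamma)\cap v(\Gamma')}$; and since $\Gamma$ has no isolated vertices, $v(\Gamma)$ is a uniformly random $k$-subset of $[n]$, hence $Z\sim\s{Hypergeometric}(n,k,k)$. By monotonicity of $\binom{\cdot}{j}$ this reduces the task to proving
\[
\E_{Z\sim\s{Hypergeometric}(n,k,k)}\!\left[\sum_{j=0}^{\s{D}}\binom{\binom{Z}{2}}{j}\lambda^{2j}\right]=O(1)
\]
whenever \eqref{eqn:CompCliqueBased} holds. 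I would then bound the inner sum: with $y\triangleq\lambda^2\binom{Z}{2}\leq\lambda^2 Z^2/2$ one has $\sum_{j=0}^{\s{D}}\binom{\binom{Z}{2}}{j}\lambda^{2j}\leq\sum_{j=0}^{\s{D}}y^{j}/j!$, which is at most $e^{y}$ when $y\leq\s{D}$ and at most $(\s{D}+1)(ey/\s{D})^{\s{D}}$ when $y>\s{D}$. Splitting the expectation over $Z$ at the threshold $t_0\asymp\sqrt{2\s{D}}/\lambda$ (the point where $y\asymp\s{D}$) and using on both pieces the hypergeometric tail bound $\P[Z\geq t]\leq\binom{k}{t}(k/n)^{t}\leq(ek^{2}/(tn))^{t}$, the range $t\leq t_0$ contributes $\sum_{t\leq t_0}\P[Z=t]\,e^{\lambda^2 t^{2}/2}\leq\sum_{t\leq t_0}\bigl(\tfrac{ek^{2}}{tn}\,e^{\lambda^2 t/2}\bigr)^{t}$ and the range $t>t_0$ contributes $\sum_{t>t_0}\P[Z\geq t]\,\bigl(e\lambda^2 t^{2}/(2\s{D})\bigr)^{\s{D}}$; in each case one verifies that the summand, viewed as $(\,\cdot\,)^{t}$ (resp.\ $(\,\cdot\,)^{\s{D}}$), has base staying below $1$ throughout the relevant range of $t$ precisely when $k^2/n$ lies below (the square of) the sub-polynomial factor in \eqref{eqn:CompCliqueBased}, whence the series sums to $O(1)$. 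The hypotheses are used exactly here: $\s{D}=o(\sqrt n)$ keeps the whole support of interest (in particular $t_0$) in the regime $t\ll n$ where the above tail bound is sharp, while $\lambda^2=o(\sqrt{\s{D}})$ makes $t_0=\sqrt{2\s{D}/\lambda^2}=\omega(\s{D}^{1/4})\to\infty$, so the split is non-degenerate and the claimed factor is the binding constraint.

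The step I expect to be the main obstacle is this tail-expectation estimate. The difficulty is that $e^{\lambda^2 Z^{2}/2}$ grows very rapidly in $Z$ — it already reaches size $e^{\Theta(\s{D})}$ for $Z$ of order $t_0$ — so one cannot just linearize around the typical value $Z\approx k^2/n$; instead the growth of the summand must be beaten by the super-exponentially small hypergeometric weight $\P[Z\approx t]\lesssim(ek^{2}/(tn))^{t}$ \emph{uniformly} over $2\leq t\leq\min(k,t_0)$, and it is this uniform optimization of the trade-off that forces $k$ down to $O\bigl((2\lambda^2\s{D})^{-\sqrt{\s{D}}/(2\sqrt2)}\sqrt n\bigr)$. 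Everything else — the identity for $\norm{\s{L}_{n, \leq\s{D}}}_{\calH_0}^2$, the reduction to the clique via $Z$, and the truncated-binomial bound — is routine, and I would expect to relegate the arithmetic of the two geometric-type sums to a short appendix computation.
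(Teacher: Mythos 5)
Your overall route is essentially the paper's own computation in different bookkeeping: the paper also ``pretends $\Gamma$ is a clique'' (it bounds $\P_\Gamma[\s{H}\subseteq\Gamma]\leq\P[v(\s{H})\subseteq v(\Gamma)]$ and sums over all graphs with at most $\s{D}$ edges, grouped by vertex/edge counts), which is the same reduction you perform via $Z=|v(\Gamma)\cap v(\Gamma')|\sim\s{Hypergeometric}(n,k,k)$ and the truncated binomial MGF. However, your specific chain of estimates has a genuine gap inside the stated hypothesis range. The step $\binom{\binom{Z}{2}}{j}\lambda^{2j}\leq(\lambda^2\binom{Z}{2})^j/j!$, followed by the bounds $e^{y}$ and $(\s{D}+1)(ey/\s{D})^{\s{D}}$ with the split at $y=\lambda^2 Z^2/2=\s{D}$, replaces the true per-vertex-pair factor $1+\lambda^2$ by $e^{\lambda^2}$, and this Poissonization is only harmless when $\lambda^2=O(1)$ (or polylogarithmic). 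The theorem allows $\lambda^2=o(\sqrt{\s{D}})$, e.g.\ $\lambda^2=\s{D}^{2/5}$, and there your bound fails: at the split point $t_0=\sqrt{2\s{D}}/\lambda$ the region-1 term is $\p{\tfrac{ek^2}{t_0 n}}^{t_0}e^{\s{D}}$, and under \eqref{eqn:CompCliqueBased} taken with equality (so $k^2/n\asymp(2\lambda^2\s{D})^{-\sqrt{\s{D}/2}}$) its logarithm is $\s{D}\p{1-\tfrac{\log(2\lambda^2\s{D})}{\lambda}}(1+o(1))$, which tends to $+\infty$ whenever $\lambda\gg\log(2\lambda^2\s{D})$. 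The mechanism is that your exponential factor already reaches $e^{\s{D}}$ at overlap $t_0\ll\sqrt{2\s{D}}$, where the hypergeometric penalty (whose exponent is proportional to $t$, i.e.\ only about $\s{D}\log(2\lambda^2\s{D})/\lambda$) cannot compensate; a similar blow-up occurs in your region 2 near its interior maximum. So the claim that ``the base stays below $1$ precisely when \eqref{eqn:CompCliqueBased} holds'' is false for $\lambda$ of polynomial order in $\s{D}$, and the ``routine appendix arithmetic'' cannot be completed as described.

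The repair is to keep the binomial structure rather than Poissonize, and to split according to $\binom{Z}{2}\lessgtr\s{D}$ (equivalently $Z\lessgtr\sqrt{2\s{D}}$) instead of $\lambda^2\binom{Z}{2}\lessgtr\s{D}$: for small $Z$ bound the inner sum by $(1+\lambda^2)^{\binom{Z}{2}}$, giving a base $\tfrac{ek^2}{tn}(1+\lambda^2)^{t/2}\leq\tfrac{ek^2}{tn}(1+\lambda^2)^{\sqrt{\s{D}/2}}$ which \eqref{eqn:CompCliqueBased} does control, and for large $Z$ use that the edge count is capped at $\s{D}$, so the dangerous factor is at most of order $(e\lambda^2 Z^2/(2\s{D}))^{\s{D}}$ but now weighted by a tail at $t\geq\sqrt{2\s{D}}$. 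This is exactly how the paper's proof organizes the sum (subgraphs with $m\leq 2\s{D}$ vertices, edges capped at $\min\p{\binom{m}{2},\s{D}}$, split at $m=\sqrt{2\s{D}}$), and with that modification your hypergeometric formulation does reproduce the two conditions that combine into \eqref{eqn:CompCliqueBased}. In the regime actually used downstream in the paper ($\lambda^2=\Theta(1)$, $\s{D}=(\log n)^{1+\epsilon}$), your original estimates are fine; the gap concerns only the full stated generality $\lambda^2=o(\sqrt{\s{D}})$.
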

\begin{proof}[Proof of Theorem~\ref{obs:lowdegreeClique}]
Recall \eqref{eq:LDPstart}, and consider the following chain of inequalities,
    \begin{align}
        \norm{\s{L}_{n, \leq\s{D}}}_{\calH_0}^2&\leq\sum_{\substack{\s{H}\subseteq \binom{[n]}{2}\\ |\s{H}|\leq \s{D}}} \lambda^{2|\s{H}|}\P\pp{v(\s{H})\subseteq v(\Gamma)}^2\\
        &\overset{(a)}{=}\sum_{\substack{\s{H}\subseteq \binom{[n]}{2}\\ |\s{H}|\leq \s{D}}} \lambda^{2|\s{H}|}\p{\frac{\binom{n-|v(\s{H})|}{|v(\Gamma)|-|v(\s{H})|}}{\binom{n}{|v(\Gamma)|}}}^2\\
        &=\sum_{\substack{\s{H}\subseteq \binom{[n]}{2}\\ |\s{H}|\leq \s{D}}} \lambda^{2|\s{H}|}\p{\frac{\p{n-|v(\s{H})|}!}{n!}\cdot \frac{|v(\Gamma)|!}{|v(\s{H})|!}}^2\\
        &\leq\sum_{\substack{\s{H}\subseteq \binom{[n]}{2}\\ |\s{H}|\leq \s{D}}} \lambda^{2|\s{H}|}\p{\frac{| v(\Gamma)|}{n-|v(\s{H})|}}^{2|v(\s{H})|}.
    \end{align}
    where $(a)$   follows since the vertices of a random copy of $\Gamma$ are uniform over $\binom{[n]}{|v(\Gamma)|}$. Next, we upper bound the above sum by considering all possible graphs with $m$ vertices and $\ell \leq \s{D}$ edges. Specifically, let $\calS_{\ell,m,n}$ denote the set of all subgraphs $\s{H}\subseteq\binom{n}{\ell}$ with $|v(\s{H})|=m$ vertices and $\abs{e(\s{H})}=\ell$ edges. The size of $\calS_{\ell,m,n}$ is clearly upper bounded by,
    \begin{align}
        \abs{\calS_{\ell,m,n}}\leq \binom{n}{m}\binom{\binom{m}{2}}{\ell}\leq n^m\cdot \p{\frac{m^2}{2}}^\ell.
    \end{align}
    We also note that it must be the case that $\ell\geq \frac{m}{2}$ and $m\leq 2\s{D}$, and thus, 
\begin{align}
        \norm{\s{L}_{n, \leq\s{D}}}_{\calH_0}^2
        &\leq\sum_{\substack{\s{H}\subseteq \binom{[n]}{2}\\ |\s{H}|\leq \s{D}}} \lambda^{2|\s{H}|}\p{\frac{| v(\Gamma)|}{n-|v(\s{H})|}}^{2|v(\s{H})|}\\
        &=1+\sum_{\substack{\s{H}\subseteq \binom{[n]}{2}\\ 1\leq |\s{H}|\leq \s{D}}} \lambda^{2|\s{H}|}\p{\frac{| v(\Gamma)|}{n-|v(\s{H})|}}^{2|v(\s{H})|}\\
        &\overset{(a)}{=}1+\sum_{m=2}^{2\s{D}}\sum_{\ell=m/2}^{\min\p{\binom{m}{2},\s{D}}} \lambda^{2\ell}\p{\frac{| v(\Gamma)|}{n-m}}^{2m} \abs{\calS_{\ell,m,n}}\\
        &=1+\sum_{m=2}^{2\s{D}}\sum_{\ell=m/2}^{\min\p{\binom{m}{2},\s{D}}} \lambda^{2\ell}\p{\frac{| v(\Gamma)|}{n-m}}^{2m} n^m \p{\frac{m^2}{2}}^\ell\\
        &=1+\sum_{m=2}^{2\s{D}} \p{\frac{| v(\Gamma)|^2n}{(n-m)^2}}^{m}
 \sum_{\ell=m/2}^{\min\p{\binom{m}{2},\s{D}}} \p{\frac{m^2\lambda^{2} }{2}}^\ell\\
        &\leq 1+\sum_{2\leq m \leq \sqrt{2\s{D}} } \p{\frac{| v(\Gamma)|^2n}{(n-\s{D})^2}}^{m}
 \sum_{\ell=m/2}^{\binom{m}{2}} \p{\frac{m^2\lambda^{2} }{2}}^\ell\\
 &\qquad + \sum_{\sqrt{2\s{D}}\leq m \leq 2\s{D} } \p{\frac{| v(\Gamma)|^2n}{(n-\s{D})^2}}^{m}
 \sum_{\ell=m/2}^{\s{D}} \p{\frac{m^2\lambda^{2} }{2}}^\ell\\
  &\overset{(b)}{\leq } 1+(1+o(1))\sum_{2\leq m \leq \sqrt{2\s{D}} } \p{\frac{| v(\Gamma)|^2}{n}}^{m}
 \sum_{\ell=m/2}^{\binom{m}{2}} \p{\frac{2\s{D}\lambda^{2} }{2}}^\ell\\
 &\qquad + (1+o(1))\sum_{\sqrt{2\s{D}}\leq m \leq 2\s{D} } \p{\frac{| v(\Gamma)|^2}{n}}^{m}
 \sum_{\ell=m/2}^{\s{D}} \p{\frac{m^2\lambda^{2} }{2}}^\ell\\
  &\overset{(c)}{\leq } 1+(1+o(1))\sum_{2\leq m \leq \sqrt{2\s{D}} } \p{\frac{| v(\Gamma)|^2}{n}}^{m}
 \p{ \s{D}\lambda^{2} }^{\binom{m}{2}}\\
 &\qquad + (1+o(1))\sum_{\sqrt{2\s{D}}\leq m \leq 2\s{D} } \p{\frac{| v(\Gamma)|^2}{n}}^{m}
  \p{\frac{m^2\lambda^{2} }{2}}^{\s{D}},\label{eq:TwoBadSums}
    \end{align}
    where (a) follows since we only sum over graphs containing no isolated vertices, (b) is because $\s{D}=o(\sqrt{n})$, and (c) is because we assume that $\lambda^2=\chi^2(p||q)= o(\sqrt{\s{D}})$, and thus for $m\geq \sqrt{2\s{D}}$, we have that $m^2\lambda^2=\omega(1)$, which implies that,
    \begin{align}
        \sum_{\ell=m/2}^{m/2+C}\p{\frac{m \lambda^2}{2}}^\ell= (1+o(1))\p{\frac{m \lambda^2}{2}}^C.
    \end{align}   
    Let us now handle the second term on the r.h.s. of \eqref{eq:TwoBadSums}. We have,
    \begin{align}
        \sum_{2\leq m \leq \sqrt{2\s{D}} } \p{\frac{| v(\Gamma)|^2}{n}}^{m}\p{\s{D}\lambda^{2} }^{\binom{m}{2}}&\leq \sum_{2\leq m \leq \sqrt{2\s{D}} } \p{\frac{| v(\Gamma)|^2}{n}}^{m}\p{\s{D}\lambda^{2} }^{\frac{m^2}{2}}\\
        & \leq \sum_{2\leq m \leq \sqrt{2\s{D}} } \p{\frac{| v(\Gamma)|^2}{n}\s{D}^{\frac{\sqrt{\s{D}}}{2}}\lambda^{2}}^{m}.
    \end{align}
    Clearly, the above expression is bounded provided that, 
    \begin{align}
        \frac{\lambda^2\abs{v(\Gamma)}^2\s{D}^{\frac{\sqrt{\s{D}}}{2}}}{n}<1 \iff \abs{v(\Gamma)}<\frac{\sqrt{n}}{\lambda \s{D}^\frac{\sqrt{\s{D}}}{4}}.
    \end{align}
    As for the last term on the r.h.s. of \eqref{eq:TwoBadSums}, we have,
    \begin{align}
        \sum_{\sqrt{2\s{D}}\leq m \leq 2\s{D} } \p{\frac{| v(\Gamma)|^2}{n}}^{m}
  \p{\frac{m^2\lambda^{2} }{2}}^{\s{D}}&\leq \p{2\s{D}\lambda^{2}}^{\s{D}}\sum_{\sqrt{2\s{D}}\leq m \leq 2\s{D} } \p{\frac{| v(\Gamma)|^2}{n}}^{m}\\
  &\leq (1+o(1))\p{2\s{D}\lambda^{2}}^{\s{D}}\p{\frac{| v(\Gamma)|^2}{n}}^{\sqrt{2\s{D}}},
    \end{align}
    where the last inequality is because we are in the regime where $|v(\Gamma)|\ll \sqrt{n}$. Combining the above, we see that \eqref{eq:TwoBadSums} is bounded if and only if \eqref{eqn:CompCliqueBased} holds, which completes the proof.
    
\end{proof}

We are now ready to formally state the observation mentioned at the beginning of this subsection: conditioned on the LDP conjecture, super-logarithmic density is not only a necessary condition but also a sufficient one for the existence of a computational-statistical gap.
\begin{theorem}
         Assume that $\chi^2(p||q)=O(1)$, $\mu(\Gamma)\gg \log|v(\Gamma)|$, and that Conjecture~\ref{conj:1} holds. Then, there is an  $\Omega(1)$ function and an $\omega(1)$ function $f_{\Gamma}$, which depends on $\Gamma$, such that whenever, 
    \begin{equation}
        \Omega(1)\cdot \log n \leq \mu(\Gamma) \leq f_\Gamma (n)\cdot \log n,
    \end{equation}
    while detection is statistically possible, a polynomial-time algorithm that achieves strong detection does not exist.
\end{theorem}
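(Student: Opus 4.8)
The plan is to combine Theorem~\ref{obs:lowdegreeClique} with the low-degree conjecture (Conjecture~\ref{conj:1}), calibrating the free function $f_\Gamma$ against the super-logarithmic density hypothesis. First I would fix any $\epsilon\in(0,1)$ and set $\s{D}=\s{D}(n)=\lceil(\log n)^{1+\epsilon}\rceil$, so that $\omega(1)\leq\s{D}\leq o(\sqrt n)$ and $\s{D}\geq(\log n)^{1+\epsilon}$, as needed by Conjecture~\ref{conj:1}. In the dense regime $\chi^2(p\|q)=O(1)$, while $\sqrt{\s{D}}\to\infty$, so the hypothesis $\lambda^2=o(\sqrt{\s{D}})$ of Theorem~\ref{obs:lowdegreeClique} holds automatically. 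A one-line estimate gives $(2\lambda^2\s{D})^{-\sqrt{\s{D}}/(2\sqrt 2)}=n^{-o(1)}$, since $\sqrt{\s{D}}\,\log(2\lambda^2\s{D})=\Theta\!\big((\log n)^{(1+\epsilon)/2}\log\log n\big)=o(\log n)$, using $(1+\epsilon)/2<1$. Therefore Theorem~\ref{obs:lowdegreeClique} yields $\norm{\s{L}_{n,\leq\s{D}}}_{\calH_0}=O(1)$ for every sequence $\Gamma$ with $|v(\Gamma)|\leq n^{1/2-o(1)}$, and in particular whenever $|v(\Gamma)|\leq n^{1/4}$ for $n$ large enough.

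Next I would package the density hypothesis into $f_\Gamma$. Since $\Gamma$ has super-logarithmic density, $g(n)\triangleq\mu(\Gamma_n)/\log|v(\Gamma_n)|\to\infty$; set $f_\Gamma(n)\triangleq\frac{1}{4}g(n)$, an $\omega(1)$ function depending only on $\Gamma$. Dividing through by $\mu(\Gamma)>0$, the inequality $\mu(\Gamma)\leq f_\Gamma(n)\log n$ is \emph{equivalent} to $\log|v(\Gamma)|\leq\frac{1}{4}\log n$, i.e.\ $|v(\Gamma)|\leq n^{1/4}$. Hence, whenever $\mu(\Gamma)\leq f_\Gamma(n)\log n$, the conclusion of the previous step applies: the $\s{D}$-low-degree likelihood ratio has bounded $L^2(\calH_0)$-norm, and Conjecture~\ref{conj:1}, invoked with this $\s{D}$, shows that no polynomial-time algorithm distinguishes $\pr_{\calH_0}$ from $\pr_{\calH_1}$; a fortiori none achieves strong detection. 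This already establishes the conclusion; the remaining two hypotheses only serve to make the window non-vacuous.

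For the lower endpoint I would take the $\Omega(1)$ function to be the constant $\underline{C}$ of Theorem~\ref{th:lowerboundDense} (any fixed constant below $\overline{C}$ works): when $\mu(\Gamma)\leq\underline{C}\log n$ detection is information-theoretically impossible, so the statement is interesting precisely above this threshold, and whenever in addition detection is statistically possible we are genuinely in the conjectured hard phase. It is worth recording that in the entire window $\mu(\Gamma)\leq f_\Gamma(n)\log n$ we have $|v(\Gamma)|\leq n^{1/4}$, hence $|e(\Gamma)|\leq\binom{|v(\Gamma)|}{2}\leq n/2$ and $d_{\max}^2(\Gamma)\leq|v(\Gamma)|^2\leq\sqrt n$, so the count and maximum-degree tests of Theorem~\ref{thm:upperBoundAlgo} provably fail; the only remaining route to statistical detection is the scan test, which needs $\mu(\Gamma)\gtrsim\log n/d_{\s{KL}}(p\|q)$. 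In particular the window contains, e.g., every planted clique of size $k$ with $\overline{C}\log n\leq k/2\leq n^{1/4}$ — a concrete family that is statistically detectable yet (conditionally) computationally hard.

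The genuinely delicate point is the calibration in the second step rather than any single calculation: the super-logarithmic density assumption is exactly what permits the free function $f_\Gamma$ — whose defining inequality must reduce to a polynomial bound $|v(\Gamma)|\leq n^{c'}$ — to be taken tending to infinity, so that the hard region extends well above scale $\log n$. There is some slack in the exponent ($c'=1/4$ versus the optimal $1/2-o(1)$ allowed by Theorem~\ref{obs:lowdegreeClique}); narrowing it would require letting $f_\Gamma$ also absorb the slowly-decaying $n^{-o(1)}$ prefactor of that theorem, which is routine but unnecessary for the qualitative claim.
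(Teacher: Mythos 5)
Your proposal is correct and follows essentially the same route as the paper: fix $\s{D}=(\log n)^{1+\epsilon}$, apply Theorem~\ref{obs:lowdegreeClique} (noting the prefactor $(2\lambda^2\s{D})^{-\sqrt{\s{D}}/(2\sqrt 2)}=n^{-o(1)}$), translate the resulting vertex-count condition into a bound $\mu(\Gamma)\leq f_\Gamma(n)\log n$ with $f_\Gamma\propto\mu(\Gamma)/\log|v(\Gamma)|=\omega(1)$, invoke Conjecture~\ref{conj:1}, and use the scan test of Theorem~\ref{thm:upperBoundAlgo} for statistical possibility. The only differences are immaterial slack (your exponent $1/4$ versus the paper's $(1+o(1))/2$) and your slightly more conditional bookkeeping of the lower endpoint of the window.
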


\begin{proof}
    Using Theorem~\ref{obs:lowdegreeClique} and Conjecture~\ref{conj:1}, we deduce that there is no polynomial-time algorithm the distinguishes between $\calH_0$ and $\calH_1$ if for some arbitrarily small $\varepsilon$ we have,
    \begin{align}
        \log|v(\Gamma)|&\leq \log\p{ \frac{\sqrt{n}}{(2\lambda^2 (\log n)^{1+\varepsilon})^\frac{\sqrt{(\log n)^{1+\varepsilon}}}{2\sqrt{2}}}}\\
        &=\frac{1}{2}\log n-\frac{\sqrt{(\log n)^{1+\varepsilon}}}{2\sqrt{2}}\cdot \log\p{2\lambda^2 (\log n)^{1+\varepsilon}}\\
        &=\frac{\log n}{2}\cdot (1+o(1)),
    \end{align}
    and if stated in terms of $\mu(\Gamma)$,
        \begin{align}
        \mu(\Gamma)\leq (1+o(1)) \frac{\log n}{2}\cdot \overset{f_\Gamma(n)=\omega(1)}{\overbrace{\p{\frac{\mu(\Gamma)}{\log|v(\Gamma)|}}}}=\omega(1)\cdot \log n.
    \end{align}
    On the other hand, by Theorem~\ref{thm:upperBoundAlgo}, detection is possible (via the scan test) if $\mu(\Gamma)=\Omega(\log n)$. 
    \end{proof}

\subsubsection{General LDP analysis}\label{subsec:LDPboundDense}

In this subsection, we establish tighter computational lower bounds than those derived in the previous subsection. To that end, we adopt the same approach and reasoning as in the proof of our statistical lower bounds in Section~\ref{sec:statlowerbound}, focusing in particular on the representation in \eqref{eq:LDPstart2}. We have the following result.
\begin{theorem}\label{th:lowDegreeConbinatorial}
Let $\s{D}=(\s{D}_n)_n$ be an increasing sequence such that $\omega(1) \leq \s{D}\leq o(n)$ and let $\Gamma=(\Gamma_n)_n$ be a sequence.
\begin{enumerate}
    \item Assume that $\lambda^2=\chi^2(p||q)= \Theta(1)$. If,
    \begin{align}
        \frac{ \p{\max(\chi^2(p||q),1)\cdot 2\s{D}^2}^{\sqrt{\s{D}/2}}\cdot\max\p{\vc(\Gamma)d_{\max}(\Gamma),d^2_{\max}(\Gamma)}}{n-|v(\Gamma)|}\leq C, \label{eq:CompGeneralDense}
            \end{align}
    then $\norm{\s{L}_{n, \leq\s{D}}}_{\calH_0} = O(1)$.
    \item Assume that $\chi^2(p||q)=o(D^{-2})$. If,
    \begin{align}
        \frac{\sqrt{\chi^2(p||q)}\cdot\s{D}\cdot\max\p{\vc(\Gamma)d_{\max}(\Gamma),d^2_{\max}(\Gamma)\sqrt{\chi^2(p||q)}\cdot\s{D}}}{n-|v(\Gamma)|}\leq C, \label{eq:CompGeneralSparse}
            \end{align}
    then $\norm{\s{L}_{n, \leq\s{D}}}_{\calH_0} = O(1)$.
\end{enumerate}
\end{theorem}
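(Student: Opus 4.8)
$\textbf{Proof proposal for Theorem~\ref{th:lowDegreeConbinatorial}.}$ The plan is to mirror, almost verbatim, the proof of Theorem~\ref{th:lowerVCFull}, but with the summation over subgraphs $\s{H}$ truncated at $|e(\s{H})| \leq \s{D}$, using the representation \eqref{eq:LDPstart2}, namely $\norm{\s{L}_{n,\leq\s{D}}}_{\calH_0}^2 = \sum_{\s{H}'\subseteq\Gamma',\,|e(\s{H}')|\leq\s{D}}\lambda^{2|e(\s{H}')|}\,\P_{\s{H}}[\s{H}\subseteq\Gamma']$. The key point is that truncating the edge count to $\s{D}$ also truncates the vertex count: a subgraph with no isolated vertices and $j\leq\s{D}$ edges has at most $2\s{D}$ vertices, and a subgraph with $m$ connected components has $\ell\leq 2j\leq 2\s{D}$ vertices as well. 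Consequently the combinatorial bounds of Lemma~\ref{lem:probRandomSubgraph1} and Lemma~\ref{lem:subgraphsBoundedDegreeCount2} apply unchanged, and the geometric-type sums in the proof of Theorem~\ref{th:lowerVCFull} now run only over $2\leq\ell\leq 2\s{D}$. The first step is therefore to re-run the chain of inequalities \eqref{eq:GenBoundSparse}–\eqref{eq:AnoyningCases2} (for part 2, the sparse/small-$\lambda$ case) and \eqref{eq:GenBoundMuLeq1} plus the super-dense bookkeeping (for part 1) with this truncated range, and observe that the dominant term in the outer geometric sum is now the $\ell = 2\sqrt{\s{D}/2}$-type term coming from the inner sum over connected components, rather than the $\ell = k$ term.

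$\textbf{Second step: tracking where the $\s{D}$-dependence enters.}$ In the bounded-density parts of the argument, the factor $(1+\lambda^2)^{\mu\ell}$ that appeared in Theorem~\ref{th:lowerVCFull} is replaced, after truncation, by $(1+\lambda^2)^{\mu\cdot 2\s{D}}$ at worst — but more carefully, since we only keep $j\leq\s{D}$ edges, the ``super-logarithmic density'' blow-up is capped: the $(1+\lambda^2)^{\mu(\Gamma)}$ term never exceeds $(1+\lambda^2)^{\s{D}}$-type quantities. The cleanest way to see the final form is to bound $|\calS_{m,\ell,j}|$ using \eqref{eq:subgraphcount1Geq.1} with $j\leq\binom{\ell}{2}\leq\binom{2\s{D}}{2}$, which yields a factor $(\max(\lambda^2,1)\cdot 2\s{D}^2)^{\binom{\ell}{2}}$ and then, since $\ell$ is effectively forced down to order $\sqrt{\s{D}}$ by the geometric decay, the controlling factor becomes $(\max(\lambda^2,1)\cdot 2\s{D}^2)^{\sqrt{\s{D}/2}}$, exactly as in \eqref{eq:CompGeneralDense}. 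For part 2, where $\lambda^2 = o(\s{D}^{-2})$, the inner sum over $j$ collapses (the binomial tail is dominated by its first term since $\ell^2\lambda^2 = o(1)$ up to $\ell\sim\s{D}$), and one is left with the same two-regime split as in the proof of \eqref{eq:LowerMainCond3}, but with $\lambda$ replaced by $\lambda\s{D}$ and $\lambda^2$ by $\lambda^2\s{D}^2$ throughout, which is precisely \eqref{eq:CompGeneralSparse}. I would organize this as two cases (according to whether $\mathbf{2\vc^2/(e\lambda^2 d^2)}$-type ratio exceeds $1$ or not), as in the parent proof, to determine which of $\vc d$ or $d^2$ (suitably weighted) dominates the $\max$.

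$\textbf{Third step: the two complementary sub-cases and assembling the bound.}$ In each part, after substituting the truncated combinatorial bounds, the second moment is at most $1 + C_\varepsilon\sum_{\ell=2}^{2\s{D}}(\text{base})^\ell\sum_{m=1}^{\lfloor\ell/2\rfloor}(\text{ratio})^m$; whether the inner sum is dominated by its top term $(\text{ratio})^{\ell/2}$ or by $(1+\varepsilon)^\ell$ determines the effective base, and in one case the relevant threshold involves $\vc(\Gamma)d_{\max}(\Gamma)$ while in the other it involves $d_{\max}^2(\Gamma)$ — taking the max over both recovers the stated conditions. One then checks, exactly as in Theorem~\ref{th:lowerVCFull}, that the two thresholds cross precisely at the boundary of the two sub-cases, so a single condition of the form ``$\text{(growth factor in }\s{D}\text{)}\cdot\max(\vc d,d^2)/(n-|v(\Gamma)|)\leq C$'' suffices. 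Finally, to pass from $\norm{\s{L}_{n,\leq\s{D}}}_{\calH_0}^2 = O(1)$ to the conclusion I simply invoke the definition (the sum is a sum of nonnegative terms, so boundedness is literally what we proved).

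$\textbf{Expected main obstacle.}$ The delicate bookkeeping step is verifying that, after truncation, the outer sum over $\ell$ is genuinely dominated by $\ell$ of order $\sqrt{\s{D}}$ rather than by $\ell\sim\s{D}$ — this is what converts a naive $(\cdots)^{\s{D}}$ factor into the much better $(\cdots)^{\sqrt{\s{D}/2}}$ factor in \eqref{eq:CompGeneralDense}. This requires carefully comparing the super-exponential growth $(\max(\lambda^2,1)\cdot 2\s{D}^2)^{\binom{\ell}{2}}$ against the exponential decay $((1+\varepsilon)ed^2/(n-|v(\Gamma)|))^\ell$, and arguing that the product is maximized at $\ell \asymp \sqrt{\s{D}}$ under the stated hypothesis; this is analogous to the argument around \eqref{eq:TwoBadSums} in the proof of Theorem~\ref{obs:lowdegreeClique}, which handled the ``clique-like'' worst case, so I expect it to go through but to require the most care. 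The second mild subtlety is the sparse case $\lambda^2 = o(\s{D}^{-2})$: one must confirm that the condition $\ell^2\lambda^2 = o(1)$ holds uniformly over the truncated range $\ell\leq 2\s{D}$ (it does, since $(2\s{D})^2\lambda^2 = o(1)$), so that the binomial sums over $j$ indeed collapse to their leading terms.
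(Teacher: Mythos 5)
Your proposal follows essentially the same route as the paper's proof: truncate the Fourier/polynomial sum \eqref{eq:LDPstart2} at $\s{D}$ edges (hence $2\s{D}$ vertices), reuse Lemma~\ref{lem:probRandomSubgraph1} together with the subgraph-counting bound with $\binom{\binom{\ell}{2}}{j}$, split into the two complementary regimes according to the $\vc^2$ versus $d^2$ ratio, and for the sparse case let the inner $j$-sum collapse to its first term since $2\s{D}^2\lambda^2=o(1)$. The "main obstacle" you flag is resolved in the paper exactly as you anticipate, by splitting the outer sum at $\ell\asymp\sqrt{2\s{D}}$ (as in the two-sum argument of Theorem~\ref{obs:lowdegreeClique}), which is what produces the $\p{\max(\lambda^2,1)\cdot 2\s{D}^2}^{\sqrt{\s{D}/2}}$ factor in \eqref{eq:CompGeneralDense}.
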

\begin{proof}[Proof of Theorem~\ref{th:lowDegreeConbinatorial}]
    We start by proving \eqref{eq:CompGeneralDense}. To that end, we recall \eqref{eq:LDPstart2}. Our approach is reminiscent to the proof of Theorem~\ref{th:lowerVCD}, but now, the maximal number of vertices is $2\s{D}$,  and we run only on graphs with at most  $ \s{D}$ edges, the maximal number of edges $j$ in a subgraph with $\ell$ vertices is dominated by $\binom{\ell}{2}\wedge \s{D}$. Hence, using the same arguments as in \eqref{eq:GenralLB-GeneralMuBegin}--\eqref{eq:GenralLB-GeneralMuEnd} and Lemma~\ref{lem:subgraphsBoundedDegreeCount1}, we obtain,
    \begin{align}
         \norm{\s{L}_{n,\leq \s{D}}}_{\calH_0}^2&=\sum_{\substack{\s{H}\subseteq \binom{e(\Gamma)}{2}\\ |\s{H}|\leq \s{D}}} \lambda^{2\abs{\s{H}}} \P_{\s{H}}[\s{H}\subseteq \Gamma]\\
         &\leq 1+\sum_{\ell=2}^{2\s{D}} \sum_{m=1}^{\floor{\ell/2}}\sum_{j=\ell-m}^{\min\p{\binom{\ell}{2},\s{D}}} |\calS_{m,\ell,j}|  \lambda^{2j}  \frac{(2\vc)^m d^{\ell-m}}{(n-k)^\ell}\\
         &\leq 1+\sum_{\ell=2}^{2\s{D}} \sum_{m=1}^{\floor{\ell/2}}\sum_{j=\ell-m}^{\min\p{\binom{\ell}{2},\s{D}}} |\calS_{m,\ell,j}|  \lambda^{2j}   \frac{(2\vc)^m d^{\ell-m}}{(n-k)^\ell}\\
         &\leq 1+\sum_{\ell=2}^{2\s{D}} \sum_{m=1}^{\floor{\ell/2}}\sum_{j=\ell-m}^{\min\p{\binom{\ell}{2},\s{D}}} \abs{\s{Par}(\ell,m)}  \vc^m  (e  d)^{\ell-m}  \binom{\binom{\ell}{2}}{j}  \lambda^{2j}   \frac{(2\vc)^m d^{\ell-m}}{(n-k)^\ell}\\
         &\leq 1+C_\varepsilon  
          \sum_{\ell=2}^{2\s{D}} \p{\frac{(1+\varepsilon)  e  d^2}{n-k}}^\ell\sum_{m=1}^{\floor{\ell/2}}\p{\frac{2\vc^2}{e  d^2}}^m\sum_{j=\ell-m}^{\min\p{\binom{\ell}{2},\s{D}}}\binom{\binom{\ell}{2}}{j}  \lambda^{2j} \\
          &\leq 1+C_\varepsilon  
          \sum_{\ell=2}^{2\s{D}} \p{\frac{(1+\varepsilon)  e  d^2}{n-k}}^\ell\sum_{m=1}^{\floor{\ell/2}}\p{\frac{2\vc^2}{e  d^2}}^m\sum_{j=\ell-m}^{\min\p{\binom{\ell}{2},\s{D}}}\binom{\ell}{2}^{j}  \lambda^{2j}\label{eqn:LDPForSparse} \\
          &\leq
          1+C_\varepsilon  
          \sum_{\ell=2}^{2\s{D}} \p{\frac{(1+\varepsilon)  e  d^2}{n-k}}^\ell\sum_{m=1}^{\floor{\ell/2}}\p{\frac{2\vc^2}{e  d^2}}^m  \p{\frac{\ell^2  \lambda^2}{2}}^{\min\p{\binom{\ell}{2},\s{D}}}\label{eq:LDPmediate}\\
          &\leq
          1+C_\varepsilon  
          \sum_{\ell=2}^{2\s{D}} \p{\frac{(1+\varepsilon)  e  d^2}{n-k}}^\ell  \p{\frac{\ell^2  \lambda^2}{2}}^{\min\p{\frac{\ell^2}{2},\s{D}}}\sum_{m=1}^{\floor{\ell/2}}\p{\frac{2\vc^2}{e  d^2}}^m,\label{eq:LDPinital}
    \end{align}
    where in the fourth inequality we have used the Hardy-Ramanujan formula, and the sixth inequality is because we assume without loss of generality that $\lambda^2>1$; otherwise, we may replace $\lambda^2$ by $1$ and proceed with the analysis. We now separate our analysis into two complementary cases, starting with the case where,
    \begin{align}
        \frac{2\vc^2}{e  d^2}\geq 1+\alpha,\label{eq:blablac}
        \end{align}
        for some fixed $\alpha>0$, independent of $n$. In this case, we have,
        \begin{align}
             \norm{\s{L}_{n,\leq \s{D}}}_{\calH_0}^2&\leq 1+C_\varepsilon'  
          \sum_{\ell=2}^{2\s{D}} \p{\frac{(1+\varepsilon)  e  d^2}{n-k}}^\ell  \p{\frac{\ell^2  \lambda^2}{2}}^{\min\p{\frac{\ell^2}{2},\s{D}}}\p{\frac{2\vc^2}{e  d^2}}^{\frac{\ell}{2}}\\
          &=1+C_\varepsilon'  \Bigg[
          \sum_{\ell=2}^{\floor{\sqrt{2\s{D}}}} \p{\frac{(1+\varepsilon)\sqrt{2e} \vc d }{n-k}}^\ell  \p{\frac{\ell^2  \lambda^2}{2}}^{\frac{\ell^2}{2}}\\
          \nonumber&\hspace{3.5cm} + \sum_{\ell=\floor{\sqrt{2\s{D}}}+1}^{2\s{D}} \p{\frac{(1+\varepsilon)\sqrt{2e} \vc d }{n-k}}^\ell  \p{\frac{\ell^2  \lambda^2}{2}}^{\s{D}}\Bigg]\\
          &=1+C_\varepsilon'  \Bigg[
          \sum_{\ell=2}^{\floor{\sqrt{2\s{D}}}} \p{\frac{(1+\varepsilon)\sqrt{2e} \vc d }{n-k}}^\ell  \p{\s{D}  \lambda^2}^{\frac{\sqrt{2\s{D}}  \ell}{2}}\\
          \nonumber&\hspace{3.5cm} + \p{\frac{(2\s{D})^2  \lambda^2}{2}}^{\s{D}}  \sum_{\ell=\floor{\sqrt{2\s{D}}}+1}^{2\s{D}} \p{\frac{(1+\varepsilon)\sqrt{2e} \vc d }{n-k}}^\ell\Bigg]\\
          &=1+C_\varepsilon'  \Bigg[
          \sum_{\ell=2}^{\floor{\sqrt{2\s{D}}}} \p{\frac{(1+\varepsilon)\sqrt{2e} \vc d   (\s{D}  \lambda^2)^{\sqrt{\s{D}/2}}}{n-k}}^\ell\\
          &\hspace{3.5cm} + \p{\frac{(2\s{D})^2  \lambda^2}{2}}^{\s{D}}  \sum_{\ell=\floor{\sqrt{2\s{D}}}+1}^{2\s{D}} \p{\frac{(1+\varepsilon)\sqrt{2e} \vc d }{n-k}}^\ell\Bigg].\label{eq:lowdegreeUglySum}
        \end{align}
        Now, the first sum on the l.h.s. of \eqref{eq:lowdegreeUglySum} converges when 
        \begin{align}
            \frac{\vc  d  (\s{D}\lambda^2)^{\sqrt{\s{D}/2}}}{n-k}<C,\label{eq:LDPcond1}
        \end{align}
        for a sufficiently small constant $C>0$. A necessary condition for the second term on the r.h.s. of \eqref{eq:lowdegreeUglySum} to converge is that,
        \begin{align}
            \frac{(1+\varepsilon)  \sqrt{2e}\vc  d }{n-k}< 1-\alpha,
        \end{align}
        for some fixed $\alpha>0$, in which case, we have
        \begin{align}
        \sum_{\ell=\floor{\sqrt{2\s{D}}}+1}^{2\s{D}} \p{\frac{(1+\varepsilon)  2\vc  d}{n-k}}^\ell
             \leq C_\alpha 
             \p{\frac{(1+\varepsilon)\sqrt{2e} \vc d }{n-k}}^{\sqrt{2\s{D}}}.
        \end{align}
        Hence, the second term on the r.h.s. of \eqref{eq:lowdegreeUglySum} is bounded if,
        \begin{align}
            \frac{\vc  d   \p{2\s{D}^2   \lambda^2}^{\sqrt{\s{D}/2}} }{n-k}<C,\label{eq:LDPcond2}
        \end{align}
         for a sufficiently small constant $C>0$. In the limit $\s{D}\to\infty$, the condition in \eqref{eq:LDPcond1} is satisfied if \eqref{eq:LDPcond2} holds, which implies that $\norm{\s{L}_{n,\leq \s{D}}}_{\calH_0}$ is bounded provided that \eqref{eq:LDPcond2} holds. Next, we move to the complementary case where, 
         \begin{align}
        \frac{2\vc^2}{e  d^2}\leq 1+o(1),
         \end{align}
        for some $o(1)$ function. Here, for a sufficiently large $n$, the inner sum on the r.h.s. of \eqref{eq:lowdegreeUglySum} is dominated by $\p{1+\varepsilon}^\ell$, and thus,
        \begin{align}
             \norm{\s{L}_{n,\leq \s{D}}}_{\calH_0}^2&\leq 1+C_\varepsilon' 
          \sum_{\ell=2}^{2\s{D}} \p{\frac{(1+\varepsilon)^2  e  d^2}{n-k}}^\ell  \p{\frac{\ell^2  \lambda^2}{2}}^{\min\p{\frac{\ell^2}{2},\s{D}}}.
          \end{align}
          Accordingly, if we repeat the exact same steps as in the first case above, we finally obtain that,
          \begin{align}
             \norm{\s{L}_{n,\leq \s{D}}}_{\calH_0}^2&\leq1+C_\varepsilon'  \Bigg[
          \sum_{\ell=2}^{\floor{\sqrt{2\s{D}}}} \p{\frac{(1+\varepsilon)^2 e  d^2  (\s{D}  \lambda^2)^{\sqrt{\s{D}/2}}}{n-k}}^\ell\label{eq:lowdegreeUglySum2}\\
          \nonumber&\hspace{3.5cm} + \p{\frac{(2\s{D})^2  \lambda^2}{2}}^{\s{D}}  \sum_{\ell=\floor{\sqrt{2\s{D}}}+1}^{2\s{D}} \p{\frac{(1+\varepsilon)^2  e  d^2}{n-k}}^\ell\Bigg],
          \end{align}
          which converges if,
    \begin{align}
            \frac{d^2  (2\s{D}^2\lambda^2)^{\sqrt{\s{D}/2}}}{n-k}<C,\label{eq:LDPcond3}
        \end{align}
         for a sufficiently small $C>0$.  We conclude by noting that \eqref{eq:blablac} holds exactly when the condition \eqref{eq:LDPcond2} dominates the condition \eqref{eq:LDPcond3} (up to a fixed multiplicative constant, and therefore \eqref{eq:CompGeneralDense} sufficient condition.

         Finally, we prove \eqref{eq:CompGeneralSparse}. This is quite similar to the above analysis, and our derivations proceed from \eqref{eqn:LDPForSparse} as follows,
         \begin{align}
             \norm{\s{L}_{n,\leq \s{D}}}_{\calH_0}^2&\leq 1+C_\varepsilon  
          \sum_{\ell=2}^{2\s{D}} \p{\frac{(1+\varepsilon)  e  d^2}{n-k}}^\ell\sum_{m=1}^{\floor{\ell/2}}\p{\frac{2\vc^2}{e  d^2}}^m\sum_{j=\ell-m}^{\min\p{\binom{\ell}{2},\s{D}}}\binom{\ell}{2}^{j}\lambda^{2j}\\
          &\leq 1+C_\varepsilon  
          \sum_{\ell=2}^{2\s{D}} \p{\frac{(1+\varepsilon)  e  d^2}{n-k}}^\ell\sum_{m=1}^{\floor{\ell/2}}\p{\frac{2\vc^2}{e  d^2}}^m\sum_{j=\ell-m}^{\min\p{\binom{\ell}{2},\s{D}}}\p{2\s{D}^2\lambda^2}^j\\
          &\leq 1+C_\varepsilon  
          \sum_{\ell=2}^{2\s{D}} \p{\frac{(1+\varepsilon)  e  d^2}{n-k}}^\ell\sum_{m=1}^{\floor{\ell/2}}\p{\frac{2\vc^2}{e  d^2}}^m\frac{(2\s{D}^2\lambda^2)^{\ell-m}}{1-2\s{D}^2\lambda^2}\\
          &\leq 1+C'_\varepsilon  
          \sum_{\ell=2}^{2\s{D}} \p{\frac{2(1+\varepsilon)  e  d^2\s{D}^2\lambda^2}{n-k}}^\ell\sum_{m=1}^{\floor{\ell/2}}\p{\frac{\vc^2}{e  d^2\s{D}^2\lambda^2}}^m,\label{eqn:LDPLastBoundSparse}
         \end{align}
         where third inequality follows from the assumption that $\lambda^2 = o(\s{D}^{-2})$. Accordingly, in a similar fashion to the analysis of the previous case, we see that if,
        \begin{align}
            \frac{\vc^2}{e  d^2\s{D}^2\lambda^2}\geq 1+\alpha,
        \end{align}
        for some positive $\alpha>0$, then \eqref{eqn:LDPLastBoundSparse} is bounded provided that,
        \begin{align}
            \frac{\vc d \s{D} \lambda}{n-k}<C,
        \end{align}
        for a sufficiently small constant $C$. Otherwise, if,
        \begin{align}
            \frac{\vc^2}{e  d^2\s{D}^2\lambda^2}\leq 
            1+o(1),
         \end{align}
        for some $o(1)$ function, then \eqref{eqn:LDPLastBoundSparse} is bounded provided that,
        \begin{align}
            \frac{d^2 \s{D}^2 \lambda^2}{n-k}<C.
        \end{align}
        Combining the above conditions, we obtain \eqref{eq:CompGeneralSparse}, which concludes the proof.
\end{proof}

\subsection{Reduction to the vertex cover-degree balanced case}\label{subsec:ReducLDP}

There is a gap between the computational lower bound in Theorem~\ref{th:lowDegreeConbinatorial} and the performance of the efficient algorithms (i.e., the count and maximum degree tests) in Theorem~\ref{thm:upperBoundAlgo}, which disappear if $\Gamma$ is $\vcd$-balanced. We remedy this and prove that Theorem~\ref{thm:upperBoundAlgo} is tight using a similar strategy as in Subsection~\ref{sec:ReudctionToBalanced}. We start with the following variant of Proposition~\ref{prop:decompisitionThD}.
    \begin{prop}\label{prop:decompisitionThDCcomp}
    Let $\Gamma=(\Gamma_n)_n$ be a sequence of graphs, and assume that each can be decomposed into a set of edge-disjoint graphs, i.e., $\Gamma=\bigcup_{\ell=1}^M \Gamma_\ell$. 
    \begin{enumerate}
        \item If $\chi^2(p||q)=\Theta(1)$, and there exists a constant $C$ such that,
                \begin{align}
        \max_{\ell=1,\dots, M}\frac{ \p{\bar\lambda_M^2\cdot M^4\s{D}^2}^{\sqrt{M^2\s{D}/2}}\cdot\max\p{\vc(\Gamma_\ell)d_{\max}(\Gamma_\ell),d^2_{\max}(\Gamma_\ell)}}{n-|v(\Gamma_\ell)|}\leq C,\label{eq:condintersectionDComp}
    \end{align}
    where $\bar\lambda_M^2\triangleq \max(1,2eM^2\lambda^2,\lambda_M^2,C_\lambda(e\lambda^2)^{M^2})$, $\lambda_M^2=(1-\lambda^2)^{M^2}-1$, and $C_{\lambda}>0$ is a universal constant, then $\norm{\s{L}_{n,\leq \s{D}}}_{\calH_0}^2=O(1)$.
        \item If $\chi^2(p||q)=o(D^{-2})$, and there exists a constant $C$ such that,
    \begin{align}
        \max_{\ell=1,\dots, M}\frac{\bar\lambda_M\s{D}\cdot\max\p{\vc(\Gamma_\ell)d_{\max}(\Gamma_\ell),d^2_{\max}(\Gamma_\ell)\bar\lambda_M\s{D}}}{n-|v(\Gamma_\ell)|}\leq C, \label{eq:condintersectionDComp2}
            \end{align}
            where
    \begin{align}
        \bar\lambda_M^2\triangleq \max\p{\lambda_M^2,eM^4\s{D}\lambda^2,(eM^2\s{D}\lambda^2)^{M^2},\s{D}M^2\p{e M^2\s{D}^2\lambda^2}^{M^2}\p{M^4\s{D}}^{M^4}},
    \end{align}
    then $\norm{\s{L}_{n, \leq\s{D}}}_{\calH_0} = O(1)$.
\end{enumerate}
\end{prop}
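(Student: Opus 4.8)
The plan is to transcribe the proof of Proposition~\ref{prop:decompisitionThD} almost verbatim, making two substitutions: the moment-generating-function identity \eqref{eq:SecondMomentExpression} is replaced by its low-degree truncation \eqref{eq:LDPstart2} (equivalently, by the truncated analogue Lemma~\ref{lem:intersectionMomentGenGeneralComp}), and Lemma~\ref{lem:interectionD} is replaced by the two-graph version of Theorem~\ref{th:lowDegreeConbinatorial}. Throughout write $\vc_\ell=\vc(\Gamma_\ell)$, $d_\ell=d_{\max}(\Gamma_\ell)$, $\mu_\ell=\mu(\Gamma_\ell)$, $k_\ell=|v(\Gamma_\ell)|$, let $\Gamma'$ be a fixed copy of $\Gamma$ in $\calK_n$ and $\Gamma\sim\s{Unif}(\calS_{\Gamma})$ a uniform random copy, and let $\Gamma'_1,\dots,\Gamma'_M$ be the fixed copies induced by $\Gamma'$.

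First I would decompose, relax the cutoff, and apply H\"older. Since the $\Gamma_i$ (resp. the $\Gamma'_j$) are edge-disjoint, every $\s{H}\subseteq e(\Gamma\cap\Gamma')$ splits uniquely as $\s{H}=\bigsqcup_{i,j}\s{H}_{ij}$ with $\s{H}_{ij}\subseteq e(\Gamma_i\cap\Gamma'_j)$ and $|\s{H}|=\sum_{i,j}|\s{H}_{ij}|$. Writing \eqref{eq:LDPstart2} as $\E_\Gamma\bigl[\sum_{\s{H}\subseteq e(\Gamma\cap\Gamma'),\,|\s{H}|\leq\s{D}}\lambda^{2|\s{H}|}\bigr]$ exactly as in the proof of Proposition~\ref{prop:LikelihoodMomentExperession},
\begin{align*}
\norm{\s{L}_{n,\leq\s{D}}}_{\calH_0}^2
&=\E_\Gamma\Bigl[\sum_{\substack{\s{H}\subseteq e(\Gamma\cap\Gamma')\\ |\s{H}|\leq\s{D}}}\lambda^{2|\s{H}|}\Bigr]
\leq\E_\Gamma\Bigl[\prod_{i,j=1}^M\ \sum_{\substack{\s{H}_{ij}\subseteq e(\Gamma_i\cap\Gamma'_j)\\ |\s{H}_{ij}|\leq\s{D}}}\lambda^{2|\s{H}_{ij}|}\Bigr]\\
&\leq\prod_{i,j=1}^M\E_{\Gamma_i}\Bigl[\Bigl(\,\sum_{\substack{\s{H}\subseteq e(\Gamma_i\cap\Gamma'_j)\\ |\s{H}|\leq\s{D}}}\lambda^{2|\s{H}|}\Bigr)^{\!M^2}\Bigr]^{1/M^2},
\end{align*}
where the first inequality discards the joint constraint $\sum_{i,j}|\s{H}_{ij}|\leq\s{D}$ in favour of the individual ones $|\s{H}_{ij}|\leq\s{D}$ (only enlarging the sum), and the second is H\"older's inequality with exponent $M^2$, precisely as in \eqref{eq:GammaIeq}--\eqref{eq:GammaIeq2}.

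Next I would linearize the $M^2$-th power. For any edge set $S$, expanding the product and regrouping by the multiset union of the $M^2$ subgraphs, together with $\prod_k\binom{|S|}{i_k}\leq\frac{|S|^{s}}{s!}\binom{s}{i_1,\dots,i_{M^2}}$ for $s=\sum_k i_k$, one gets
\begin{align*}
\Bigl(\ \sum_{\substack{\s{H}\subseteq S\\ |\s{H}|\leq\s{D}}}\lambda^{2|\s{H}|}\Bigr)^{\!M^2}\ \leq\ C_\lambda\ \sum_{\substack{\s{H}\subseteq S\\ |\s{H}|\leq M^2\s{D}}}\bar\lambda_M^{2|\s{H}|},
\end{align*}
with $C_\lambda$ universal and $\bar\lambda_M^2$ the maximum appearing in the statement; the entries of that $\max$ correspond to the distinct regimes in which the $M^2$-fold product is controlled (bound by the full moment-generating function, as in \eqref{eq:GammaIeq2}; the ``bulk'' where the union arises from $M^2$ pieces of comparable size; and the extreme edge-count regime), paralleling the separate cases in the proof of Theorem~\ref{th:lowDegreeConbinatorial}, while the $\max$ with $1$ handles $\lambda^2\leq1$ by simply replacing $\lambda^2$ by $1$. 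Substituting this and rewriting the expectation as $\sum_{\s{H}\subseteq\Gamma'_j,\,|\s{H}|\leq M^2\s{D}}\bar\lambda_M^{2|\s{H}|}\,\P_{\Gamma_i}[\s{H}\subseteq\Gamma_i]$ (exactly as in \eqref{eq:LDPstart2}) turns each factor into $C_\lambda^{1/M^2}$ times the very object bounded by Theorem~\ref{th:lowDegreeConbinatorial} — now with degree cutoff $M^2\s{D}$ and $\chi^2$-parameter $\bar\lambda_M^2$ — once it is extended from one graph to the pair $(\Gamma_i,\Gamma_j)$. That extension is the exact analogue of how Lemma~\ref{lem:interectionD} extends Theorem~\ref{th:lowerVCD}: run the enumeration of $\calS_{m,\ell,j}$ over subgraphs of $\Gamma_i$ using $\vc_i,d_i$ in Lemma~\ref{lem:subgraphsBoundedDegreeCount1}, bound $\P_{\Gamma_j}[\s{H}\subseteq\Gamma_j]$ via Lemma~\ref{lem:probRandomSubgraph1} using $\vc_j,d_j$, and use that $|v(\s{H})|\leq\min(k_i,k_j)$ and $|e(\s{H})|\leq|v(\s{H})|\min(\mu_i,\mu_j)$; the resulting sufficient condition replaces $\max(\vc_\ell d_\ell,d_\ell^2)$ by $\max(\sqrt{\vc_i\vc_j d_i d_j},\,d_i d_j)$ and $k_\ell$ by $\min(k_i,k_j)$.

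Finally I would verify the hypotheses and conclude. Using $\sqrt{\vc_i\vc_j d_i d_j}\leq\max(\vc_i d_i,\vc_j d_j,d_i^2,d_j^2)$, $\min(k_i,k_j)\leq k_\ell$ for $\ell\in\{i,j\}$, and $\bar\lambda_M^2\geq\lambda^2$, the two-graph condition for every pair $(i,j)$ is implied by $\max_{1\leq\ell\leq M}\frac{(\bar\lambda_M^2 M^4\s{D}^2)^{\sqrt{M^2\s{D}/2}}\max(\vc_\ell d_\ell,d_\ell^2)}{n-k_\ell}\leq C$, which is exactly \eqref{eq:condintersectionDComp}; the diagonal case $i=j$ is the even weaker statement already handled by Theorem~\ref{th:lowDegreeConbinatorial}. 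Hence each of the $M^2$ factors is $O(1)$, and when $M=M_n$ grows with $n$ one reads off from the proof of Theorem~\ref{th:lowDegreeConbinatorial}, for a small enough universal $C$, that each factor is in fact $\leq2$, so the product is $O(1)$. The sparse case ($\chi^2(p\|q)=o(\s{D}^{-2})$) is identical, starting instead from \eqref{eqn:LDPLastBoundSparse} rather than \eqref{eq:LDPmediate}, invoking part~2 of Theorem~\ref{th:lowDegreeConbinatorial} and its two-graph analogue, and tracking the correspondingly different $\bar\lambda_M^2$. The one nonroutine point is the linearization step: unlike the full moment-generating function, the truncated polynomial $\sum_{|\s{H}|\leq\s{D}}\lambda^{2|\s{H}|}$ does not factor across the decomposition, so one must bound its $M^2$-fold self-product by a single truncated sum with inflated cutoff $M^2\s{D}$ and inflated parameter $\bar\lambda_M^2$; the bookkeeping of which regime in $\bar\lambda_M^2$ dominates is precisely what pins down the exponent $\sqrt{M^2\s{D}/2}$, and hence the exact form of $\bar\lambda_M^2$, in the statement. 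Everything else is a mechanical transcription of Propositions~\ref{prop:decompisitionThD} and~\ref{prop:decompisitionThFull}.
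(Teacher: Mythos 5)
Your overall architecture coincides with the paper's: represent $\norm{\s{L}_{n,\leq\s{D}}}_{\calH_0}^2$ as a truncated moment sum (Lemma~\ref{lem:intersectionMomentGenGeneralComp}/\eqref{eq:LDPstart2}), pass to a product over the pairs $(i,j)$ by relaxing the global cutoff, apply H\"older with exponent $M^2$, convert each $M^2$-th power of a truncated binomial sum into a single truncated sum with inflated cutoff $M^2\s{D}$ and inflated parameter $\bar\lambda_M^2$, and finish with Theorem~\ref{th:lowDegreeConbinatorial} for the diagonal terms and the two-graph extension (the paper's Lemma~\ref{lem:interectionDComp}) for the cross terms, reducing everything to \eqref{eq:condintersectionDComp} via $\sqrt{\vc_i\vc_jd_id_j}\leq\max(\vc_id_i,\vc_jd_j,d_i^2,d_j^2)$. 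Your edge-disjoint subgraph-splitting argument for the product step is correct and is in fact a cleaner route to the paper's inequality \eqref{eqn:DecompoStep1}, which the paper establishes through Vandermonde-type manipulations; the H\"older step and the final bookkeeping also match the paper.

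The gap is precisely at the step you flag as nonroutine: you assert, but do not prove, that $\bigl(\sum_{\s{H}\subseteq S,\,|\s{H}|\leq\s{D}}\lambda^{2|\s{H}|}\bigr)^{M^2}\leq C_\lambda\sum_{\s{H}\subseteq S,\,|\s{H}|\leq M^2\s{D}}\bar\lambda_M^{2|\s{H}|}$ with the \emph{specific} $\bar\lambda_M^2$ of the statement, and the multinomial bound you offer does not deliver it. Grouping by total size $s$, your bound only gives $(M^2|S|\lambda^2)^s/s!$ for the coefficient of $\lambda^{2s}$; this suffices when $s\leq|S|$ (term-by-term against $\binom{|S|}{s}(eM^2\lambda^2)^s$) and when $|S|\geq M^2\s{D}$, and the case $|S|\leq\s{D}$ is exact via $(1+\lambda^2)^{M^2|S|}=(1+\lambda_M^2)^{|S|}$; but in the intermediate regime $\s{D}<|S|<M^2\s{D}$ the overshoot terms with $|S|<s\leq M^2\s{D}$ have no counterpart on the right and must be absorbed into $\bar\lambda_M^{2|S|}$, which with your bound forces $\bar\lambda_M^2$ of order $(eM^2\lambda^2)^{M^2}=M^{2M^2}(e\lambda^2)^{M^2}$ rather than the stated $C_\lambda(e\lambda^2)^{M^2}$ with a universal $C_\lambda$ (and the exponent $\sqrt{M^2\s{D}/2}$ in \eqref{eq:condintersectionDComp} makes such a loss non-negligible). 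This intermediate regime is exactly where the paper does real work: it rewrites the truncated sum as $(1+\lambda^2)^{|S|}\pr[\s{B}\leq\cdot]$ with $\s{B}$ binomial and uses a max-versus-average comparison, followed by the maximum-term analysis yielding $\lambda_{M,2}^2,\lambda_{M,3}^2,\lambda_{M,4}^2$ (including the treatment of non-integer $c\s{D}$), and that computation is what pins down $\bar\lambda_M^2$; your ``one gets'' replaces it. A version of the proposition with a weaker $\bar\lambda_M^2$ would survive your route and would still serve the fixed-$M$ applications, but the statement as written is not established. A minor misattribution: the case split behind $\bar\lambda_M^2$ does not parallel the cases of Theorem~\ref{th:lowDegreeConbinatorial} (those concern whether $\vc^2$ or $d^2$ dominates); it is a split on the size of $|e(\Gamma_i\cap\Gamma_j')|$ relative to $\s{D}$ and $M^2\s{D}$, which is new to this proposition.
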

To prove Proposition~\ref{prop:decompisitionThDCcomp} we need the following generalization of Theorem~\ref{th:lowDegreeConbinatorial}, which provides the conditions under which the truncated moment generating function of the intersection of \emph{two arbitrary} subgraphs is bounded. 

\begin{lemma}\label{lem:interectionDComp}
      Let $\Gamma_1=(\Gamma_{1,n})_n$ and $\Gamma_2=(\Gamma_{2,n})_n$ be two sequences of graphs. Then, there exists a universal constant $C$ such that:
      \begin{enumerate}
          \item If $\chi^2(p||q)=\Theta(1)$,  and
          \begin{align}
         \frac{ \p{\max(\lambda^2,1)\cdot 2\s{D}^2}^{\sqrt{\s{D}/2}}\max\p{\sqrt{\vc(\Gamma_1)d_{\max}(\Gamma_1)\vc(\Gamma_2)d_{\max}(\Gamma_2)},d_{\max}(\Gamma_1)d_{\max}(\Gamma_2)}}{n-\min\p{|v(\Gamma_1)|,|v(\Gamma_2)|}}\leq C,\label{eq:condIntersectionD1Comp}
     \end{align}
      then,
    \begin{align}
        \bE_{ \Gamma_2}\pp{\sum_{m=0}^{\min(\s{D},|\Gamma_1'\cap\Gamma_2|)}\binom{|\Gamma_1'\cap\Gamma_2|}{m}\lambda^{2m}}=O(1),\label{eq:condIntersectionD2Comp}
    \end{align}
     where the expectation is taken w.r.t. $\Gamma_2\sim\s{Unif}(\calS_{\Gamma_2})$ and $\Gamma_1'$ is fixed copy of $\Gamma_1$ in $\calK_n$. 
     \item If $\chi^2(p||q)=o(D^{-2})$, and
     \begin{align}
        &\frac{\sqrt{\chi^2(p||q)}\cdot\s{D}\cdot\max\p{\sqrt{\vc(\Gamma_1)d_{\max}(\Gamma_1)\vc(\Gamma_2)d_{\max}(\Gamma_2)},d_{\max}(\Gamma_1)d_{\max}(\Gamma_2)\sqrt{\chi^2(p||q)}\cdot\s{D}}}{n-\min\p{|v(\Gamma_1)|,|v(\Gamma_2)|}}\nonumber\\
        &\hspace{4cm}\leq C, \label{eq:condIntersectionD1Comp2}
            \end{align}
            then,
    \begin{align}
        \bE_{ \Gamma_2}\pp{\sum_{m=0}^{\min(\s{D},|\Gamma_1'\cap\Gamma_2|)}\binom{|\Gamma_1'\cap\Gamma_2|}{m}\lambda^{2m}}=O(1).\label{eq:condIntersectionD2Comp2}
    \end{align}
      \end{enumerate}
      
\end{lemma}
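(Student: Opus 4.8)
The plan is to repeat, almost verbatim, the argument establishing Theorem~\ref{th:lowDegreeConbinatorial}, with the single planted graph replaced by the pair $(\Gamma_1,\Gamma_2)$, exactly as Lemma~\ref{lem:interectionD} generalizes Theorem~\ref{th:lowerVCD} in the statistical setting. First I would symmetrize: the distribution of $|e(\Gamma_1'\cap\Gamma_2)|$ is unchanged under swapping the roles of the fixed and the random copy, so I may assume $\mu(\Gamma_1)\le\mu(\Gamma_2)$ and expand the truncated moment generating function over subgraphs of the fixed copy $\Gamma_1'$. Following the derivation of \eqref{eq:LDPstart2} (equivalently, repeating the proof of Lemma~\ref{lem:intersectionMomentGenGeneral} but retaining only subgraphs with at most $\s{D}$ edges, cf.\ \eqref{eq:inetersectionEquivalent2}) gives
\[
\bE_{\Gamma_2}\Bigg[\sum_{m=0}^{\min(\s{D},\,|\Gamma_1'\cap\Gamma_2|)}\binom{|\Gamma_1'\cap\Gamma_2|}{m}\lambda^{2m}\Bigg]
=\sum_{\substack{\s{H}\subseteq\Gamma_1'\\ |e(\s{H})|\le\s{D}}}\lambda^{2|e(\s{H})|}\,\P_{\Gamma_2}\big[\s{H}\subseteq\Gamma_2\big].
\]
Then Lemma~\ref{lem:probRandomSubgraph1}, applied to $\Gamma_2$, bounds $\P_{\Gamma_2}[\s{H}\subseteq\Gamma_2]$ by $(2\vc_2)^{m(\s{H})}d_2^{|v(\s{H})|-m(\s{H})}/(n-k_2)^{|v(\s{H})|}$, so each summand depends on $\s{H}$ only through $(|v(\s{H})|,|e(\s{H})|,m(\s{H}))$, and Lemma~\ref{lem:subgraphsBoundedDegreeCount1} supplies the count of subgraphs of $\Gamma_1$ with these parameters, namely $|\s{Par}(\ell,m)|\,\vc_1^m(e\tilde d_1)^{\ell-m}$ times a binomial factor which I would crudely bound by $\binom{\binom{\ell}{2}}{j}\le\binom{\ell}{2}^{j}$. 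The truncation forces $\ell\le 2\s{D}$ and $j\le\s{D}$.

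With these ingredients the resulting triple sum over $(\ell,m,j)$ is exactly the one handled in \eqref{eq:GenralLB-GeneralMuBegin}--\eqref{eq:generallowerSplit1} and \eqref{eqn:LDPForSparse}--\eqref{eq:LDPinital}, now with $\vc\mapsto\sqrt{\vc_1\vc_2}$, $d\mapsto\sqrt{d_1d_2}$, and $k\mapsto\bar k\triangleq\min(k_1,k_2)$. In the dense case $\lambda^2=\chi^2(p\|q)=\Theta(1)$ I would sum the $j$-series up to $\min(\binom{\ell}{2},\s{D})$ to produce the factor $(\ell^2\lambda^2/2)^{\min(\ell^2/2,\s{D})}$ (replacing $\lambda^2$ by $1$ when $\lambda^2\le 1$, as in the proof of Theorem~\ref{th:lowDegreeConbinatorial}), and then split according to whether $2\vc_1\vc_2/(ed_1d_2)$ is at least $1+\alpha$ or at most $1+o(1)$: in the former the $m$-sum is dominated by its top term $\propto(\vc_1\vc_2/(d_1d_2))^{\ell/2}$, in the latter by $(1+\varepsilon)^{\ell}$. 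In each case I would split the surviving $\ell$-sum at $\ell=\lfloor\sqrt{2\s{D}}\rfloor$ exactly as in \eqref{eq:lowdegreeUglySum}: the tail $\ell>\sqrt{2\s{D}}$, where $(\ell^2\lambda^2/2)^{\s{D}}$ is frozen at order $(\s{D}^2\lambda^2)^{\s{D}}$, yields the binding condition, which after collecting constants is precisely \eqref{eq:condIntersectionD1Comp} (mirroring \eqref{eq:CompGeneralDense}); the head $\ell\le\sqrt{2\s{D}}$ gives a bound of the form \eqref{eq:LDPcond1} that is implied by it as $\s{D}\to\infty$. This establishes part~1. For part~2, $\chi^2(p\|q)=o(\s{D}^{-2})$, I would instead start from the analogue of \eqref{eqn:LDPForSparse}: since $2\s{D}^2\lambda^2=o(1)$, the $j$-series is a convergent geometric tail bounded by $(2\s{D}^2\lambda^2)^{\ell-m}/(1-2\s{D}^2\lambda^2)$, turning the expression into a double geometric sum with ratios $2e\s{D}^2\lambda^2 d_1d_2/(n-\bar k)$ and $\vc_1\vc_2/(e\s{D}^2\lambda^2 d_1d_2)$, exactly as in \eqref{eqn:LDPLastBoundSparse}; the same two-case analysis then gives boundedness under $\sqrt{\vc_1 d_1\vc_2 d_2}\,\s{D}\lambda=O(n-\bar k)$ together with $\vc_1 d_1\vc_2 d_2\,\s{D}^2\lambda^2=O((n-\bar k)^2)$, which combine to \eqref{eq:condIntersectionD1Comp2} (mirroring \eqref{eq:CompGeneralSparse}). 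As at the end of the proof of Theorem~\ref{th:lowDegreeConbinatorial}, the ``ratio $\ge 1+\alpha$'' case occurs precisely when the $\sqrt{\vc_1 d_1\vc_2 d_2}$ term dominates the $d_1d_2$ term up to a fixed constant, so the single stated condition is sufficient in each part.

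I expect the main obstacle to be purely organizational: the truncation split ($\ell$ below or above $\sqrt{2\s{D}}$) and the geometry split (the $m$-ratio below or above $1$) together generate four sub-cases, each with its own critical threshold, and one must verify that in every one of them the threshold surviving the limit $\s{D}\to\infty$ is exactly the one recorded in the statement, with all $(1+\varepsilon)$, $e$, and $\sqrt{2e}$ factors safely absorbed into the universal constant $C$. A secondary point requiring care is the symmetrization: the count produced by Lemma~\ref{lem:subgraphsBoundedDegreeCount1} is in terms of subgraphs of $\Gamma_1$ and so a priori involves $\mu(\Gamma_1)$, but --- unlike the statistical bounds of Theorem~\ref{th:lowerVCFull}, where the factor $(1+\lambda^2)^{\mu}$ persists --- here the degree-$\s{D}$ truncation already caps the exponent on $\lambda^2$ at $\s{D}$, so the $\mu$-dependence drops out entirely and the only asymmetry left, $\bar k=\min(k_1,k_2)$, is harmless since $n-k_i\ge n-\bar k$ enters the denominators monotonically.
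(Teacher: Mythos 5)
Your proposal is correct and follows essentially the same route as the paper's proof: expand the truncated MGF via the degree-$\s{D}$ analogue of Lemma~\ref{lem:intersectionMomentGenGeneral} over subgraphs of the fixed copy $\Gamma_1'$, bound the probability with Lemma~\ref{lem:probRandomSubgraph1} and the count with Lemma~\ref{lem:subgraphsBoundedDegreeCount1}, and then run the same case splits (the $m$-ratio $2\vc_1\vc_2/(ed_1d_2)$ versus $1$, and the $\ell$-sum split at $\sqrt{2\s{D}}$ in the dense case, the geometric-tail bound $(2\s{D}^2\lambda^2)^{\ell-m}$ in the sparse case) used for Theorem~\ref{th:lowDegreeConbinatorial}, arriving at \eqref{eq:condIntersectionD1Comp} and \eqref{eq:condIntersectionD1Comp2}. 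Your closing observation that the truncation caps the exponent at $\s{D}$ so the $\mu$-dependence disappears is also consistent with how the paper's argument proceeds.
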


Before we prove Lemma~\ref{lem:interectionDComp}, we will require the following simple generalization of Lemma~\ref{lem:intersectionMomentGenGeneral}.
\begin{lemma}\label{lem:intersectionMomentGenGeneralComp}  Let $\Gamma_1$ and $\Gamma_2$ be two arbitrary (perhaps different) subgraphs of $\calK_n$ and $\s{D}$ be an integer. Then,
\begin{align}
    \E_{\Gamma_1\indep \Gamma_2}\pp{\sum_{m=0}^{\min\p{\s{D},|e(\Gamma_1\cap \Gamma_2)|}}\binom{|e(\Gamma_1\cap\Gamma_2)|}{m}\lambda^{2m}}&=\sum_{\substack{\s{H}\subseteq \binom{[n]}{2}\\ |\s{H}|\leq \s{D}}}\lambda^{2|\s{H}|}\cdot \P_{\Gamma_1}\pp{\s{H} \subseteq \Gamma_{1}}\cdot \P_{\Gamma_2} \pp{\s{H} \subseteq \Gamma_2}\label{eq:inetersectionEquivalentComp}\\
    &=\sum_{\substack{\s{H}\subseteq \Gamma_1' \\ |\s{H}|\leq \s{D}}}\lambda^{2|\s{H}|}\cdot \P_{\Gamma_2} \pp{\s{H} \subseteq \Gamma_2},\label{eq:inetersectionEquivalent2Comp}
\end{align}
where $\Gamma_1'$ is a fixed arbitrary copy of $\Gamma_1$ in $\calK_n$, the summation is over graphs containing no isolated vertices, and the probabilities and the expectation are taken w.r.t. two independent random copies $\Gamma_i\sim \s{Unif}(\calS_{\Gamma_i})$, for $i=1,2$. 
\end{lemma}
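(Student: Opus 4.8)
The plan is to mimic the proof of Lemma~\ref{lem:intersectionMomentGenGeneral} essentially verbatim, carrying the degree cutoff $|\s{H}|\le\s{D}$ along as an extra bookkeeping constraint that commutes with every manipulation. First I would establish \eqref{eq:inetersectionEquivalentComp}. Treating subgraphs without isolated vertices as sets of edges, independence of $\Gamma_1$ and $\Gamma_2$ gives $\P_{\Gamma_1}\pp{\s{H}\subseteq\Gamma_1}\cdot\P_{\Gamma_2}\pp{\s{H}\subseteq\Gamma_2}=\P_{\Gamma_1\indep\Gamma_2}\pp{\s{H}\subseteq e(\Gamma_1\cap\Gamma_2)}$, exactly as in the untruncated case. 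Writing $\s{H}'=e(\Gamma_1\cap\Gamma_2)$ and summing first over the value of $\s{H}'$ and then over $\s{H}\subseteq\s{H}'$ with $|\s{H}|\le\s{D}$, the right-hand side of \eqref{eq:inetersectionEquivalentComp} becomes
\begin{align}
\sum_{\s{H}'\subseteq\binom{[n]}{2}}\P_{\Gamma_1\indep\Gamma_2}\pp{\s{H}'=e(\Gamma_1\cap\Gamma_2)}\sum_{\substack{\s{H}\subseteq\s{H}'\\ |\s{H}|\le\s{D}}}\lambda^{2|\s{H}|}.
\end{align}
The inner sum depends on $\s{H}'$ only through its cardinality and equals $\sum_{m=0}^{\min(\s{D},|\s{H}'|)}\binom{|\s{H}'|}{m}\lambda^{2m}$; this is the single point where the computation differs from Lemma~\ref{lem:intersectionMomentGenGeneral}, in which the unrestricted inner sum collapsed to $(1+\lambda^2)^{|\s{H}'|}$. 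Substituting back recovers $\E_{\Gamma_1\indep\Gamma_2}\pp{\sum_{m=0}^{\min(\s{D},|e(\Gamma_1\cap\Gamma_2)|)}\binom{|e(\Gamma_1\cap\Gamma_2)|}{m}\lambda^{2m}}$, which is precisely the left-hand side.

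For the second equality \eqref{eq:inetersectionEquivalent2Comp}, I would reuse the isomorphism-class argument from Lemma~\ref{lem:intersectionMomentGenGeneral} without change. Fixing a copy $\Gamma_1'$ of $\Gamma_1$, Lemma~\ref{lem:equivRandomCopyOrSubgraphcopy} shows that $\P_{\Gamma_1}\pp{\s{H}\subseteq\Gamma_1'}$ is invariant under graph isomorphism. Grouping subgraphs $\s{H}\subseteq\binom{[n]}{2}$ into isomorphism classes, using $|[\s{H}]|=\calN(\s{H},\Gamma_1)$ together with $|\calS_{\s{H}}|\cdot\P_{\Gamma_1}\pp{\s{H}\subseteq\Gamma_1}=\calN(\s{H},\Gamma_1)$, converts $\sum_{\s{H}}\lambda^{2|\s{H}|}\P_{\Gamma_1}\pp{\s{H}\subseteq\Gamma_1}\P_{\Gamma_2}\pp{\s{H}\subseteq\Gamma_2}$ into $\sum_{\s{H}\subseteq\Gamma_1'}\lambda^{2|\s{H}|}\P_{\Gamma_2}\pp{\s{H}\subseteq\Gamma_2}$. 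The cutoff $|\s{H}|\le\s{D}$ passes through this manipulation untouched because the number of edges of $\s{H}$ is an isomorphism invariant, so the restricted sums on both sides agree class by class.

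The only thing that genuinely needs checking is the compatibility of the cutoff with the two reorderings: with the Fubini-type swap it is fine, since $|\s{H}|\le\s{D}$ constrains $\s{H}$ rather than the intersection graph $\s{H}'$, so summation over $\s{H}'$ and over $\s{H}\subseteq\s{H}'$ may still be interchanged; and with the isomorphism grouping it is fine for the reason just noted. Hence there is no real obstacle here — the lemma is a mechanical refinement of Lemma~\ref{lem:intersectionMomentGenGeneral}. Finally, the representation \eqref{eq:LDPstart2} of $\norm{\s{L}_{n,\le\s{D}}}_{\calH_0}^2$ is then obtained by specializing to $\Gamma_1=\Gamma_2=\Gamma$ and invoking the symmetry that permits fixing one of the two random copies.
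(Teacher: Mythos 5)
Your proposal is correct and follows essentially the same route as the paper's proof: the same Fubini-type swap over the intersection graph $\s{H}'$ yielding the truncated binomial sum $\sum_{m=0}^{\min(\s{D},|\s{H}'|)}\binom{|\s{H}'|}{m}\lambda^{2m}$ for the first equality, and the same isomorphism-class grouping (with the cutoff preserved since $|\s{H}|$ is an isomorphism invariant) for the second. Nothing is missing.
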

Note that in the limit $\s{D}\to \infty$, the binomial sum on the l.h.s. of \eqref{eq:inetersectionEquivalent} converges to $(1+\lambda^2)^{|e(\Gamma_1\cap \Gamma_2)|}$, and we obtain Lemma~\ref{lem:intersectionMomentGenGeneral}.

\begin{proof}[Proof of Lemma~\ref{lem:intersectionMomentGenGeneralComp}]
We follow the exact same steps as in the proof of Lemma~\ref{lem:intersectionMomentGenGeneral}, and start with the analysis of the term on the r.h.s. of \eqref{eq:inetersectionEquivalentComp}. We have,
\begin{align}
    \sum_{\substack{\s{H}\subseteq \binom{[n]}{2}\\ |\s{H}|\leq \s{D}}}\lambda^{2|\s{H}|}\cdot \P_{\Gamma_1}\pp{\s{H} \subseteq \Gamma_1}\cdot \P_{\Gamma_2} \pp{\s{H} \subseteq \Gamma_2}
    &=\sum_{\substack{\s{H}\subseteq \binom{[n]}{2}\\ |\s{H}|\leq \s{D}}}\lambda^{2|\s{H}|}\cdot \P_{\Gamma_1\indep \Gamma_2}\pp{\s{H} \subseteq e\p{\Gamma_1\cap  \Gamma_2}}\\
    &=\sum_{\substack{\s{H}\subseteq \binom{[n]}{2}\\ |\s{H}|\leq \s{D}}}\lambda^{2|\s{H}|}\cdot \sum_{\substack{\s{H}'\subseteq \binom{[n]}{2}\\ \s{H}\subseteq \s{H}'}}\P_{\Gamma_1\indep \Gamma_2}\pp{\s{H}'=e\p{\Gamma_1\cap  \Gamma_2}}\\
    &=\sum_{\s{H}'\subseteq \binom{[n]}{2}} \P_{\Gamma_1\indep \Gamma_2}\pp{\s{H}'=e\p{\Gamma_1\cap  \Gamma_2}} \sum_{\substack{\s{H}\subseteq \binom{[n]}{2}\\ \s{H}\subseteq \s{H}'\\ |\s{H}|\leq \s{D}}}\lambda^{2|\s{H}|}\\
    &=\sum_{\s{H}'\subseteq \binom{[n]}{2}} \P_{\Gamma_1\indep \Gamma_2}\pp{\s{H}'=e\p{\Gamma_1\cap  \Gamma_2}} \sum_{i=0}^{\min\p{\abs{\s{H}'},\s{D}}}\binom{|\s{H}'|}{i}\lambda^{2i}\\
    &=\E_{\Gamma_1\indep \Gamma_2}\pp{\sum_{m=0}^{\min\p{\s{D},|e(\Gamma_1\cap \Gamma_2)|}}\binom{|e(\Gamma_1\cap\Gamma_2)|}{m}\lambda^{2m}},
\end{align}
which proves the first equality in \eqref{eq:inetersectionEquivalentComp}. For the second equality in \eqref{eq:inetersectionEquivalent2Comp}, recall that as explained in the proof of Lemma~\ref{lem:intersectionMomentGenGeneral}, for any graph $\s{H'}\subseteq \calK_n$, if $\s{H}'$ is isomorphic to some subgraph $\s{H}\subseteq\Gamma_1'$, then $\pr_{\Gamma_1}\pp{\s{H}\subseteq \Gamma_1'}=\pr_{\Gamma_1}\pp{\s{H'}\subseteq \Gamma_1'}\neq 0$. If $\s{H}'$ is not isomorphic to any subgraph of $\Gamma_1$, then the probability above is clearly zero. We define an equivalence relation on subgraphs of $\Gamma_1'$, where two subgraphs are considered equivalent if and only if they are isomorphic. Let $[\s{H}]$ denote the equivalence class of a subgraph $\s{H}\subseteq \Gamma_1$, and let $\calP$ be the set of all equivalence classes corresponding to graphs with at most $\s{D}$ edges.
Then,
\begin{align}
     \sum_{\substack{\s{H}\subseteq \binom{[n]}{2}\\ |\s{H}|\leq \s{D}}}\lambda^{2|\s{H}|}\cdot \P_{\Gamma_1}\pp{\s{H} \subseteq \Gamma_1}\cdot \P_{\Gamma_2} \pp{\s{H} \subseteq \Gamma_2}&=\sum_{[\s{H}]\in \calP_{\s{D}}}\sum_{\substack{\s{H}'\subseteq \binom{[n]}{2}\\ \s{H}'\cong \s{H} }}\lambda^{2|\s{H}'|}\cdot \P_{\Gamma_1}\pp{\s{H}' \subseteq \Gamma_1}\cdot \P_{\Gamma_2} \pp{\s{H}' \subseteq \Gamma_2}\\
     &=\sum_{[\s{H}]\in \calP_{\s{D}}}\sum_{\substack{\s{H}'\subseteq \binom{[n]}{2}\\ \s{H}'\cong \s{H} }}\lambda^{2|\s{H}|}\cdot \P_{\Gamma_1}\pp{\s{H} \subseteq \Gamma_1}\cdot \P_{\Gamma_2} \pp{\s{H} \subseteq \Gamma_2}\\
     &=\sum_{[\s{H}]\in \calP_{\s{D}}} |\calS_{\s{H}}|\lambda^{2|\s{H}|}\cdot \P_{\Gamma_1}\pp{\s{H} \subseteq \Gamma_1}\cdot \P_{\Gamma_2} \pp{\s{H} \subseteq \Gamma_2}\\
     &=\sum_{[\s{H}]\in \calP_{\s{D}}} \lambda^{2|\s{H}|}\cdot \calN(\s{H},\Gamma_1)\cdot \P_{\Gamma_2} \pp{\s{H} \subseteq \Gamma_2}\\
     &=\sum_{[\s{H}]\in \calP_{\s{D}}} \sum_{\s{H}'\in [\s{H}]}\lambda^{2|\s{H}|}\cdot \P_{\Gamma_2} \pp{\s{H} \subseteq \Gamma_2}\\
     &=\sum_{\substack{\s{H}\subseteq \Gamma_1'\\ |\s{H}|\leq \s{D}}} \lambda^{2|\s{H}|}\cdot \P_{\Gamma_2} \pp{\s{H} \subseteq \Gamma_2}.
\end{align}
which completes the proof.
\end{proof}

\begin{proof}[Proof of Lemma~\ref{lem:interectionDComp}]
    The proof follows the exact same steps as in the proof of Lemma~\ref{lem:interectionD} with the modifications as in the proof of Theorem~\ref{th:lowDegreeConbinatorial}. Let us follow the notation used in the proof of Lemma~\ref{lem:interectionD}, where $\vc_i=\vc(\Gamma_i)$, $d_i=d_{\max}(\Gamma_i)$, $\mu_i=\mu(\Gamma_i)$, $k_i=|v(\Gamma_i)|$, $\bar{k}=\min(k_1,k_2)$, and $\bar{\mu}=\min(\mu_1,\mu_2)$. Let $\calS(m,\ell,j)(\Gamma_1)$ be the set of subgraphs of $\Gamma_1$ with $m$ connected components, $\ell$ vertices, and $j$ edges. We repeat the steps as in the proof of Lemma~\ref{lem:interectionD}, but now, the maximal number of vertices is $2\s{D}$, and we consider only graphs with at most $\s{D}$ edges. Furthermore, note that the maximal number of edges $j$ in a subgraph with $\ell$ vertices is upper bounded by $\binom{\ell}{2}\wedge \s{D}$.  
    We begin with the case where $\lambda^2=\Theta(1)$.  Using Lemmas~\ref{lem:probRandomSubgraph1}, \ref{lem:subgraphsBoundedDegreeCount1}, and \ref{lem:intersectionMomentGenGeneralComp}, and following similar derivations as in the proof of Lemma~\ref{lem:interectionD}, we get, 
    \begin{align}
          \E_{\Gamma_1\indep \Gamma_2}&\pp{\sum_{m=0}^{\min\p{\s{D},|e(\Gamma_1\cap \Gamma_2)|}}\binom{|e(\Gamma_1\cap\Gamma_2)|}{m}\lambda^{2m}}\\
          &=\sum_{\substack{\s{H}\subseteq \Gamma_1' \\ |\s{H}|\leq \s{D}}}\lambda^{2|\s{H}|}\cdot \P_{\Gamma_2} \pp{\s{H} \subseteq \Gamma_2}\\
         &\leq 1+\sum_{\ell=2}^{2\s{D}} \sum_{m=1}^{\floor{\ell/2}}\sum_{j=\ell-m}^{\min\p{\binom{\ell}{2},\s{D}}} |\calS_{m,\ell,j}(\Gamma_1)|  \lambda^{2j}  \frac{(2\vc_2)^m d_2^{\ell-m}}{(n-\bar{k})^\ell}\\
         &\leq 1+\sum_{\ell=2}^{2\s{D}} \sum_{m=1}^{\floor{\ell/2}}\sum_{j=\ell-m}^{\min\p{\binom{\ell}{2},\s{D}}} \abs{\s{Par}(\ell,m)}  \vc_1^m  (e  d_1)^{\ell-m}  \binom{\binom{\ell}{2}}{j}  \lambda^{2j}   \frac{(2\vc_2)^m d_2^{\ell-m}}{(n-\bar{k})^\ell}\\
         &\leq 1+C_\varepsilon  
          \sum_{\ell=2}^{2\s{D}} \p{\frac{(1+\varepsilon)  e  d_1 d_2}{n-\bar{k}}}^\ell\sum_{m=1}^{\floor{\ell/2}}\p{\frac{2\vc_1 \vc_2}{e  d_1 d_2}}^m\sum_{j=\ell-m}^{\min\p{\binom{\ell}{2},\s{D}}}\binom{\binom{\ell}{2}}{j}  \lambda^{2j} \\
          &\leq 1+C_\varepsilon  
          \sum_{\ell=2}^{2\s{D}} \p{\frac{(1+\varepsilon)  e  d_1 d_2}{n-\bar{k}}}^\ell\sum_{m=1}^{\floor{\ell/2}}\p{\frac{2\vc_1 \vc_2}{e  d_1 d_2}}^m\sum_{j=\ell-m}^{\min\p{\binom{\ell}{2},\s{D}}}\binom{\ell}{2}^{j}  \lambda^{2j}\label{eqn:LDPForSparseInt} \\
          &{\leq}
          1+C_\varepsilon  
          \sum_{\ell=2}^{2\s{D}} \p{\frac{(1+\varepsilon)  e  d_1 d_2}{n-\bar{k}}}^\ell\sum_{m=1}^{\floor{\ell/2}}\p{\frac{2\vc_1 \vc_2}{e  d_1 d_2}}^m  \p{\frac{\ell^2  \lambda^2}{2}}^{\min\p{\binom{\ell}{2},\s{D}}}\label{eq:LDPmediateInt}\\
          &\leq
          1+C_\varepsilon  
          \sum_{\ell=2}^{2\s{D}} \p{\frac{(1+\varepsilon)  e  d_1 d_2}{n-\bar{k}}}^\ell  \p{\frac{\ell^2  \lambda^2}{2}}^{\min\p{\frac{\ell^2}{2},\s{D}}}\sum_{m=1}^{\floor{\ell/2}}\p{\frac{2\vc_1 \vc_2}{e  d_1 d_2}}^m,\label{eq:LDPinitalIntersec}
    \end{align}
    where the last inequality is because we assume, without loss of generality, that $\lambda^2>1$; otherwise, we can replace $\lambda^2$ by $1$ and proceed with the same analysis. We separate the following analysis into to complementary cases. In the first, we assume that,
    \begin{align}
        \frac{2\vc_1 \vc_2}{e  d_1 d_2}\geq 1+\alpha\label{eq:dominationCompInt}
        \end{align}
        for some fixed $\alpha>0$, independent of $n$. In this case, we have,
        \begin{align}
             \norm{\s{L}_{n,\leq \s{D}}}_{\calH_0}^2&\leq 1+C_\varepsilon'  
          \sum_{\ell=2}^{2\s{D}} \p{\frac{(1+\varepsilon)  e  d_1 d_2}{n-\bar{k}}}^\ell  \p{\frac{\ell^2  \lambda^2}{2}}^{\min\p{\frac{\ell^2}{2},\s{D}}}\p{\frac{2\vc_1 \vc_2}{e  d_1 d_2}}^{\frac{\ell}{2}}\\
          &=1+C_\varepsilon'  \Bigg[
          \sum_{\ell=2}^{\floor{\sqrt{2\s{D}}}} \p{\frac{(1+\varepsilon)  \sqrt{2e  \vc_1 \vc_2 d_1 d_2}}{n-\bar{k}}}^\ell  \p{\frac{\ell^2  \lambda^2}{2}}^{\frac{\ell^2}{2}}\\
          \nonumber&\hspace{3.5cm} + \sum_{\ell=\floor{\sqrt{2\s{D}}}+1}^{2\s{D}} \p{\frac{(1+\varepsilon)  \sqrt{2e  \vc_1 \vc_2 d_1 d_2}}{n-\bar{k}}}^\ell  \p{\frac{\ell^2  \lambda^2}{2}}^{\s{D}}\Bigg]\\
          &=1+C_\varepsilon'  \Bigg[
          \sum_{\ell=2}^{\floor{\sqrt{2\s{D}}}} \p{\frac{(1+\varepsilon)  \sqrt{2e  \vc_1 \vc_2 d_1 d_2}  (\s{D}  \lambda^2)^{\sqrt{\s{D}/2}}}{n-\bar{k}}}^\ell\\
          &\hspace{3.5cm} + \p{\frac{(2\s{D})^2  \lambda^2}{2}}^{\s{D}}  \sum_{\ell=\floor{\sqrt{2\s{D}}}+1}^{2\s{D}} \p{\frac{(1+\varepsilon)  \sqrt{2e  \vc_1 \vc_2 d_1 d_2}}{n-\bar{k}}}^\ell\Bigg].\label{eq:lowdegreeUglySumInt}
        \end{align}
        The first sum on the l.h.s. of \eqref{eq:lowdegreeUglySumInt} converges when, 
        \begin{align}
            \frac{\sqrt{\vc_1 \vc_2 d_1 d_2} (\s{D}\lambda^2)^{\sqrt{\s{D}/2}}}{n-\bar{k}}<C,\label{eq:LDPcond1Int}
        \end{align}
        for a sufficiently small constant $C>0$. A necessary condition for the second term on the r.h.s. of \eqref{eq:lowdegreeUglySumInt} to converge is that,
        \begin{align}
            \frac{(1+\varepsilon)  \sqrt{2e\vc_1 \vc_2 d_1 d_2}}{n-\bar{k}}< 1-\alpha
        \end{align}
         for some fixed $\alpha>0$, in which case, we have
        \begin{align}
        \sum_{\ell=\floor{\sqrt{2\s{D}}}+1}^{2\s{D}} \p{\frac{(1+\varepsilon) \sqrt{2e\vc_1 \vc_2 d_1 d_1}}{n-\bar{k}}}^\ell
             \leq C_\alpha 
             \p{\frac{(1+\varepsilon)  \sqrt{2e\vc_1 \vc_2 d_1 d_1}}{n-\bar{k}}}^{\sqrt{2\s{D}}}.
        \end{align}
        Hence, the second term on the r.h.s. of \eqref{eq:lowdegreeUglySumInt} is bounded if,
        \begin{align}
            \frac{\sqrt{\vc_1 \vc_2  d_1 d_2}   (2\s{D}^2   \lambda^2)^{\sqrt{\s{D}/2}} }{n-\bar{k}}<C,\label{eq:LDPcond2Int}
        \end{align}
         for a sufficiently small constant $C>0$. In the limit $\s{D}\to\infty$, the condition in \eqref{eq:LDPcond1Int} is satisfied if \eqref{eq:LDPcond2Int} holds, which implies that $\norm{\s{L}_{n,\leq \s{D}}}_{\calH_0}$ is bounded provided that \eqref{eq:LDPcond1} holds. Next, we move to the complementary case where, 
         \begin{align}
        \frac{2\vc_1 \vc_2}{e  d_1 d_2}\leq 1+o(1),
         \end{align}
        for some $o(1)$ function. Here, for a sufficiently large $n$, the inner sum on the r.h.s. of \eqref{eq:lowdegreeUglySumInt} is dominated by $\p{1+\varepsilon}^\ell$, and thus,
        \begin{align}
             \norm{\s{L}_{n,\leq \s{D}}}_{\calH_0}^2&\leq 1+C_\varepsilon' 
          \sum_{\ell=2}^{2\s{D}} \p{\frac{(1+\varepsilon)^2  e  d_1 d_2}{n-k}}^\ell  \p{\frac{\ell^2  \lambda^2}{2}}^{\min\p{\frac{\ell^2}{2},\s{D}}}.
          \end{align}
          Accordingly, if we repeat the exact same steps as in the first case above, we finally obtain that,
          \begin{align}
             \norm{\s{L}_{n,\leq \s{D}}}_{\calH_0}^2&\leq1+C_\varepsilon'  \Bigg[
          \sum_{\ell=2}^{\floor{\sqrt{2\s{D}}}} \p{\frac{(1+\varepsilon)^2 e  d_1 d_2  (\s{D}  \lambda^2)^{\sqrt{\s{D}/2}}}{n-k}}^\ell\label{eq:lowdegreeUglySum2Int}\\
          \nonumber&\hspace{3.5cm} + \p{\frac{(2\s{D})^2  \lambda^2}{2}}^{\s{D}}  \sum_{\ell=\floor{\sqrt{2\s{D}}}+1}^{2\s{D}} \p{\frac{(1+\varepsilon)^2  e  d_1 d_2}{n-k}}^\ell\Bigg],
          \end{align}
          which converges if,
    \begin{align}
            \frac{d_1 d_2  (2\s{D}^2\lambda^2)^{\sqrt{\s{D}/2}}}{n-k}<C,\label{eq:LDPcond3Int}
        \end{align}
         for a sufficiently small $C>0$. Combining the above conditions, we obtain \eqref{eq:condIntersectionD1Comp}, which concludes the proof for the case where $\lambda^2=\Theta(1)$.

         Next, we handle the case where $\lambda^2=o(D^{-2})$ in a similar fashion. We proceed our analysis from \eqref{eqn:LDPForSparseInt}, and get,
         \begin{align}
              \E_{\Gamma_1\indep \Gamma_2}&\pp{\sum_{m=0}^{\min\p{\s{D},|e(\Gamma_1\cap \Gamma_2)|}}\binom{|e(\Gamma_1\cap\Gamma_2)|}{m}\lambda^{2m}}\\
              &\leq 1+C_\varepsilon  
          \sum_{\ell=2}^{2\s{D}} \p{\frac{(1+\varepsilon)  e  d_1 d_2}{n-\bar{k}}}^\ell\sum_{m=1}^{\floor{\ell/2}}\p{\frac{2\vc_1 \vc_2}{e  d_1 d_2}}^m\sum_{j=\ell-m}^{\min\p{\binom{\ell}{2},\s{D}}}\binom{\ell}{2}^{j}  \lambda^{2j}\\
          & \leq 1+C_\varepsilon  
          \sum_{\ell=2}^{2\s{D}} \p{\frac{(1+\varepsilon)  e  d_1 d_2}{n-\bar{k}}}^\ell\sum_{m=1}^{\floor{\ell/2}}\p{\frac{2\vc_1 \vc_2}{e  d_1 d_2}}^m\sum_{j=\ell-m}^{\min\p{\binom{\ell}{2},\s{D}}}\p{ 2\s{D}^2 \lambda^{2}}^j\\
          &\overset{(a)}{\leq} 1+C_\varepsilon'  
          \sum_{\ell=2}^{2\s{D}} \p{\frac{(1+\varepsilon)  e  d_1 d_2}{n-\bar{k}}}^\ell\sum_{m=1}^{\floor{\ell/2}}\p{\frac{2\vc_1 \vc_2}{e  d_1 d_2}}^m\p{ 2\s{D}^2 \lambda^{2}}^{\ell-m}\\
          &=1+C_\varepsilon'  
          \sum_{\ell=2}^{2\s{D}} \p{\frac{(1+\varepsilon) 2 e  d_1 d_2 \lambda^2 \s{D}^2}{n-\bar{k}}}^\ell\sum_{m=1}^{\floor{\ell/2}}\p{\frac{\vc_1 \vc_2}{e  d_1 d_2 \lambda^2 \s{D}^2}}^m,\label{eq:innerouterIntersect}
         \end{align}
         where $(a)$ follows from the assumption that $\lambda^2=o(\s{D}^{-2})$. Now, consider the case where \begin{align}
               \frac{\vc_1 \vc_2}{e  d_1 d_2 \lambda^2 \s{D}^2}>1+\alpha,\label{eq:RHSalpha}
          \end{align}
          for some fixed constant $\alpha$. In this case, the inner sum in the r.h.s. of \eqref{eq:innerouterIntersect} is dominated by the last term, and we have,
          \begin{align}
               \E_{\Gamma_1\indep \Gamma_2}\pp{\sum_{m=0}^{\min\p{\s{D},|e(\Gamma_1\cap \Gamma_2)|}}\binom{|e(\Gamma_1\cap\Gamma_2)|}{m}\lambda^{2m}}\leq 1+C_\varepsilon''  
          \sum_{\ell=2}^{2\s{D}} \p{\frac{2(1+\varepsilon) \lambda \s{D} \sqrt{ e  d_1 d_2 \vc_1 \vc_2} }{n-\bar{k}}}^\ell,
          \end{align}
          which is bounded if 
          \begin{align}
              \frac{\lambda \s{D} \sqrt{  d_1 d_2 \vc_1 \vc_2} }{n-\bar{k}}<C,\label{eq:condInters1}
          \end{align}
          for some universal constant $C$. Similarly if the l.h.s. of \eqref{eq:RHSalpha} is upper bounded by $1+o(1)$, then the inner sum in the r.h.s. of is dominated by $(1+\varepsilon)^\ell$, and, accordingly, \eqref{eq:innerouterIntersect} is bounded if, 
          \begin{align}
             \frac{ d_1 d_2 \lambda^2 \s{D}^2}{n-\bar{k}}\leq C. \label{eq:condInters2}
          \end{align}
          Combining the conditions in \eqref{eq:condInters1} and \eqref{eq:condInters2}, we obtain \eqref{eq:condIntersectionD2Comp2}.
\end{proof} 

Before proving Proposition~\ref{prop:decompisitionThDCcomp}, we note that while the argument of Lemma~\ref{lem:interectionD} extends relatively easily to the low-degree scenario, generalizing the H\"{o}lder's inequality argument used in Proposition~\ref{prop:decompisitionThD} is significantly more technically involved. Specifically, for the statistical lower bounds, the second moment $\E_{\calH_0}[\s{L}(\s{G})^2]$ is given by $\E_{\Gamma}[\exp(c\cdot |e(\Gamma\cap \Gamma'|))]$. Substituting the  decomposition of $\Gamma$ into $\Gamma_1,\dots,\Gamma_M$, and applying H\"{o}lder's inequality allows us to analyze each crossing term $\E[\exp(c_M\cdot |e(\Gamma_i\cap \Gamma_j'|))]$, separately. In the low-degree case, on the other hand, this is not the case anymore. Indeed, as seen from Lemma~\ref{lem:intersectionMomentGenGeneralComp}, the low-degree likelihood's second moment is given by,
\begin{align}
    \norm{\s{L}_{n,\leq \s{D}}}_{\calH_0}^2&=\bE_{\Gamma\indep\Gamma'}\pp{\sum_{m=0}^{\min(\s{D},|e(\Gamma\cap \Gamma')|)}\binom{|e(\Gamma\cap \Gamma')|}{m}\lambda^{2m}}\\
    &=\bE_{\Gamma\indep\Gamma'}\pp{\sum_{m=0}^{\min(\s{D},\sum_{i,j}|e(\Gamma_i\cap \Gamma_j')|)}\binom{\sum_{i,j}|e(\Gamma_i\cap \Gamma_j')|)}{m}\lambda^{2m}}.
\end{align}
At first glance, it is rather unclear whether the above expression can be bounded using a product of expressions of the form
\begin{align}
    \bE_{\Gamma_i\indep\Gamma_j'}\pp{\sum_{m=0}^{\min(\s{D},|e(\Gamma_i\cap \Gamma_j')|)}\binom{|e(\Gamma_i\cap \Gamma_j')|)}{m}\bar{\lambda}_M^{2m}},
\end{align}
for some ``well-behaved" $\bar{\lambda}_M=f(\lambda^2,M,\s{D})$, as in the statistical lower bound case. In Proposition~\ref{prop:decompisitionThDCcomp}, we provide a positive answer to this question, which forms the main argument in the proof.

\begin{proof}[Proof of Proposition~\ref{prop:decompisitionThDCcomp}]
    First, we note that by Lemma~\ref{lem:intersectionMomentGenGeneralComp}, we have 
    \begin{align}
        \norm{\s{L}_{n,\leq \s{D}}}_{\calH_0}^2&=\sum_{\substack{\s{H}\subseteq \binom{[n]}{2}\\ |\s{H}|\leq \s{D}}} \lambda^{2\abs{\s{H}}}\P_{\Gamma}\pp{\s{H}\subseteq \Gamma}^2\\
        & =\bE_{\Gamma\indep\Gamma'}\pp{\sum_{m=0}^{\min(\s{D},|e(\Gamma\cap \Gamma')|)}\binom{|e(\Gamma\cap \Gamma')|}{m}\lambda^{2m}},\label{eqn:compStartToUpperDecom}
    \end{align}
    where $\Gamma'$ is an independent random copy of $\Gamma$, both drawn uniformly at random over $\calS_\Gamma$. While all the expectations throughout the following analysis are with respect to the distribution of $\Gamma$ and $\Gamma'$, we drop the $\Gamma\indep \Gamma'$ notation from the expectations for the sake of readability. 
    Consider an edge-disjoint decomposition $\Gamma = \bigcup_{i=1}^M\Gamma_i$. We next show that,
    \begin{align}
        \bE\pp{\sum_{m=0}^{\min(\s{D},|e(\Gamma\cap \Gamma')|)}\binom{|e(\Gamma\cap \Gamma')|}{m}\lambda^{2m}}\leq \bE\pp{\prod_{i,j\in[M]}\sum_{m_{i,j}=0}^{\min(\s{D},|\Gamma_i\cap\Gamma_j'|)}\binom{|\Gamma_i\cap\Gamma_j'|}{m}\lambda^{2m}}.\label{eqn:DecompoStep1}
    \end{align}
     To that end, let us rewrite the term on the r.h.s. of \eqref{eqn:DecompoStep1} as follows. We let 
     \begin{align}
         \calJ_{\s{D}}\triangleq\ppp{(m_{i,j})_{i,j=1}^{M}\in [D]^{M^2}:0\leq m_{i,j}\leq \min(\s{D},|\Gamma_i\cap\Gamma_j'|),\forall1\leq i,j\leq M}
     \end{align}
     Below, for brevity, we will use the following shorthand notation for multiple sums,
    \begin{align}
        \sum_{m_{1,1}=0}^{\min(\s{D},|\Gamma_1\cap\Gamma_1'|)}\sum_{m_{1,2}=0}^{\min(\s{D},|\Gamma_1\cap\Gamma_2'|)}\ldots\sum_{m_{M,M}=0}^{\min(\s{D},|\Gamma_M\cap\Gamma_M'|)} \equiv\sum_{(m_{i,j})_{i,j}\in\calJ_{\s{D}}}.
    \end{align}
    Then, clearly,
    \begin{align}
        \prod_{i,j\in[M]}\sum_{m=0}^{\min(\s{D},|\Gamma_i\cap\Gamma_j'|)}\binom{|\Gamma_i\cap\Gamma_j'|}{m}\lambda^{2m} = \sum_{(m_{i,j})_{i,j}\in\calJ_{\s{D}}}\prod_{i,j\in[M]}\binom{|\Gamma_i\cap\Gamma_j'|}{m_{i,j}}\lambda^{2m_{i,j}}.\label{eqn:Step1Exp0}
    \end{align}
    Now, we note that,
    \begin{align}
        &\sum_{(m_{i,j})_{i,j}\in\calJ_{\s{D}}}\prod_{i,j\in[M]}\binom{|\Gamma_i\cap\Gamma_j'|}{m_{i,j}}\lambda^{2m_{i,j}}\nonumber\\
        &\hspace{1cm}= \sum_{m=0}^{\sum_{i,j\in[M]}\min(\s{D},|\Gamma_i\cap\Gamma_j'|)}\lambda^{2m}\sum_{\substack{(m_{i,j})_{i,j}\in\calJ_{\s{D}}\\\sum_{i,j}m_{i,j}=m}}\prod_{i,j\in[M]}\binom{|\Gamma_i\cap\Gamma_j'|}{m_{i,j}}.\label{eqn:Step1Exp}
    \end{align}
    We next show that \eqref{eqn:Step1Exp} is lower bounded by the sum on the l.h.s. of \eqref{eqn:DecompoStep1}. First, we notice that,
    \begin{align}
        \sum_{i,j\in[M]}\min(\s{D},|\Gamma_i\cap\Gamma_j'|)\geq \min\p{\s{D},\sum_{i,j\in[M]}|\Gamma_i\cap\Gamma_j'|} = \min(\s{D},|\Gamma\cap\Gamma'|),
    \end{align}
    and thus,
    \begin{align}
        \sum_{(m_{i,j})_{i,j}\in\calJ_{\s{D}}}\prod_{i,j\in[M]}\binom{|\Gamma_i\cap\Gamma_j'|}{m_{i,j}}\lambda^{2m_{i,j}}\geq \sum_{m=0}^{\min(\s{D},|\Gamma\cap\Gamma'|)}\lambda^{2m}\sum_{\substack{(m_{i,j})_{i,j}\in\calJ_{\s{D}}\\\sum_{i,j}m_{i,j}=m}}\prod_{i,j\in[M]}\binom{|\Gamma_i\cap\Gamma_j'|}{m_{i,j}}.\label{eqn:Step1Exp2}
    \end{align}
    We recall Vandermonde's identity,
    \begin{align}
        \binom{|\Gamma\cap\Gamma'|}{m} = \sum_{\substack{(m_{i,j})_{i,j}\in\calJ_{\infty}\\\sum_{i,j}m_{i,j}=m}}\prod_{i,j\in[M]}\binom{|\Gamma_i\cap\Gamma_j'|}{m_{i,j}},\label{eqn:Step1Exp3}
    \end{align}
    where 
    \begin{align}
        \calJ_\infty\triangleq \ppp{(m_{i,j})_{i,j=1}^{M}:0\leq m_{i,j}\leq |\Gamma_i\cap\Gamma_j'|,\forall1\leq i,j\leq M}.
    \end{align}
    Then, the sum on the l.h.s. of \eqref{eqn:DecompoStep1} can be rewritten as,
    \begin{align}
        \sum_{m=0}^{\min(\s{D},|e(\Gamma\cap \Gamma')|)}\binom{|e(\Gamma\cap \Gamma')|}{m}\lambda^{2m}=\sum_{m=0}^{\min(\s{D},|e(\Gamma\cap \Gamma')|)}\lambda^{2m}\sum_{\substack{(m_{i,j})_{i,j}\in\calJ_{\infty}\\\sum_{i,j}m_{i,j}=m}}\prod_{i,j\in[M]}\binom{|\Gamma_i\cap\Gamma_j'|}{m_{i,j}}.\label{eqn:Rep2}
    \end{align}
    Thus, in light of \eqref{eqn:Step1Exp2} and \eqref{eqn:Rep2}, in order to prove \eqref{eqn:DecompoStep1}, it suffices to show that,
    \begin{align}
        \sum_{\substack{(m_{i,j})_{i,j}\in\calJ_{\s{D}}\\\sum_{i,j}m_{i,j}=m}}\prod_{i,j\in[M]}\binom{|\Gamma_i\cap\Gamma_j'|}{m_{i,j}}\geq \sum_{\substack{(m_{i,j})_{i,j}\in\calJ_{\infty}\\\sum_{i,j}m_{i,j}=m}}\prod_{i,j\in[M]}\binom{|\Gamma_i\cap\Gamma_j'|}{m_{i,j}}.\label{eqn:combind}
    \end{align}
    \sloppy
    Indeed, the key here is the range of $m$, namely, $0\leq m\leq \min(\s{D},|\Gamma\cap\Gamma'|) = \min(\s{D},\sum_{i,j\in[M]}|\Gamma_i\cap\Gamma_j'|)$. Let $\calQ$ denote the pairs $(i,j)$ for which  $\min(\s{D},|\Gamma_{i}\cap\Gamma_{j}'|)=|\Gamma_{i}\cap\Gamma_{j}'|$, and then $\calQ^c = [M]^2\setminus\calQ$. Note that pairs $(i,j)\in\calQ$, the summations that corresponds to $(i,j)$ have the sum range in both sides of \eqref{eqn:combind}, and as so, we may focus on $\calQ^c$. In this case, $\min(\s{D},|\Gamma_{i}\cap\Gamma_{j}'|)=\s{D}$, and thus, the range of these pairs $(i,j)$ in the summation on the l.h.s. of \eqref{eqn:combind} is $0\leq m_{i,j}\leq \s{D}$, which contained in the range of those on the r.h.s. of \eqref{eqn:combind}, which is $0\leq m_{i,j}\leq |\Gamma_{i}\cap\Gamma_{j}'|$. While at first sight, this might seem problematic, since for both terms in \eqref{eqn:combind} we have 
    \begin{align}
        0\leq \sum_{i,j}m_{i,j}=m\leq \min(\s{D},|\Gamma\cap\Gamma'|) = \min\bigg(\s{D},\sum_{i,j\in[M]}|\Gamma_i\cap\Gamma_j'| \bigg)=\s{D},
    \end{align}it must be that $0\leq m_{i,j}\leq \s{D}$ for the term on the r.h.s. of \eqref{eqn:combind} as well. Thus, \eqref{eqn:combind} holds, and in fact, with equality. Combining \eqref{eqn:Step1Exp0}, \eqref{eqn:Step1Exp}, \eqref{eqn:Step1Exp2}, \eqref{eqn:Step1Exp3}, and \eqref{eqn:combind}, proves \eqref{eqn:DecompoStep1}. 

    Next, applying H\"{o}lder's inequality on  \eqref{eqn:DecompoStep1}, we get that,
    \begin{align}
        \bE\pp{\prod_{i,j\in[M]}\sum_{m=0}^{\min(\s{D},|\Gamma_i\cap\Gamma_j'|)}\binom{|\Gamma_i\cap\Gamma_j'|}{m}\lambda^{2m}}\leq \prod_{i,j\in[M]}\p{\bE\pp{\sum_{m=0}^{\min(\s{D},|\Gamma_i\cap\Gamma_j'|)}\binom{|\Gamma_i\cap\Gamma_j'|}{m}\lambda^{2m}}^{M^2}}^{\frac{1}{M^2}}.\label{eqn:afterHoldComp}
    \end{align}
    We now analyze the inner expectation on the r.h.s. of  \eqref{eqn:afterHoldComp}, for any given pair $(i,j)$. Let $\varphi\in[0,1]$ be defined as $\varphi\triangleq\frac{\lambda^2}{1+\lambda^2}$, and as so $\lambda^2 = \frac{\varphi}{1-\varphi}$. Then,
    \begin{align}
        \sum_{m=0}^{\min(\s{D},|\Gamma_i\cap\Gamma_j'|)}\binom{|\Gamma_i\cap\Gamma_j'|}{m}\lambda^{2m} &= (1-\varphi)^{-|\Gamma_i\cap\Gamma_j'|}\cdot\sum_{m=0}^{\min(\s{D},|\Gamma_i\cap\Gamma_j'|)}\binom{|\Gamma_i\cap\Gamma_j'|}{m}\varphi^{m}(1-\varphi)^{|\Gamma_i\cap\Gamma_j'|-m}\\
        & = (1+\lambda^2)^{|\Gamma_i\cap\Gamma_j'|}\cdot\pr\pp{\s{B}\leq \min(\s{D},|\Gamma_i\cap\Gamma_j'|)},
    \end{align}
    where $\s{B}\triangleq\s{Binomial}(|\Gamma_i\cap\Gamma_j'|,\varphi)$. Therefore,
    \begin{align}
        \pp{\sum_{m=0}^{\min(\s{D},|\Gamma_i\cap\Gamma_j'|)}\binom{|\Gamma_i\cap\Gamma_j'|}{m}\lambda^{2m}}^{M^2} = (1+\lambda^2)^{M^2|\Gamma_i\cap\Gamma_j'|}\cdot\p{\pr\pp{\s{B}\leq \min(\s{D},|\Gamma_i\cap\Gamma_j'|)}}^{M^2}.
    \end{align}
    Now, let $\ppp{\s{B}_{\ell}}_{\ell=1}^{M^2}$ be $M^2$ independent random copies of $\s{B}$. Then, we have,
    \begin{align}
        \p{\pr\pp{\s{B}\leq \min(\s{D},|\Gamma_i\cap\Gamma_j'|)}}^{M^2} &= \pr\pp{\max_{1\leq \ell\leq M^2}\s{B}_\ell\leq \min(\s{D},|\Gamma_i\cap\Gamma_j'|)}\\
        &\leq \pr\pp{\frac{1}{M^2}\sum_{\ell=1}^{M^2}\s{B}_\ell\leq \min(\s{D},|\Gamma_i\cap\Gamma_j'|)}\\
        &= \pr\pp{\bar{\s{B}}\leq\min(M^2\s{D},M^2|\Gamma_i\cap\Gamma_j'|)},
    \end{align}
    where $\bar{\s{B}}\triangleq\s{Binomial}\p{M^2|\Gamma_i\cap\Gamma_j'|,\varphi}$. Thus,
    \begin{align}
        &\pp{\sum_{m=0}^{\min(\s{D},|\Gamma_i\cap\Gamma_j'|)}\binom{|\Gamma_i\cap\Gamma_j'|}{m}\lambda^{2m}}^{M^2} \leq (1+\lambda^2)^{M^2|\Gamma_i\cap\Gamma_j'|}\cdot\pr\pp{\bar{\s{B}}\leq\min(M^2\s{D},M^2|\Gamma_i\cap\Gamma_j'|)}\\
        &=(1+\lambda^2)^{M^2|\Gamma_i\cap\Gamma_j'|}\sum_{m=0}^{\min(M^2\s{D},M^2|\Gamma_i\cap\Gamma_j'|)}\binom{M^2|\Gamma_i\cap\Gamma_j'|}{m}\varphi^{m}(1-\varphi)^{M^2|\Gamma_i\cap\Gamma_j'|-m}\\
        &=\sum_{m=0}^{\min(M^2\s{D},M^2|\Gamma_i\cap\Gamma_j'|)}\binom{M^2|\Gamma_i\cap\Gamma_j'|}{m}\lambda^{2m},\label{eqn:lastStepCompDec}
    \end{align}
    where in the last equality we have used the fact that $(1+\lambda^2)^{M^2|\Gamma_i\cap\Gamma_j'|}(1-\varphi)^{M^2|\Gamma_i\cap\Gamma_j'|}=1$. Therefore, combining \eqref{eqn:compStartToUpperDecom}, \eqref{eqn:DecompoStep1}, \eqref{eqn:afterHoldComp}, and \eqref{eqn:lastStepCompDec}, we get that,
    \begin{align}
        \norm{\s{L}_{n,\leq \s{D}}}_{\calH_0}^2\leq \prod_{i,j\in[M]}\p{\bE\pp{\sum_{m=0}^{M^2\min(\s{D},|\Gamma_i\cap\Gamma_j'|)}\binom{M^2|\Gamma_i\cap\Gamma_j'|}{m}\lambda^{2m}}}^{\frac{1}{M^2}}.\label{eqn:almostTheEnd}
    \end{align}
    \begin{rmk}
        As a sanity check, note that if we take $\s{D}\to\infty$, we end up with,
    \begin{align}
        \norm{\s{L}_{n,\leq \infty}}_{\calH_0}^2&\leq \prod_{i,j\in[M]}\p{\bE\pp{\sum_{m=0}^{M^2|\Gamma_i\cap\Gamma_j'|}\binom{M^2|\Gamma_i\cap\Gamma_j'|}{m}\lambda^{2m}}}^{\frac{1}{M^2}}\\
        &=\prod_{i,j\in[M]}\p{\bE\pp{(1+\lambda^2)^{M^2|\Gamma_i\cap\Gamma_j'|}}}^{\frac{1}{M^2}},
    \end{align}
    which is the expression we got in our derivation of the statistical lower bound.
    \end{rmk}
    Let us simplify \eqref{eqn:almostTheEnd}. Note that,
    \begin{align}
       \sum_{m=0}^{M^2\min(\s{D},|\Gamma_i\cap\Gamma_j'|)}\binom{M^2|\Gamma_i\cap\Gamma_j'|}{m}\lambda^{2m} \leq  \s{A}+\s{B},\label{eqn:DecompositionAandB}
    \end{align}
    where
    \begin{align}
        \s{A}\triangleq\Ind\ppp{|\Gamma_i\cap\Gamma_j'|\leq\s{D}}\sum_{m=0}^{M^2|\Gamma_i\cap\Gamma_j'|}\binom{M^2|\Gamma_i\cap\Gamma_j'|}{m}\lambda^{2m},\label{eqn:sAdef}
    \end{align}
    and
    \begin{align}
        \s{B}\triangleq\Ind\ppp{|\Gamma_i\cap\Gamma_j'|\geq\s{D}}\cdot\pp{1+\sum_{m=1}^{M^2\s{D}}\binom{M^2|\Gamma_i\cap\Gamma_j'|}{m}\lambda^{2m}}.\label{eqn:sBdef}
    \end{align}
    As for $\s{A}$, using the Binomial theorem, we readily see that,
    \begin{align}
        \sum_{m=0}^{M^2|\Gamma_i\cap \Gamma_j'|}\binom{M^2|\Gamma_i\cap\Gamma_j'|}{m}\lambda^{2m}&=(1+\lambda^2)^{M^2|\Gamma_i\cap\Gamma_j|}=(1+\lambda_M^2)^{|\Gamma_i\cap\Gamma_j|},
    \end{align}
    for $\lambda_M^2 = (1+\lambda^2)^{M^2}-1$, and therefore
    \begin{align}
        \s{A}=\sum_{m=0}^{|\Gamma_i\cap\Gamma_j'|}\binom{|\Gamma_i\cap\Gamma_j'|}{m}\lambda_{M}^{2m}\Ind\ppp{|\Gamma_i\cap\Gamma_j'|\leq\s{D}}.
    \end{align}
      Now, for $\s{B}$, we split our analysis into two parts, where $\s{D}\leq|\Gamma_i\cap\Gamma_j'|\leq M^2\s{D}$ and $|\Gamma_i\cap\Gamma_j'|\geq M^2\s{D}$, and start our analysis with the later. Our goal is to  find a parameter $\bar\lambda\in\mathbb{R}_+$ such that,
    \begin{align}
        \sum_{m=1}^{M^2\s{D}}\binom{M^2|\Gamma_i\cap\Gamma_j'|}{m}\lambda^{2m}\leq \sum_{m=1}^{M^2\s{D}}\binom{|\Gamma_i\cap\Gamma_j'|}{m}\bar\lambda^{2m}.
    \end{align}
    To that end, on one hand, we will use the following observation. Let $f(\bar{m})\triangleq\binom{\bar{n}}{\bar{m}}\alpha^{\bar{m}}$, for $1\leq \bar{m}\leq \bar{k}$ and $\alpha>0$. We would like to find the value of $\bar{m}$ which maximizes $f(\bar{m})$ in the interval $0\leq \bar{m}\leq \bar{k}$. We have,
    \begin{align}
        \frac{f(\bar{m}-1)}{f(\bar{m})} = \frac{1}{\alpha}\frac{\bar{m}}{\bar{n}-\bar{m}+1}.
    \end{align}
    Thus, we have $f(\bar{m}-1)\leq f(\bar{m})$ for,
    \begin{align}
        \bar{m}\leq \frac{\alpha(\bar{n}+1)}{1+\alpha}.
    \end{align}
    Therefore, the value $m^\star\triangleq\min\p{\ceil{\frac{\alpha(\bar{n}+1)}{1+\alpha}},\bar{k}}$ maximizes $f(\bar{m})$ over $0\leq \bar{m}\leq \bar{k}$. Using this fact, in our case we note that,
    \begin{align}
        \sum_{m=1}^{M^2\s{D}}\binom{M^2|\Gamma_i\cap\Gamma_j'|}{m}\lambda^{2m}\leq M^2\s{D}\binom{M^2|\Gamma_i\cap\Gamma_j'|}{m^\star}\lambda^{2m^\star},
    \end{align}
    where 
    \begin{align}
        m^\star = \min\p{\ceil{\frac{\lambda^2 (M^2|\Gamma_i\cap\Gamma_j'|+1)}{\lambda^2+1}},M^2\s{D}}.\label{eq:mStar}
    \end{align} 
    Let us now assume that $\lambda^2=\Theta(1)$, and address the case where $\lambda=o(\s{D}^{-2})$ later. When $\lambda=\Theta(1)$ we may assumed without loss of generality that $\lambda^2\geq1$ and thus $\lambda^2/(1+\lambda^2)\geq  1/2$. 
    In the regime where $|\Gamma_i\cap\Gamma_j'|\geq M^2\s{D}$, we have $m^\star=cM^2\s{D}$, for some $0<c\leq1$ (in fact, since $M\geq 2$,  under the assumption $|\Gamma_i\cap\Gamma_j'|\geq M^2\s{D}$ we have that $m^\star = M^2\s{D}$ and $c=1$).  Thus,
    \begin{align}
        \sum_{m=1}^{M^2\s{D}}\binom{M^2|\Gamma_i\cap\Gamma_j'|}{m}\lambda^{2m}&\leq M^2\s{D}\cdot\binom{M^2|\Gamma_i\cap\Gamma_j'|}{cM^2\s{D}}\lambda^{2cM^2\s{D}}\\
        &\leq M^2\s{D}\cdot\p{\frac{eM^2|\Gamma_i\cap\Gamma_j'|}{cM^2\s{D}}}^{cM^2\s{D}}\lambda^{2cM^2\s{D}}\label{eq:maximizingShit}\\
        & \leq M^2\s{D}\cdot\p{\frac{|\Gamma_i\cap\Gamma_j'|}{c\s{D}}}^{cM^2\s{D}}(e\lambda^2)^{cM^2\s{D}},\label{eqn:UpperBinomwith}
    \end{align}
    where we have used the fact that $\binom{n}{m}\leq\p{\frac{en}{m}}^m$, for $m\leq n$. On the other hand, for any $\bar{\lambda}$:
    \begin{align}
        \sum_{m=1}^{M^2\s{D}}\binom{|\Gamma_i\cap\Gamma_j'|}{m}\bar\lambda^{2m}&\geq \binom{|\Gamma_i\cap\Gamma_j'|}{cM^2\s{D}}\bar\lambda^{2cM^2\s{D}}\\
        &\geq\p{\frac{|\Gamma_i\cap\Gamma_j'|}{c\s{D}}}^{cM^2\s{D}}M^{-2cM^2\s{D}} \bar\lambda^{2cM^2\s{D}},\label{eqn:LowerBinomwith}
    \end{align}
    where we have used the fact that $\p{\frac{n}{m}}^m\leq\binom{n}{m}$, for $m\leq n$. Then, it can be checked that the term on the r.h.s. of \eqref{eqn:LowerBinomwith} is larger than the term on the r.h.s. of \eqref{eqn:UpperBinomwith} for any,
    \begin{align}
        \bar\lambda^2\geq \lambda^2_{M,2}\triangleq \pp{M^2\s{D}M^{2cM^2\s{D}}(e\lambda^2)^{cM^2\s{D}}}^{\frac{1}{cM^2D}} = eM^2[M^2D]^{\frac{1}{cM^2\s{D}}}\lambda^2,\label{eq:LambdaBar1}
    \end{align}
    We mention here that $\bar\lambda^2$ is bounded as $M$ is assumed fixed and $c$ is bounded away from $0$. To conclude, for $|\Gamma_i\cap\Gamma_j'|\geq M^2\s{D}$,
    \begin{align}
        \sum_{m=1}^{M^2\s{D}}\binom{M^2|\Gamma_i\cap\Gamma_j'|}{m}\lambda^{2m}\leq \sum_{m=1}^{M^2\s{D}}\binom{|\Gamma_i\cap\Gamma_j'|}{m}\lambda_{M,2}^{2m}.\label{eqn:LargeValInter}
    \end{align}
   For $\s{D}\leq|\Gamma_i\cap\Gamma_j'|\leq M^2\s{D}$, we first show that there exists $\bar\lambda\in\mathbb{R}_+$ such that,
    \begin{align}
        \sum_{m=1}^{M^2\s{D}}\binom{M^2|\Gamma_i\cap\Gamma_j'|}{m}\lambda^{2m}\leq \sum_{m=1}^{\s{D}}\binom{|\Gamma_i\cap\Gamma_j'|}{m}^{M^2}\bar\lambda^{2m}.\label{eqn:combComp1}
    \end{align}
    This follows exactly from the same arguments above. Indeed, as in the previous case, there is a $0<c\leq  1$ such that $m^\star=cM^2\s{D}$ such that the $m^{\star}$ term is the maximal among the summands of the l.h.s. of \eqref{eqn:combComp1}, and therefore multiplying it by $M^2\s{D}$ upper bounds the l.h.s. of \eqref{eqn:combComp1}. If $c\s{D}$ is an integer, similarly to the previous case we have
    \begin{align}
        \sum_{m=1}^{\s{D}}\binom{|\Gamma_i\cap\Gamma_j'|}{m}^{M^2}\bar\lambda^{2m}&\geq \binom{|\Gamma_i\cap\Gamma_j'|}{c\s{D}}^{M^2}\bar\lambda^{2c\s{D}}\\
        &\geq\p{\frac{|\Gamma_i\cap\Gamma_j'|}{c\s{D}}}^{cM^2\s{D}}\bar\lambda^{2c\s{D}},
    \end{align}
    Thus, \eqref{eqn:combComp1} holds for, 
    \begin{align}
        \bar\lambda^2\geq\lambda_{M,3}^{2}\triangleq(M^2\s{D})^{\frac{1}{c\s{D}}}(e\lambda^2)^{M^2}.\label{eq:lambdaBARkashe}
    \end{align}
     On the other hand, if $c\s{D}$ is not an integer, note that the function $f_{\hat{m}}(x)=\p{\frac{\hat{m}}{x}\lambda^{2}}^{M^2 x}$ has at most one extrema point in $(0,\hat{m}]$, at $\frac{\hat{m}}{e}$. Consider $\hat{m}=|\Gamma_i\cap \Gamma_j'|$. In the regime where $\lambda^2=\Theta(1)$, we may assume without loss of generality that $\bar{\lambda}^2$ is sufficiently large such that $\bar{\lambda}^2/(1+\bar{\lambda}^2)$ is arbitrarily close to one.  Thus from the definition of $m^\star$ and since we are in the regime where $|\Gamma_i\cap \Gamma_j'|\geq \s{D}$, we can assume that $c$ is bounded away from $e^{-1}$. In particular $[c\s{D}-1,c\s{D}+1]$ does not contain the maximum of $f_{\hat{m}}$. Hence, it must hold that,
     \begin{align}
       \p{\frac{|\Gamma_i\cap\Gamma_j'|}{c\s{D}}}^{cM^2\s{D}}\bar\lambda^{2c\s{D}}\leq \max\p{\p{\frac{|\Gamma_i\cap\Gamma_j'|}{\ceil{c\s{D}}}}^{\ceil{c\s{D}}M^2}\bar\lambda^{2\ceil{c\s{D}}}, \p{\frac{|\Gamma_i\cap\Gamma_j'|}{\floor{c\s{D}}}}^{\floor{c\s{D}}M^2}\bar\lambda^{2\floor{c\s{D}}} }.\label{eq:StillmAzhik}
     \end{align}
     Thus, there exists $0<\tilde{c}\leq 1$ such that $\tilde{c}\s{D}\in \{\floor{c\s{D}}, \ceil{c\s{D}} \}$ and, 
     \begin{align}
         \sum_{m=1}^{\s{D}}\binom{|\Gamma_i\cap\Gamma_j'|}{m}^{M^2}\bar\lambda^{2m}&\geq \binom{|\Gamma_i\cap\Gamma_j'|}{\tilde{c}\s{D}}^{M^2}\bar\lambda^{2\tilde{c}\s{D}}\\
        &\geq\p{\frac{|\Gamma_i\cap\Gamma_j'|}{\tilde{c}\s{D}}}^{\tilde{c}M^2\s{D}}\bar\lambda^{2\tilde{c}\s{D}}\\&\geq\p{\frac{|\Gamma_i\cap\Gamma_j'|}{c\s{D}}}^{cM^2\s{D}}\bar\lambda^{2c\s{D}}.
     \end{align}
     In particular, \eqref{eqn:combComp1} holds for the same $\overline{\lambda}^2$ from \eqref{eq:lambdaBARkashe}.
     
     Next, we show further that there exists $\tilde\lambda\in\mathbb{R}_+$ such that,
    \begin{align}
       \sum_{m=0}^{\s{D}}\binom{|\Gamma_i\cap\Gamma_j'|}{m}^{M^2}\lambda_{M,3}^{2m}\leq \sum_{m=0}^{\s{D}}\binom{|\Gamma_i\cap\Gamma_j'|}{m}\tilde\lambda^{2m},\label{eqn:combComp2}
    \end{align}
    Indeed, let $<0<c\leq 1$ be such that $m^{\star} =c\s{D}$ maximizes the summands of the l.h.s. of \eqref{eqn:combComp2}.  On one hand, we note that,
    \begin{align}
        \sum_{m=1}^{\s{D}}\binom{|\Gamma_i\cap\Gamma_j'|}{m}^{M^2}\lambda_{M,3}^{2m}&\leq \s{D}\p{\frac{|\Gamma_i\cap\Gamma_j'|}{c\s{D}}}^{M^2}\lambda_{M,3}^{2cD}\\
        &\leq \s{D}\p{\frac{M^2D}{c\s{D}}}^{M^2}\lambda_{M,3}^{2cD}\\
        & =  \s{D}\p{\frac{M^2}{c}}^{M^2}\lambda_{M,3}^{2cD},\label{eqn:UpperBinomwith2}
    \end{align}
    where we have used the fact that we are in the regime where $\s{D}\leq|\Gamma_i\cap\Gamma_j'|\leq M^2\s{D}$. On the other hand,
    \begin{align}
        \sum_{m=1}^{\s{D}}\binom{|\Gamma_i\cap\Gamma_j'|}{m}\tilde\lambda^{2m}\geq \p{\frac{|\Gamma_i\cap\Gamma_j'|}{c\s{D}}}^{c\s{D}}\tilde\lambda^{2c\s{D}}\geq \p{\frac{1}{c}}^{c\s{D}}\tilde\lambda^{2c\s{D}},\label{eqn:LowerBinomwith2}
    \end{align}
    where we have used the fact that we are in the regime where $\s{D}\leq|\Gamma_i\cap\Gamma_j'|$. Then, it can be checked that the term on the r.h.s. of \eqref{eqn:LowerBinomwith2} is larger than the term on the r.h.s. of \eqref{eqn:UpperBinomwith2} for any,
    \begin{align}
        \tilde\lambda^2\geq \lambda^2_{M,4}\triangleq \pp{c^{-c\s{D}}\s{D}\p{\frac{M^2}{c}}^{M^2}\lambda_{M,3}^{2cD}}^{\frac{1}{c\s{D}}} = \frac{\s{D}^{\frac{1}{c\s{D}}}\p{\frac{M^2}{c}}^{\frac{M^2}{c\s{D}}}}{c}\lambda_{M,3}^{2}=O(1).
    \end{align}
    Thus, combining \eqref{eqn:combComp1} and \eqref{eqn:combComp2}, we get that for $\s{D}\leq|\Gamma_i\cap\Gamma_j'|\leq M^2\s{D}$,
    \begin{align}
        \sum_{m=0}^{M^2\s{D}}\binom{M^2|\Gamma_i\cap\Gamma_j'|}{m}\lambda^{2m}\leq \sum_{m=0}^{\s{D}}\binom{|\Gamma_i\cap\Gamma_j'|}{m}\lambda_{M,4}^{2m}.\label{eqn:midValInter}
    \end{align}
    Recalling the definition of $\s{B}$ in \eqref{eqn:sBdef}, we get from \eqref{eqn:LargeValInter} and \eqref{eqn:midValInter},
    \begin{align}
        \s{B}&\triangleq\Ind\ppp{|\Gamma_i\cap\Gamma_j'|\geq\s{D}}\sum_{m=0}^{M^2\s{D}}\binom{M^2|\Gamma_i\cap\Gamma_j'|}{m}\lambda^{2m}\\
        &\leq  \Ind\ppp{\s{D}\leq|\Gamma_i\cap\Gamma_j'|\leq M^2\s{D}}\sum_{m=0}^{\s{D}}\binom{|\Gamma_i\cap\Gamma_j'|}{m}\lambda_{M,4}^{2m}\nonumber\\
        &\quad+\Ind\ppp{|\Gamma_i\cap\Gamma_j'|\geq M^2\s{D}}\sum_{m=0}^{M^2\s{D}}\binom{|\Gamma_i\cap\Gamma_j'|}{m}\lambda_{M,2}^{2m}.
    \end{align}
    Let $\bar\lambda_M^2\triangleq\max(\lambda_M^2,\lambda_{M,2}^2,\lambda_{M,4}^2)$. Then, from \eqref{eqn:DecompositionAandB}, we have,
    \begin{align}
       &\bE\pp{\sum_{m=0}^{M^2\min(\s{D},|\Gamma_i\cap\Gamma_j'|)}\binom{M^2|\Gamma_i\cap\Gamma_j'|}{m}\lambda^{2m}} = \bE\pp{\s{A}+\s{B}}\\
       &\hspace{2.5cm} \leq \bE\pp{\sum_{m=0}^{|\Gamma_i\cap\Gamma_j'|}\binom{|\Gamma_i\cap\Gamma_j'|}{m}\lambda_{M}^{2m}\Ind\ppp{|\Gamma_i\cap\Gamma_j'|\leq\s{D}}}\nonumber\\
       &\hspace{2.5cm}\quad+\bE\pp{\sum_{m=0}^{\s{D}}\binom{|\Gamma_i\cap\Gamma_j'|}{m}\lambda_{M,4}^{2m}\Ind\ppp{\s{D}\leq|\Gamma_i\cap\Gamma_j'|\leq M^2\s{D}}}\nonumber\\
       &\hspace{2.5cm}\quad+\bE\pp{\sum_{m=0}^{M^2\s{D}}\binom{|\Gamma_i\cap\Gamma_j'|}{m}\lambda_{M,2}^{2m}\Ind\ppp{|\Gamma_i\cap\Gamma_j'|\geq M^2\s{D}}}\\
       &\hspace{2.5cm}\leq \bE\pp{\sum_{m=0}^{|\Gamma_i\cap\Gamma_j'|}\binom{|\Gamma_i\cap\Gamma_j'|}{m}\bar\lambda_M^{2m}\Ind\ppp{|\Gamma_i\cap\Gamma_j'|\leq M^2\s{D}}}\nonumber\\
       &\hspace{2.5cm}\quad+\bE\pp{\sum_{m=0}^{|\Gamma_i\cap\Gamma_j'|}\binom{|\Gamma_i\cap\Gamma_j'|}{m}\bar\lambda_M^{2m}\Ind\ppp{\s{D}\leq|\Gamma_i\cap\Gamma_j'|\leq M^2\s{D}}}\nonumber\\
       &\hspace{2.5cm}\quad+\bE\pp{\sum_{m=0}^{M^2\s{D}}\binom{|\Gamma_i\cap\Gamma_j'|}{m}\bar\lambda_M^{2m}\Ind\ppp{|\Gamma_i\cap\Gamma_j'|\geq M^2\s{D}}}\\
       &\hspace{2.5cm}\leq 2\cdot\bE\pp{\sum_{m=0}^{\min(M^2\s{D},|\Gamma_i\cap\Gamma_j'|)}\binom{|\Gamma_i\cap\Gamma_j'|}{m}\bar\lambda_M^{2m}}.
    \end{align}
    Substituting the above in \eqref{eqn:almostTheEnd}, we finally obtain that,
    \begin{align}
        \norm{\s{L}_{n,\leq \s{D}}}_{\calH_0}^2\leq 2\prod_{i,j\in[M]}\p{\bE\pp{\sum_{m=0}^{\min(M^2\s{D},|\Gamma_i\cap\Gamma_j'|)}\binom{|\Gamma_i\cap\Gamma_j'|}{m}\bar\lambda_M^{2m}}}^{\frac{1}{M^2}}.\label{eqn:upperBoundL2Dnorm}
    \end{align}
    Note that $\bar\lambda_M^2$ can be simplified as follows. We have,
    \begin{align}
        \lambda_{M,2}^2=eM^2[M^2D]^{\frac{1}{cM^2\s{D}}}\lambda^2\leq 2eM^2\lambda^2,
    \end{align}
    where we have used the fact that $[M^2D]^{\frac{1}{cM^2\s{D}}}\leq 2$. Similarly,
    \begin{align}
        \lambda_{M,4}^2=\frac{\s{D}^{\frac{1}{c\s{D}}}\p{\frac{M^2}{c}}^{\frac{M^2}{c\s{D}}}}{c}(M^2\s{D})^{\frac{1}{c\s{D}}}(e\lambda^2)^{M^2}\leq C(e\lambda^2)^{M^2},
    \end{align}
    for some universal $C>0$ independent of $M$ and $\s{D}$. Thus,
    \begin{align}
        \bar\lambda_M^2\leq\max\ppp{1,2eM^2\lambda^2,\lambda_M^2,C(e\lambda^2)^{M^2}}.
    \end{align}

   Let us now consider the case where $\lambda^2=o(\s{D^{-2}})$. The analysis of this case remains almost the same, with some minor adjustments. First, we observe that $m^{\star}=cM^2\s{D}$, which maximizes the l.h.s. of \eqref{eq:maximizingShit}, can be as small as $1$. Therefore, the constant $c$ in \eqref{eqn:LowerBinomwith} can vary in the interval $[(M^2\s{D})^{-1},1]$. In particular, $\bar{\lambda}_{M,2}^2$ is bounded by, 
   \begin{align}
      \lambda^2_{M,2}\triangleq \pp{M^2\s{D}M^{2cM^2\s{D}}(e\lambda^2)^{cM^2\s{D}}}^{\frac{1}{cM^2D}} = eM^2[M^2D]^{\frac{1}{cM^2\s{D}}}\lambda^2\leq eM^4\s{D}\lambda^2.
   \end{align}
   Using similar arguments, our new bounds on $\lambda_{M,3}^2$ and $\lambda_{M,4}^2$ are bounded as,
   \begin{align}
       \lambda_{M,3}^2\leq \p{e M^2\s{D}\lambda^2}^{M^2} \quad\s{and}\quad \lambda_{M,4}^2\leq \s{D}M^2\p{e M^2\s{D}^2\lambda^2}^{M^2}\p{M^4\s{D}}^{M^4}.
   \end{align}
   Finally, we note that the inequality in \eqref{eq:StillmAzhik} still holds true. Indeed, under the assumption that $\lambda^2=o(D^{-2})$, we have that $m^{\star}$, which maximizes \eqref{eqn:combComp1}, must be $1$. Since it is assumed $\s{D}\leq |\Gamma_i\cap\Gamma_j| \leq M^2\s{D}$, we have, for sufficiently large $n$,
   \begin{align}
        m^\star = \min\p{\ceil{\frac{\lambda^2 (M^2|\Gamma_i\cap\Gamma_j'|+1)}{\lambda^2+1}},M^2\s{D}}=\ceil{\frac{\lambda^2 (M^2\cdot M^2\s{D}+1)}{\lambda^2+1}}=1.
   \end{align}
   In particular, $c$ in that case must be $(M^2\s{D})^{-1}$, which is also bounded away from $e^{-1}$, that value that maximizes $f_{\hat{m}}$. Therefore, $\eqref{eq:StillmAzhik}$ holds.

    We are now in a position to prove \eqref{eq:condintersectionDComp}. Let $C>0$ be the smallest constant for which Theorem~\ref{th:lowDegreeConbinatorial} and Lemma~\ref{lem:interectionDComp} hold simultaneously. Also, define $\nu(M,\s{D})\triangleq\p{\bar\lambda_M^2 M^4\s{D}^2}^{\sqrt{M^2\s{D}/2}}$. For $i=j$, using \eqref{eq:CompGeneralDense} in Theorem~\ref{th:lowDegreeConbinatorial}, we know that,
    \begin{align}
        \bE\pp{\sum_{m=0}^{\min(M^2\s{D},|\Gamma_i\cap\Gamma_i'|)}\binom{|\Gamma_i\cap\Gamma_i'|}{m}\bar\lambda_M^{2m}}=O(1),
    \end{align}
    provided that,
    \begin{align}
        \frac{\nu(M,\s{D})\cdot\max\p{\vc_id_i,s_i}}{n-k_i}\leq C.
    \end{align}
    This, however, is satisfied when \eqref{eq:condintersectionDComp} holds because, 
    \begin{align}
         \frac{\nu(M,\s{D})\cdot\max\p{\vc_id_i,d^2_i}}{n-k_i}\leq \max_{1\leq j\leq M} \frac{\nu(M,\s{D})\cdot\max\p{\vc_jd_j,d^2_j}}{n-k_j}\leq C.
    \end{align}
    Next, for $i\neq j$, we note that,
    \begin{align}
        \frac{ \nu(M,\s{D})\max\p{\sqrt{\vc_id_i\vc_jd_j},d_id_j}}{n-\min\p{k_i,k_j}}&\leq  \frac{\nu(M,\s{D})\max(\sqrt{\vc_i \vc_j d_i d_j},d_i d_j)}{n-\min(k_i,k_j)}\\
        &\leq  \frac{\nu(M,\s{D})\max(\vc_i d_i,\vc_j d_j, d_i^2, d_j^2)}{n-\min(k_i,k_j)}\\
        &\leq \max_{\ell=i,j} \frac{\nu(M,\s{D})\max(\vc_\ell d_\ell ,d_\ell^2)}{n-k_\ell}\\
        &\leq \max_{1\leq \ell\leq M} \frac{\nu(M,\s{D})\max(\vc_\ell d_\ell,d_\ell^2)}{n-k_\ell}\leq C,
    \end{align}
    where the last equality is due to \eqref{eq:condintersectionDComp}, and therefore, by Lemma~\ref{lem:interectionDComp},
    \begin{align}
        \bE\pp{\sum_{m=0}^{\min(M^2\s{D},|\Gamma_i\cap\Gamma_j|)}\binom{|\Gamma_i\cap\Gamma_j|}{m}\bar\lambda^{2m}}=O(1).
    \end{align}
    Thus, we proved that under \eqref{eq:condintersectionDComp} each term in the product on the r.h.s. of \eqref{eqn:upperBoundL2Dnorm} is bounded, and since $M$ is finite, the entire product of \eqref{eqn:upperBoundL2Dnorm} is bounded as well. The proof of \eqref{eq:condintersectionDComp2} follows from the exact same arguments as above by using \eqref{eq:CompGeneralSparse} in Theorem~\ref{th:lowDegreeConbinatorial} Lemma~\ref{lem:interectionDComp}, and \eqref{eq:condIntersectionD2Comp2}.

\end{proof}

\subsection{Implications}\label{subsec:OtherLDP} The computational lower bounds in Theorem~\ref{th:lowDegreeConbinatorial} provide general conditions under which $\norm{\s{L}_{n, \leq\s{D}}}_{\calH_0} = O(1)$. While these conditions are formulated in terms of the vertex cover number, our upper bounds (given by the performances of the count test and the maximum degree test) are formulated in terms of $|e(\Gamma)|$ and $d_{\max}(\Gamma)$ solely. In this subsection, we bridge this gap for the dense and sparse regimes. 
\begin{theorem}\label{thm:LDPFinal}
Let $\s{D}=(\s{D}_n)_n$ be a sequence such that $\s{D} = O(\log n)$, and let $\Gamma=(\Gamma_n)_n$ be a sequence of graphs. 
\begin{enumerate}
    \item Assuming that $\lambda^2=\chi^2(p||q)= \Theta(1)$,  if
    \begin{align}
        \max\p{|e(\Gamma)|,d_{\max}^2(\Gamma)}\leq n^{1-\varepsilon}, \label{eqn:MainResDenseLower2Comp}
    \end{align}
    for some $\varepsilon>0$, we have $\norm{\s{L}_{n, \leq\s{D}}}_{\calH_0} = O(1)$.
    \item Assume that $\lambda^2=\Theta(n^{-\alpha})$, where $0<\alpha<2$, and that $\Gamma$ is an $(\epsilon,\delta,\smu)$-polynomial family with $|v(\Gamma)|=\Theta(n^\beta)$, for $0<\beta<1$. Then, $\norm{\s{L}_{n, \leq\s{D}}}_{\calH_0} = O(1)$ if
    \begin{align}
        \beta <\min\p{\frac{2+\alpha}{2\varepsilon},\frac{1+\alpha}{2\delta}}.\label{eqn:MainResSparseLower2Comp}
    \end{align}
\end{enumerate} 
\end{theorem}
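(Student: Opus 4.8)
The plan is to reproduce, in the low-degree setting, the argument that proved the statistical lower bounds of Theorem~\ref{th:lowerboundDense}: combine the cover--degree balanced decomposition of Proposition~\ref{prop:GoodDecompositionExists} with the decomposed low-degree bound of Proposition~\ref{prop:decompisitionThDCcomp} (the low-degree analogue of Proposition~\ref{prop:decompisitionThD}, which in turn rests on the combinatorial estimate of Theorem~\ref{th:lowDegreeConbinatorial}). In both parts I would fix the target exponent, choose a finite, $n$-independent $M\in\mathbb{N}$, and use Proposition~\ref{prop:GoodDecompositionExists} to write $\Gamma=\bigcup_{\ell=1}^{M}\Gamma_\ell$ with $\vc(\Gamma_\ell)\,d_{\max}(\Gamma_\ell)\le 2\,|e(\Gamma)|\,d_{\max}(\Gamma)^{1/M}$, $d_{\max}(\Gamma_\ell)\le d_{\max}(\Gamma)$, and $|v(\Gamma_\ell)|\le|v(\Gamma)|$; then I would check that the hypothesis of Proposition~\ref{prop:decompisitionThDCcomp} holds --- in fact that the maximum over $\ell$ appearing there is $o(1)$, so any fixed constant $C$ suffices. (If $\Gamma$ were already $\vcd$-balanced one could invoke Theorem~\ref{th:lowDegreeConbinatorial} directly; the decomposition handles the general case exactly as in the statistical proof.) The single structural fact that makes everything go through is that $\s{D}=O(\log n)$ forces $\sqrt{\s{D}}\,\log\s{D}=o(\log n)$, so any factor of the form $\big(\mathrm{poly}(M,\s{D})\big)^{\sqrt{M^2\s{D}/2}}$ is $n^{o(1)}$, while $d_{\max}(\Gamma)^{1/M}\le |v(\Gamma)|^{1/M}\le n^{1/M}$ is a polynomial loss that can be made arbitrarily small by enlarging $M$.

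For part (1), the dense regime, I would argue as follows. Since $\lambda^2=\Theta(1)$, the first item of Proposition~\ref{prop:decompisitionThDCcomp} applies, and with $M$ fixed one has $\bar\lambda_M^2=\max\p{1,2eM^2\lambda^2,\lambda_M^2,C_\lambda(e\lambda^2)^{M^2}}=O(1)$, hence $\nu(M,\s{D})\triangleq(\bar\lambda_M^2M^4\s{D}^2)^{\sqrt{M^2\s{D}/2}}=n^{o(1)}$. Given $\varepsilon>0$, take $M=\ceil{4/\varepsilon}$. Since $\Gamma$ has no isolated vertices, $|v(\Gamma)|\le 2|e(\Gamma)|\le 2n^{1-\varepsilon}$, so $n-|v(\Gamma_\ell)|=(1-o(1))\,n$, and, using the decomposition together with $\max(|e(\Gamma)|,d_{\max}^2(\Gamma))\le n^{1-\varepsilon}$,
\begin{align*}
\frac{\nu(M,\s{D})\,\max\p{\vc(\Gamma_\ell)d_{\max}(\Gamma_\ell),\,d_{\max}^2(\Gamma_\ell)}}{n-|v(\Gamma_\ell)|}\;\le\; \frac{n^{o(1)}\,\max\p{2\,n^{1-\varepsilon}\,n^{\varepsilon/4},\,n^{1-\varepsilon}}}{(1-o(1))\,n}\;=\;n^{-3\varepsilon/4+o(1)},
\end{align*}
which is $\le C$ for all large $n$; Proposition~\ref{prop:decompisitionThDCcomp}(1) then gives $\norm{\s{L}_{n,\le\s{D}}}_{\calH_0}=O(1)$.

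For part (2), the sparse regime, the structure is the same. Now $\lambda^2=\Theta(n^{-\alpha})$ with $0<\alpha<2$, and since $\s{D}=O(\log n)$ we have $\lambda^2=o(\s{D}^{-2})$, so the relevant condition is the second item of Proposition~\ref{prop:decompisitionThDCcomp}. A direct exponent count (using $\s{D}=n^{o(1)}$, $M$ fixed, and $M^2\alpha\ge\alpha$) shows that $\bar\lambda_M^2=n^{-\alpha+o(1)}$, i.e. $\bar\lambda_M\,\s{D}=n^{-\alpha/2+o(1)}$. From the $(\epsilon,\delta,\smu)$-polynomial family with $|v(\Gamma)|=\Theta(n^{\beta})$ one has $|e(\Gamma)|=n^{\beta\epsilon+o(1)}$ and $d_{\max}(\Gamma)=n^{\beta\delta+o(1)}$. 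Assuming $\beta<\min\p{\tfrac{2+\alpha}{2\epsilon},\,\tfrac{1+\alpha}{2\delta}}$, I would fix $\rho>0$ with $\beta\epsilon+\rho<1+\tfrac{\alpha}{2}$ and take $M$ large enough (but $n$-independent) that $d_{\max}(\Gamma)^{1/M}\le n^{\rho}$ for large $n$; then $\vc(\Gamma_\ell)d_{\max}(\Gamma_\ell)\le 2|e(\Gamma)|\,d_{\max}(\Gamma)^{1/M}=n^{\beta\epsilon+\rho+o(1)}$, $d_{\max}^2(\Gamma_\ell)\le n^{2\beta\delta+o(1)}$, and $n-|v(\Gamma_\ell)|=(1-o(1))\,n$ since $\beta<1$, so
\begin{align*}
\frac{\bar\lambda_M\,\s{D}\,\max\p{\vc(\Gamma_\ell)d_{\max}(\Gamma_\ell),\,d_{\max}^2(\Gamma_\ell)\,\bar\lambda_M\,\s{D}}}{n-|v(\Gamma_\ell)|}\;\le\; \max\p{n^{\beta\epsilon+\rho-\alpha/2-1+o(1)},\;n^{2\beta\delta-\alpha-1+o(1)}},
\end{align*}
and both exponents are strictly negative by the choice of $\beta$ and $\rho$, so the maximum over $\ell$ tends to $0$; Proposition~\ref{prop:decompisitionThDCcomp}(2) then yields $\norm{\s{L}_{n,\le\s{D}}}_{\calH_0}=O(1)$.

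The conceptual content is entirely carried by the two propositions, so the genuinely delicate step will be the $o(1)$ bookkeeping in part (2): one must carefully absorb the sub-polynomial factors hidden in $|e(\Gamma)|=|v(\Gamma)|^{\epsilon+o(1)}$ and $d_{\max}(\Gamma)=|v(\Gamma)|^{\delta+o(1)}$, in $\bar\lambda_M$, in the $\s{D}^{O(M^4)}$ and $(M^4\s{D})^{M^4}$ prefactors of $\bar\lambda_M$, and in $\nu(M,\s{D})$, into $n^{o(1)}$, while confirming that the polynomial gains $n^{-\rho}$ and $n^{-3\varepsilon/4}$ are genuinely strictly positive before letting $n\to\infty$. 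No new combinatorial estimate is required.
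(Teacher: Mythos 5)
Your proposal is correct and follows essentially the same route as the paper: decompose $\Gamma$ via Proposition~\ref{prop:GoodDecompositionExists} with a fixed $M$, invoke Proposition~\ref{prop:decompisitionThDCcomp} (part 1 in the dense case, part 2 in the sparse case since $\lambda^2=o(\s{D}^{-2})$), and absorb $\nu(M,\s{D})$ and $\bar\lambda_M$ into $n^{o(1)}$ factors because $\s{D}=O(\log n)$. The only cosmetic difference is your slightly larger choice of $M$ (e.g. $\ceil{4/\varepsilon}$), which yields an explicit strictly negative polynomial exponent rather than the paper's borderline bookkeeping, but the argument is the same.
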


\begin{proof}[Proof of Theorem~\ref{thm:LDPFinal}]
    From Proposition~\ref{prop:GoodDecompositionExists} we know that there exists a decomposition $\Gamma=\bigcup_{\ell=1}^M\Gamma_\ell$, for which \eqref{eq:decompositionPropoety} holds, and we take,
    \begin{align}
        M_n=\ceil{\frac{1}{\varepsilon}}.
    \end{align} By Proposition~\ref{prop:decompisitionThDCcomp} strong detection is impossible if,
    \begin{align}
         \max_{\ell=1,\dots, M}\p{\frac{\nu(M,\s{D})}{n-|v(\Gamma_\ell)|}\cdot\max\p{\vc(\Gamma_\ell)\cdot d_{\max}(\Gamma_\ell),d_{\max}(\Gamma_\ell)^2}}\leq C,\label{eq:CondFinalDComp}
    \end{align}
    where $\nu(M,\s{D})\triangleq\p{\max(\bar\lambda_M^2,1)\cdot 2 M^4\s{D}^2}^{\sqrt{M^2\s{D}/2}}$. Now, by \eqref{eq:decompositionPropoety}, we have,
     \begin{align}
          \max_{\ell=1,\dots, M}&\p{\frac{\nu(M,\s{D})}{n-|v(\Gamma_\ell)|}\cdot\max\p{\vc(\Gamma_\ell)\cdot d_{\max}(\Gamma_\ell),d_{\max}(\Gamma_\ell)^2}}\\
          &\qquad\leq   \frac{\nu(M,\s{D})}{n-|v(\Gamma)|}\cdot\max\p{|e(\Gamma)|\cdot d_{\max}(\Gamma)^{\frac{1}{M_n}},d_{\max}(\Gamma)^2}\\
          &\qquad\leq  \frac{\nu(M,\s{D})}{\delta n}\cdot\max\p{|e(\Gamma)|\cdot d_{\max}(\Gamma)^{\frac{1}{M_n}},d_{\max}(\Gamma)^2}\\
            &\qquad\leq \frac{\nu(M,\s{D})}{\delta n^{1-\frac{1}{M_n}}}\cdot\max\p{|e(\Gamma)|,d_{\max}(\Gamma)^2}
            \label{eq:DenseNiceEqComp}
     \end{align}
     where the first equality follows from the fact that $|v(\Gamma_\ell)|\leq |v(\Gamma)|$, for all $\ell\in[M]$, the second equality is because $|v(\Gamma)|\leq (1-\delta)n$, for some fixed $\delta>0$, the third equality follows from $d_{\max}(\Gamma)\leq |v(\Gamma)|$, and the last equality is because $|v(\Gamma)|\leq n$. Since for a fixed $M$ and $\s{D} = O(\log n)$, we have $\nu(M,\s{D}) = n^{o(1)}$, we finally get that if, 
     \begin{align}
         \max\p{|e(\Gamma)|,d_{\max}(\Gamma)^2}\leq  n^{1-\varepsilon},
     \end{align}
     then \eqref{eq:CondFinalDComp} is satisfied for a sufficiently large $n$ (regardless of the constant $C$), which proves \eqref{eqn:MainResDenseLower2Comp}.

     Next, we follow the lines of the proof of Theorem~\ref{th:polynomial} for the proving the condition \eqref{eqn:MainResSparseLower2Comp} in the sparse regime. Let 
      \begin{align}
        2\rho \triangleq \beta- \min\p{, \frac{1+\alpha}{2\delta},  \frac{2+\alpha}{2\epsilon}} >0,\label{eq:AssumptionSparseComp}
    \end{align}
    and let $\Gamma$ be decomposed as 
    $\Gamma=\Gamma_1\cup\cdots\cup\Gamma_M$, where $M=\ceil{\rho^{-1}}$, such that for all $1\leq \ell\leq M$, we have 
    \begin{align}
        \vc(\Gamma_\ell)d_{\max}(\Gamma_\ell)&\leq |e(\Gamma)|d_{\max}(\Gamma)^{\frac{1}{M}}\leq n^{\beta(\epsilon +\rho)+o(1)},\label{eq:decomposintiosparseComp}
    \end{align}
    where such a decomposition exists due to Proposition~\ref{prop:GoodDecompositionExists}. By Proposition~\ref{prop:decompisitionThDCcomp}, $\norm{\s{L}_{n,\leq \s{D}}}_{\calH_0}$ is bounded if 
    \begin{align}
         \max_{\ell=1,\dots, M}\frac{\bar\lambda_M\s{D}\cdot\max\p{\vc(\Gamma_\ell)d_{\max}(\Gamma_\ell),d^2_{\max}(\Gamma_\ell)\bar\lambda_M\s{D}}}{n-|v(\Gamma_\ell)|}\leq C, \label{eq:condinproof}
    \end{align}
    where \begin{align}
        \bar\lambda_M^2\triangleq \max\p{(1+\lambda^2)^{M^2}-1,eM^4\s{D}\lambda^2,(eM^2\s{D}\lambda^2)^{M^2},\s{D}M^2\p{e M^2\s{D}^2\lambda^2}^{M^2}\p{M^4\s{D}}^{M^4}}.\label{eq:HowtochooseRHO}
    \end{align} Note that under the assumption that $\lambda^2=\Theta(n^{-\alpha})$, $M$ is finite, and $\s{D}=O(\log(n))$, we also have that $\bar{\lambda}^2_M=n^{-\alpha+o(1)}$. Thus, from the definition of an $(\epsilon,\delta,\smu)$-polynomial sequence, and since $|v(\Gamma)|=\Theta(n^{\beta})$ and $\s{D}=O(\log(n))=n^{o(1)}$ for all $\ell$, 
    \begin{align}
        \frac{\bar\lambda_M\s{D}\cdot\max\p{\vc(\Gamma_\ell)d_{\max}(\Gamma_\ell),d^2_{\max}(\Gamma_\ell)\bar\lambda_M\s{D}}}{n-|v(\Gamma_\ell)|}&\leq \frac{n^{-\frac{\alpha}{2}+o(1)}\max\p{n^{\epsilon\beta+\rho},n^{2\delta\beta-\frac{\alpha}{2}+o(1)}}}{n^{1-o(1)}}\\
        &=n^{\max\p{\beta\epsilon +\rho -1-\frac{\alpha}{2}, 2\beta\delta -1 -\alpha}}+o(1).\label{eq:oneBeforeEND}
    \end{align}
    Note that by \eqref{eq:HowtochooseRHO} we have 
    \begin{align}
        \max\p{\beta\epsilon +\rho -1-\frac{\alpha}{2}, 2\beta\delta -1 -\alpha}<n^{-\rho},
    \end{align}
    and theretofore \eqref{eq:oneBeforeEND} decays to zero. In particular, the condition in \eqref{eq:condinproof} holds.
\end{proof}

We conclude this section by pointing out that the bounds in Theorem~\ref{thm:LDPFinal} are tight. Indeed, by Theorem~\ref{thm:upperBoundAlgo}, in the dense regime, using the count and maximum degree tests (which runs in polynomial time), strong detection is possible whenever,
\begin{align}
    \max\p{|e(\Gamma)|,d_{\max}(\Gamma)^2}\geq n^{1+o(1)}.
\end{align}
In the sparse regime, under the setting of Theorem~\ref{thm:LDPFinal}, detection using the count and the maximum degree tests is possible when,
\begin{align}
    \beta>\min\p{\frac{2+\alpha}{2\varepsilon},\frac{1+\alpha}{2\delta}}.
\end{align}
These complement our computational lower bounds. Furthermore, the statistical and computational bounds in Theorem~\ref{th:StatCompLimitSparse} imply that a statistical-computational gap exist if and only if $\Gamma$ has a positive polynomial density, i.e., $\smu>0$. Indeed, we observe that as long as $\alpha>0$, the line $\frac{\mu}{\alpha}$ is strictly below the lines $\frac{2+\alpha}{2\epsilon}$ and $\frac{1+\alpha}{2\delta}$ in the region \begin{align}
    \alpha\in \p{0, \min\p{\frac{2\smu}{2\epsilon-\smu},\frac{\smu}{2\delta-\mu}}}\triangleq I_{\epsilon,\delta,\smu}.
\end{align} In particular, for any $\alpha \in I_{\epsilon,\delta,\smu}$, and $\frac{\alpha}{\smu}<\beta<\min(\frac{1+\alpha}{2\delta},  \frac{2+\alpha}{2\epsilon})$, detection is possible (using the scan test), and by Theorem~\ref{thm:LDPFinal}, detection in polynomial time is impossible.


    


\bibliographystyle{alpha}
\bibliography{bibfile}
\appendix

\section{Additional Proofs}\label{app:1}

\subsection{Essential number of planted edges}\label{app:simpleLowerBound}

Let us show that strong detection is information-theoretically impossible if $|p-q|\cdot|e(\Gamma)|<c<1$. We already saw that the optimal risk satisfies (see, \eqref{eqn:lowerBoundSecond}),
\begin{align}
    \mathsf{R}^{\star}&=1-d_{\s{TV}}(\P_{\calH_0},\P_{\calH_1}).
\end{align}
Recall the following two facts about the total-variation distance (see, e.g., \cite[Theorem 7.5]{Polyanskiy_Wu_2025} and \cite[Lemma 7]{ma2015computational}).
\begin{enumerate}
    \item \emph{Convexity}: $(P,Q)\mapsto d_{\s{TV}}(P,Q)$ is a jointly convex function.
    \item \emph{Tensorization}: Let $\{P_i\}_{i=1}^n$ and $\{Q_i\}_{i=1}^n$ be distributions on a measurable space. Then,
    \begin{align}
        d_{\s{TV}}\p{\prod_{i=1}^nP_i,\prod_{i=1}^nQ_i}\leq\sum_{i=1}^nd_{\s{TV}}(P_i,Q_i).
    \end{align}
\end{enumerate}
Accordingly, using the above two properties we readily obtain that,
\begin{align}
    d_{\s{TV}}(\P_{\calH_0},\P_{\calH_1}) &= d_{\s{TV}}(\P_{\calH_0},\bE_{\Gamma}\P_{\calH_1\vert\Gamma})\\
    &\leq \bE_{\Gamma}d_{\s{TV}}(\P_{\calH_0},\P_{\calH_1\vert\Gamma})\\
    & = \bE_{\Gamma}d_{\s{TV}}\p{\prod_{(i,j)\in\binom{[n]}{2}}\s{Bern}(q),\prod_{(i,j)\in\binom{[n]}{2}\setminus\Gamma}\s{Bern}(q)\times\prod_{(i,j)\in\Gamma}\s{Bern}(p)}\\
    &\leq |p-q|\cdot|e(\Gamma)|,
\end{align}
and thus, $\mathsf{R}^{\star}\geq1-|p-q|\cdot|e(\Gamma)|$, which concludes the proof.    

\subsection{Graphs with sub-logarithmic degrees: alternative probabilistic proof}\label{app:probablisticProofs}

Our main results were proved using a combination of combinatorial and probabilistic approaches. Many of the existing results in the literature, for specific structures, rely on probabilistic arguments only. In this appendix, we present a pure probabilistic proof, which give tight lower bounds for the family of sug-logarithmic degree graphs (i.e., $d_{\max}(\Gamma)=o(\log(n))$), which might be of independent interest. We prove the following result.
\begin{lemma}\label{lem:intersectionBoundedDegree}
    Let $\Gamma_1$ and $\Gamma_2$ be two graphs drawn uniformly and independently at random from $\calS_{\Gamma_1}$ and $\calS_{\Gamma_2}$, respectively. Assume further that $d_{\max}(\Gamma_1)=O(\log n)$ and $|e(\Gamma_2)|=O(n^{1-\varepsilon})$, for some $\varepsilon>0$. Then, for any constant $c>0$,
    \begin{align}
        \bE_{\Gamma_1\indep\Gamma_2}\pp{(1+\lambda^2)^{c\cdot |e(\Gamma_1\cap\Gamma_2)|}} = 1+o(1).
    \end{align}
\end{lemma}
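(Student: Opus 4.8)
## Proof proposal for Lemma~\ref{lem:intersectionBoundedDegree}

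The plan is to bound the moment-generating function directly, using the product/expansion identity from Lemma~\ref{lem:intersectionMomentGenGeneral} together with an explicit probabilistic estimate on $\P_{\Gamma_2}[\s{H}\subseteq\Gamma_2]$ for subgraphs $\s{H}$ of a fixed copy $\Gamma_1'$ of $\Gamma_1$. By Lemma~\ref{lem:intersectionMomentGenGeneral} we have
\begin{align}
    \bE_{\Gamma_1\indep\Gamma_2}\pp{(1+\lambda^2)^{c\,|e(\Gamma_1\cap\Gamma_2)|}}
    =\sum_{\s{H}\subseteq\Gamma_1'}(\lambda')^{2|\s{H}|}\cdot\P_{\Gamma_2}\pp{\s{H}\subseteq\Gamma_2},
\end{align}
where $(\lambda')^2\triangleq(1+\lambda^2)^{c}-1=\Theta(1)$ (since $\lambda^2=\Theta(1)$ and $c$ is a fixed constant). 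So it suffices to show the sum over all nonempty $\s{H}\subseteq\Gamma_1'$ is $o(1)$. First I would record the elementary bound $\P_{\Gamma_2}[\s{H}\subseteq\Gamma_2]\le \P[v(\s{H})\subseteq v(\Gamma_2)]\le\big(|v(\Gamma_2)|/(n-|v(\s{H})|)\big)^{|v(\s{H})|}$, which, combined with $|v(\Gamma_2)|\le 2|e(\Gamma_2)|=O(n^{1-\varepsilon})$ and $|v(\s{H})|\le|v(\Gamma_1)|\le n$, gives $\P_{\Gamma_2}[\s{H}\subseteq\Gamma_2]\le (C n^{-\varepsilon})^{|v(\s{H})|}$ for large $n$.

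Next I would organize the sum by the number of vertices $\ell$ of $\s{H}$: since $\Gamma_1$ has sub-logarithmic maximum degree $d\triangleq d_{\max}(\Gamma_1)=O(\log n)$, a connected subgraph of $\Gamma_1$ on $\ell$ vertices has at most $\binom{\ell}{2}\le\ell^2$ edges, but more usefully, by the same reasoning as in Lemma~\ref{lem:probRandomSubgraph1}/\ref{lem:subgraphsBoundedDegreeCount1} the number of subgraphs $\s{H}\subseteq\Gamma_1'$ with $\ell$ vertices is at most $|v(\Gamma_1)|\cdot(e\tilde d)^{\ell-1}\cdot 2^{\binom{\ell}{2}}\le n\cdot(Cd)^{\ell}\cdot 2^{\ell^2}$ (choose an anchor vertex, grow a connected vertex set, then choose which induced edges to keep). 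Actually, to keep the geometric-series bookkeeping clean I would instead sum over $\s{H}$ with a fixed number of vertices $\ell$ and edges $j$ using the cruder count $|\calS_{\ell,j}(\Gamma_1)|\le n^\ell (d/2)^j$ is not quite right — the cleanest route is: number of $\s{H}\subseteq\Gamma_1'$ with $\ell$ vertices $\le n\cdot(e\tilde d)^{\ell-1}$ connected-vertex-set choices per component, so overall at most $\binom{n}{?}$... Let me just use: for each $\ell$, the number of subgraphs of $\Gamma_1$ on $\ell$ vertices with no isolated vertices is at most $|v(\Gamma_1)|\cdot(4\tilde d)^{\ell}$ after also summing over edge subsets, which is valid since each such subgraph's vertex set is contained in a union of at most $\ell$ bounded-degree neighborhoods. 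Then the contribution of subgraphs on $\ell$ vertices is at most
\begin{align}
    n\cdot(4\tilde d)^{\ell}\cdot(\lambda')^{2\ell^2}\cdot(Cn^{-\varepsilon})^{\ell}
    \le n\cdot\Big(4\tilde d\,C n^{-\varepsilon}\,(\lambda')^{2\ell}\Big)^{\ell}.
\end{align}
Because $\ell\le|v(\Gamma_1)|$ and $(\lambda')^2=\Theta(1)$ while $\tilde d=O(\log n)$, the bracket is $\le n^{-\varepsilon/2}$ once... no — this fails when $\ell$ is of order $n$, since $(\lambda')^{2\ell}$ blows up. The fix, and the point where sub-logarithmic degree is essential, is to also use $j\ge\ell/2$ together with the observation that $(\lambda')^{2|\s{H}|}\le (\lambda')^{2|\s{H}|}$ and to instead control $\P_{\Gamma_2}[\s{H}\subseteq\Gamma_2]$ by $(C n^{-\varepsilon})^{|v(\s{H})|}$ paired against the bound $|e(\s{H})|\le \ell\cdot d_{\max}(\Gamma_1)/2$? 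No: the genuinely correct pairing is $|e(\s{H})| = j$ and $\P_{\Gamma_2}[\s{H}\subseteq\Gamma_2]\le$ (analogue of Lemma~\ref{lem:probRandomSubgraph1}) which contributes a factor $\le(d_{\max}(\Gamma_2)/(n-|v(\Gamma_2)|))^{\ell-m}\cdot(\cdots)^m$; but since $|e(\Gamma_2)|=O(n^{1-\varepsilon})$ we have $d_{\max}(\Gamma_2),\vc(\Gamma_2)\le|e(\Gamma_2)| = O(n^{1-\varepsilon})$, so Lemma~\ref{lem:probRandomSubgraph1} gives $\P_{\Gamma_2}[\s{H}\subseteq\Gamma_2]\le (C n^{-\varepsilon})^{\ell}$, and the number of $\s{H}\subseteq\Gamma_1$ on $\ell$ vertices with $j$ edges is (Lemma~\ref{lem:subgraphsBoundedDegreeCount1}) at most $|\s{Par}(\ell,m)|\,\vc(\Gamma_1)^m(e\tilde d)^{\ell-m}\binom{\binom{\ell}{2}}{j}\le n\,(Cd)^\ell\,2^{\ell^2/2}$. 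So the obstacle is real only because of the $2^{\ell^2/2}$; it is defused precisely by noting that in the regime $d=o(\log n)$ and $|e(\Gamma_2)|=O(n^{1-\varepsilon})$, any $\s{H}$ with $\ell$ vertices that actually embeds into $\Gamma_2$ forces $\ell=O(n^{1-\varepsilon})$, and moreover $j\le \ell\cdot d$, so $2^{\ell^2/2}$ should be replaced by $2^{\ell d}=2^{o(\ell\log n)}=n^{o(\ell)}$, which is what makes the geometric series converge. Concretely, writing out the bound: the total is at most $1+\sum_{\ell\ge2} n\,\big(C\,d\,n^{-\varepsilon}\,(\lambda')^{2d}\,2^{d}\big)^{\ell}$, and since $(\lambda')^{2d}2^d\,d=n^{o(1)}$ while $n^{-\varepsilon}$ dominates, each summand is $\le n^{1-\varepsilon\ell/2}$, giving a convergent series whose sum is $O(n^{1-\varepsilon})\cdot$(first term)$=o(1)$... which is still $\Omega(1)$; so the final clean-up is to peel off $\ell=2,3,\dots$ individually: the $\ell$-term is $\le n^{1-\varepsilon\ell+o(1)}$, so for $\ell\ge2$ it is $\le n^{-1+o(1)}=o(1/\ell^2)$ after summing, whence the whole sum over $\ell\ge 2$ is $o(1)$ and the total is $1+o(1)$, as claimed.

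I expect the main obstacle to be exactly this tension between the number of subgraphs on $\ell$ vertices (which a priori carries a $2^{\Theta(\ell^2)}$ factor) and the embedding probability into $\Gamma_2$ (which decays only like $n^{-\varepsilon\ell}$): the resolution is to bound $|e(\s{H})|\le \ell\, d_{\max}(\Gamma_1)$ and use $d_{\max}(\Gamma_1)=o(\log n)$ so that the edge-subset count becomes $n^{o(\ell)}$ rather than exponential in $\ell^2$, and simultaneously to exploit $|e(\Gamma_2)|=O(n^{1-\varepsilon})$ (hence $\vc(\Gamma_2),d_{\max}(\Gamma_2)\le|e(\Gamma_2)|$) so that Lemma~\ref{lem:probRandomSubgraph1} delivers the $n^{-\varepsilon\ell}$ decay with enough room to spare. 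Once the per-$\ell$ estimate $n^{1-\varepsilon\ell+o(1)}$ is in hand, summing the geometric tail is routine. Throughout I would invoke Lemma~\ref{lem:intersectionMomentGenGeneral}, Lemma~\ref{lem:probRandomSubgraph1}, Lemma~\ref{lem:subgraphsBoundedDegreeCount1}, and the Hardy--Ramanujan bound \eqref{eq:Hardy-Ramanujan} exactly as in the proof of Theorem~\ref{th:lowerVCD}, so the only new ingredient is the observation that sub-logarithmic degree converts the $\binom{\binom{\ell}{2}}{j}$ factor into a sub-polynomial-per-edge quantity.
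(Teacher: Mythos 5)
Your route (expanding the moment generating function via Lemma~\ref{lem:intersectionMomentGenGeneral} with $(\lambda')^2=(1+\lambda^2)^c-1$ and bounding the sum over subgraphs of a fixed copy of $\Gamma_1$) is not the paper's argument: the paper proves this lemma purely probabilistically, by revealing the vertices of a random copy of $\Gamma_2$ sequentially, bounding the past-degrees of the intersection, showing the resulting sum is stochastically dominated (via an inductive coupling) by a sum of independent Bernoulli variables with success probabilities $\deg^{\s{p}}_{\Gamma_2}(i)\,d_{\max}(\Gamma_1)/(n-|v(\Gamma_2)|)$, and then computing the MGF of that independent sum, which gives the bound $\exp\p{d_{\max}(\Gamma_1)\,|e(\Gamma_2)|\,(1+\lambda^2)^{c\,d_{\max}(\Gamma_1)}/(n-|v(\Gamma_2)|)}=1+o(1)$. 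A combinatorial proof along your lines can be made to work, but as written yours has two genuine gaps. First, the probability estimate you settle on, $\P_{\Gamma_2}\pp{\s{H}\subseteq\Gamma_2}\leq (Cn^{-\varepsilon})^{|v(\s{H})|}$, is far too weak: already for $\ell=|v(\s{H})|=2$, $\Gamma_1$ can have $|e(\Gamma_1)|=n^{1+o(1)}$ edges, and each contributes $(\lambda')^2\cdot O(n^{-2\varepsilon})$ under your bound, so your $\ell=2$ block is $n^{1-2\varepsilon+o(1)}$, which diverges for every $\varepsilon<1/2$; your closing claim that the $\ell$-term is $\leq n^{-1+o(1)}$ for all $\ell\geq 2$ is false for small $\varepsilon$. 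The true probability that a fixed edge lies in $\Gamma_2$ is $O(|e(\Gamma_2)|/n^2)=O(n^{-1-\varepsilon})$, i.e.\ you must charge each connected component of $\s{H}$ a factor of order $|e(\Gamma_2)|/(n-|v(\Gamma_2)|)^2$ and each additional vertex a factor $d_{\max}(\Gamma_2)/(n-|v(\Gamma_2)|)$, exactly the mechanism of Lemma~\ref{lem:probRandomSubgraph1}; collapsing this to $(Cn^{-\varepsilon})^{\ell}$ discards the crucial $n^{-1}$ per component.

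Second, your count of subgraphs of $\Gamma_1$ on $\ell$ vertices, $n\cdot(4\tilde{d})^{\ell}$, is wrong for disconnected $\s{H}$: if $\Gamma_1$ is a perfect matching, the number of sub-matchings with $m$ edges is $\binom{n/2}{m}=n^{\Theta(m)}$, so the correct count carries a $\vc(\Gamma_1)^{m}$ factor (up to $n^{m}$), with $m$ the number of components, as in Lemma~\ref{lem:subgraphsBoundedDegreeCount1}. The two repairs must be made together: keeping the $m$-dependence on both sides, the $(\ell,m)$-contribution is at most $n^{o(\ell)}\p{\vc(\Gamma_1)|e(\Gamma_2)|/n^2}^{m}\p{d_{\max}(\Gamma_1)d_{\max}(\Gamma_2)/n}^{\ell-2m}$ (times the edge-subset sum $(1+(\lambda')^2)^{\mu(\Gamma_1)\ell}=n^{o(\ell)}$, which is where sub-logarithmic degree enters, as you correctly identified), i.e.\ at most $n^{-\varepsilon(\ell-m)+o(\ell)}\leq n^{-\varepsilon\ell/2+o(\ell)}$, after which the sum over $\ell\geq 2$ is $o(1)$. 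Note also that both your argument and the paper's proof actually require $d_{\max}(\Gamma_1)=o(\log n)$ (as in Corollary~\ref{cor:LogarthmicDegreeLower}); with $d_{\max}(\Gamma_1)=C\log n$ for a large constant $C$, the factor $(1+\lambda^2)^{c\,d_{\max}(\Gamma_1)}$ is polynomial in $n$ and can overwhelm $n^{-\varepsilon}$, so the $O(\log n)$ in the statement should be read in that weaker sense.
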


\begin{proof}[Proof of Lemma~\ref{lem:intersectionBoundedDegree}]
    Throughout this proof, let $k_i$ denote the number of vertices in $\Gamma_i$ and $d_i$ denote the maximum degree of $\Gamma_i$, for $i=1,2$. Let us also denote the fixed deterministic versions of $\Gamma_1$ and $\Gamma_2$ by $\Gamma_1^{\s{F}}$  and $\Gamma_2^{\s{F}}$  respectively.
      By symmetry we may fix $\Gamma_1$ to be a deterministic copy of $\Gamma_1^{\s{F}}$ in $\calK_n$ with vertex set $\set{1,2,\ldots,k_1}$. Let $\Gamma_2$ be a  uniform random copy of $\Gamma_2^{\s{F}}$ generated by the random variables $X_1,\dots,X_{k_2}$ distributed on $[n]$ as described in Observation~\ref{obs:RandomCopy1}, with respect to an enumeration 
      \begin{align}
    v(\Gamma_2^{\s{F}})=\set{v_1,\dots,v_{k_2}}.
      \end{align} For any $i\in [k]$, let $\deg_{\Gamma_1\cap\Gamma_2}^{\s{p}}(i)$ be the past degree of $v_i$ (with respect to the ordering of $X_1,\dots,X_{k_2}$) in $\Gamma_1\cap \Gamma_2$. That is,
      \begin{equation}
      \deg_{\Gamma_1\cap\Gamma_2}^{\s{p}}(i)\triangleq \abs{\ppp{j\leq i-1:\set{v_i,v_j}\in e(\Gamma_2^{\s{F}} )\text{ and }\set{X_i,X_j}\in e(\Gamma_1)}},
      \end{equation}
      for $2\leq i\leq k_2$, and we define $\deg_{\Gamma_1\cap\Gamma_2}^{\s{p}}(1)\triangleq0$. Observe that,
      \begin{align}
          e(\Gamma_1\cap\Gamma_2)=\sum_{i=1}^k \deg_{\Gamma_1\cap \Gamma_2}^{\s{p}}(i).\label{eq:edgesPastDegrees}
      \end{align}
      Furthermore, for any $i\in[k_2]$ we have, 
      \begin{align}
          \deg_{\Gamma_1\cap\Gamma_2}^{\s{p}}(i)\leq \min\p{\deg_{\Gamma_2}^{\s{p}}(i),d_1} \cdot Z_i,
      \end{align}
      with probability one, where 
      \begin{align}
          Z_i&\triangleq\Ind_{\set{\deg_{\Gamma_1\cap\Gamma_2}^{\s{p}}(i)>0}},\\
          \deg_{\Gamma_2}^{\s{p}}(i)&\triangleq\abs{\ppp{j\leq i-1: \set{v_i,v_j} \in e(\Gamma_2^{\s{F}})}},
      \end{align}
    for $2\leq i\leq k_2$, and we define $\deg_{\Gamma_2}^{\s{p}}(1)\triangleq0$.
    Thus, by \eqref{eq:edgesPastDegrees}, we have 
    \begin{align}
        e(\Gamma_1\cap\Gamma_2)\leq \sum_{i=1}^k a_i\cdot Z_i,\label{eq:edgesPastDegreesSolo}
    \end{align}
    where $a_i\triangleq \min\p{\deg_{\Gamma_2}^{\s{p}}(i),d_1}$.
    Let $B_1,\dots,B_{k_2}$ be independent random variables, independent of $\Gamma_1$ and $\Gamma_2$, where  $B_i\sim \s{Ber}(p_i)$, with
    \begin{align}
        p_i\triangleq \min\p{1, \frac{\deg^{\s{p}}_{\Gamma_2^{\s{F}}}(i)\cdot d_1}{n-k}},
    \end{align}
    for $i\in[k_2]$. In what follows, we prove that 
    \begin{align}
        \sum_{i=1}^ka_i\cdot Z_i \preceq \sum_{i=1}^k a_i\cdot B_i.\label{eq:stochasitcDominanceDegrees}
    \end{align}
    We will prove the above relation inductively.  That is, we will prove that for all $1\leq \ell\leq k_2$, 
    \begin{equation}
       \overset{\text{denote by }S_\ell }{\overbrace{ \sum_{i=1}^\ell a_i\cdot Z_i}} \preceq \overset{\text{denote by }Y_\ell}{\overbrace{\sum_{i=1}^\ell a_i\cdot B_i}}.
    \end{equation}
    Indeed, by definition, we have
    $Z_1=B_1=0$ with probability one, and thus,
    \begin{equation}
        a_1\cdot Z_1 \preceq  a_1\cdot B_1.
    \end{equation}
    Assume by induction that the statement is true for $ 1\leq \ell< k_2$.  We recall the following result.
    \begin{lemma}\label{lem:S.D.Sums}\cite[Theorem 5.4]{mosler1991some} Let $P_{Z,X}$ and $P_{Y,X}$ be two joint distributions on $(Z,X)$ and $(Z,Y)$ respectively such that for all $x$, $P_{Y|X=x}\preceq P_{Z|X=x}$, then,
\[ Z+X\preceq Y+X.\]
\end{lemma}
Using Lemma~\ref{lem:S.D.Sums}, we show that it is sufficient to show that  any value of $S_\ell=s_\ell$, the conditional distribution of $a_{\ell+1}\cdot Z_{\ell+1}$ given the event $\set{S_\ell=s_\ell}$ is stochastically dominated by the conditional distribution of $a_{\ell+1}\cdot B_{\ell+1}$ given $\set{S_\ell=s_\ell}$. Having that proved, we obtain,
    \begin{align}
        S_{\ell+1}&=S_{\ell}+ a_{\ell+1}\cdot Z_{\ell+1}\label{eq:stochasticChain1}\\& \overset{(a)}{\preceq} S_{\ell}+ a_{\ell+1}\cdot B_{\ell+1}\\
        &\overset{(b)}{\preceq} Y_{\ell}+ a_{\ell+1}\cdot B_{\ell+1}=Y_{\ell+1},\label{eq:stochasticChain2}
    \end{align}
    where $(a)$ follows from Lemma~\ref{lem:S.D.Sums}, and $(b)$ follows  from Lemma~\ref{lem:S.D.Sums}, the induction hypothesis and from the independence of $Y_\ell$ and $S_\ell$ with $B_{\ell+1}$. Since $\preceq$ is a partial order over probability measures, the derivation of \eqref{eq:stochasticChain1}-\eqref{eq:stochasticChain2} implies that $S_{\ell+1}\preceq Y_{\ell+1}$ as desired.

    Since the partial order of $\preceq$ is invariant under multiplication with a non-negative constants, in order to complete the proof, it is sufficient to show that,
    \begin{align}
        P_{Z_{\ell+1}|S_\ell=s_\ell}\preceq P_{B_{\ell+1}|S_\ell=s_\ell}. 
    \end{align}
    Since $Z_{\ell+1}$ and $B_{\ell}$ are Bernoulli distributed random variables, by Lemma~\ref{lem:S.D.Sums}, it is sufficient to show that,
    \begin{align}
        \P[Z_{\ell+1}=1\vert S_\ell=s_\ell]\leq \P[B_{\ell+1}=1\vert S_\ell=s_\ell]=p_{\ell+1}.
    \end{align}
    Note that the event $\set{S_\ell=s_\ell}$ is in the $\sigma$-algebra generated by $X_1,\dots,X_\ell$. Let us denote $(X_1,\dots,X_\ell)$ as $\overline{X}_\ell$ and define $A_{\ell}\in[n]^\ell$ to be the set of all assignments $\overline{X}_\ell=\overline{x}_\ell$ such that the event $\set{S_\ell=s_\ell}$ is decomposed as,
    \begin{align}
    \set{S_\ell=s_\ell}=\bigcup_{\overline{x}_{\ell}\in A_{\ell}}\set{\overline{X}_\ell = \overline{x}_\ell}.
    \end{align}
    We now have,
    \begin{align}
        \P[Z_{\ell+1}=1~|~S_\ell=s_\ell]&
        =\frac{\P[Z_{\ell+1}=1 \cap S_\ell=s_\ell]}{\P[ S_\ell=s_\ell]}\\
        & =\sum_{\overline{x}_\ell\in A_\ell}\frac{\P[Z_{\ell+1}=1 ~|~ \overline{X}_\ell=\overline{x}_\ell]\cdot \P[\overline{X}_\ell=\overline{x}_\ell]}{\P[ S_\ell=s_\ell]}\\
        &\overset{(a)}{\leq }\sum_{\overline{x}_\ell\in A_\ell}\frac{\P[\overline{X}_\ell=\overline{x}_\ell]}{\P[ S_\ell=s_\ell]}\cdot \frac{1}{n-\ell}\cdot\sum_{\substack{j\leq\ell\\ \set{v_{\ell+1},v_j}\in e(\Gamma_2^{\s{F}})\\ x_j\in [k_1]}}\deg_{\Gamma_1}(x_j) \\
        &\leq \sum_{\overline{x}_\ell\in A_\ell}\frac{\P[\overline{X}_\ell=\overline{x}_\ell]}{\P[ S_\ell=s_\ell]}\cdot \frac{1}{n-k_{2}}\cdot  \sum_{\substack{j\leq\ell\\ \set{\ell+1,j}\in e(\Gamma_2^{\s{F}})}}d_1 \\
        &= \sum_{\overline{x}_\ell\in A_\ell}\frac{\P[\overline{X}_\ell=\overline{x}_\ell]}{\P[ S_\ell=s_\ell]}\cdot  \frac{d_1\cdot \deg^{\s{p}}_{\Gamma_2^{\s{F}}}(\ell+1)}{n-k_{2}}\\
        &=p_{\ell+1}
    \end{align}
    where $(a)$ follows since conditioned on $\set{\overline{X}_\ell=\overline{x}_\ell}$, $Z_{\ell+1}=1$ only if $X_{\ell+1}$ is in the $\Gamma_0$-neighborhood of $x_{j}$ for some $j\leq\ell$ such that $x_j\in [k_1]$. Namely, there are at most 
    \begin{align}
        \sum_{j\leq\ell,\; \set{\ell+1,j}\in e(\Gamma_2^{\s{F}}),\; x_j\in [k_1]}\deg_{\Gamma_1}(x_j)
    \end{align} locations at which $X_{\ell+1}$ can be, out of $n-\ell$ possibilities in total. This concludes the proof of the induction step. 

    Since $\lambda^2,c\geq 0$, the function $f(x)=(1+\lambda^2)^{c\cdot x}$ is monotonically increasing w.r.t. $x\geq0$. Thus, combining \eqref{eq:SecondMomentExpression}, \eqref{eq:edgesPastDegrees}, and \eqref{eq:stochasitcDominanceDegrees}, we get,
    \begin{align}
    \bE_{\Gamma_1\indep\Gamma_2}\pp{(1+\lambda^2)^{e(\Gamma\cap\Gamma')}}&=\bE_{\Gamma_2}\pp{(1+\lambda^2)^{c\cdot\sum_{i=1}^{k_2} \deg_{\Gamma_1\cap \Gamma_2}^{\s{p}}(i)}}\\
        &\leq \bE_{\Gamma}\pp{(1+\lambda^2)^{c\cdot\sum_{i=1}^{k_2}a_i\cdot Z_i}}\\
        &\leq \bE\pp{(1+\lambda^2)^{c\cdot\sum_{i=1}^{k_2} a_i\cdot B_i}}\\
        &=\prod_{i=1}^{k_2} \bE\pp{(1+\lambda^2)^{c\cdot a_i\cdot B_i}}\\
        &=\prod_{i=1}^{k_2} \p{1+p_i\cdot \p{(1+\lambda^2)^{c\cdot a_i}-1}}\\
        &\leq \prod_{i=1}^{k_2} \p{1+\frac{\deg^{\s{p}}_{\Gamma_2^{\s{F}}}(i)\cdot d_1}{n-k_{2}}\cdot \p{(1+\lambda^2)^{c\cdot d_1}}}\\
        &\leq\exp \p{\frac{ d_1}{n-k_{2}}\cdot \p{\sum_{i=1}^{k_2}\deg^{\s{p}}_{\Gamma_2^{\s{F}}}(i)}\cdot \p{(1+\lambda^2)^{c\cdot d_1}}}\\
        &=\exp \p{\frac{ d_1}{n-k_{2}}\cdot |e(\Gamma_2)|\cdot \p{(1+\lambda^2)^{c\cdot d_1}}}.\label{eq:probabilisticIntersectionFinal}
    \end{align}
    By the assumption that $d_1=o(\log n)$ and $\lambda^2=\Theta(1)$ we have that, 
    \begin{align}
        d_1 \cdot (1+\lambda^2)^{c\cdot d_1}=n^{o(1)},
    \end{align}
    and \eqref{eq:probabilisticIntersectionFinal} converge to unity if $|e(\Gamma_2)|=O(n^{1-\varepsilon})$ for some arbitrarily small $\varepsilon>0$.

\end{proof}

    Note that an impossibility result for low degree graphs follows immediately from Lemma~\ref{lem:intersectionBoundedDegree}.
\begin{cor}\label{cor:LogarthmicDegreeLower}
    Let $\Gamma$ be such that $d_{\max}(\Gamma)=o(\log n)$, and $\chi^2(p||q)=\Theta(1)$. Then, strong and weak detection is impossible if 
    \begin{align}
        |e(\Gamma)| =O\p{n^{1-\varepsilon}},\label{eq:logarithmicDegThrahshold}
    \end{align}
    for an arbitrary small $\varepsilon>0$.
\end{cor}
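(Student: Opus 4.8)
The plan is to derive the corollary directly from the second-moment identity and the intersection bound of Lemma~\ref{lem:intersectionBoundedDegree}. Recall from Proposition~\ref{prop:LikelihoodMomentExperession}, specifically \eqref{eq:SecondMomentExpression}, that the second moment of the likelihood ratio admits the representation
\[
\E_{\calH_0}\pp{\s{L}(\s{G})^2} = \E_{\Gamma}\pp{(1+\lambda^2)^{|e(\Gamma\cap\Gamma')|}},
\]
where $\lambda^2 = \chi^2(p\|q)$, $\Gamma'$ is a fixed copy of $\Gamma$ in $\calK_n$, and $\Gamma \sim \mathsf{Unif}(\calS_\Gamma)$. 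The key observation is that this is exactly the quantity controlled by Lemma~\ref{lem:intersectionBoundedDegree} upon specializing $\Gamma_1 = \Gamma_2 = \Gamma$ and $c = 1$; by the symmetry between a uniform random copy and a fixed copy (already used in the proof of Proposition~\ref{prop:LikelihoodMomentExperession}), treating one of the two copies as deterministic and the other as uniform matches the setup of that lemma.

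First I would verify the hypotheses of Lemma~\ref{lem:intersectionBoundedDegree} in this diagonal case: the assumption $d_{\max}(\Gamma) = o(\log n)$ in particular gives $d_{\max}(\Gamma) = O(\log n)$, and the standing hypothesis \eqref{eq:logarithmicDegThrahshold}, $|e(\Gamma)| = O(n^{1-\varepsilon})$, is precisely the remaining requirement; the assumption $\chi^2(p\|q) = \Theta(1)$ enters through the lemma's proof, where the factor $d_{\max}(\Gamma)\cdot(1+\lambda^2)^{c\,d_{\max}(\Gamma)} = n^{o(1)}$ is used to absorb the exponential term. Applying the lemma then yields $\E_{\calH_0}[\s{L}(\s{G})^2] = 1 + o(1)$.

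The final step is the standard second-moment-to-risk conversion. Since $\chi^2(\P_{\calH_0}, \P_{\calH_1}) = \E_{\calH_0}[\s{L}(\s{G})^2] - 1 = o(1)$, the bound \eqref{eqn:lowerBoundSecond} gives
\[
\mathsf{R}^\star = 1 - d_{\s{TV}}(\P_{\calH_0}, \P_{\calH_1}) \geq 1 - \tfrac{1}{2}\sqrt{\chi^2(\P_{\calH_0}, \P_{\calH_1})} = 1 - o(1),
\]
so $\mathsf{R}^\star_n \to 1$. This is by definition the impossibility of weak detection, and since it forces $\liminf_{n\to\infty}\mathsf{R}^\star_n = 1 > 0$, strong detection is impossible as well.

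There is essentially no genuine obstacle at the level of the corollary: all the substantive work — the stochastic-domination coupling of the past-degree sequence of $\Gamma_1\cap\Gamma_2$ by independent Bernoulli variables and the resulting product bound — is carried out inside the proof of Lemma~\ref{lem:intersectionBoundedDegree}. The only points needing a moment's care are (i) confirming that the diagonal specialization $\Gamma_1 = \Gamma_2 = \Gamma$ is legitimate, which follows from the aforementioned symmetry, and (ii) checking that $o(\log n)$ comfortably meets the $O(\log n)$ requirement, with the $\Theta(1)$ assumption on $\chi^2(p\|q)$ ensuring that $(1+\lambda^2)^{c\,d_{\max}(\Gamma)}$ is only $n^{o(1)}$, so that the bound in \eqref{eq:probabilisticIntersectionFinal} indeed converges to unity.
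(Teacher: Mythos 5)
Your proposal is correct and follows exactly the route the paper intends: the paper derives this corollary directly from Lemma~\ref{lem:intersectionBoundedDegree} (with $\Gamma_1=\Gamma_2=\Gamma$, one copy fixed by symmetry) together with the second-moment identity \eqref{eq:SecondMomentExpression} and the risk bound \eqref{eqn:lowerBoundSecond}. Your verification of the hypotheses and the conversion of $\E_{\calH_0}[\s{L}(\s{G})^2]=1+o(1)$ into impossibility of weak (hence strong) detection matches the paper's argument.
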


\section{Useful Lemmata}\label{app:lemmata}
In this section, we record a few well known auxiliary results.
\begin{lemma}[Chernoff's inequality]
    For any $n\in\mathbb{N}$ and $p,q\in(0,1)$ such that $p\geq q$,
    \begin{align}
        \pr\pp{\s{Binomial}(n,q)\geq pn}\leq\exp\pp{-n\cdot d_{\s{KL}}(p||q)}.
    \end{align}
\end{lemma}
\begin{lemma}[Multiplicative Chernoff's inequality]
Suppose $\{S_i\}_{i=1}^n$ are independent random variables taking values in $\{0,1\}$, where $n\in\mathbb{N}$. Let $S_n\triangleq\sum_{i=1}^nS_i$, and denote $\eta\triangleq\bE[X]$. Then, for any $0<\delta<1$,
    \begin{align}
        \pr\pp{S_n\leq(1-\delta)\eta}\leq\exp\pp{-\frac{\delta^2\eta}{2}}.
    \end{align}
\end{lemma}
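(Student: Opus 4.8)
The plan is to apply the exponential Markov (Chernoff) method to the \emph{lower} tail of $S_n$. Write $p_i\triangleq\bE[S_i]$, so that $\eta=\bE[S_n]=\sum_{i=1}^n p_i$ (note the statement's $\bE[X]$ should read $\bE[S_n]$). For any $t>0$, since $x\mapsto e^{-tx}$ is strictly decreasing, Markov's inequality gives
\[
\pr\pp{S_n\le(1-\delta)\eta}=\pr\pp{e^{-tS_n}\ge e^{-t(1-\delta)\eta}}\le e^{t(1-\delta)\eta}\,\bE\pp{e^{-tS_n}}.
\]
By independence of the $S_i$ and the termwise estimate $1+x\le e^x$,
\[
\bE\pp{e^{-tS_n}}=\prod_{i=1}^n\p{1+p_i(e^{-t}-1)}\le\exp\p{\textstyle\sum_{i=1}^n p_i(e^{-t}-1)}=\exp\p{\eta(e^{-t}-1)},
\]
so that $\pr[S_n\le(1-\delta)\eta]\le\exp\p{\eta\,(t(1-\delta)+e^{-t}-1)}$ for every $t>0$.

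The next step is to optimize the exponent over $t>0$. Setting $\frac{d}{dt}\big(t(1-\delta)+e^{-t}-1\big)=(1-\delta)-e^{-t}=0$ yields $e^{-t}=1-\delta$, i.e. $t=\ln\tfrac{1}{1-\delta}$, which is strictly positive because $\delta\in(0,1)$, hence admissible in the Markov step. Substituting this value gives
\[
\pr\pp{S_n\le(1-\delta)\eta}\le\exp\Big(\eta\big({-}(1-\delta)\ln(1-\delta)-\delta\big)\Big)=\left(\frac{e^{-\delta}}{(1-\delta)^{1-\delta}}\right)^{\eta}.
\]

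Finally I would reduce this to the stated form via the elementary inequality $(1-\delta)\ln(1-\delta)\ge-\delta+\tfrac{\delta^2}{2}$ for $\delta\in[0,1)$. This follows since $f(\delta)\triangleq(1-\delta)\ln(1-\delta)+\delta-\tfrac{\delta^2}{2}$ has $f(0)=0$ and $f'(\delta)=-\ln(1-\delta)-\delta\ge0$, because $-\ln(1-\delta)=\sum_{k\ge1}\delta^k/k\ge\delta$. Therefore ${-}(1-\delta)\ln(1-\delta)-\delta\le-\tfrac{\delta^2}{2}$, and plugging this into the previous display yields $\pr[S_n\le(1-\delta)\eta]\le\exp(-\delta^2\eta/2)$, as claimed. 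The argument is entirely routine; there is essentially no obstacle, the only points meriting a line of verification being the positivity of the optimal $t=\ln\tfrac1{1-\delta}$ (needed for the Markov step) and the scalar inequality above, both immediate consequences of $\delta\in(0,1)$.
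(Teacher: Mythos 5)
Your proof is correct: the exponential Markov step, the product bound via $1+x\le e^x$ (which correctly handles non-identically distributed $S_i$), the optimal choice $t=\ln\tfrac{1}{1-\delta}$, and the scalar inequality $(1-\delta)\ln(1-\delta)\ge-\delta+\tfrac{\delta^2}{2}$ are all verified accurately, and you rightly note that $\eta$ should be read as $\bE[S_n]$. The paper records this lemma in its appendix of well-known auxiliary results without giving a proof, so there is nothing to compare against; your argument is the standard Chernoff-method derivation and is complete as written.
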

\begin{lemma}[Bernstein's inequality]
    For any $n\in\mathbb{N}$, $p\in(0,1)$, and $t>0$, 
    \begin{align}
        \pr\pp{\s{Binomial}(n,p)\geq np+t}\leq\exp\pp{-\frac{t^2}{2[np(1-p)+t/3]}}.
    \end{align}
\end{lemma}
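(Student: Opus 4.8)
The plan is the classical Chernoff–exponential-moment argument, specialized to the Binomial case. First I would write $\s{Binomial}(n,p) = S_n = \sum_{i=1}^n X_i$ with $X_1,\dots,X_n$ i.i.d.\ $\s{Bern}(p)$, and pass to the centered variables $Y_i \triangleq X_i - p$, which satisfy $\bE[Y_i]=0$, $|Y_i|\le 1$, and $\bE[Y_i^2]=p(1-p)$. By Markov's inequality applied to $e^{\lambda(S_n - np)}$ for an arbitrary $\lambda>0$, together with independence,
\[
\pr\pp{S_n \ge np + t} \le e^{-\lambda t}\prod_{i=1}^n \bE\pp{e^{\lambda Y_i}}.
\]

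Second, I would bound the single-coordinate moment generating function. Expanding $e^{\lambda Y_i}$, using $\bE[Y_i]=0$ to kill the linear term, and the estimate $|\bE[Y_i^k]| \le \bE|Y_i|^k \le \bE[Y_i^2] = p(1-p)$ for $k \ge 2$ (valid since $|Y_i|\le 1$), one gets
\[
\bE\pp{e^{\lambda Y_i}} \le 1 + p(1-p)\sum_{k\ge 2}\frac{\lambda^k}{k!} \le 1 + \frac{p(1-p)\lambda^2}{2}\sum_{k\ge 2}\p{\tfrac{\lambda}{3}}^{k-2} = 1 + \frac{p(1-p)\lambda^2/2}{1-\lambda/3},
\]
valid for $0<\lambda<3$, where the middle inequality uses the elementary bound $k! \ge 2\cdot 3^{k-2}$. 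Applying $1+x \le e^x$ gives $\bE[e^{\lambda Y_i}] \le \exp\!\big(\tfrac{p(1-p)\lambda^2/2}{1-\lambda/3}\big)$, and hence $\prod_{i=1}^n \bE[e^{\lambda Y_i}] \le \exp\!\big(\tfrac{np(1-p)\lambda^2/2}{1-\lambda/3}\big)$.

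Third, I would combine the two displays to obtain, for every $0<\lambda<3$,
\[
\pr\pp{S_n \ge np+t} \le \exp\p{-\lambda t + \frac{np(1-p)\lambda^2/2}{1-\lambda/3}},
\]
and then choose $\lambda = \dfrac{t}{np(1-p) + t/3}$, which lies in $(0,3)$ since $t/3 < np(1-p)+t/3$. With this choice $1-\lambda/3 = \dfrac{np(1-p)}{np(1-p)+t/3}$, so $\lambda t = \dfrac{t^2}{np(1-p)+t/3}$ and the quadratic term equals $\tfrac12\cdot\dfrac{t^2}{np(1-p)+t/3}$; the exponent collapses to $-\dfrac{t^2}{2(np(1-p)+t/3)}$, which is exactly the claimed bound.

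There is no genuine conceptual obstacle here; the only points requiring care are the two numerical inequalities $k! \ge 2\cdot 3^{k-2}$ for $k\ge 2$ (which is what produces the specific constant $1/3$ in the denominator) and the verification that the displayed $\lambda$ both lies in the admissible range $(0,3)$ and makes the exponent simplify cleanly — for which I prefer substituting the stated $\lambda$ directly rather than differentiating, so the constants $2$ and $t/3$ emerge without case analysis.
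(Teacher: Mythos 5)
Your proof is correct: the Chernoff bound with the moment estimate $\bE|Y_i|^k\le p(1-p)$, the inequality $k!\ge 2\cdot 3^{k-2}$, and the substitution $\lambda=t/[np(1-p)+t/3]$ all check out and yield exactly the stated exponent. The paper records this Bernstein inequality in its appendix of well-known auxiliary results without proof, so there is nothing to compare against; your argument is the standard one and would serve as a complete justification.
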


\section{Failure of Local Subgraph Counts Bounds}\label{app:JansonType}

One idea to handle the expression in \eqref{eq:SecondMomentExpressionProb1} is by using Janson's type bounds. Indeed, note that by using Lemma~\ref{lem:equivRandomCopyOrSubgraphcopy}, we have,
\begin{align}
    \E_{\calH_0}\pp{\s{L}(\s{G})^2}&=\sum_{\s{H}'\subseteq \Gamma'}\lambda^{2|\s{H}'|}\cdot\P_{\Gamma}\pp{\s{H}'
             \subseteq \Gamma}\\
             & = \sum_{\s{H}'\subseteq \Gamma'}\lambda^{2|\s{H}'|}\cdot\frac{\calN(\s{H}',\Gamma')}{\abs{\calS_{\s{H}}}}.\label{eqn:2momentJans}
\end{align}
Now, let $\calN(n,m,\s{H})$ be the largest number of copies of $\s{H}$ that can be packed in $n$ vertices and $m$ edges. Clearly $\calN(\s{H},\Gamma)$ can be bounded by $\calN\p{|v(\Gamma)|,|e(\Gamma)|,\s{H}}$, which received significant attention in the literature. Indeed, many bounds on $\calN(v(\Gamma),e(\Gamma),\s{H})$ exist in the literature, e.g.,  \cite{Alon1981OnTN,Janson,Friedgut1998OnTN,Jaikumar,Wojciech}, and here we will focus the following recent result from \cite{Wojciech}. Specifically, for every graph $\s{H}$ without isolated vertices, and for all $e(\s{H})\leq e(\Gamma)$ and $v(\s{H})\leq v(\Gamma)$, we have \cite[Theorem 5.4]{Wojciech},
\begin{align}
    \calN(v(\Gamma),e(\Gamma),\s{H})\leq [2e(\Gamma)]^{v(\s{H})-\alpha^\star(\s{H})}\cdot\min\ppp{2e(\Gamma),v(\Gamma)}^{2\alpha^\star(\s{H})-v(\s{H})},\label{eqn:GenBoundcalN1}
\end{align}
where for a given graph $\s{H}$, the fractional independence number of $\s{H}$, denoted by $\alpha^\star(\s{H})$ is the largest value of $\sum_v\alpha_v$ over all assignments of weights $\alpha_v\in[0,1]$ to the vertices of $\s{H}$ satisfying the condition $\alpha_v+\alpha_u\leq1$, for all edges $(u,v)$ of $\s{H}$. It was shown in \cite[Lemma A.1]{Janson} that if $e(\s{H})>0$ then,
\begin{align}
    1\leq\frac{v(\s{H})}{2}\leq \alpha^\star(\s{H})\leq v(\s{H})-\frac{e(\s{H})}{\Delta(\s{H})}\leq  v(\s{H})-1,
\end{align}
where $\Delta(\s{H})$ is the maximum degree in $\s{H}$. Therefore, combining \eqref{eqn:2momentJans} and \eqref{eqn:GenBoundcalN1} we obtain,
\begin{align}
    \norm{\s{L}(\s{G})}_{\calH_0}^2&\leq 1+\sum_{\emptyset\neq \s{H}\subseteq \binom{e(\Gamma)}{2}}\lambda^{2|\s{H}|}\frac{[2e(\Gamma)]^{v(\s{H})-\alpha^\star(\s{H})}\cdot\min\ppp{2e(\Gamma),v(\Gamma)}^{2\alpha^\star(\s{H})-v(\s{H})}}{|\calS_{\s{H}}|}\\
    &= 1+\sum_{\emptyset\neq \s{H}\subseteq \binom{e(\Gamma)}{2}}\lambda^{2|\s{H}|}\frac{[2e(\Gamma)]^{v(\s{H})-\alpha^\star(\s{H})}\cdot\min\ppp{2e(\Gamma),v(\Gamma)}^{2\alpha^\star(\s{H})-v(\s{H})}}{\binom{n}{|v(\s{H})|}\cdot \frac{|v(\s{H})|!}{|\s{Aut}(\s{H})|}}.\label{eqn:bound}
\end{align}
The goal is now to derive rather sufficient conditions under which the former is bounded. While at first sight, this might seem like a promising approach, it is quite loose. To illustrate this, let us consider the case of planted path in the dense regime; our bounds in this paper show that the detection problem in this case is impossible whenever $|v(\Gamma)| = o(n)$, which is rather an intuitive result. We next analyze the above local Janson-type approach. 

Let us begin by recalling from \cite{Janson}, that for a path of with $s$ edges (and $s+1$ vertices) the fractional covering number $\alpha^\star$ is $(s+1)/2$ if $s$ is odd, and $(s+2)/2$ if $s$ is even. We also observe that any subgraph $\s{H}$ with $\ell$ edges of a path with $k$ edges is a disjoint union of $P(H)=m$ paths of lengths $\ell_1,\dots,\ell_m$ such that $\sum_i \ell_i=\ell$. Since $|v(\s{H})|\leq k+1 $ and 
\[|v(\s{H})|=\sum_{i=1}^m \overset{\text{length of the $i$'th path}}{\overbrace{\ell_i+1}}=m+\ell,\]
we have
\[m \leq k-\ell+1.\]
On the other hand the number of paths $m$ will always be at most the number of edges, and therefore $m\leq \ell$.
Immediately from the definition of $\alpha^\star$,  we observe that for a disjoint union of graphs $\s{H}_1,\dots, \s{H}_{\ell}$, 
\[\alpha^\star\p{\bigcup_{i=1}^\ell \s{H}_i}=\sum_{i=1}^\ell \alpha^\star\p{\s{H}_i}. \]
Combining the above, any subgraph of $\Gamma$, a path of length $k$ edges, is a disjoint union of $m$ paths of lengths $\ell_1,\dots, \ell_m$, where $1\leq \ell \leq \min(k+1-\ell,\ell)$, and 
\[\frac{\ell}{2}=\sum_{i=1}^m \frac{\ell_i}{2}\leq \alpha^\star (\s{H})\leq \sum_{i=1}^m \frac{\ell_i+2}{2}=m+\frac{\ell}{2}.\]
We also recall the following result.
\begin{lemma} \label{lem:automorhisms}\cite{bams1183513745}
     Let $\s{G}$ be a graph composed as a union of disjoint graphs $\s{G}=\bigcup_{i=1}^\ell \bigcup_{j=1}^{h_i} \s{G}_{i,j}$, such that for any $i$, $\s{G}_{i,1}\dots \s{G}_{i,h_i}$ are isomorphic and for any $i'\neq i$ and each $j'$ the graphs $\s{G}_{i,j}$ and $\s{G}_{i',j'}$ are not isomorphic. Then:
     \begin{equation}
         \abs{\s{Aut}(\s{G})}=\prod_{i=1}^\ell\abs{\s{Aut}(\s{G}_{i,1})}^{h_i}\cdot h_i!
     \end{equation}
 \end{lemma}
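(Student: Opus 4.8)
The plan is to reduce the statement to the standard description of the automorphism group of a disjoint union of connected graphs. In the setting of interest the pieces $\s{G}_{i,j}$ are connected (they are paths of a prescribed length), and since $\s{G}$ is their disjoint union, the family $\ppp{\s{G}_{i,j}}_{i\in[\ell],\,j\in[h_i]}$ is precisely the set of connected components of $\s{G}$. The index $i$ then records the isomorphism class of a component: by hypothesis $\s{G}_{i,j}\cong\s{G}_{i,j'}$ for all $j,j'$, while $\s{G}_{i,j}\not\cong\s{G}_{i',j'}$ whenever $i\neq i'$, so two components of $\s{G}$ are isomorphic if and only if they carry the same first index.

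First I would record the elementary fact that every $\sigma\in\s{Aut}(\s{G})$ permutes the connected components of $\s{G}$: the image under $\sigma$ of a maximal connected subgraph is again a maximal connected subgraph, and $\sigma$ restricted to a component is a graph isomorphism onto its image. Since isomorphic components share the same first index, $\sigma$ must send each component $\s{G}_{i,j}$ to some component $\s{G}_{i,\pi_i(j)}$ of the same type, where $\pi_i=\pi_i(\sigma)\in S_{h_i}$ is a permutation. Conversely, given any choice of permutations $\pi_i\in S_{h_i}$ (one per type) and, for every $i$ and $j$, an arbitrary graph isomorphism $\tau_{i,j}:\s{G}_{i,j}\to\s{G}_{i,\pi_i(j)}$, the common extension of the $\tau_{i,j}$ to all of $V(\s{G})$ is a well-defined automorphism of $\s{G}$, because the pieces are vertex-disjoint, their union is $V(\s{G})$, and every edge of $\s{G}$ lies in exactly one piece. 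These two observations together give a bijection between $\s{Aut}(\s{G})$ and the set of all such tuples $\p{(\pi_i)_i,(\tau_{i,j})_{i,j}}$.

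It then remains to count the tuples. Fix a type $i$. There are $h_i!$ choices for $\pi_i$. For a fixed $\pi_i$ and a fixed $j$, the set $\mathrm{Iso}\p{\s{G}_{i,j},\s{G}_{i,\pi_i(j)}}$ is non-empty, since both graphs are isomorphic to $\s{G}_{i,1}$; and if $\tau_0$ is one such isomorphism then every other is of the form $\tau_0\circ\psi$ with $\psi\in\s{Aut}\p{\s{G}_{i,j}}$ uniquely determined, so this set has exactly $\abs{\s{Aut}\p{\s{G}_{i,j}}}=\abs{\s{Aut}\p{\s{G}_{i,1}}}$ elements (isomorphic graphs have conjugate, hence equinumerous, automorphism groups). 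Thus type $i$ contributes $h_i!\cdot\abs{\s{Aut}\p{\s{G}_{i,1}}}^{h_i}$ tuples, and multiplying over $i\in[\ell]$ yields $\abs{\s{Aut}(\s{G})}=\prod_{i=1}^{\ell}\abs{\s{Aut}\p{\s{G}_{i,1}}}^{h_i}\cdot h_i!$, as claimed.

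The only point requiring genuine care — and the reason connectedness of the $\s{G}_{i,j}$ is the natural reading here — is the first reduction: if the pieces were allowed to be disconnected and to share connected components across different types, an automorphism could move such a shared component between types and the stated product formula would overcount. Everything else (that automorphisms permute components, that an arbitrary family of component isomorphisms glues to an automorphism, and the coset count) is routine, so I do not anticipate a serious obstacle.
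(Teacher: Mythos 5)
The paper does not prove this lemma at all---it is imported by citation (it is the classical Frucht-type description of the automorphism group of a disjoint union of connected graphs, whose order is that of the direct product of wreath products $\s{Aut}(\s{G}_{i,1})\wr S_{h_i}$)---so there is no internal proof to compare against; the question is only whether your argument is sound, and it is. Your three steps are exactly the standard ones: (i) an automorphism permutes the connected components and, by the non-isomorphism across types, preserves the type index $i$; (ii) the map $\sigma\mapsto\p{(\pi_i)_i,(\tau_{i,j})_{i,j}}$, with $\tau_{i,j}=\sigma|_{\s{G}_{i,j}}$, is a bijection onto the set of tuples, since disjointness lets any family of component isomorphisms glue back to an automorphism; (iii) the coset count $\abs{\mathrm{Iso}(\s{G}_{i,j},\s{G}_{i,\pi_i(j)})}=\abs{\s{Aut}(\s{G}_{i,1})}$ gives $h_i!\cdot\abs{\s{Aut}(\s{G}_{i,1})}^{h_i}$ per type. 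All of this is correct.

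Your closing caveat is also well taken and worth keeping: as literally stated, the lemma omits the hypothesis that the pieces $\s{G}_{i,j}$ are connected, and without it the formula can fail---e.g., take one piece to be a single edge and a second, non-isomorphic piece to be two disjoint edges; the union is three disjoint edges with automorphism group of order $2^3\cdot 3!=48$, whereas the stated product gives $2\cdot\p{2^2\cdot 2!}=16$. In the paper the lemma is applied only to disjoint unions of paths, which are connected, so your proof establishes precisely the statement that is actually used.
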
  
To wit, the size of the automorphism group of a disjoint union of $m$ paths of lengths $\ell_1,\dots,\ell_m$ with $\sum_i \ell_i=\ell$ is given by 
\[ |\s{Aut}(\s{H})|=2^m\prod_{i=1}^\ell p_{\s{H}}(i)!,\] 
where $p_{\s{H}}(i)$ is the number sub-paths with length $i$.

No we can further analyze \eqref{eqn:bound}:
\begin{align}
    \norm{\s{L(G)}}_{\calH_0}^2 &\leq 1+\sum_{\emptyset\neq \s{H}\subseteq \binom{e(\Gamma)}{2}}\lambda^{2|\s{H}|}\frac{(2|e(\Gamma)|)^{\alpha^\star(\s{H})}}{\binom{n}{|v(\s{H})|}\cdot \frac{|v(\s{H})|!}{|\s{Aut}(\s{H})|}}\\
    &=1+\sum_{\ell=1
}^k\sum_{m=1}^{\min(\ell,k+1-\ell)}\sum_{\substack{\s{H}\subseteq \binom{e(\Gamma)}{2}\\ (|e(\s{H})|,|P(\s{H})|)=(\ell,m)}}\lambda^{2\ell} \frac{(2k)^{\alpha^\star(\s{H})}2^m \prod_i p_{\s{H}}(i)}{\frac{n!}{(n-m-\ell)!}}\\
 &\leq 1+\sum_{\ell=1
}^k\sum_{m=1}^{\min(\ell,k+1-\ell)}\sum_{\substack{\s{H}\subseteq \binom{e(\Gamma)}{2}\\ (|e(\s{H})|,|P(\s{H})|)=(\ell,m)}}\lambda^{2\ell} (2k)^{\frac{\ell}{2}+m}2^m\frac{(n-m-\ell)!}{n!}  \prod_i p_{\s{H}}(i).
\end{align}
 The most inner sum in the above expression is the same for all subgraphs of the path $\s{H}$ with composed from a disjoint union of exactly $m$ paths of lengths of $\ell_1,\dots,\ell_m$. Now let us evaluate the inner sum. To that end, we should count over all equivalence classes (under graph isomorphism) representing subgraphs with $\ell$ edges and $m$ disjoint paths of lengths $\ell_1,\dots,\ell_m$. Next, we consider the size of any such equivalent class, which is exactly the size of $\calN(\s{H},\Gamma)$, for a representing element $\s{H}$. To that end, we begin by observing that a non-empty subgraph of a path is a disjoint union of paths. Let $\s{H}$ be such a subgraphs composed of $\ell$ edges, $m$ disjoint paths, such that for any $1\leq i\leq \ell$, the subgraph $\s{H}$ contains exactly $h_i$ paths of length $i$. We begin by analyzing $\calN(\s{H},\Gamma)$ and $\calS(\s{H})$.  As we know,
    \begin{align}
        \calS(\s{H})=\binom{n}{|v(\s{H})|}\frac{|v(\s{H})|!}{|\s{Aut}(\s{H})|}=\binom{n}{\ell+m}\frac{(\ell+m)!}{|\s{Aut}(\s{H})|}.
    \end{align}
    Thus, we aim to determine $\abs{\s{Aut}(\s{H})}$. We note  any two sub paths of $\s{H}$ are isomorphic if and only if they have the same length. We also note that a path as exactly two isomorphisms, the identity and the one that reverses the direction of the path. Using Lemma~\ref{lem:automorhisms}
    \begin{align}
        \abs{\s{Aut}(\s{H})}= \prod_{i=1}^\ell 2^{h_i}\cdot h_i!=2^{\sum_{i=1}^m h_i}\prod_{i=1}^\ell h_i!=2^m\prod_{i=1}^\ell h_i!.
    \end{align}
    Next, we turn to evaluate $\calN(\s{H},\Gamma)$, i.e., the number of copies of $\s{H}$ in $\Gamma$. We count this number of copies as follows. First, we go over all $m!$ orderings of the distinct paths composing $\s{H}$ in line, according to their order of appearance in $\Gamma$. We note that by doing so, generate any each order multiple times and we therefore over count since when counting copies of $\s{H}$ we do not distinguish between different sub-paths of the same length. Thus, we should normalize by 
    \begin{align}
        \frac{1}{\prod_{i=1}^\ell h_i!},
    \end{align} 
    which is the number of internal orderings of paths of the same length. In order to count the number of copies, it only remains to go over all possibilities of the number of edges separating between any two distinct sub-paths, which is at least $1$ (as the paths are disjoint). Thus, we count the number of ways distribute the renaming $k-(\ell+m-1)$ edges between the distinct paths. That is exactly the number ways to distribute $k-(\ell+m-1)$ elements in $m+1$ bins (we note that edges may be located before the first path and after the last path as well). The number of ways to distribute $q$ elements into $d$ cells  is known to be equal to $\binom{q+d-1}{d-1}$, and therefore we have
    \begin{align}
        \binom{k-(\ell+m-1)+(m+1)-1}{(m+1)-1}=\binom{k-\ell +1}{m}
    \end{align}
    possibilities of the remaining edges. Combining all together we have,
    \begin{align}\label{eq:NHGAMMAboundPath}
        \calN(\s{H},\Gamma)=\frac{ m!}{\prod_{i=1}^\ell h_i!}\cdot \binom{k-\ell +1}{m}=\frac{(k-\ell+1)!}{(k-\ell-m+1)!\prod_{i=1}^\ell h_i!}\leq \frac{\p{k-\ell+1}^m}{\prod_{i=1}^\ell p_{\s{H}}(i)!}.
    \end{align} 
    Note that different equivalence classes are distinguished by the number of parts of each size, which corresponds to integer partitions of $m$ to exactly $\ell$ parts. For an integer partition $p$, we denote $p(i)$ the number of elements of size $i$ in the partition. We combine it with our bound and obtain:
    \begin{align}
    \norm{\s{L(G)}}_{\calH_0}^2 &\leq 1+\sum_{\ell=1
}^k\sum_{m=1}^{\min(\ell,k+1-\ell)}\sum_{\substack{\s{H}\subseteq \binom{e(\Gamma)}{2}\\ (|e(\s{H})|,|P(\s{H})|)=(\ell,m)}}\lambda^{2\ell} (2k)^{\frac{\ell}{2}+m}2^m\frac{(n-m-\ell)!}{n!}  \prod_i p_{\s{H}}(i)\\
&\leq 1+\sum_{\ell=1
}^k\sum_{m=1}^{\min(\ell,k+1-\ell)}\sum_{\substack{\s{H}\subseteq \binom{e(\Gamma)}{2}\\ (|e(\s{H})|,|P(\s{H})|)=(\ell,m)}}\lambda^{2\ell} (2k)^{\frac{\ell}{2}+m}2^m (n-m-l)^{-(m+\ell)}\prod_i p_{\s{H}}(i)\\
&\leq 1+\sum_{\ell=1
}^k\sum_{m=1}^{\min(\ell,k+1-\ell)}\sum_{p\in\s{Par}(\ell,m)}\calN(\s{H}_p,\Gamma) \lambda^{2\ell} (2k)^{\frac{\ell}{2}+m}2^m (n-m-l)^{-(m+\ell)}\prod_i p_{\s{H}}(i)\\
&\overset{(a)}{\leq} 1+\sum_{\ell=1
}^k\sum_{m=1}^{\min(\ell,k+1-\ell)}\sum_{p\in\s{Par}(\ell,m)}(k-\ell+1)^m \lambda^{2\ell} (2k)^{\frac{\ell}{2}+m}2^m (n-k-1)^{-(m+\ell)}\\
&\overset{(b)}{\leq} 1+\sum_{\ell=1
}^k\sum_{m=1}^{\min(\ell,k+1-\ell)}e^{c\sqrt{\ell}}(k-\ell+1)^m \lambda^{2\ell} (2k)^{\frac{\ell}{2}+m}2^m (n-k-1)^{-(m+\ell)}\\
&\leq 1+\sum_{\ell=1
}^k \frac{e^{c\sqrt{\ell}}2^{\frac{3\ell}{2}}k^{\frac{\ell}{2}}}{(n-k-1)^\ell}
\sum_{m=1}^{\ell}\p{\frac{2k^2}{n-k-1}}^m
\end{align}
where 
\begin{itemize}
    \item[(a)] follows from \eqref{eq:NHGAMMAboundPath} and since $m+\ell\leq k+1$.
    \item[(b)] follows from \eqref{eq:Hardy-Ramanujan}.
\end{itemize}
Assuming that $k$ is in the interesting regime $\sqrt{n}\ll k $. we have $\frac{2k^2}{n-k-1}\to\infty$ and 
\begin{equation}
    \sum_{m=1}^{\ell}\p{\frac{2k^2}{n-k-1}}^m=(1+o(1))\p{\frac{2k^2}{n-k-1}}^\ell.
\end{equation}
Combining the above with our calculation, we get,
\begin{align}
     \norm{\s{L(G)}}_{\calH_0}^2 &\leq  1+(1+o(1))\sum_{\ell=1
}^k \frac{e^{c\sqrt{\ell}}2^{\frac{\ell}{2}}k^{\frac{\ell}{2}}}{(n-k-1)^\ell}
\p{\frac{2k^2}{n-k-1}}^\ell\\
&\leq  1+(1+o(1))\sum_{\ell=1
}^k  \p{ \frac{\tilde{\lambda^2 C}k^{\frac{5}{2}}}{(n-k-1)^2}}^\ell.
\end{align}
Thus, the above sum converge when $k\ll n^{\frac{4}{5}}$. This is of course loose, as we already proved that detection is impossible for $k\ll n^{\frac{4}{5}}$.
\end{document}